\documentclass[11pt]{amsart}

\usepackage{epigamath}


\usepackage[english]{babel}


\numberwithin{equation}{section}


\usepackage[shortlabels]{enumitem}

\usepackage{amssymb}
\usepackage{amsmath, amscd}
\usepackage{amsthm}

\usepackage{mathtools}
\mathtoolsset{showonlyrefs=true}

\usepackage{mathrsfs}
\usepackage{booktabs}
\usepackage{perpage}
\usepackage[all,cmtip]{xy}
\usepackage{setspace}
\usepackage{eucal}
\usepackage{amsbsy}
\usepackage[T1]{fontenc}
\usepackage{verbatim}
\usepackage{mathdots}
\usepackage{blkarray}
\usepackage{multirow}
\usepackage{tikz}

\usepackage{colonequals} 

\usepackage{braket}
\usepackage{lipsum}

\usepackage{graphicx}

\newtheorem{theorem}{Theorem}[section]
\newtheorem{lemma}[theorem]{Lemma}
\newtheorem{conjecture}[theorem]{Conjecture}
\newtheorem{corollary}[theorem]{Corollary}
\newtheorem{proposition}[theorem]{Proposition}

\newtheorem{thmIntro}{Theorem}
\newtheorem{conjIntro}{Conjecture}

\theoremstyle{definition}
\newtheorem{definition}[theorem]{Definition}
\newtheorem{condition}[theorem]{Condition}
\newtheorem{assumption}[theorem]{Assumption}

\theoremstyle{remark}
\newtheorem{rmk}[theorem]{Remark}


\makeatletter
\newcommand{\sigmaop}[1]{\mathop{\mathpalette\@sigmaop{#1}}\slimits@}
\newcommand{\@sigmaop}[2]{%
  \vphantom{\sum}%
  \sbox\z@{$\m@th#1\sum$}%
  \dimen@=\ht\z@ \advance\dimen@\dp\z@
  \dimen\tw@=\wd\z@
  \ifx#1\displaystyle\dimen@=.9\dimen@\fi
  \ooalign{%
    \hidewidth
    $\vcenter{\hbox{$\m@th#1#2$\kern.3\dimen\tw@}%
     \ifx#1\scriptstyle\kern-.25ex\fi}$\hidewidth\cr
    $\vcenter{\hbox{%
      \resizebox{!}{\dimen@}{$\m@th\boxtimes$}%
    }\ifx#1\scriptstyle\kern-.25ex\fi}$\cr
  }%
}
\makeatother


\newlist{equivlist}{enumerate}{2}
\setlist[equivlist,1]{label={\rm(\roman*)}, ref={\rm\roman*}}

\newlist{assertionlist}{enumerate}{2}
\setlist[assertionlist,1]{label={\rm(\arabic*)}, ref={\rm\arabic*}}

\newlist{Alist}{enumerate}{2}
\setlist[Alist,1]{label={\rm(\Alph*)}, ref={\rm\Alph*}}

\newlist{alist}{enumerate}{2}
\setlist[alist,1]{label={\rm(\alph*)}, ref={\rm\alph*}}


\newcommand{\GZip}{\mathop{\text{$G$-{\tt Zip}}}\nolimits}

\newcommand{\pha}{\mathsf{pHa}}
\newcommand{\zipsf}{\mathsf{zip}}
\newcommand{\GSsf}{\mathsf{GS}}
\newcommand{\lwsf}{\mathsf{lw}}
\newcommand{\hwsf}{\mathsf{hw}}
\newcommand{\low}{\mathsf{low}}
\newcommand{\high}{\mathsf{high}}

\newcommand{\flag}{\mathsf{flag}}

\newcommand{\GFqrZip}{\mathop{\text{$G_{\FF_{q^r}}$-{\tt Zip}}}\nolimits}
\newcommand{\GjZip}{\mathop{\text{$G_j$-{\tt Zip}}}\nolimits}
\newcommand{\GtilZip}{\mathop{\text{$\widetilde{G}$-{\tt Zip}}}\nolimits}

\newcommand{\GF}{\mathop{\text{$G$-{\tt ZipFlag}}}\nolimits}

\newcommand{\VB}{\mathfrak{VB}}


\newcommand{\Acal}{{\mathcal A}}
\newcommand{\Bcal}{{\mathcal B}}
\newcommand{\Ccal}{{\mathcal C}}
\newcommand{\Fcal}{{\mathcal F}}
\newcommand{\Lcal}{{\mathcal L}}
\newcommand{\Ocal}{{\mathcal O}}
\newcommand{\Ucal}{{\mathcal U}}
\newcommand{\Vcal}{{\mathcal V}}
\newcommand{\Xcal}{{\mathcal X}}
\newcommand{\Zcal}{{\mathcal Z}}
\newcommand{\pfr}{{\mathfrak p}}
\newcommand{\Afr}{{\mathfrak A}}
\newcommand{\Dfr}{{\mathfrak D}}
\newcommand{\Ifr}{{\mathfrak I}}

\renewcommand{\AA}{\mathbb{A}}
\newcommand{\CC}{\mathbb{C}}
\newcommand{\FF}{\mathbb{F}}
\newcommand{\GG}{\mathbb{G}}
\newcommand{\NN}{\mathbb{N}}
\newcommand{\QQ}{\mathbb{Q}}
\newcommand{\RR}{\mathbb{R}}
\newcommand{\XX}{\mathbb{X}}
\newcommand{\ZZ}{\mathbb{Z}}

\newcommand{\Bscr}{{\mathscr B}}
\newcommand{\Gscr}{{\mathscr G}}
\newcommand{\Lscr}{{\mathscr L}}
\newcommand{\Pscr}{{\mathscr P}}
\newcommand{\Sscr}{{\mathscr S}}
\newcommand{\Tscr}{{\mathscr T}}


\newcommand{\cent}{{\rm Cent}}
\newcommand{\gm}{\mathbf G_{\textnormal{m}}}
\newcommand{\gmq}{\mathbf G_{\textnormal{m},\QQ}}
\newcommand{\fp}{\mathbf F_p}

\DeclareMathOperator{\codim}{codim}

\renewcommand{\div}{\operatorname{div}}

\DeclareMathOperator{\ad}{ad}
\DeclareMathOperator{\card}{Card}

\newcommand{\dR}{{\rm dR}}

\DeclareMathOperator{\Gal}{Gal}
\DeclareMathOperator{\gal}{Gal}
\DeclareMathOperator{\Hom}{Hom}
\DeclareMathOperator{\rank}{rank}
\DeclareMathOperator{\Span}{Span}
\DeclareMathOperator{\Stab}{Stab}
\DeclareMathOperator{\pr}{pr}
\DeclareMathOperator{\Ker}{Ker}
\DeclareMathOperator{\Rep}{Rep}
\DeclareMathOperator{\res}{Res}
\DeclareMathOperator{\Sbt}{Sbt}
\DeclareMathOperator{\Sh}{Sh}
\DeclareMathOperator{\spec}{Spec}

\DeclareMathOperator{\zip}{zip} 

\DeclareMathOperator{\GS}{GS}
\DeclareMathOperator{\SL}{SL}
\DeclareMathOperator{\GL}{GL}
\DeclareMathOperator{\GSp}{GSp}
\DeclareMathOperator{\Sp}{Sp}
\DeclareMathOperator{\U}{U}
\DeclareMathOperator{\GU}{GU}

\newcommand{\shgx}{\Sh(\mathbf G, \mathbf X)}
\newcommand{\egx}{E(\mathbf G, \mathbf X)}
\newcommand{\gx}{(\mathbf G, \mathbf X)}

\newcommand{\gofaf}{\mathbf G(\mathbf A_f)}

\newcommand{\id}{{\rm Id}}

\newcommand{\loccit}{{\em loc.\ cit. }} 
\newcommand{\loccitn}{{\em loc.\ cit.}}
\newcommand{\opcitn}{{\em op.\ cit.}}
\newcommand{\cf}{{\em cf. }} 


\DeclareMathOperator{\diag}{diag}
\DeclareMathOperator{\fil}{Fil}

\newcommand{\reg}{{\rm reg}}

\DeclareMathOperator{\GSpin}{GSpin}
\DeclareMathOperator{\Spin}{Spin}
\DeclareMathOperator{\SO}{SO}
\DeclareMathOperator{\Fil}{Fil}

\DeclareMathOperator{\Ha}{Ha}
\DeclareMathOperator{\type}{type}
\DeclareMathOperator{\Norm}{Norm}
\DeclareMathOperator{\Ind}{Ind}
\DeclareMathOperator{\cara}{char}

\DeclareMathOperator{\Res}{Res}
\DeclareMathOperator{\Flag}{Flag}
\DeclareMathOperator{\Aut}{Aut}
\DeclareMathOperator{\ev}{ev}
\DeclareMathOperator{\Weil}{Weil}

\newcommand{\relmiddle}[1]{\mathrel{}\middle#1\mathrel{}}

\newcommand{\lra}{\longrightarrow}
\def\lowsim{\vbox to 0pt{\vss\hbox{$\scriptstyle\sim$}\vskip-1.6pt}}

\newcommand{\supth}[1]{\ensuremath{#1^{\mathrm{th}}}}


\EpigaVolumeYear{10}{2026} \EpigaArticleNr{8} \ReceivedOn{June 13, 2025}
\InFinalFormOn{November 18, 2025}
\AcceptedOn{December 15, 2025}

\title{Weights of mod $\boldsymbol{p}$ automorphic forms\\ and partial Hasse invariants}
\titlemark{Weights of mod $\boldsymbol{p}$ automorphic forms and partial Hasse invariants}

\author{Naoki Imai}
\address{Graduate School of Mathematical Sciences, The University of Tokyo, 
  3-8-1 Komaba, Meguro-ku, Tokyo, 153-8914, Japan}
  \email{naoki@ms.u-tokyo.ac.jp}

\author{Jean-Stefan Koskivirta}
\address{Laboratoire de Math\'ematiques Nicolas Oresme, 
Universit\'e de Caen Normandie, 
Boulevard mar\'echal Juin, 14032 Caen, France}
\email{jean-stefan.koskivirta@unicaen.fr}

\authormark{N.~Imai and J.-S.~Koskivirta}

\AbstractInEnglish{For a connected, reductive group $G$ over a finite field endowed with a cocharacter $\mu$, we define the zip cone of $(G,\mu)$ as the cone of all possible weights of mod $p$ automorphic forms on the stack of $G$-zips. This cone is conjectured to coincide with the cone of weights of characteristic $p$ automorphic forms for Hodge-type Shimura varieties of good reduction. We prove in full generality that the cone of weights of characteristic $0$ automorphic forms is contained in the zip cone, which gives further evidence to this conjecture. Furthermore, we determine exactly when the zip cone is generated by the weights of partial Hasse invariants, which is a group-theoretic generalization of a result of Diamond--Kassaei and Goldring--Koskivirta.}

\MSCclass{14G35, 11F55, 14D23, 20G40}

\KeyWords{Shimura varieties, mod $p$ automorphic forms, partial Hasse invariants, stack of $G$-zips}


\acknowledgement{This work was supported by JSPS KAKENHI Grant Numbers 21K13765 and 22H00093. 
W.\,G.\ thanks the Knut \& Alice Wallenberg Foundation for its support under Wallenberg Academy Fellow grants KAW 2019.0256, KAW 2024.0233 and grants KAW 2018.0356, KAW 2022.0308. W.\,G.\ also thanks the Swedish Research Council for its support under grants \"AR-NT-2020-04924, \"AR-NT-2024-05526.
}


\contribution{With an appendix by Wushi Goldring}

\begin{document}



\maketitle

\begin{prelims}

\DisplayAbstractInEnglish

\bigskip

\DisplayKeyWords

\medskip

\DisplayMSCclass

\end{prelims}


\newpage

\setcounter{tocdepth}{1}

\tableofcontents


\section{Introduction}

This paper is aimed at understanding automorphic forms in characteristic $p$. They are sections of certain automorphic vector bundles over Shimura varieties. The second-named author and W.~Goldring have illustrated in several papers (\textit{e.g.}, \cite{Goldring-Koskivirta-Strata-Hasse,Goldring-Koskivirta-global-sections-compositio}) that Shimura varieties share many geometric properties with the stack of $G$-zips of Moonen--Wedhorn and Pink--Wedhorn--Ziegler; see \cite{Moonen-Wedhorn-Discrete-Invariants, Pink-Wedhorn-Ziegler-zip-data}. In this paper, we study various cones generated by weights of some classes of automorphic forms coming from this stack.

Let $(\mathbf{G},\mathbf{X})$ be a Shimura datum and $\Sh_K(\mathbf{G},\mathbf{X})$ the corresponding Shimura variety with level $K$ over a number field $\mathbf{E}$ (the reflex field). Let $\mu\colon \GG_{\mathrm{m},\CC}\to \mathbf{G}_{\CC}$ be a cocharacter attached to $\mathbf{X}$, and $\mathbf{L}\subset \mathbf{G}_\CC$ the Levi subgroup centralizing $\mu$. Choose a Borel pair $(\mathbf{B},\mathbf{T})$ such that $\mathbf{B}$ is contained in the parabolic $\mathbf{P}$ with Levi~$\mathbf{L}$ defined by $\mu$. Write $\Phi$ for the set of $\mathbf{T}$-roots and $\Phi^+$ for the positive roots (with respect to the opposite Borel $\mathbf{B}^+$). Denote by $\Delta$ the set of simple roots, and let $I\colonequals \Delta_{\mathbf{L}}$ be the simple roots of $\mathbf{L}$. To any $\mathbf{L}$-dominant character $\lambda\in X^*(\mathbf{T})$, we can attach a vector bundle $\Vcal_I(\lambda)$ (called automorphic vector bundle) on $\Sh_K(\mathbf{G},\mathbf{X})$, modeled on the $\mathbf{L}$-representation $\mathbf{V}_I(\lambda)\colonequals \Ind_{\mathbf{B}}^{\mathbf{P}}(\lambda)$ induced from $\lambda$.
When $(\mathbf{G},\mathbf{X})$ is of Hodge type and $p$ is a prime of good reduction, we have an integral model $\Sscr_K$ over $\Ocal_{\mathbf{E}_\pfr}$ (where $\pfr\mid p$) by works of Kisin and Vasiu. Furthermore, $\Vcal_I(\lambda)$ extends to a vector bundle over $\Sscr_K$ (\cf Section~\ref{subsec-Shimura} for the case of abelian type). In this paper, we are interested in the question: For which $\lambda\in X^*(T)$ does $\Vcal_I(\lambda)$ admit nonzero global sections?

Set $S_K\colonequals \Sscr_K\otimes_{\Ocal_{\mathbf{E}_\pfr}}\overline{\FF}_p$. When $F=\CC$ (resp.~$F=\overline{\FF}_p$), denote by $C_K(F)$ the cone of $\lambda\in X^*(\mathbf{T})$ such that $\Vcal_I(\lambda)$ admits nonzero sections on $\Sh_K(\mathbf{G},\mathbf{X}) \otimes_{\mathbf{E}} \CC$ (resp.~$S_K$). 
For a cone $C\subset X^*(\mathbf{T})$, define the saturation (or saturated cone) of $C$ as the set of $\lambda\in X^*(\mathbf{T})$ such that some positive multiple of $\lambda$ lies in $C$. We always denote the saturation with a calligraphic letter $\Ccal$. For example, write $\Ccal_K(F)$ for the saturation of $C_K(F)$. The set $C_K(F)$ depends on the level $K$, but one can show that the saturated cone $\Ccal_K(F)$ does not (see \cite[Corollary 1.5.3]{Koskivirta-automforms-GZip}). Therefore, we may denote it simply by $\Ccal(F)$.

We first consider the case $F=\CC$. Griffiths--Schmid introduced in \cite{Griffiths-Schmid-homogeneous-complex-manifolds} the set 
\begin{equation}
\Ccal_{\GSsf}=\left\{ \lambda\in X^{*}(\mathbf{T}) \ \relmiddle| \ 
\parbox{6cm}{
$\langle \lambda, \alpha^\vee \rangle \geq 0 \ \textrm{ for }\alpha\in I, \\
\langle \lambda, \alpha^\vee \rangle \leq 0 \ \textrm{ for }\alpha\in \Phi^+ \setminus \Phi_{\mathbf{L}}^{+}$}
\right\}.
\end{equation}
The following conjecture is expected, but we could not find a reference for it.

\begin{conjIntro}\label{conj1}
    One has $\Ccal(\CC)=\Ccal_{\GSsf}$.
\end{conjIntro}

The inclusion $\Ccal(\CC) \subset \Ccal_{\GSsf}$ is proved for general Hodge-type Shimura varieties in \cite[Theorem 2.6.4]{Goldring-Koskivirta-GS-cone}. The opposite inclusion should follow by studying the Lie algebra cohomology appearing in the cohomology of Shimura varieties.

Regarding $\Ccal(\overline{\FF}_p)$, very little is known. Diamond--Kassaei \cite{Diamond-Kassaei-comp-minimal,Diamond-Kassaei-cone-minimal} and Goldring--Koskivirta \cite{Goldring-Koskivirta-global-sections-compositio} have shown in the case of Hilbert--Blumenthal Shimura varieties that $\Ccal(\overline{\FF}_p)$ equals $\Ccal_{\pha}$, the cone generated by the weights of partial Hasse invariants on $S_K$. One goal of this paper is to discuss possible generalizations of this result to other cases. For general groups, we seek a description or an approximation of the cone $\Ccal(\overline{\FF}_p)$. Our approach uses the stack of $G$-zips of Moonen--Wedhorn and Pink--Wedhorn--Ziegler. Let $G$ be a reductive group over a finite field $\FF_q$ and $\mu\colon \GG_{\mathrm{m},k}\to G_k$ a cocharacter over $k=\overline{\FF}_q$ (in the context of Shimura varieties, we always take $q=p$). The stack of $G$-zips of type $\mu$ is denoted by $\GZip^\mu$. After possibly conjugating $\mu$, we may choose a Borel pair $(B,T)$ over $\FF_q$ such that $B$ is contained in the parabolic subgroup~$P$ defined by $\mu$ (see Section~\ref{sec-Gzips}). 
Write $L\subset G_k$ for the centralizer of $\mu$, and define $I\colonequals \Delta_L$. The vector bundles $\Vcal_I(\lambda)$ for $\lambda\in X^*(T)$ can also be defined on $\GZip^\mu$. We attach to $(G,\mu)$ a cone $C_{\zipsf}\subset X^*(T)$, defined as the set of $\lambda$ such that $\Vcal_I(\lambda)$ admits nonzero sections on $\GZip^\mu$. It is a group-theoretic version of $C_K(\overline{\FF}_p)$ and can be interpreted in terms of representation theory of reductive groups (see Section~\ref{subsec-global-sections}). When $(G,\mu)$ arises by reduction from an abelian-type Shimura datum, there is a natural smooth map $\zeta\colon S_K\to \GZip^\mu$ by \cite{Zhang-EO-Hodge} and \cite{imai-kato-youcis-prim-real}, which is known to be surjective. The map $\zeta$ induces, by pullback of sections, inclusions $C_{\zipsf}\subset C_K(\overline{\FF}_p)$ and $\Ccal_{\zipsf}\subset \Ccal(\overline{\FF}_p)$. Goldring and the second-named author have conjectured the following.  

\begin{conjIntro}[\textit{cf.} {\cite[Conjecture 2.1.6]{Goldring-Koskivirta-global-sections-compositio}}]\label{conj2}
One has $\Ccal(\overline{\FF}_p) = \Ccal_{\zipsf}$.
\end{conjIntro}

In the case of Hilbert--Blumenthal Shimura varieties, one has $\Ccal_{\zipsf} = \Ccal_{\pha}$; hence Conjecture~\ref{conj1} is compatible with the result of Diamond--Kassaei mentioned above. Aside from this case, Goldring and the second-named author showed this conjecture for Picard modular surfaces at a split prime and Siegel threefolds (see \cite[Theorem D]{Goldring-Koskivirta-global-sections-compositio}). They also treat the case of Siegel modular varieties attached to $\GSp(6)$ and unitary Shimura varieties of signature $(r,s)$ with $r+s\leq 4$ at split or inert primes (with the exception of $r=s=2$ and $p$ inert) in the paper \cite{Goldring-Koskivirta-divisibility}. 

We now describe our results more precisely. In \cite{Goldring-Koskivirta-Strata-Hasse}, the stack of $G$-zip flags, denoted by $\GF^\mu$, is defined; it is a group-theoretic analogue of the flag space of Ekedahl--van der Geer \cite{Ekedahl-Geer-EO}. There is a natural projection 
$\pi\colon \GF^\mu\to \GZip^\mu$ 
whose fibers are flag varieties isomorphic to $P/B$. The stack $\GF^\mu$ carries a family of line bundles $\Vcal_{\flag}(\lambda)$ for $\lambda\in X^*(T)$ such that $\pi_*(\Vcal_{\flag}(\lambda))=\Vcal_I(\lambda)$. In particular, we can identify $H^0(\GZip^\mu,\Vcal_I(\lambda))$ and $H^0(\GF^\mu,\Vcal_{\flag}(\lambda))$. Moreover, $\GF^\mu$ admits a stratification $(\Fcal_w)_{w\in W}$ analogous to the Bruhat decomposition, where $W=W(G,T)$ is the Weyl group of $G$. By \cite{Imai-Koskivirta-partial-Hasse}, there exists a family of partial Hasse invariants $\{h_\alpha\}_{\alpha \in \Delta}$ (where $\Delta$ is the set of simple roots). Specifically, $h_\alpha$ is a section of $\Vcal_{\flag}(\lambda_\alpha)$ (for some $\lambda_\alpha\in X^*(T)$) whose vanishing locus is the closure of a single codimension~$1$ stratum in $\GF^\mu$ (and each such stratum is cut out by exactly one of the~$h_{\alpha}$). The cone generated by the $(\lambda_\alpha)_{\alpha\in \Delta}$ is called the partial Hasse invariant cone $C_{\pha}$ (see Definition~\ref{definition-CHasse}). One has by construction $C_{\pha}\subset C_{\zipsf}$. As an analogue of \cite[Corollary 8.3]{Diamond-Kassaei-cone-minimal}, we ask whether $\Ccal_{\pha} = \Ccal_{\zipsf}$ holds in general. 
Let $w_{0,L}$ be the longest element in the Weyl group $W_L=W(L,T)$. Let $\sigma$ denote the action of Frobenius on the based root datum of $(G,B,T)$. By our assumption, the condition that $L$ (or $P$) is defined over $\FF_q$ is equivalent to $\sigma(I)=I$. We show the following. 

\begin{thmIntro}[Theorem~\ref{main-thm-Hasse-type}]\label{main-thm-Hasse-type-intro}
The following are equivalent:
\begin{equivlist}
\item\label{mtHti-1} One has $\Ccal_{\pha} = \Ccal_{\zipsf}$.
\item\label{mtHti-2} One has $\Ccal_{\GSsf}\subset \Ccal_{\pha}$.
\item
\label{item-root-data-main-thm}
The group $L$ is defined over $\FF_q$, and $\sigma$ acts on $\Delta_L$ by $-w_{0,L}$.
\end{equivlist}
\end{thmIntro}

We point out to the reader that the above result holds for an arbitrary pair $(G,\mu)$ (not merely those attached to Shimura varieties). Pairs $(G,\mu)$ with satisfying condition~\eqref{item-root-data-main-thm} are called of Hasse type. For a Shimura variety $S_K$ as above, we always have $\Ccal_{\pha}\subset \Ccal_{\zipsf}\subset \Ccal(\overline{\FF}_p)$. We deduce that a necessary condition for $\Ccal(\overline{\FF}_p)$ to be generated by partial Hasse invariants is that $(G,\mu)$ is of Hasse type. A classification of Hasse-type cases is given in the appendix by Wushi Goldring. For example, orthogonal Shimura varieties give rise to pairs $(G,\mu)$ of Hasse type (see Section~\ref{subsec-orthogonal}). Condition~\eqref{mtHti-2} has also an interpretation for Shimura varieties. One can show in general that $C_K(\CC)  \subset C_K(\overline{\FF}_p)$ (see \cite[Proposition 1.8.3]{Koskivirta-automforms-GZip}) and hence $\Ccal(\CC)\subset \Ccal(\overline{\FF}_p)$. Since it is expected that $\Ccal(\CC) = \Ccal_{\GSsf}$, condition~\eqref{mtHti-2} is necessary for $\Ccal_{\pha} = \Ccal(\overline{\FF}_p)$ to hold. From Conjectures~\ref{conj1} and~\ref{conj2}, we expect that the containment $\Ccal_{\GSsf}\subset \Ccal_{\zipsf}$ should hold in general, which is now a purely group-theoretic statement. We confirm this expectation. 

\begin{thmIntro}[Theorem~\ref{thmGSzip}]\label{thm2}
For general $(G,\mu)$, we have $\Ccal_{\GSsf}\subset \Ccal_{\zipsf}$.
\end{thmIntro}

This theorem gives further evidence for Conjecture~\ref{conj2}. In \cite[Corollary 3.5.6]{Koskivirta-automforms-GZip}, Theorem 2 was proved only when $P$ is defined over $\FF_q$. We now explain the proof of Theorem 2. The proof uses a general technique that makes it possible to reduce questions pertaining to $\Ccal_{\zipsf}$ to the case of a split group. In the split case, Theorem 2 is already known by \cite[Corollary 3.5.6]{Koskivirta-automforms-GZip}. We explain how we can reduce to the case of a split group. Denote by $L_0\subset L$ the largest algebraic subgroup defined over $\FF_q$. It is a Levi subgroup of $L$ containing $T$. There is a cocharacter $\mu_0$ with centralizer $L_0$, and we consider the pair $(G_{\FF_{q^r}},\mu_0)$, where $r\geq 1$ is such that $G_{\FF_{q^r}}$ is split. Denote by $C_{\zipsf}(G_{\FF_{q^r}},\mu_0)$ the zip cone of $(G_{\FF_{q^r}},\mu_0)$ and by $\Ccal_{\zipsf}(G_{\FF_{q^r}},\mu_0)$ its saturation. Let $w_{0,L}$ and $w_{0,L_0}$ be the longest elements in the Weyl groups of $L$ and $L_0$, respectively. Write $X_{+,L}^*(T)$ for the set of $L$-dominant characters. We show the following.

\begin{thmIntro}[Theorem~\ref{thm-GFqr-cone}]\label{thm-GFqr-cone-intro}
We have
\begin{equation}
  X_{+,L}^*(T) \cap \left(w_{0,L} w_{0,L_0} \Ccal_{\zipsf}\left(G_{\FF_{q^r}},\mu_0\right) \right) \subset \Ccal_{\zipsf}.
\end{equation}
\end{thmIntro}

This theorem is useful in general to reduce questions on $\Ccal_{\zipsf}$ to the case of a split group, as explained in Remark~\ref{rmk-reduce-split}. In particular, Theorem~\ref{thm-GFqr-cone-intro} reduces Theorem~\ref{thm2} to the case of a split group, for which it is already known. The proof of Theorem~\ref{thm-GFqr-cone-intro} relies on a closer study of the case when $G$ is a Weil restriction (see Section~\ref{sec-Weil}).

Our final result is the construction of natural mod $p$ automorphic forms attached to the highest-weight vectors of the representations $V_I(\lambda)$. Let $\lambda$ be an $L$-dominant character, and let $f_\lambda\in V_I(\lambda)$ denote the highest-weight vector of $V_I(\lambda)$. There is a natural way of defining the norm $\mathbf{f}_\lambda \colonequals \Norm_{L_\varphi}(f_\lambda)$ of $f_{\lambda}$. Here $L_\varphi$ is a certain finite (generally nonsmooth) subgroup of $L$ containing $L_0(\FF_q)$. There is an integer $m\geq 0$, determined by $L_\varphi$, such that the norm $\Norm_{L_\varphi}(f_\lambda)$ is a section of $\Vcal_I(d\lambda)$ (where $d=q^m|L_0(\FF_q)|$) over the $\mu$-ordinary locus $\Ucal_\mu$ of $\GZip^\mu$ (see Section~\ref{subsec-norm} for details). For $\alpha\in \Delta$, let $r_\alpha$ be the smallest integer $r\geq 1$ such that $\sigma^r(\alpha)=\alpha$.

\begin{thmIntro}[Proposition~\ref{prop-Norm}]\label{thm4}
The section $\mathbf{f}_\lambda$ extends to $\GZip^\mu$ if and only if for all $\alpha \in \Delta\setminus \Delta_L$ one has
\begin{equation}\label{formula-norm-intro}
\sum_{w\in W_{L_0}(\FF_q)} \sum_{i=0}^{r_\alpha-1} q^{i+\ell(w)} \ \left\langle w\lambda, \sigma^i\left(\alpha^\vee\right) \right\rangle\leq 0. \tag{1}
\end{equation}
\end{thmIntro}

Let $\Ccal_{\hwsf}$ be the set of $L$-dominant characters $\lambda$ satisfying the above inequality \eqref{formula-norm-intro}. Theorem~\ref{thm4} shows that $\Ccal_{\hwsf}\subset \Ccal_{\zipsf}$, which provides another natural subcone of $\Ccal_{\zipsf}$. We obtain a family of interesting automorphic forms $(\mathbf{f}_\lambda)_{\lambda\in \Ccal_{\hwsf}}$ in characteristic $p$ of weight $d\lambda$ (by pullback via $\zeta$). 
There is also an analogue of Theorem~\ref{thm4} for the lowest-weight vector (see Section~\ref{sec-low}), and we define the lowest-weight cone $\Ccal_{\lwsf}$ similarly. When $P$ is defined over $\FF_q$, one has $\Ccal_{\lwsf}=\Ccal_{\hwsf}$, but in general one only has  $\Ccal_{\hwsf}\subset \Ccal_{\lwsf}$. 

The motivation for introducing the family $(\mathbf{f}_\lambda)_{\lambda}$ is the following. As mentioned above, Diamond--Kassaei showed in \cite{Diamond-Kassaei-comp-minimal} that the weight of any Hilbert modular form in characteristic $p$ is spanned by the weights of partial Hasse invariants. This is also true for the Siegel-type Shimura variety $\Acal_2$, but it fails for $\Acal_n$ when $n\geq 3$. In the case $n=3$, Goldring and the second-named author showed that the weight of any automorphic form for $\Acal_3$ is spanned by the weights of partial Hasse invariants and of the forms $(\mathbf{f}_{\lambda})_{\lambda\in \Ccal_{\hwsf}}$. Therefore, these forms seem to have some significance for more general groups. Moreover, the vanishing locus of $\mathbf{f}_{\lambda}$ is an interesting subvariety stable by Hecke operators, which we plan to investigate in future papers.

We briefly explain the content of each section. In Section~\ref{sec2}, we review the stack of $G$-zips, vector bundles thereon and the connection with Shimura varieties. Section~\ref{sec-zip-cone} is dedicated to the study of the cone $C_{\zip}$, called the zip cone. We explain the motivation for introducing this set. We define several related subcones that arise naturally. We define automorphic forms on $\GZip^\mu$ attached to highest-weight vectors. In Section~\ref{sec4}, we consider pairs $(G,\mu)$ of Hasse type, and we give a complete characterization in terms of $C_{\zip}$. In Section~\ref{sec5}, similarly to the case of highest-weight vectors, we show that the lowest-weight vectors naturally give rise  to certain automorphic forms on $\GZip^\mu$. In Section~\ref{sec-Weil}, we study pairs $(G,\mu)$, where $G$ is the Weil restriction of a reductive group defined over an extension. This machinery makes it possible to reduce several questions to the case of a split group. Using this, we can check in full generality the expectation that $\Ccal_{\GS}\subset \Ccal_{\zip}$. Finally, in the last section, we illustrate the results in the case of a unitary group $U(2,1)$ and for odd orthogonal groups. In the appendix by Wushi Goldring, we give an exhaustive classification of pairs $(G,\mu)$ of Hasse type.

\subsection*{Acknowledgments}
We thank the anonymous referee for useful comments on our manuscript.

\section{Preliminaries and background on the stack of \texorpdfstring{$\boldsymbol{G}$}{G}-zips}\label{sec2}

\subsection{Notation}\label{subsec-notation}

Throughout the paper, $p$ is a prime number, $q$ is a power of $p$, and $\FF_q$ is a finite field with $q$ elements. We write $k=\overline{\FF}_q$ for an algebraic closure of $\FF_q$. The notation $G$ will always denote a connected reductive group over $\FF_q$. For a $k$-scheme $X$, we denote by $X^{(q)}$ its $\supth{q}$ power Frobenius twist and by $\varphi \colon X\to X^{(q)}$ its relative Frobenius morphism. Write $\sigma \in \Gal(k/\FF_q)$ for the $q$-power Frobenius. We will always write $(B,T)$ for a Borel pair of $G$; \textit{i.e.}, $T \subset B \subset G$ are 
a maximal torus and a Borel subgroup in $G$. We do not assume that $T$ is split over $\FF_q$. Let $B^+$ be the Borel subgroup of $G$  opposite to $B$ with respect to $T$ (\textit{i.e.}, the unique Borel subgroup $B^+$ of $G$ such that $B^+\cap B=T$). We will use the following notation: 
\begin{itemize}
\item As usual, $X^*(T)$ (resp.~$X_*(T)$) denotes the group of characters (resp.~cocharacters) of $T$. The group $\Gal(k/\FF_q)$ acts naturally on these groups. Let $W=W(G_k,T)$ be the Weyl group of $G_k$. Similarly, $\Gal(k/\FF_q)$ acts on $W$. Furthermore, the actions of $\Gal(k/\FF_q)$ and $W$ on $X^*(T)$ and $X_*(T)$ are compatible in a natural sense. We write $W(\FF_q)$ for the $\Gal(k/\FF_q)$-fixed subgroup of $W$. 
\item We write $\Phi\subset X^*(T)$ for the set of $T$-roots of $G$.
\item We write $\Phi^+\subset \Phi$ for the system of positive roots with respect to $B^+$ (\textit{i.e.}, $\alpha \in \Phi^+$ when the $\alpha$-root group $U_{\alpha}$ is contained in $B^+$). This convention may differ from that of other authors. We use it to match the conventions of previous publications \cite{Goldring-Koskivirta-Strata-Hasse,Koskivirta-automforms-GZip}.
\item We write $\Delta\subset \Phi^+$ for the set of simple roots. 
\item For $\alpha \in \Phi$, let $s_\alpha \in W$ be the corresponding reflection. The system $(W,\{s_\alpha \mid \alpha \in \Delta\})$ is a Coxeter system. 
We write $\ell  \colon W\to \NN$ for the length function. Hence $\ell(s_\alpha)=1$ for all $\alpha\in \Delta$. Let $w_0$ denote the longest element of $W$.
\item For a subset $K\subset \Delta$, let $W_K$ denote the subgroup of $W$ generated by $\{s_\alpha \mid \alpha \in K\}$. Write $w_{0,K}$ for the longest element in $W_K$.
\item Let ${}^KW$ (resp.~$W^K$) denote the subset of elements $w\in W$ that have minimal length in the coset $W_K w$ (resp.~$wW_K$). Then ${}^K W$ (resp.\ $W^K$) is a set of representatives of $W_K\backslash W$ (resp.\ $W/W_K$). The map $g\mapsto g^{-1}$ induces a bijection ${}^K W\to W^K$. The longest element in the set ${}^K W$ is $w_{0,K} w_0$.
\item Let $X_{+}^*(T)$ denote the set of dominant characters, \textit{i.e.}, characters $\lambda\in X^*(T)$ such that $\langle \lambda,\alpha^\vee \rangle \geq 0$ for all $\alpha \in \Delta$.
\item For a subset $I\subset \Delta$, let $X_{+,I}^*(T)$ denote the set of characters $\lambda\in X^*(T)$ such that $\langle \lambda,\alpha^\vee \rangle \geq 0$ for all $\alpha \in I$. We call them $I$-dominant characters.
\item Let $P\subset G_k$ be a parabolic subgroup containing $B$, and let $L\subset P$ be the unique Levi subgroup of $P$ containing $T$. Then we define a subset $I_P\subset \Delta$ as the unique subset such that $W(L,T)=W_{I_P}$. For an arbitrary parabolic subgroup $P\subset G_k$ containing $T$, we define $I_P\subset \Delta$ as $I_P \colonequals I_{P'}$, where $P'$ is the unique conjugate of $P$ containing $B$.
\item For a parabolic $P\subset G_k$, write $\Delta^P \colonequals \Delta \setminus I_P$.
\item For all $\alpha\in \Phi$, choose an isomorphism $u_\alpha\colon \GG_{\mathrm{a}}\to U_\alpha$ so that 
  $(u_{\alpha})_{\alpha \in \Phi}$ is a realization in the sense of \cite[Section~8.1.4]{Springer-Linear-Algebraic-Groups-book}.
  In particular, we have 
\begin{equation}\label{eq:phiconj}
 t u_{\alpha}(x)t^{-1}=u_{\alpha}(\alpha(t)x), \quad \forall x\in \GG_{\mathrm{a}},\  \forall t\in T.
\end{equation}
\item Let $\phi_{\alpha}\colon \SL_2\to G$ denote the map attached to $\alpha$ as in \cite[Proof of Lemma~9.2.2]{Springer-Linear-Algebraic-Groups-book}.
  It satisfies
\[
 \phi_\alpha 
 \left( \begin{pmatrix}
 1 & x \\ 0 & 1 
 \end{pmatrix}\right) = u_{\alpha}(x), \quad 
 \phi_\alpha 
 \left( \begin{pmatrix}
 1 & 0 \\ x & 1 
 \end{pmatrix}\right) = u_{-\alpha}(x).
\]
\item Fix a $B$-representation $(V,\rho)$. For $j\in \ZZ$ and $\alpha\in \Phi$, we define a map $E_{\alpha}^{(j)} \colon V \to V$ as follows. Let $V=\bigoplus_{\nu \in X^*(T)}V_\nu$ be the weight decomposition of $V$. For $v\in V_\nu$, we can write uniquely
\[
 u_{\alpha}(x)v=\sum_{j \geq 0} x^j E_{\alpha}^{(j)}(v), \quad \forall x\in \GG_{\mathrm{a}},
\]
for elements $E_{\alpha}^{(j)}(v) \in V_{\nu+j\alpha}$ (see \cite[Lemma 3.3.1]{Imai-Koskivirta-vector-bundles}). Extend $E_{\alpha}^{(j)}$ by additivity to a map $V\to V$. For $j<0$, put $E_{\alpha}^{(j)}=0$. 
\end{itemize}

\subsection{The stack of $\boldsymbol{G}$-zips}\label{sec-Gzips}

We recall some facts about the stack of $G$-zips of Pink--Wedhorn--Ziegler in \cite{Pink-Wedhorn-Ziegler-zip-data}.

\subsubsection{Definitions} \label{subsec-zipdatum}
Let $G$ be a connected reductive group over $\FF_q$. In this paper, a zip datum is a tuple $\Zcal \colonequals (G,P,L,Q,M)$ consisting of the following objects:
\begin{equivlist}
    \item $P\subset G_k$ and $Q\subset G_k$ are parabolic subgroups of $G_k$.
    \item $L\subset P$ and $M\subset Q$ are Levi subgroups such that $L^{(q)}=M$. 
\end{equivlist}
For an algebraic group $H$, denote by $R_{\mathrm{u}}(H)$ the unipotent radical of $H$. If $P'\subset G_k$ is a parabolic subgroup with Levi subgroup $L'\subset P'$, any $x\in P'$ can be written uniquely as $x=\overline{x}u$ with $\overline{x}\in L'$ and $u\in R_{\mathrm{u}}(P')$. We denote by $\theta^{P'}_{L'} \colon P'\to L'$ the map $x\mapsto \overline{x}$. Since $M=L^{(q)}$, we have a Frobenius isogeny $\varphi \colon L\to M$. Put
\begin{equation}\label{zipgroup}
E \colonequals \left\{(x,y)\in P\times Q \relmiddle|  \varphi\left(\theta^P_L(x)\right)=\theta^Q_M(y)\right\}.
\end{equation}
Equivalently, $E$ is the subgroup of $P\times Q$ generated by $R_{\mathrm{u}}(P)\times R_{\mathrm{u}}(Q)$ and elements of the form $(a,\varphi(a))$ with $a\in L$. Let $G\times G$ act on $G$ by $(a,b)\cdot g \colonequals agb^{-1}$, and let $E$ act on $G$ by restricting this action to $E$. The stack of $G$-zips of type $\Zcal$ (see \cite{Pink-Wedhorn-Ziegler-zip-data,Pink-Wedhorn-Ziegler-F-Zips-additional-structure}) can be defined as the quotient stack
\[
\GZip^\Zcal = \left[E\backslash G_k \right].
\]

\subsubsection{Cocharacter datum} \label{subsec-cochar}
A \emph{cocharacter datum} is a pair $(G,\mu)$, where $G$ is a reductive connected group over $\FF_q$ and $\mu \colon \GG_{\mathrm{m},k}\to G_k$ is a cocharacter. One can attach to $(G,\mu)$ a zip datum $\Zcal_\mu$, defined as follows. First, denote by $P_+(\mu)$ (resp.\ $P_-(\mu)$) the unique parabolic subgroup of $G_k$ such that $P_+(\mu)(k)$ (resp.\ $P_-(\mu)(k)$) consists of the elements $g\in G(k)$ satisfying that the map 
\[
\GG_{\mathrm{m},k} \lra G_{k},\quad  t\longmapsto\mu(t)g\mu(t)^{-1} \quad \left(\textrm{resp.\ } t\longmapsto\mu(t)^{-1}g\mu(t)\right)
\]
extends to a morphism of varieties $\AA_{k}^1\to G_{k}$. We obtain a pair of parabolics $(P_+(\mu),P_{-}(\mu))$ in $G_k$ whose intersection $P_+(\mu)\cap P_-(\mu)=L(\mu)$ is the centralizer of $\mu$ (it is a common Levi subgroup of $P_+(\mu)$ and $P_-(\mu)$). Set $P \colonequals P_-(\mu)$, $Q \colonequals (P_+(\mu))^{(q)}$, $L \colonequals L(\mu)$ and $M \colonequals  (L(\mu))^{(q)}$. The tuple $\Zcal_\mu \colonequals (G,P,L,Q,M)$ is a zip datum, which we call the zip datum attached to the cocharacter datum $(G,\mu)$. We write simply $\GZip^\mu$ for $\GZip^{\Zcal_\mu}$. We always consider zip data of this form.

\begin{rmk}\label{rmk-opposite-cochar}
A general zip datum $(G,P,L,Q,M)$ is of the form $\Zcal_\mu$ for a cocharacter $\mu \colon \GG_{\mathrm{m},k}\to G_k$ if and only if $\sigma(P)$ and $Q$ are opposite parabolic subgroups with common Levi $M=\sigma(L)$.
\end{rmk}

\begin{rmk}\label{rmkmuFp}
If $\mu$ is defined over $\FF_q$, then so are $P$ and $Q$. In this case, we have $L=M$, and $P$ and $Q$ are opposite parabolic subgroups with common Levi subgroup $L$.
\end{rmk}

\subsubsection{Frames} \label{sec-frames}
Let $\Zcal=(G,P,Q,L,M)$ be a zip datum. In this paper, a frame for $\Zcal$ is a triple $(B,T,z)$, where $(B,T)$ is a Borel pair of $G_k$ defined over $\FF_q$ such that 
\begin{equivlist}
    \item one has the inclusion $B\subset P$; 
    \item $z\in W$ is an element satisfying the conditions
\begin{equation}\label{eqBorel}
{}^z \! B \subset Q \quad \textrm{and} \quad
 B\cap M= {}^z \! B\cap M. 
\end{equation}
\end{equivlist}
We put $B_M\colonequals B\cap M$. Other papers (see \cite{Pink-Wedhorn-Ziegler-zip-data,Pink-Wedhorn-Ziegler-F-Zips-additional-structure, Koskivirta-Wedhorn-Hasse}) use the convention $B\subset Q$ instead of $B\subset P$. A frame (as defined here) may not always exist. However, if $(G,\mu)$ is a cocharacter datum and $\Zcal_\mu$ is the associated zip datum by Section~\ref{subsec-cochar}, then there exists a $G(k)$-conjugate $\mu'=\ad(g)\circ \mu$ (with $g\in G(k)$) such that $\Zcal_{\mu'}$ admits a frame, by Lemma~\ref{equ-IJDeltaP} below. Hence, it is harmless to assume that a frame exists, and we only consider zip data that admit frames. With respect to the Borel pair $(B,T)$, we define subsets $I,J,\Delta^P $ of~$\Delta$ as follows:
\begin{equation}\label{equ-IJDeltaP}
 I \colonequals I_P, \quad J \colonequals I_Q, \quad \Delta^P=\Delta\setminus I.  
\end{equation}

\begin{lemma}[\textit{cf.} {\cite[Lemma 2.3.4]{Goldring-Koskivirta-zip-flags}}]\label{lem-framemu}
Let $\mu \colon \GG_{\mathrm{m},k}\to G_k$ be a cocharacter, and let $\Zcal_\mu$ be the attached zip datum. Assume that $(B,T)$ is a Borel pair defined over $\FF_q$ such that $B\subset P$. Define the element
\begin{equation}\label{z-def}
z \colonequals w_0 w_{0,J}=\sigma(w_{0,I})w_0.
\end{equation}
Then $(B,T,z)$ is a frame for $\Zcal_\mu$.
\end{lemma}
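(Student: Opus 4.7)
The plan is to verify the two frame axioms (the first, $B \subset P$, is given) together with the stated equality of Weyl group elements. My strategy is to first identify $J = I_Q$ root-theoretically, then to derive both the algebraic identity $w_0 w_{0,J} = \sigma(w_{0,I}) w_0$ and the frame conditions as consequences of this identification.

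The first step is to pin down $J$. Since $B \subset P = P_-(\mu)$, the positive roots $\Phi^+$ (relative to $B^+$) satisfy $\langle \alpha, \mu \rangle \geq 0$, and a direct inspection shows that $P_+(\mu)$ has root set $\Phi^+ \cup (-\Phi^+_I)$. Taking the Frobenius twist and using that $\sigma$ preserves $\Phi^+$, I find that $Q = P_+(\mu)^{(q)}$ has root set $\Phi^+ \cup (-\Phi^+_{\sigma(I)})$, so $Q$ is the opposite $P_{\sigma(I)}^{\mathrm{opp}}$ of the standard parabolic of type $\sigma(I)$. A short computation shows that for any $K \subset \Delta$, the opposite of the standard parabolic $P_K$ is $G$-conjugate via $w_0$ to the standard parabolic $P_{K^*}$ of type $K^* \colonequals -w_0(K)$. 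Applying this with $K = \sigma(I)$ yields $J = I_Q = \sigma(I)^*$, or equivalently $J^* = \sigma(I)$.

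From this the algebraic identity is purely formal. Conjugation by $w_0$ sends $W_K$ to $W_{K^*}$ for any $K \subset \Delta$, hence $w_0 w_{0,K} w_0^{-1} = w_{0, K^*}$. Setting $K = J$ and using $J^* = \sigma(I)$ together with $\sigma(w_{0,I}) = w_{0, \sigma(I)}$ (the Galois action on $W$ permutes simple reflections, since $(B,T)$ is defined over $\FF_q$) gives $w_0 w_{0,J} = w_{0,J^*} w_0 = w_{0,\sigma(I)} w_0 = \sigma(w_{0,I}) w_0$.

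For the frame axioms, I would compute $z(\Phi^+)$ explicitly. The element $w_{0,J}$ negates $\Phi^+_J$ while permuting $\Phi^+ \setminus \Phi^+_J$ within itself; applying $w_0$ afterward gives $z(\Phi^+) = \Phi^+_{J^*} \sqcup \bigl(-(\Phi^+ \setminus \Phi^+_{J^*})\bigr)$. Substituting $J^* = \sigma(I)$, the roots of ${}^zB = zBz^{-1}$ are $-z(\Phi^+) = -\Phi^+_{\sigma(I)} \cup (\Phi^+ \setminus \Phi^+_{\sigma(I)})$, which is visibly contained in $\Phi^+ \cup (-\Phi^+_{\sigma(I)})$, the root set of $Q$. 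Intersecting with $M$, whose root set is $\pm \Phi^+_{\sigma(I)}$, shows that both $B \cap M$ and ${}^zB \cap M$ have root set $-\Phi^+_{\sigma(I)}$, i.e., they coincide with the standard Borel of $M$. The main obstacle is the correct identification $J = \sigma(I)^*$: one must notice that $Q$ is the \emph{opposite} of a standard parabolic (not itself standard), so that the opposition involution enters through the $G$-conjugation bringing $Q$ into standard form. Once this geometric input is isolated, the algebraic identity and the frame conditions fall out by routine root-system bookkeeping.
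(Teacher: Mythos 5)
Your proof is correct. The paper itself gives no proof of this lemma — it cites \cite[Lemma~2.3.4]{Goldring-Koskivirta-zip-flags} — so there is no internal argument to compare against, but the direct root-theoretic verification you carry out is the standard route and all the steps check out. In particular: the identification of the root set of $P_+(\mu)$ as $\Phi^+ \cup (-\Phi^+_I)$ (using that $B \subset P_-(\mu)$ forces $\langle \alpha, \mu \rangle \geq 0$ on $\Phi^+$), the observation that $Q$ is the opposite of the standard parabolic of type $\sigma(I)$ and hence $J = I_Q = \sigma(I)^*$ via the conjugate of $Q$ through $B$, the formal identity $w_0 w_{0,K} w_0^{-1} = w_{0,K^*}$, and the computation $z(\Phi^+) = \Phi^+_{J^*} \sqcup \bigl(-(\Phi^+ \setminus \Phi^+_{J^*})\bigr)$ which gives both ${}^zB \subset Q$ and $B \cap M = {}^zB \cap M$ at once. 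You also correctly isolate the one non-obvious point, namely that $Q$ is not itself standard with respect to $B$ but is standard with respect to $B^+$, so the opposition involution enters when translating $I_Q$ back to a subset of $\Delta$; missing this is the typical way such an argument goes wrong, and you handle it cleanly.
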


\subsubsection{Parametrization of the \texorpdfstring{$\boldsymbol{E}$}{E}-orbits in \texorpdfstring{$\boldsymbol{G}$}{G}}
\label{subsec-zipstrata}
By \cite[Proposition 7.1]{Pink-Wedhorn-Ziegler-zip-data}, there are finitely many $E$-orbits in $G$. The $E$-orbits are smooth and locally closed in $G$, and the Zariski closure of an $E$-orbit is a union of $E$-orbits. We review the parametrization of $E$-orbits following \cite{Pink-Wedhorn-Ziegler-zip-data}. For $w\in W$, fix a representative $\dot{w}\in N_G(T)$ such that $(w_1w_2)^\cdot = \dot{w}_1\dot{w}_2$ whenever $\ell(w_1 w_2)=\ell(w_1)+\ell(w_2)$ (this is possible by choosing a Chevalley system; see \cite[Section~XXIII.6]{SGA3}). For $w\in W$, define $G_w$ as the $E$-orbit of $\dot{w}\dot{z}^{-1}$. If no confusion occurs, we write $w$ instead of $\dot{w}$. For $w,w'\in {}^I W$, write $w'\preccurlyeq w$ if there exists a $w_1\in W_I$ such that $w'\leq w_1 w \sigma(w_1)^{-1}$. This defines a partial order on ${}^I W$ (see \cite[Corollary 6.3]{Pink-Wedhorn-Ziegler-zip-data}).

\begin{theorem}[\textit{cf.} {\cite[Theorems 7.5,~11.2,~11.3 and~11.5]{Pink-Wedhorn-Ziegler-zip-data}}] \label{thm-E-orb-param}
We have two bijections:
\begin{align} \label{orbparam}
{}^I W &\longrightarrow \{ \textrm{$E$-orbits in $G_k$} \}, \quad w\longmapsto G_w,  \\  
\label{dualorbparam} W^J &\longrightarrow \{ \textrm{$E$-orbits in $G_k$}\}, \quad  w\longmapsto G_w. 
\end{align}
For $w\in {}^I W\cup W^J$, one has $\dim(G_w)= \ell(w)+\dim(P)$, and the Zariski closure of $G_w$ is 
\begin{equation}\label{equ-closure-rel}
\overline{G}_w=\bigsqcup_{w'\in {}^IW,\  w'\preccurlyeq w} G_{w'}
\end{equation}
for $w\in {}^I W$ and 
\begin{equation}\label{equ-closure-relJ}
\overline{G}_w=\bigsqcup_{w'\in W^J,\  w'\preccurlyeq w} G_{w'}
\end{equation}
for $w\in W^J$.
\end{theorem}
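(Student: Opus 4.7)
The plan is to follow the approach of Pink--Wedhorn--Ziegler, reducing the classification of $E$-orbits to a Frobenius-twisted analogue of the Bruhat decomposition. As setup, I would observe that $E$ fits into the short exact sequence
\[
1 \longrightarrow R_{\mathrm{u}}(P)\times R_{\mathrm{u}}(Q) \longrightarrow E \xrightarrow{(x,y)\mapsto \theta^P_L(x)} L \longrightarrow 1,
\]
so $\dim(E)=\dim(P)+\dim(R_{\mathrm{u}}(Q))$. In particular, left multiplication by $R_{\mathrm{u}}(P)$ and right multiplication by $R_{\mathrm{u}}(Q)$ are already built into the $E$-action, and $L$ acts by the twisted prescription $\ell \cdot g = \ell\, g\, \varphi(\ell)^{-1}$ modulo these unipotent translations. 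This will be the source of the $\sigma$-twist that appears throughout.

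For the parameterization \eqref{orbparam}, I would begin from the standard Bruhat decomposition $G=\bigsqcup_{v\in W}B\dot{v}B$ and systematically exploit the $E$-action to normalize. Given $g\in G$, write $g\dot{z}=b_1\dot{v}b_2$; the combined $R_{\mathrm{u}}(P)\times R_{\mathrm{u}}(Q)$-action together with the twisted $L$-action (which, thanks to the frame condition $B\cap M = {}^z\! B \cap M$, preserves the Bruhat form) allows us to multiply $v$ on the left by elements of $W_I$. Replacing $v$ by the unique minimal-length representative in $W_I v$, we obtain a representative of the form $\dot{w}\dot{z}^{-1}$ with $w\in {}^IW$. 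Injectivity is then a matter of showing that $\dot{w}\dot{z}^{-1}$ and $\dot{w}'\dot{z}^{-1}$ with $w,w'\in {}^IW$ lie in the same $E$-orbit only if they sit in the same Bruhat cell, which forces $w=w'$. The alternative parameterization \eqref{dualorbparam} is obtained symmetrically by using the projection $E\to Q$ and the decomposition indexed by $W^J$; the two resulting bijections agree up to a length-preserving reindexing coming from the relation $J=\sigma(I)$ imposed by the frame.

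The dimension formula is then an orbit-stabilizer calculation. Writing the condition that $(x,y)\in E$ fixes $\dot{w}\dot{z}^{-1}$ in root-subgroup coordinates via the realization $(u_\alpha)$, one sees that the stabilizer is cut out by $\dim(R_{\mathrm{u}}(P))-\ell(w)$ independent conditions, so that $\dim(\Stab_E(\dot{w}\dot{z}^{-1}))=\dim(R_{\mathrm{u}}(P))-\ell(w)$, giving $\dim(G_w)=\dim(E)-\dim(\Stab_E)=\dim(P)+\ell(w)$.

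The main obstacle is establishing the closure relations \eqref{equ-closure-rel} and \eqref{equ-closure-relJ}. The partial order $\preccurlyeq$ is a $\sigma$-twisted variant of the Bruhat order, and matching it precisely with Zariski closure requires two separate arguments. For the forward direction $G_{w'}\subset \overline{G}_w$ when $w'\preccurlyeq w$, given $w_1\in W_I$ with $w'\leq w_1 w \sigma(w_1)^{-1}$, one constructs a one-parameter family inside $E\cdot \dot{w}\dot{z}^{-1}$ that degenerates to an element of $G_{w'}$, via the standard technique of letting coordinates along appropriate root subgroups tend to infinity and rescaling. The reverse direction---that no orbit $G_{w'}$ with $w'\not\preccurlyeq w$ lies in $\overline{G}_w$---is the most delicate: it proceeds by induction on $\ell(w)$, combining the dimension formula with a careful analysis showing that intersecting $\overline{G}_w$ with each Bruhat cell $B\dot{v}\dot{z}^{-1}B$ can only produce orbits whose ${}^IW$-index is $\preccurlyeq w$, where the $\sigma$-twisted $W_I$-conjugation captures exactly the ambiguity between the Bruhat indexing and the ${}^IW$-indexing of $E$-orbits.
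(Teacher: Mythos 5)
The paper does not prove this theorem: it is quoted verbatim (with citation) from Pink--Wedhorn--Ziegler \cite[Theorems 7.5, 11.2, 11.3, 11.5]{Pink-Wedhorn-Ziegler-zip-data} and no argument is supplied. So there is no ``paper's own proof'' to compare against; the relevant proof lives in \loccitn

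Judged on its own, your sketch is a plausible first pass but departs from the actual Pink--Wedhorn--Ziegler argument in a way that matters. Their proof does not proceed by directly normalizing inside the Bruhat decomposition of $G$; instead they build a recursive framework of ``abstract/algebraic zip data'' and an inductive reduction to a smaller reductive group (roughly, $L$ acting on a twisted version of a Levi quotient), from which the parametrization, the dimension formula, and the closure relation all fall out simultaneously by induction. Your step asserting that injectivity follows because two orbit representatives ``lie in the same $E$-orbit only if they sit in the same Bruhat cell'' is where the direct approach breaks: the $E$-orbit of $\dot w\dot z^{-1}$ is \emph{not} contained in a single $B\times {}^z B$-double coset in general, so one cannot read off the ${}^I W$-index from the Bruhat stratum of the orbit. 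This is precisely the subtlety that the twisted partial order $\preccurlyeq$ (with its $W_I$-conjugation by $w_1\,\cdot\,\sigma(w_1)^{-1}$) encodes, and it is why the closure relations are genuinely harder than in the Bruhat setting. If you wish to give a self-contained proof rather than cite \loccit, you would need to either adopt the recursive zip-data machinery, or supply a substitute for the injectivity and closure steps that accounts for the spread of an $E$-orbit across several Bruhat cells.
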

In particular, there is a unique open $E$-orbit $U_\Zcal\subset G$ corresponding to the longest elements $w_{0,I}w_0\in {}^I W$ via \eqref{orbparam} and to $w_0w_{0,J}\in W^J$ via \eqref{dualorbparam}. The $E$-orbit $U_\Zcal$ is dense in $G$.  If $\Zcal=\Zcal_\mu$ (see Section~\ref{subsec-cochar}), write $U_\mu=U_{\Zcal_\mu}$. In this case, we can choose $z=w_{0}w_{0,J}=\sigma(w_{0,I})w_0$ (see Lemma~\ref{lem-framemu}); hence \eqref{dualorbparam} shows that $1\in U_\mu$.  We put $\Ucal_\mu \colonequals [E\backslash U_\mu]$, which we call the $\mu$-ordinary locus.

\subsection{Vector bundles on the stack of $\boldsymbol{G}$} \label{sec-vector-bundles-gzipz}

\subsubsection{Representation theory}\label{subsec-remind}

For an algebraic group $G$ over a field $K$, denote by $\Rep(G)$ the category of algebraic representations of $G$ on finite-dimensional $K$-vector spaces. We denote a representation $\rho\colon G\to \GL_K(V)$ by $(V,\rho)$, or sometimes simply $\rho$ or $V$. For an algebraic group $G$ over $\FF_q$, a $G_k$-representation $(V,\rho)$ and an integer $m$, we denote by $(V^{[m]},\rho^{[m]})$ the representation such that $V^{[m]}=V$ and
\begin{equation}\label{equ-rhom}
  \rho^{[m]} \colon  G_k \xrightarrow{\varphi^m} G_k \overset{\rho}\lra \GL(V).
\end{equation}

Let $H$ be a split connected reductive $K$-group, and choose a Borel pair $(B_H,T)$ defined over $K$. If $K$ has characteristic zero, $\Rep(H)$ is semisimple. In characteristic $p$, however, this is no longer true in general. For $\lambda\in X_{+}^*(T)$, let $\Lcal_\lambda$ be the line bundle attached to $\lambda$ on the flag variety $H/B_H$ by the usual associated sheaf construction (see \cite[Section~5.8]{jantzen-representations}). Define an $H$-representation $V_H(\lambda)$ by
\begin{equation}\label{VlambdadefH}
    V_H(\lambda) \colonequals H^0\left(H/B_H,\Lcal_\lambda\right) . 
\end{equation}
In other words, one has $V_H(\lambda)=\Ind_{B_H}^{H} \lambda$. The representation $V_H(\lambda)$ is of highest weight $\lambda$. If $\cara(K)=0$, the representation $V_H(\lambda)$ is irreducible. We view elements of $V_H(\lambda)$ as regular maps $f \colon H\to \AA^1$ satisfying
\begin{equation}\label{Vlambda-function}
f(hb)=\lambda\left(b^{-1}\right)f(h), \quad \forall h\in H, \ \forall b\in B_H.    
\end{equation}
For dominant characters $\lambda,\lambda'$, there is a natural surjective map
\begin{equation}\label{Vlambda-natural-map}
V_H(\lambda)\otimes V_H(\lambda')\lra V_H(\lambda+\lambda').
\end{equation}
In the description given by \eqref{Vlambda-function}, this map is $f\otimes f'\mapsto ff'$ (for $f\in V_H(\lambda)$, $f'\in V_H(\lambda')$). Denote by $W_H \colonequals W(H,T)$ the Weyl group and by $w_{0,H}\in W_H$ the longest element. Then $V_H(\lambda)$ has a unique $B_H$-stable line, which is a weight space for the weight $w_{0,H}\lambda$.

\subsubsection{Vector bundles on quotient stacks} \label{sec-genth-vb}
For an algebraic stack $\Xcal$, write $\VB(\Xcal)$ for the category of vector bundles on $\Xcal$. Let $X$ be a $k$-scheme and $H$ an affine $k$-group scheme acting on $X$. If $\rho \colon H\to \GL(V)$ is an algebraic representation of $H$, it gives rise to a vector bundle $\Vcal_{H,X}(\rho)$ on the stack $[H\backslash X]$. This vector bundle can be defined geometrically as $[H\backslash (X\times_k V)]$, where $H$ acts diagonally on $X\times_k V$. We obtain a functor
\begin{equation}\label{funct-rep}
    \Vcal_{H,X} \colon \Rep(H)\lra \VB([H\backslash X]).
\end{equation}
Similarly to the usual associated sheaf construction, see \cite[Section~5.8, Equation (1)]{jantzen-representations}, the global sections of $\Vcal_{H,X}(\rho))$ are given by
\begin{equation}\label{globquot}
 H^0([H\backslash X],\Vcal_{H,X}(\rho))
 =\{f \colon X\to V \mid  f(h \cdot x)=\rho(h) f(x) , \  \forall h\in H, \ \forall x\in X\},
\end{equation}
where $f\colon X\to V$ is a morphism of $k$-schemes, and $V$ is viewed as an affine space over $k$.

\subsubsection{Vector bundles on \texorpdfstring{$\boldsymbol{\GZip^\mu}$}{GZip\textasciicircum\{mu\}}} \label{sec-VB-Gzip}

Fix a cocharacter datum $(G,\mu)$. Let $\Zcal=(G,P,L,Q,M)$ be the attached zip datum. Fix a frame $(B,T)$ as in Section~\ref{sec-frames}. By \eqref{funct-rep}, we have a functor $\Vcal_{E,G}\colon \Rep(E)\to \VB(\GZip^\mu)$, which we simply denote by $\Vcal$. For $(V,\rho)\in \Rep(E)$, the global sections of $\Vcal(\rho)$ are
\[
 H^0(\GZip^\mu,\Vcal(\rho))=\left\{f \colon G_k \to V \mid f(\epsilon \cdot g)=\rho(\epsilon) f(g) , \ \forall\epsilon\in E, \ \forall g\in G_k \right\}.
\]
Since $G$ admits an open dense $E$-orbit (see the discussion below Theorem~\ref{thm-E-orb-param}), the space $H^0(\GZip^\mu,\Vcal(\rho))$ is finite-dimensional (see \cite[Lemma 1.2.1]{Koskivirta-automforms-GZip}). The first projection $p_1 \colon E\to P$ induces a functor $p_1^* \colon \Rep(P)\to \Rep(E)$. If $(V,\rho)\in \Rep(P)$, we write again $\Vcal(\rho)$ for $\Vcal(p_1^*(\rho))$. In this paper, we only consider $E$-representations coming from $P$ in this way. Let $\theta^P_L \colon P\to L$ be the natural projection modulo $R_{\mathrm{u}}(P)$, as in Section~\ref{subsec-zipdatum}. It induces a fully faithful functor
\begin{equation}
(\theta^P_L)^* \colon \Rep(L)\lra \Rep(P)  
\end{equation}
whose image is the full subcategory of $\Rep(P)$ of $P$-representations trivial on $R_{\mathrm{u}}(P)$. Hence, we view $\Rep(L)$ as a full subcategory of $\Rep(P)$. If $(V,\rho)\in \Rep(L)$, write again $\Vcal(\rho) \colonequals \Vcal((\theta^P_L)^*\rho)$. For $\lambda\in X^*(T)$, write $B_L \colonequals B\cap L$, and define an $L$-representation $(V_I(\lambda),\rho_{I,\lambda})$ as follows: 
\begin{equation}\label{equ-VlambdaL}
V_I(\lambda)=\Ind_{B_L}^L \lambda, \quad \rho_{I,\lambda}\colon L\lra \GL(V_I(\lambda)). 
\end{equation}
This is the representation defined in \eqref{VlambdadefH} for $H=L$ and $B_H=B_L$. Let $\Vcal_I(\lambda)$ be the vector bundle on $\GZip^\mu$ attached to $V_I(\lambda)$, and call it an \emph{automorphic vector bundle} on $\GZip^\mu$ associated to $\lambda$. This terminology stems from Shimura varieties (see Section~\ref{subsec-Shimura} below for further details). For $\lambda \in X^*(L)$, viewing $\lambda$ as an element of $X^*(T)$ by restriction, the vector bundle $\Vcal_I(\lambda)$ is a line bundle. Note that if $\lambda\in X^*(T)$ is not $I$-dominant, then $V_I(\lambda)=0$ and thus $\Vcal_I(\lambda)=0$.

\subsection{Global sections over $\boldsymbol{\GZip^\mu}$} \label{subsec-global-sections}
We review some results of \cite{Imai-Koskivirta-vector-bundles} regarding the global sections of $\Vcal(\rho)$ for a $P$-representation $\rho$. We start with sections over the open substack $\Ucal_\mu\subset \GZip^\mu$. Recall that $\Ucal_\mu=[E\backslash U_\mu]$ and $1\in U_\mu$ (see Section~\ref{subsec-zipstrata}). By \eqref{globquot}, an element of $H^0(\Ucal_\mu,\Vcal(\rho))$ can be viewed as a map $h\colon G\to V$ satisfying $h(agb^{-1})=\rho(a)h(g)$ for all $(a,b)\in E$ and all $g\in G$. Since the $E$-orbit of $1$ is open dense in $G$, the map $h\mapsto h(1)$ is an injection
\begin{equation}\label{injection-ev1}
\ev_1 \colon H^0\left(\Ucal_\mu,\Vcal(\rho)\right)\lra V.
\end{equation}
We give the image of this map. Let $L_\varphi$ be the scheme-theoretic stabilizer subgroup of $1$ in $E$. By definition, one has
\begin{equation}\label{Lphi-equ}
L_{\varphi}=E\cap \{(x,x) \mid x\in G_k \}, 
\end{equation}
which is a $0$-dimensional algebraic group (in general nonsmooth). The first projection $E\to P$ induces a closed immersion $L_{\varphi}\to P$. Identify $L_\varphi$ with its image, and view it as a subgroup of $P$. Denote by $L_0\subset L$ the largest algebraic subgroup defined over $\FF_q$. 
In other words,
\begin{equation}\label{eqL0}
    L_0=\bigcap_{n\geq 0}L^{(q^n)}.
\end{equation}

\begin{lemma}[\textit{cf.} {\cite[Lemma 3.2.1]{Koskivirta-Wedhorn-Hasse}}]\label{lemLphi} \leavevmode
\begin{assertionlist}
\item \label{lemLphi-item1} One has $L_{\varphi}\subset L$.
\item \label{lemLphi-item2}  The group $L_{\varphi}$ can be written as a semidirect product $L_{\varphi}=L_{\varphi}^\circ\rtimes L_0(\FF_q)$, where $L_{\varphi}^\circ$ is the identity component of\, $L_{\varphi}$. Furthermore, $L_{\varphi}^\circ$ is a finite unipotent algebraic group.
\item \label{lemLphi-item3}  Assume that $P$ is defined over $\FF_q$. Then $L_0=L$ and $L_{\varphi}=L(\FF_q)$, viewed as a constant algebraic group.
\end{assertionlist}
\end{lemma}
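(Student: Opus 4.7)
Since the stabilizer of $1\in G_k$ under the action $(a,b)\cdot g = agb^{-1}$ of $E$ consists of those pairs with $a=b$, the first projection identifies $L_\varphi$ with the closed subscheme
\begin{equation}
S \colonequals \{a\in P\cap Q \mid \varphi(\theta^P_L(a))= \theta^Q_M(a)\} \subset P\cap Q.
\end{equation}
The main structural input, noted in Remark~\ref{rmk-opposite-cochar}, is that $\sigma(P)$ and $Q$ are opposite parabolic subgroups of $G_k$ with common Levi $M$, so that the multiplication map $R_{\mathrm{u}}(\sigma(P)) \times M \times R_{\mathrm{u}}(Q) \hookrightarrow G_k$ is an open immersion.

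For (a), take a functorial point $a$ of $S$ and write $a = lu$ in $P = L\ltimes R_{\mathrm{u}}(P)$ and $a = mv$ in $Q = M\ltimes R_{\mathrm{u}}(Q)$, so the defining relation gives $m = \varphi(l)$. One then computes
\begin{equation}
a^{-1}\varphi(a) \;=\; v^{-1}\varphi(u) \;\in\; R_{\mathrm{u}}(Q)\cdot R_{\mathrm{u}}(\sigma(P)),
\end{equation}
an element living in the opposite big cell of $(\sigma(P),Q)$ with trivial $M$-component. On $\overline{\FF}_q$-points, the uniqueness of the big-cell factorization forces $u = v = 1$, whence $a = l \in L$; the scheme-theoretic strengthening, where one must cope with non-reduced infinitesimal contributions coming from the Frobenius kernel of $L$, is the main technical point.

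Once (a) is in hand, the defining condition simplifies to $\varphi(l) = \theta^Q_M(l)$ for $l\in L\cap Q$. Since $\theta^Q_M|_{L\cap Q}$ is a homomorphism of algebraic groups, $L_\varphi$ is realized as the kernel of the morphism
\begin{equation}
\tau\colon L\cap Q \longrightarrow M, \qquad l\longmapsto \varphi(l)\cdot\theta^Q_M(l)^{-1}.
\end{equation}
For (b), $L_\varphi^\circ$ sits inside $\ker(\varphi|_L)$, the Frobenius kernel of $L$, which is a finite infinitesimal unipotent group scheme; the component group of $L_\varphi$ identifies with the $\FF_q$-points $L_0(\FF_q)$ of the $\FF_q$-defined subgroup $L_0\subset L$ via a Lang--Steinberg section, yielding the claimed semidirect decomposition.

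For (c), when $P$ is defined over $\FF_q$, the Levi containing $T$ is also $\FF_q$-defined, so $L = M = L_0$, $L\cap Q = L$, $\theta^Q_M|_L = \mathrm{id}_L$, and $\tau$ reduces to the Lang isogeny $l\mapsto l^{-1}\varphi(l)$ on $L$, whose kernel is $L(\FF_q)$ by Lang--Steinberg. The principal obstacle throughout is the scheme-theoretic version of (a), since $P$ and $Q$ themselves are not opposite in general and the opposition of $(\sigma(P),Q)$ must be leveraged through Frobenius; once that is secured, (b) and (c) are formal consequences.
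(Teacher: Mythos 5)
Your proof of part (a), which you yourself identify as the main technical point, has a genuine gap at the decisive step. Having computed $a^{-1}\varphi(a)=v^{-1}\varphi(u)$ with $v^{-1}\in R_{\mathrm{u}}(Q)$ and $\varphi(u)\in R_{\mathrm{u}}(\sigma(P))$, you invoke uniqueness of the big-cell factorization for the opposite pair $(\sigma(P),Q)$ to conclude $u=v=1$. But uniqueness only says that the three components of $a^{-1}\varphi(a)$ under the open immersion $R_{\mathrm{u}}(Q)\times M\times R_{\mathrm{u}}(\sigma(P))\hookrightarrow G$ are well-determined; the triviality of the $M$-component is already visible from the expression $v^{-1}\cdot 1\cdot\varphi(u)$, and nothing forces $v$ or $u$ to be trivial, because there is no second factorization of $a^{-1}\varphi(a)$ to play off against uniqueness. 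Worse, $v=1$ is actually \emph{false} as a scheme-theoretic statement: for $G=\Res_{\FF_{q^2}/\FF_q}(\GL_2)$ with $\mu$ supported on one factor, the group scheme $L_\varphi$ has $R$-points over non-reduced $R$ whose $R_{\mathrm{u}}(Q)$-component $v$ is a nontrivial unipotent with off-diagonal entry $z$ satisfying $z^q=0$. (One still has $u=1$ and $a\in L$ in this example, but your reasoning, such as it is, would equally ``prove'' $v=1$.) The opposition of $\sigma(P)$ and $Q$ is the right input, but what it gives directly is the vanishing $R_{\mathrm{u}}(P)\cap R_{\mathrm{u}}(Q)=1$; feeding this into the direct-product decomposition $P\cap Q=(L\cap M)(L\cap R_{\mathrm{u}}(Q))(R_{\mathrm{u}}(P)\cap M)(R_{\mathrm{u}}(P)\cap R_{\mathrm{u}}(Q))$ of \cite[Exp.~XXVI]{SGA3} reduces the problem to killing the $R_{\mathrm{u}}(P)\cap M$-component of $a$, and that still requires a genuine analysis of the defining relation $\varphi(\theta^P_L(a))=\theta^Q_M(a)$ inside $M$ which your write-up never supplies.

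The gap propagates into part (b). The containment $L_\varphi^\circ\subset\ker(\varphi|_L)$ is of exactly the same nature as what is missing in (a) and would need its own justification; and even granting it, $\ker(\varphi|_L)$ is the Frobenius kernel of the reductive group $L$, which is \emph{not} unipotent, so you would obtain finiteness of $L_\varphi^\circ$ but not the asserted unipotence. For unipotence one needs the stronger containment $L_\varphi^\circ\subset L\cap R_{\mathrm{u}}(Q)$. A smaller issue: $\tau(l)=\varphi(l)\theta^Q_M(l)^{-1}$ is not a group homomorphism when $M$ is non-abelian, so $L_\varphi$ is not its kernel; it is the equalizer of the two homomorphisms $\varphi|_{L\cap Q}$ and $\theta^Q_M|_{L\cap Q}$ into $M$. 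Part (c) is fine once (a) is secured, via Lang--Steinberg as you indicate.
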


\begin{lemma}[\textit{cf.} {\cite[Corollary 3.2.3]{Imai-Koskivirta-vector-bundles}}] \label{lem-Umu-sections}
The map \eqref{injection-ev1} induces an identification
\begin{equation}
 H^0\left(\Ucal_\mu,\Vcal(\rho)\right)=V^{L_\varphi}.   
\end{equation}
\end{lemma}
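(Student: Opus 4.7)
The plan is to identify $\Ucal_\mu$ with the classifying stack $BL_\varphi$ and then invoke the standard formula for global sections over a classifying stack. Recall from \S\ref{subsec-zipstrata} that $U_\mu$ is a single $E$-orbit in $G$, and from \eqref{Lphi-equ} that $L_\varphi$ is the scheme-theoretic stabilizer of $1\in U_\mu$. The orbit map $\alpha\colon E\to U_\mu$, $\epsilon\mapsto \epsilon\cdot 1$, is therefore surjective with fibers that are $L_\varphi$-torsors, hence fppf. This exhibits $U_\mu$ as the fppf quotient $E/L_\varphi$, and passing to the $E$-quotient gives a canonical isomorphism of stacks
\begin{equation}
 \Ucal_\mu = [E\backslash U_\mu] \;\cong\; [E\backslash (E/L_\varphi)] \;\cong\; BL_\varphi.
\end{equation}
Under this identification, the vector bundle $\Vcal(\rho)$, which was built from the $E$-representation $p_1^*\rho$, pulls back to the vector bundle on $BL_\varphi$ attached to the $L_\varphi$-representation obtained by restriction along $L_\varphi\hookrightarrow E\xrightarrow{p_1} P$. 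Because $L_\varphi\subset L\subset P$ by Lemma \ref{lemLphi}\ref{lemLphi-item1}, this restriction is simply $\rho|_{L_\varphi}$. The general formula $H^0(BH,\Vcal_{H,\mathrm{pt}}(W))=W^H$ then yields $H^0(\Ucal_\mu,\Vcal(\rho))=V^{L_\varphi}$, and one checks the identification agrees with $\ev_1$ because the base point $1\in U_\mu$ is the one distinguished in the orbit description.

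If one prefers to avoid the stacky reformulation, the same proof can be carried out concretely. Injectivity of $\ev_1$ is granted by the transitive action of $E$ on $U_\mu$, as explained before \eqref{injection-ev1}. For the inclusion $\mathrm{Im}(\ev_1)\subset V^{L_\varphi}$, take any section $f$ and any $x\in L_\varphi$, viewed as $(x,x)\in E$ via \eqref{Lphi-equ}: the equivariance in \eqref{globquot} gives $f(1)=f((x,x)\cdot 1)=\rho(x)f(1)$. Conversely, given $v\in V^{L_\varphi}$, define $f\colon U_\mu\to V$ by $f(\epsilon\cdot 1)\colonequals \rho(p_1(\epsilon))v$; this is well-defined precisely because two elements $\epsilon_1,\epsilon_2\in E$ satisfy $\epsilon_1\cdot 1=\epsilon_2\cdot 1$ iff $\epsilon_2^{-1}\epsilon_1\in L_\varphi$, and then $\rho(p_1(\epsilon_2^{-1}\epsilon_1))v=v$ by the assumption on $v$ and the fact that $p_1$ is the identity on $L_\varphi$.

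The only delicate point is the possible non-smoothness/non-reducedness of $L_\varphi$ mentioned after \eqref{Lphi-equ}, which prevents arguing naively with $k$-points. However, everything above is formulated at the level of fppf sheaves: the orbit map $\alpha$ is faithfully flat of finite presentation because $L_\varphi$ is finite flat over $k$, so the identification $U_\mu\cong E/L_\varphi$ and hence $\Ucal_\mu\cong BL_\varphi$ holds as stacks, and the formula $H^0(BL_\varphi,\cdot)=(\cdot)^{L_\varphi}$ remains valid in the scheme-theoretic sense. This is where I would expect the main care to be needed; the rest is formal.
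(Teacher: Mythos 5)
The paper does not prove this lemma; it cites it directly from \cite[Corollary 3.2.3]{Imai-Koskivirta-vector-bundles}, so there is no in-paper argument to compare against. On its own terms, your proof is correct and is almost certainly the intended one: identify $\Ucal_\mu$ with $BL_\varphi$ via the orbit map at $1\in U_\mu$, then use $H^0(BL_\varphi,-)=(-)^{L_\varphi}$. Your concrete version, checking injectivity and surjectivity of $\ev_1$ at the level of fppf sheaves, is a faithful unwinding of the same identification.

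One assertion is slightly glib and worth tightening. You write that $\alpha\colon E\to U_\mu$ ``is faithfully flat of finite presentation because $L_\varphi$ is finite flat over $k$.'' Finite flat fibers alone do not force flatness of a morphism; what actually makes this work here is that $E$ is smooth (it is a fiber product of $P$ and $Q$ over $M$ along a smooth projection) and $U_\mu$ is open in $G$, hence smooth, and the fibers of $\alpha$ all have dimension $0=\dim E-\dim U_\mu$, so miracle flatness applies. Equivalently, one can invoke the standard fact that $E\to E/L_\varphi$ is automatically faithfully flat for a closed subgroup scheme $L_\varphi$, and then observe that the induced monomorphism $E/L_\varphi\to U_\mu$ is a surjective monomorphism between smooth irreducible $k$-varieties of the same dimension, hence an isomorphism (birational monomorphism plus Zariski's main theorem). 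Either route makes your ``hence fppf'' rigorous; the conclusion $\Ucal_\mu\cong BL_\varphi$ and the resulting identification $H^0(\Ucal_\mu,\Vcal(\rho))=V^{L_\varphi}$ via $\ev_1$ are correct.
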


Here, the notation $V^{L_{\varphi}}$ denotes the space of scheme-theoretic invariants, \textit{i.e.}, the set of $v\in V$ such that for any $k$-algebra $R$, one has $\rho(x)v=v$ in $V\otimes_k R$ for all $x\in L_{\varphi}(R)$. We now consider the space of global sections over $\GZip^\mu$. Restriction of sections to $\Ucal_\mu\subset \GZip^\mu$ induces an injective map $H^0(\GZip^{\mu},\Vcal(\rho))\to H^0(\Ucal_\mu,\Vcal(\rho))=V^{L_\varphi}$. For simplicity, we assume here that $P$ is defined over $\FF_q$ (for the general result, see \cite[Theorem 3.4.1]{Imai-Koskivirta-vector-bundles}). We will need the general version in the proof of Proposition~\ref{prop-charL}, but in the simple setting when $\rho$ is a character $L\to \GG_{\mathrm{m}}$. For $\alpha\in \Phi$, choose a realization $(u_{\alpha})_{\alpha \in \Phi}$ (see Section~\ref{subsec-notation}). Fix a $P$-representation $(V,\rho)$, and let $V=\bigoplus_{\nu \in X^*(T)} V_\nu$ be its $T$-weight decomposition. Define the Brylinski--Kostant filtration (\cf \cite[Equation~(3.3.2)]{Xiao-Zhu-on-vector-valued}) indexed by $c\in \RR$ on $V_\nu$ by 
\begin{equation}\label{equ-Filc}
\fil_{c}^{\alpha} V_{\nu} = 
 \bigcap_{j > c} \Ker \left( 
 E_{\alpha}^{(j)} \colon V_{\nu} \to V_{\nu+j\alpha} \right), 
\end{equation}
where the map $E_{\alpha}$ was defined in Section~\ref{subsec-notation}. For $\chi \in X^*(T)_{\RR}$ and $\nu \in X^*(T)$, also set
\begin{equation}\label{equ-FilPchi}
 \fil_{\chi}^P 
 V_{\nu} = 
 \bigcap_{\alpha \in \Delta^P} 
 \fil_{\langle \chi, \alpha^\vee \rangle}^{-\alpha} 
 V_{\nu}.
\end{equation}
The Lang torsor morphism $\wp \colon T \to T$, $g\mapsto g\varphi(g)^{-1}$ induces isomorphisms 
\begin{align}
 &\wp^* \colon X^*(T)_{\RR} \stackrel{\lowsim}{\longrightarrow} X^*(T)_{\RR},\quad \lambda \longmapsto \lambda \circ \wp  = \lambda -q\sigma^{-1}(\lambda), \label{equ-Pupstar} \\
 &\wp_* \colon X_*(T)_{\RR} \stackrel{\lowsim}{\longrightarrow} X_*(T)_{\RR},\quad \delta \longmapsto \wp \circ \delta  = \delta -q\sigma(\delta).  \label{equ-Plowstar}
\end{align}

\begin{theorem}[\textit{cf.} {\cite[Corollary 3.4.2]{Imai-Koskivirta-vector-bundles}}]\label{main-Fq}
Assume that $P$ is defined over $\FF_q$. 
For all $(V,\rho)\in \Rep(P)$, the map $\ev_1$ induces an identification
\begin{equation}
H^0(\GZip^\mu,\Vcal(\rho))=V^{L(\FF_q)}\cap 
 \bigoplus_{\nu \in X^*(T)} 
 \fil_{{\wp^*}^{-1}(\nu)}^P 
 V_{\nu}.
\end{equation}
\end{theorem}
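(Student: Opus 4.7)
The plan is to analyze the restriction map $H^0(\GZip^\mu,\Vcal(\rho)) \to H^0(\Ucal_\mu,\Vcal(\rho))$ and then identify precisely which vectors in the image correspond to globally regular sections. Since $P$ is defined over $\FF_q$, Lemma~\ref{lemLphi}\ref{lemLphi-item3} gives $L_\varphi = L(\FF_q)$, and Lemma~\ref{lem-Umu-sections} identifies $\ev_1$ with an isomorphism $H^0(\Ucal_\mu,\Vcal(\rho)) \cong V^{L(\FF_q)}$. Since $\Ucal_\mu \subset \GZip^\mu$ is open dense and $\Vcal(\rho)$ is locally free, restriction is injective, and I obtain automatically $H^0(\GZip^\mu,\Vcal(\rho)) \subset V^{L(\FF_q)}$ via $\ev_1$. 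What remains is to characterize the image in terms of the Brylinski--Kostant condition.

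Next, I would reduce the extension question to codimension one. The stack $\GZip^\mu$ is smooth and $\Vcal(\rho)$ is locally free, so a section over $\Ucal_\mu$ extends to all of $\GZip^\mu$ iff it extends across each codimension-one stratum of the complement. By Theorem~\ref{thm-E-orb-param} and \eqref{equ-closure-rel}, these codimension-one strata correspond to the elements of ${}^I W$ immediately below $w_{0,I}w_0$, which by the construction of the partial Hasse invariants in \cite{Imai-Koskivirta-partial-Hasse} are naturally parametrized by $\alpha \in \Delta^P$. It therefore suffices to test extension one $\alpha \in \Delta^P$ at a time.

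For each $\alpha \in \Delta^P$, I would analyze the corresponding codimension-one stratum using the opposite root subgroup $U_{-\alpha}$ as a transverse direction: one can set up local coordinates (using the frame $(B,T,z)$) in which approaching the stratum corresponds to sending the parameter $x$ of $u_{-\alpha}(x)$ to $0$. For a weight vector $v\in V_\nu \subset V^{L(\FF_q)}$, the candidate section evaluated along this slice expands as
\[
 u_{-\alpha}(x) \cdot v = \sum_{j\geq 0} x^j\, E_{-\alpha}^{(j)}(v),
\]
so regularity at $x=0$ forces $E_{-\alpha}^{(j)}(v) = 0$ for $j$ exceeding a critical threshold $c_{\alpha,\nu}$ determined by the $T$-weight by which $x$ scales.

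The hard part will be to compute $c_{\alpha,\nu}$ exactly and to recognize the character $\wp^*$ geometrically. The threshold arises when one normalizes an element $u_{-\alpha}(x)\cdot 1$ near the boundary via the $E$-action: after using $(a,\varphi(a))$ with $a\in L$ together with conjugation by $\mu(t)$, the combined scaling of $x$ produces the character $\chi - q\sigma^{-1}(\chi) = \wp^*(\chi)$ rather than merely $\chi$. This identifies $c_{\alpha,\nu} = \langle{\wp^*}^{-1}(\nu),\alpha^\vee\rangle$; intersecting the resulting conditions over all $\alpha\in\Delta^P$ yields $v_\nu \in \fil^P_{{\wp^*}^{-1}(\nu)} V_\nu$, and intersecting with $V^{L(\FF_q)}$ completes the proof. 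The delicacy lies entirely in matching the Frobenius-twisted $E$-relation with the Lang isogeny $\wp$; once that bookkeeping is in place, both the necessity and the sufficiency of the filtration condition fall out of the same local computation.
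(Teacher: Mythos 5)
Your proposal follows the same strategy as the cited reference. The paper itself does not reprove this theorem (it is quoted from \cite[Corollary 3.4.2]{Imai-Koskivirta-vector-bundles}); but the ``adapted morphism'' technique the reference uses --- and which this paper re-runs explicitly in the proofs of Proposition~\ref{prop-Norm} and Proposition~\ref{prop-RuP0} --- is precisely what you outline: identify $L_\varphi = L(\FF_q)$ via Lemma~\ref{lemLphi}\eqref{lemLphi-item3}, use normality of $G$ to reduce extension to the codimension-one $E$-orbits, and for each $\alpha\in\Delta^P$ pull the candidate section back along the transversal slice $\psi_\alpha$ through that stratum to extract the valuation condition, recognizing $\wp^*$ via the adjointness of $\wp^*$ and $\wp_*$. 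So the approach matches.

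One point in your local computation, however, is stated backwards and would derail a careful write-up. The series $u_{-\alpha}(x)\cdot v = \sum_{j\geq 0} x^j E_{-\alpha}^{(j)}(v)$ is a polynomial in $x$, so ``regularity at $x=0$'' constrains nothing. What actually happens on the slice $t\mapsto \phi_\alpha(A(t))$ is that the stratum is hit at $t=0$ and the $E$-normalization factors $\phi_\alpha(A(t))$ as $u_{-\alpha}(-t^{-1})\,\delta_\alpha(t)\,(\cdots)$ with $\delta_\alpha = \wp_*^{-1}(\alpha^\vee)$; for $v_\nu\in V_\nu$ one then gets $t^{\langle\nu,\delta_\alpha\rangle}\sum_j (-t^{-1})^j E_{-\alpha}^{(j)}(v_\nu)$, so the argument of $u_{-\alpha}$ blows up as $t\to 0$ and the extension condition is that the positive power $t^{\langle\nu,\delta_\alpha\rangle}$ dominates, i.e.\ $E_{-\alpha}^{(j)}(v_\nu)=0$ for $j > \langle\nu,\delta_\alpha\rangle = \langle{\wp^*}^{-1}(\nu),\alpha^\vee\rangle$. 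Your remark about ``the combined scaling of $x$'' is gesturing at exactly this cancellation, but the sentence claiming the constraint comes from regularity at $x=0$ should be replaced by regularity at $t=0$ with $x=-t^{-1}\to\infty$, weighted by the $\delta_\alpha(t)$-scaling.
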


In the general case of an arbitrary parabolic $P$, $V^{L(\FF_q)}$ is replaced by $V^{L_\varphi}$ and $\Fil^\alpha_c V_\nu$ is replaced by a generalized Brylinski--Kostant filtration (see \cite[Theorem 3.4.1]{Imai-Koskivirta-vector-bundles}). In the special case when $\rho$ is trivial on $R_{\mathrm{u}}(P)$, Theorem~\ref{main-Fq} simplifies greatly. Set $\delta_{\alpha}\colonequals \wp_*^{-1}(\alpha^\vee)$, and define a subspace $V_{\geq 0}^{\Delta^P}\subset V$ by

\begin{equation}\label{equ-VDeltaP}
V_{\geq 0}^{\Delta^P} = \bigoplus_{\substack{\langle \nu,\delta_{\alpha} \rangle \geq 0, \ 
 \forall \alpha\in \Delta^P}} V_\nu.
\end{equation}
If $T$ is split over $\FF_q$, then 
$\delta_{\alpha}=-\alpha^\vee /(q-1)$, 
and $V_{\geq 0}^{\Delta^P}$ is the direct sum of the weight spaces $V_\nu$ for those $\nu\in X^*(T)$ satisfying $\langle \nu,\alpha^\vee \rangle \leq 0$ for all $\alpha \in \Delta^P$.

\begin{corollary}\label{cor-Fq-Levi}
Assume that $P$ is defined over $\FF_q$ and furthermore that $(V,\rho)\in \Rep(P)$ is trivial on $R_{\mathrm{u}}(P)$. Then one has
\begin{equation}
H^0(\GZip^\mu,\Vcal(\rho))=V^{L(\FF_q)}\cap V_{\geq 0}^{\Delta^P} .
\end{equation}
\end{corollary}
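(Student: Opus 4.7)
\textbf{Proof plan for Corollary \ref{cor-Fq-Levi}.} The plan is to deduce the corollary from Theorem \ref{main-Fq} by showing that, when $\rho$ is trivial on $R_{\mathrm{u}}(P)$, the Brylinski--Kostant filtration $\fil_{{\wp^*}^{-1}(\nu)}^{P} V_{\nu}$ degenerates to either the whole weight space $V_\nu$ or to zero, according to the sign of $\langle \nu,\delta_{\alpha}\rangle$ for $\alpha\in \Delta^P$. Summing over $\nu$ will then directly identify $\bigoplus_\nu \fil_{{\wp^*}^{-1}(\nu)}^{P} V_{\nu}$ with $V_{\geq 0}^{\Delta^P}$, and the statement will follow by intersecting with $V^{L(\FF_q)}$.

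First I would record the following root-theoretic observation. Since $\Phi^+$ is positive with respect to the opposite Borel $B^+$, the Borel $B$ contains exactly the root groups $U_{-\alpha}$ for $\alpha\in \Phi^+$. The inclusion $B\subset P=P_-(\mu)$ thus forces $\langle \alpha,\mu\rangle\geq 0$ for all $\alpha\in \Phi^+$, with strict inequality precisely when $\alpha\in \Delta^P=\Delta\setminus I$. Consequently $U_{-\alpha}\subset R_{\mathrm{u}}(P)$ for every $\alpha\in \Delta^P$, and the hypothesis that $\rho$ is trivial on $R_{\mathrm{u}}(P)$ yields $\rho(u_{-\alpha}(x))=\id_V$ identically in $x\in \GG_{\mathrm{a}}$. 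Comparing with the expansion $u_{-\alpha}(x)v=\sum_{j\geq 0} x^{j} E_{-\alpha}^{(j)}(v)$ that defines the operators $E_{-\alpha}^{(j)}$ gives $E_{-\alpha}^{(0)}=\id_{V}$ and $E_{-\alpha}^{(j)}=0$ for all $j\geq 1$ and all $\alpha\in \Delta^P$.

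Next I would use this vanishing to compute the filtration \eqref{equ-Filc}. A direct inspection of the definition shows that $\fil_{c}^{-\alpha}V_{\nu}=V_{\nu}$ as soon as $c\geq 0$, because the intersection then runs only over $j\geq 1$ and each of those kernels is all of $V_{\nu}$. On the other hand $\fil_{c}^{-\alpha}V_{\nu}=0$ as soon as $c<0$, because the intersection then also includes $\Ker(E_{-\alpha}^{(0)}|_{V_\nu})=\Ker(\id_{V_\nu})=0$. Plugging $c=\langle {\wp^*}^{-1}(\nu),\alpha^\vee\rangle$ and using the adjunction $\langle \wp^{*}\chi,\eta\rangle=\langle \chi,\wp_{*}\eta\rangle$ to rewrite this pairing as $\langle \nu,\wp_{*}^{-1}(\alpha^\vee)\rangle=\langle \nu,\delta_{\alpha}\rangle$, the intersection over $\alpha\in \Delta^P$ in \eqref{equ-FilPchi} collapses exactly to the cutoff defining $V_{\geq 0}^{\Delta^P}$ in \eqref{equ-VDeltaP}.

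Putting these two steps together, Theorem \ref{main-Fq} yields $H^{0}(\GZip^{\mu},\Vcal(\rho))=V^{L(\FF_q)}\cap V_{\geq 0}^{\Delta^P}$, which is the desired identity. I do not expect a real obstacle here: the entire argument is a direct simplification of the general filtration in a special case. The only item requiring a little care is the pairing bookkeeping between $\wp^{*}$ and $\wp_{*}$, together with the (routine) verification that $U_{-\alpha}\subset R_{\mathrm{u}}(P)$ for $\alpha\in \Delta^P$ under the convention $B\subset P$.
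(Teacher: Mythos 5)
Your proof is correct and matches the argument implicit in the paper. The paper states this as a corollary of Theorem \ref{main-Fq} without spelling out the simplification, and your argument — that triviality on $R_{\mathrm{u}}(P)$ forces $E_{-\alpha}^{(0)}=\id$ and $E_{-\alpha}^{(j)}=0$ for $j\geq 1$ when $\alpha\in\Delta^P$, so that $\fil_{c}^{-\alpha}V_\nu$ is all or nothing according to the sign of $c$, combined with the adjunction $\langle\wp^*\lambda,\delta\rangle=\langle\lambda,\wp_*\delta\rangle$ to rewrite the threshold as $\langle\nu,\delta_\alpha\rangle\geq 0$ — is exactly the intended derivation of $V_{\geq 0}^{\Delta^P}$ from the Brylinski--Kostant filtration.
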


\subsection{The stack of $\boldsymbol{G}$-zip flags} \label{subsec-zipflag}

\subsubsection{Definition}
Let $(G,\mu)$ be a cocharacter datum with attached zip datum $\Zcal_{\mu}=(G,P,L,Q,M)$ (see Section~\ref{subsec-cochar}). Fix a frame $(B,T,z)$ with $z=\sigma(w_{0,I})w_0=w_0w_{0,J}$ (see Lemma~\ref{lem-framemu}). The stack of zip flags (see \cite[Definition 2.1.1]{Goldring-Koskivirta-Strata-Hasse}) is defined as
\begin{equation}\label{eq-Gzipflag-PmodB}
\GF^\mu=[E\backslash (G_k \times P/B)],     
\end{equation}
where the group $E$ acts on the variety $G_k \times (P/B)$ by the rule $(a,b)\cdot (g,hB) \colonequals (agb^{-1},ahB)$ for all $(a,b)\in E$ and all $(g,hB)\in G_k \times P/B$. The first projection $G_k \times P/B \to G_k$ is $E$-equivariant and yields a natural morphism of stacks
 \begin{equation}\label{projmap-flag}
     \pi  \colon  \GF^\mu \lra \GZip^\mu.
 \end{equation}

Set $E' \colonequals E\cap (B\times G_k)$. Then the injective map $G_k \to G_k \times P/B$, $g\mapsto (g,B)$ yields an isomorphism of stacks $[E' \backslash G_k]\simeq \GF^\mu$ (see \cite[Equation~(2.1.5)]{Goldring-Koskivirta-Strata-Hasse}). We recall the stratification of $\GF^\mu$. First, define the Schubert stack as the quotient stack
\begin{equation}\label{equ-def-Sbt}
\Sbt \colonequals [B\backslash G_k /B].
\end{equation}
This stack is finite and smooth. Its topological space is isomorphic to $W$, endowed with the topology induced by the Bruhat order on $W$. This follows easily from the Bruhat decomposition of $G$. One can show that $E'\subset B\times {}^z B$. In particular, there is a natural projection map $[E'\backslash G_k]\to [B\backslash G_k/{}^z\! B]$. Composing with the isomorphism $[B\backslash G_k/{}^z B]\to [B\backslash G_k/B]$ induced by $G_k\to G_k$, $g\mapsto gz$, we obtain a smooth, surjective map
 \begin{equation}\label{eq-GF-to-Sbt}
     \psi  \colon  \GF^\mu \lra \Sbt.
 \end{equation}
For $w\in W$, put $\Sbt_w\colonequals [B\backslash BwB /B]$; it is a locally closed substack of $\Sbt$. The \emph{flag strata} of $\GF^\mu$ are defined as fibers of the map $\psi$. They are locally closed substacks (endowed with the reduced structure). Concretely, let $w\in W$, and put
\begin{equation}
F_w \colonequals B\left(wz^{-1}\right){}^zB=BwBz^{-1},
\end{equation} 
which is the $B\times {}^zB$-orbit of $wz^{-1}$. The set $F_w$ is locally closed in $G_k$, and one has $\dim(F_w)=\ell(w)+\dim(B)$. Then, via the isomorphism $\GF^\mu\simeq [E'\backslash G_k]$, the flag strata of $\GF^\mu$ are the locally closed substacks
\begin{equation}\label{zipflag-Cw}
\Fcal_w \colonequals [E'\backslash F_w], \quad w\in W.    
\end{equation}
The set $F_{w_0}\subset G_k$ is open in $G_k$, and similarly the stratum $\Fcal_{w_0}$ is open in $\GF^\mu$. The $B\times {}^zB$-orbits of codimension $1$ are $F_{s_\alpha w_0}$ for $\alpha\in \Delta$. The Zariski closure $\overline{F}_w$ is normal (see \cite[Theorem 3]{Ramanan-Ramanathan-projective-normality}) and coincides with $\bigcup_{w'\leq w}F_{w'}$, where $\leq$ is the Bruhat order of $W$.

\subsubsection{Vector bundles on  \texorpdfstring{$\boldsymbol{\GF^\mu}$}{GF\textasciicircum mu}}
Let $\rho \colon B\to \GL(V)$ be an algebraic representation, and view~$\rho$ as a representation of $E'$ via the first projection $E'\to B$. Via the isomorphism $\GF^\mu\simeq [E'\backslash G_k]$, we obtain a vector bundle $\Vcal_{\rm flag}$ on $\GF^\mu$. Let $(V,\rho)\in \Rep(P)$, and let $\Vcal(\rho)$ be the attached vector bundle on $\GZip^\mu$. Then one has
\begin{equation}\label{pullbackV}
\pi^*(\Vcal(\rho))=\Vcal_{\flag}(\rho|_B).
\end{equation}
Note that the rank of $\Vcal_{\flag}(\rho)$ is the dimension of $\rho$. In particular, if $\lambda\in X^*(B)$, then $\Vcal_{\flag}(\lambda)$ is a line bundle. For $(V,\rho)\in \Rep(B)$, consider the $P$-representation $\Ind_B^P(\rho)$ defined by
\begin{equation}\label{eq-Indrho}
\Ind_B^P(\rho)=\left\{f\colon P\to V \relmiddle| f(xb) = \rho\left(b^{-1}\right)f(x), \ \forall b\in B,\ \forall x\in P \right\}.
\end{equation}
For $y\in P$ and $f\in \Ind_B^P(\rho)$, the element $y\cdot f$ is the function $x\mapsto f(y^{-1} x)$.

\begin{proposition}[\textit{cf.} {\cite[Proposition 3.2.1]{Imai-Koskivirta-partial-Hasse}}]\label{prop-push-flag-zip}
For $(V,\rho)\in \Rep(B)$, we have the identification 
$\pi_{*}(\Vcal_{\flag}(\rho))=\Vcal(\Ind_B^P(\rho))$. In particular $\pi_{*}(\Vcal_{\flag}(\rho))$ is a vector bundle on $\GZip^\mu$.
\end{proposition}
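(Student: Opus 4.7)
The plan is to compute $\pi_*\Vcal_{\flag}(\rho)$ directly via flat base change along the atlas $G_k \to \GZip^\mu$, exploiting the alternate presentation $\GF^\mu = [E\backslash (G_k \times P/B)]$ in which $\pi$ is literally induced by the first projection.

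First, I would set up the cartesian square
\begin{equation}
\begin{CD}
G_k \times P/B @>\pr_1>> G_k \\
@VVV @VVV \\
\GF^\mu @>\pi>> \GZip^\mu\rlap{.}
\end{CD}
\end{equation}
The right vertical map is the standard smooth atlas, so flat base change reduces computing $\pi_* \Vcal_{\flag}(\rho)$ to computing $\pr_{1,*}$ of the pullback of $\Vcal_{\flag}(\rho)$ to $G_k \times P/B$, together with its $E$-equivariant descent datum.

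Second, I would identify this pullback. Under the presentation $\GF^\mu = [E\backslash(G_k \times P/B)]$, the bundle $\Vcal_{\flag}(\rho)$ is associated to $\mathcal{O}_{G_k}\boxtimes \Lscr_\rho$, where $\Lscr_\rho = P\times^B V$ is the associated sheaf on $P/B$ attached to the $B$-representation $(V,\rho)$, with $E$ acting on the factor $P/B$ via the first projection $E\to P$. I would briefly check that this description is compatible with the one given via $\GF^\mu \simeq [E'\backslash G_k]$ and the map $g\mapsto (g,B)$, which pulls $\mathcal{O}_{G_k}\boxtimes \Lscr_\rho$ back to the $E'$-equivariant bundle attached to $\rho|_B\circ p_1$ on $G_k$.

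Third, I would carry out the pushforward. Since $\pr_1$ is the projection from a product with proper fiber $P/B$, the projection formula (or flat base change along $\pr_1$ applied to $\Lscr_\rho$, whose cohomology is flat over $k$) gives
\begin{equation}
\pr_{1,*}\bigl(\mathcal{O}_{G_k}\boxtimes \Lscr_\rho\bigr) \;=\; \mathcal{O}_{G_k}\otimes_k H^0(P/B, \Lscr_\rho) \;=\; \mathcal{O}_{G_k}\otimes_k \Ind_B^P(\rho),
\end{equation}
and the $E$-equivariant structure is precisely the one coming from the natural $P$-action on $\Ind_B^P(\rho)$ (as in \eqref{eq-Indrho}) pulled back along $E\to P$. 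Descending, this is exactly the pullback of $\Vcal(\Ind_B^P(\rho))$ to $G_k$, and hence by descent $\pi_*\Vcal_{\flag}(\rho)=\Vcal(\Ind_B^P(\rho))$. The final assertion that $\pi_*\Vcal_{\flag}(\rho)$ is a vector bundle then follows at once, because $P/B$ is proper and $\Lscr_\rho$ is coherent, so $\Ind_B^P(\rho)$ is a finite-dimensional $P$-representation and the functor $\Vcal$ of \S\ref{sec-VBGzip} produces a vector bundle.

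The main obstacle is bookkeeping of equivariance: one has to check that the $E$-equivariant descent datum on $\mathcal{O}_{G_k}\otimes_k \Ind_B^P(\rho)$ produced by pushforward coincides with the one obtained by viewing $\Ind_B^P(\rho)$ as a $P$-representation and then as an $E$-representation via $p_1$. Once the two presentations of $\GF^\mu$ have been matched up and the $E$-action on $P/B$ is pinned down (via $(a,c)\cdot pB = apB$ for $(a,c)\in E\subset P\times Q$), this becomes a routine check, but it is the only genuinely non-formal point.
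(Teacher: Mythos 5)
Your argument is correct, and it is the natural one. The square you set up is cartesian precisely because $\pr_1\colon G_k\times P/B\to G_k$ is $E$-equivariant; the identification of the pullback of $\Vcal_{\flag}(\rho)$ with $\Ocal_{G_k}\boxtimes\Lscr_\rho$ (with $E$ acting through $\pr_1\colon E\to P$ on the second factor) is easily checked against the $[E'\backslash G_k]$ presentation along $g\mapsto(g,B)$, since $E'=E\cap(B\times Q)$ is exactly the stabilizer of $B\in P/B$; the Künneth/flat-base-change step gives $\pr_{1,*}(\Ocal_{G_k}\boxtimes\Lscr_\rho)=\Ocal_{G_k}\otimes_k\Ind_B^P(\rho)$; and the equivariance bookkeeping you flag at the end does check out, with the induced action on $\Ind_B^P(\rho)$ being $(a\cdot f)(x)=f(a^{-1}x)$ pulled back along $E\to P$. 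Note that the paper itself does not reproduce a proof here (it just cites Proposition 3.2.1 of the Imai--Koskivirta paper on partial Hasse invariants), but the descent-plus-base-change argument you give is the standard one and is almost certainly what that reference does.
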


In particular, if $\rho$ is a character $\lambda\in X^*(T)$, then $\Vcal_{\flag}(\lambda)$ is a line bundle and one has 
\begin{equation}\label{pushforward-lambda}
    \pi_*\left(\Vcal_{\flag}(\lambda)\right)=\Vcal_I(\lambda), 
\end{equation}
where the vector bundle $\Vcal_I(\lambda)$ was defined in Section~\ref{sec-VB-Gzip}. Hence, we have 
\begin{equation}\label{ident-H0-GF}
    H^0(\GZip^\mu,\Vcal_I(\lambda))=H^0\left(\GF^\mu,\Vcal_{\flag}(\lambda)\right).
\end{equation}
If $f \colon G_k \to k$ is a section of the right-hand side of \eqref{ident-H0-GF}, 
then the corresponding function 
$f_I \colon G_k \to V_I (\lambda)$ 
on the left-hand side of 
\eqref{ident-H0-GF} is given by 
\begin{equation}\label{eq:H0expid}
 (f_I (g))(x)=f \left(\left(x^{-1},\varphi(x)^{-1}\right) \cdot g\right)= 
 f\left(x^{-1} g \varphi (x)\right) 
\end{equation}
for all $g \in G_k$ and $x \in L$, by the construction of the identification. 
Also note  that the line bundles $\Vcal_{\flag}(\lambda)$ satisfy the following identity:
\begin{equation}\label{Vflag-additivity}
\Vcal_{\flag}(\lambda+\lambda')=\Vcal_{\flag}(\lambda)\otimes \Vcal_{\flag}(\lambda'), \quad \forall \lambda, \lambda'\in X^*(T).
\end{equation}
We can also define vector bundles on the stack $\Sbt$ as in \cite[Section~4]{Imai-Koskivirta-partial-Hasse}. For our purpose, it is enough to define line bundles on $\Sbt$. Using \eqref{funct-rep}, we can attach to each $(\chi_1,\chi_2)\in X^*(T)\times X^*(T)$ a line bundle $\Vcal_{\Sbt}(\chi_1,\chi_2)$ on $\Sbt$. One has
\begin{equation}\label{pullbackformula}
\psi^*\Vcal_{\Sbt}(\chi_1,\chi_2)=\Vcal_{\flag}(\chi_1+(z\chi_2)\circ \varphi)=\Vcal_{\flag}\left(\chi_1+q\sigma^{-1}(z\chi_2)\right).
\end{equation}

\subsubsection{Partial Hasse invariants}\label{sec-partialHasse}
We recall some results of \cite{Imai-Koskivirta-partial-Hasse}. By \cite[Theorem 2.2.1(a)]{Goldring-Koskivirta-Strata-Hasse}, the line bundle $\Vcal_{\Sbt}(\chi_1,\chi_2)$ admits a nonzero section over $\Sbt_{w_0}$ if and only if $\chi_1=-w_0\chi_2$. If this condition is satisfied, $H^0(\Sbt_{w_0}, \Vcal_{\Sbt}(\chi_1,\chi_2))$ is $1$-dimensional. For $\chi\in X^*(T)$, let $h_\chi$ be any nonzero element
\begin{equation}\label{hchi}
  h_\chi\in H^0\left(\Sbt_{w_0}, \Vcal_{\Sbt}(-w_0\chi,\chi)\right).
\end{equation}
By \cite[Theorem 2.2.1(c)]{Goldring-Koskivirta-Strata-Hasse}, $h_\chi$ extends to $\Sbt$ if and only if $\chi$ is a dominant character. Using \eqref{pullbackformula} and $z=\sigma(w_{0,I})w_0$, we obtain a section
\begin{equation}
\Ha_{\chi}\colonequals \psi^*(h_\chi)\in H^0\left(\Fcal_{w_0},\Vcal_{\flag}\left(-w_0\chi+qw_{0,I}w_0\left(\sigma^{-1}\chi\right)\right)\right), 
\end{equation}
and for $\chi\in X^*_+(T)$ the section $\Ha_{\chi}$ extends to $\GF^\mu$. In particular, let $\alpha\in \Delta$, and suppose $\chi_{\alpha}$ is a character satisfying 
\begin{equation}\label{chia-cond}
\begin{cases}
\left\langle \chi_\alpha, \alpha^{\vee} \right\rangle>0, & \\
\left\langle \chi_\alpha, \beta^{\vee} \right\rangle=0 & \ \textrm{ for all } \beta\in \Delta\setminus \{\alpha\}.
\end{cases}
\end{equation}
In this case, the section $h_{\chi_\alpha}$ vanishes exactly on the codimension~$1$ stratum $\overline{\Sbt}_{w_0s_\alpha}$. Similarly, the section $\Ha_{\chi_{\alpha}}$ cuts out the Zariski closure of the codimension~$1$ stratum $\Fcal_{w_0s_\alpha}$.

\begin{definition}\label{partial-Hasse-def}
For $\alpha\in \Delta$ and $\chi_\alpha$ satisfying \eqref{chia-cond}, we call the section $\Ha_{\chi_{\alpha}}$ a partial Hasse invariant for the stratum $\Fcal_{w_0 s_\alpha}$.
\end{definition}

\subsection{Shimura varieties and $\boldsymbol{G}$-zips} \label{subsec-Shimura}
We explain the connection between the stack of $G$-zips and Shimura varieties. Let $\gx$ be a Shimura datum; see \cite[Section~2.1.1]{Deligne-Shimura-varieties}. 
Write $\mathbf{E}=\egx$ for the reflex field of $\gx$ and $\Ocal_\mathbf{E}$ for its ring of integers. 
Given an open compact subgroup $K \subset \gofaf$, write $\shgx_{K}$ for Deligne's canonical model at level $K$ over $\mathbf{E}$ (see \cite{Deligne-Shimura-varieties}). For $K\subset \mathbf{G}(\AA_f)$ small enough, $\shgx_K$ is a smooth, quasi-projective scheme over $\mathbf{E}$. Every inclusion $K' \subset K$ induces a finite \'{e}tale projection $\pi_{K'/K} \colon \shgx_{K'} \to \shgx_{K}$.

Fix a prime number $p$, and assume that the level $K$ is of the form $K=K_pK^p$,  where $K_p\subset \mathbf{G}(\QQ_p)$ is a hyperspecial subgroup and $K^p\subset \mathbf{G}(\AA_f^p)$ is an open compact subgroup. Then one has $K_p=\Gscr(\ZZ_p)$, where $\Gscr$ is a reductive group over $\ZZ_p$ such that $\Gscr\otimes_{\ZZ_p}\QQ_p\simeq \mathbf{G}_{\QQ_p}$ and $\Gscr\otimes_{\ZZ_p}\mathbb{F}_p$ is connected. 

We assume that $\gx$ is of abelian type.
For any place $v$ above $p$ in $\mathbf{E}$, Kisin \cite{Kisin-Hodge-Type-Shimura} and Vasiu \cite{Vasiu-Preabelian-integral-canonical-models} constructed a smooth, canonical model $\Sscr_K$  of $\shgx_K$ over $\Ocal_{\mathbf{E}_v}$. Let $\kappa(v)$ denote the residue field of $\Ocal_{\mathbf{E}_v}$, and let $\overline{\FF}_p$ be an algebraic closure of $\kappa(v)$. Set $S_K\colonequals \Sscr_K\otimes_{\Ocal_{\mathbf{E}_v}} \overline{\FF}_p$. We can take a representative $\mu\in \{\mu\}$ defined over $\mathbf{E}_v$, by \cite[Lemma~(1.1.3)(a)]{Kottwitz-Shimura-twisted-orbital}. 
We can also assume that $\mu$ extends to $\mu \colon \GG_{\mathrm{m},\Ocal_{\mathbf{E}_v}}\to \mathscr{G}_{\Ocal_{\mathbf{E}_v}}$ (see \cite[Corollary 3.3.11]{Kim-Rapoport-Zink-uniformization}). It gives rise to a parabolic subgroup $\Pscr\subset \mathscr{G}_{\Ocal_{\mathbf{E}_v}}$, with Levi subgroup $\Lscr$ equal to the centralizer of $\mu$. As explained in \cite[Section~2.5]{Imai-Koskivirta-vector-bundles}, we can assume (after possibly twisting $\mu$) that there is a Borel pair $(\Bscr, \Tscr)$ over $\ZZ_p$ in $\Gscr$ such that $\mathscr{B}_{\Ocal_{\mathbf{E}_v}}\subset \Pscr$. Let $G,P,B,T$ denote the special fibers of $\Gscr, \Pscr, \Bscr, \Tscr$, respectively. By slight abuse of notation, we denote again by $\mu$ its mod $p$ reduction $\mu \colon \GG_{\mathrm{m},k}\to G_k$. 

We define a quotient $\mathbf{G}^c$ of $\mathbf{G}$ by a subtorus of the center of $\mathbf{G}$ as in \cite[Section~2.3]{imai-kato-youcis-prim-real}. We note that $\mathbf{G}^c=\mathbf{G}$ if $\gx$ is of Hodge type by \cite[Remark 2.6]{imai-kato-youcis-prim-real}.  Let $G^c$ be the quotients of $G$ determined by $\mathbf{G}^c$.  Let $\mu^c \colon \GG_{\mathrm{m},k}\to G^c_k$ be the cocharacter induced by $\mu$.  Then $(G^c,\mu^c)$ is a cocharacter datum, and it yields a zip datum as in Section~\ref{subsec-cochar}, where $q=p$.  By \cite[Section~4.1]{Zhang-EO-Hodge} and \cite[Section~3.5]{imai-kato-youcis-prim-real}, there exists a natural smooth morphism
\begin{equation}\label{zeta-Shimura}
\zeta \colon S_K\lra \mathop{\text{$G^c$-{\tt Zip}}}\nolimits^{\mu^c}. 
\end{equation}
This map is also surjective by \cite[Corollary 3.5.3(1)]{Shen-Yu-Zhang-EKOR}. 

Let $T^c$ be the maximal torus of $G^c$ determined by $T$. 
For $\lambda\in X^*_{+,I}(T^c)$, we have a vector bundle $\Vcal_{I}(\lambda)$ on $\mathop{\text{$G^c$-{\tt Zip}}}\nolimits^{\mu^c}$ as in Section~\ref{sec-VB-Gzip}. We denote the pullback of $\Vcal_{I}(\lambda)$ under $\zeta$ by the same symbol. 
The vector bundle $\Vcal_{I}(\lambda)$ on $S_K$ is called \emph{the automorphic vector bundle of weight $\lambda$}.

The flag space of the Siegel modular variety $\Acal_n$ was first introduced by Ekedahl--van der Geer in \cite{Ekedahl-Geer-EO}. It parametrizes pairs $(\underline{A},\Fcal_{\bullet})$, where $\underline{A}=(A,\lambda)\in \Acal_n$ is a principally polarized abelian variety of relative dimension $n$ and $\Fcal_\bullet \subset H^1_{\dR}(A)$ is a full symplectic flag refining the Hodge filtration of $H^1_{\dR}(A)$. In general,  the flag space $\Flag(S_K)$ of $S_K$ was defined in \cite[Section~9.1]{Goldring-Koskivirta-Strata-Hasse} as the fiber product
\begin{equation}
\xymatrix@1@M=7pt{
\Flag(S_K) \ar[d]_{\pi_K} \ar[r]^-{\zeta_{\flag}} & \GF^{\mu} \ar[d]^{\pi} \\
S_K \ar[r]_-{\zeta} & \GZip^\mu\rlap{.}
}
\end{equation}
The stratification $(\Fcal_w)_{w\in W}$ on $\GF^\mu$ induces by pullback via $\zeta_{\flag}$ a stratification $(\Flag(S_K)_w)_{w\in W}$ of $\Flag(S_K)$ by locally closed, smooth subschemes. Moreover, we obtain a line bundle $\Vcal_{\flag}(\lambda)$ on $\Flag(S_K)$. Since $\zeta$ is smooth, pullback and pushforward commute, so we have $\pi_{K,*}(\Vcal_{\flag}(\lambda))=\Vcal_I(\lambda)$. In particular, the space of automorphic forms $H^0(S_K,\Vcal_I(\lambda))$ can be identified with $H^0(\Flag(S_K),\Vcal_{\flag}(\lambda))$.

\section{The zip cone}\label{sec-zip-cone}

In this section, we will consider several subsets of $X^*(T)$. A \emph{cone} in $X^*(T)$ will be an additive submonoid containing $0$. If $C\subset X^*(T)$ is a cone, we define its saturation (or saturated cone) as follows: 
\begin{equation}
\Ccal = \{\lambda \in X^*(T) \mid \exists N\geq 1, \ N\lambda\in C \}.
\end{equation}
We say that $C$ is saturated if $C=\Ccal$. Also define  $C_{\QQ_{\geq 0}}$ as follows: 
\begin{equation}
C_{\QQ_{\geq 0}}=\left\{\sum_{i=1}^N a_i\lambda_i \relmiddle| N\geq 1, \ a_i\in \QQ_{\geq 0}, \ \lambda_i\in C \right\}.
\end{equation}
There is a bijection between saturated cones of $X^*(T)$ and additive submonoids of $X^*(T)\otimes_{\ZZ} \QQ$ stable by $\QQ_{\geq 0}$. The bijection is given by the maps $C\mapsto C_{\QQ_{\geq 0}}$ and $C'\mapsto C'\cap X^*(T)$.

\subsection{Example: Hilbert--Blumenthal Shimura varieties}
We recall some results of Diamond--Kassaei in \cite{Diamond-Kassaei-comp-minimal} and extended in \cite{Diamond-Kassaei-cone-minimal} that motivate this paper. We give a short explanation of \cite[Corollary 5.4]{Diamond-Kassaei-comp-minimal}. The authors study Hilbert automorphic forms in characteristic $p$. Specifically, let $\mathbf{F}/\QQ$ be a totally real extension of degree $d=[\mathbf{F}:\QQ]$, and let $\mathbf{G}$ be the subgroup of $\Res_{\mathbf{F}/\QQ}(\GL_{2,\mathbf{F}})$ defined by
\begin{equation}
    \mathbf{G}(R)=\left\{g\in \GL_2(R\otimes_\QQ \mathbf{F}) \relmiddle| \det(g)\in R^\times\right\}.
\end{equation}
Let $p$ be a prime number unramified in $\mathbf{F}$ (in \cite{Diamond-Kassaei-cone-minimal}, $p$ is allowed to be ramified in $\mathbf{F}$). The lattice $\ZZ_p\otimes_{\ZZ}\Ocal_{\mathbf{F}}\subset \QQ_p\otimes_{\QQ} \mathbf{F}$ gives rise to a reductive model $\Gscr$ over $\ZZ_p$. Fix a small enough level $K^p\subset \mathbf{G}(\AA_f^p)$ outside $p$, and set $K_p\colonequals\Gscr(\ZZ_p)$ and $K=K_pK^p$. Let $S_K$ be the (geometric) special fiber of the corresponding Hilbert--Blumenthal Shimura variety of level $K$. 
The scheme $S_K$ is smooth of dimension $d$ over $\overline{\FF}_p$. It parametrizes tuples $(A,\lambda,\iota,\overline{\eta})$ of abelian schemes over $\overline{\FF}_p$ of dimension $d$ endowed with a principal polarization $\lambda$, an action $\iota$ of $\Ocal_{\mathbf{F}}$ on $A$ and a $K^p$-level structure $\overline{\eta}$. 

Let $\Sigma\colonequals \Hom(\mathbf{F},\overline{\QQ}_p)$ be the set of field embeddings $\mathbf{F}\to \overline{\QQ}_p$. Write $(\mathbf{e}_\tau)_\tau$ for the canonical basis of $\ZZ^\Sigma$. Let $\sigma$ denote the action of Frobenius on $\Sigma$. For each $\tau\in \Sigma$, there is an associated line bundle $\omega_\tau$ on $S_K$. For $\mathbf{k}=\sum_\tau k_\tau \mathbf{e}_\tau\in \ZZ^\Sigma$, define
\begin{equation}
\omega^\mathbf{k}\colonequals \bigotimes_{\tau\in \Sigma}\omega_{\tau}^{k_\tau}.    
\end{equation}
Elements of $H^0(X_{\overline{\FF}_p},\omega^\mathbf{k})$ are called mod $p$ Hilbert modular forms of weight $\mathbf{k}$. There is an Ekedahl--Oort stratification on $S_K$ given by the isomorphism class of the $p$-torsion $A[p]$ (with its additional structure given by $\lambda$ and $\iota$). There is a unique open stratum (on which $A$ is an ordinary abelian variety). The codimension~$1$ strata can be labeled as $(S_{K,\tau})_{\tau\in \Sigma}$. Andreatta--Goren \cite{Andreatta-Goren-book} constructed partial Hasse invariants $\Ha_{\tau}$ for each $\tau\in \Sigma$. The weight of $\Ha_{\tau}$ is given by
\begin{equation}
    \mathbf{h}_\tau \colonequals e_{\tau}-pe_{\sigma^{-1} \tau}.
\end{equation}
Note that the sign of $\mathbf{h}_\tau$ is different in \cite{Andreatta-Goren-book} and \cite{Diamond-Kassaei-comp-minimal}, due to a different positivity convention. The main property of $\Ha_{\tau}$ is that it vanishes exactly on the Zariski closure of the codimension~$1$ stratum $S_{K,\tau}$. It is a special case of the sections $\Ha_{\alpha}$ defined in Definition~\ref{partial-Hasse-def}. Define the partial Hasse invariant cone $\Ccal_{\pha}\subset \ZZ^\Sigma$ as the cone of $\mathbf{k}\in \ZZ^{\Sigma}$ that are spanned (over $\QQ_{\geq 0}$) by the weights $(\mathbf{h}_\tau)_{\tau\in \Sigma}$ defined above.

\begin{theorem}[Diamond--Kassaei, \textit{cf.} {\cite[Theorem 5.1 and Corollary 5.4]{Diamond-Kassaei-comp-minimal}}] \label{thmDK} \ 
\begin{assertionlist}
    \item\label{thm-DK-item1} Let $f\in H^0(S_K,\omega^\mathbf{k})$ and $\tau \in \Sigma$. Assume that $pk_\tau > k_{\sigma^{-1}\tau}$. Then $f$ is divisible by $\Ha_{\tau}$.
    \item\label{thm-DK-item2} If\, $H^0(S_K,\omega^\mathbf{k})\neq 0$, then $\mathbf{k}\in \Ccal_{\pha}$.
\end{assertionlist}
\end{theorem}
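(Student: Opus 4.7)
The plan is to establish \ref{thm-DK-item1} first by a local analysis, and then deduce \ref{thm-DK-item2} by iteration. For \ref{thm-DK-item1}, the idea is to work on the formal neighborhood of a point $x \in S_{K,\tau}$. Because $p$ is unramified in $\mathbf{F}$, Serre--Tate theory realises this formal neighborhood as a product $\prod_{\sigma\in\Sigma}\widehat{D}_\sigma$ of one-dimensional formal deformation spaces indexed by $\Sigma$, furnishing local coordinates $(t_\sigma)_{\sigma\in\Sigma}$. Locally $S_{K,\tau}$ is cut out by $t_\tau=0$, and $\Ha_\tau$ coincides, up to a unit, with the coordinate $t_\tau$ through its description as the Verschiebung component in direction $\tau$. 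Choosing a Serre--Tate compatible trivialization of each $\omega_\sigma$ and expanding $f=\sum_I a_I\prod_\sigma t_\sigma^{i_\sigma}$, the $\Ocal_{\mathbf{F}}\otimes\ZZ_p$-graded structure on the universal Dieudonn\'e module forces the monomials with $i_\tau=0$ to vanish whenever $pk_\tau>k_{\sigma^{-1}\tau}$; normality of $S_K$ together with the simplicity of the zero of $\Ha_\tau$ along $S_{K,\tau}$ then yields $f=\Ha_\tau f'$. More conceptually, the same conclusion can be read off the flag space $\Flag(S_K)$: by \eqref{ident-H0-GF} the section $f$ pulls back to a section of $\Vcal_{\flag}(\mathbf{k})$, and \eqref{divflam} computes its order of vanishing along the relevant codimension-one flag stratum in terms of pairings of $\mathbf{k}$ with coroots, which recovers the threshold $pk_\tau>k_{\sigma^{-1}\tau}$.

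For \ref{thm-DK-item2}, I would iterate \ref{thm-DK-item1}. If $pk_\tau\le k_{\sigma^{-1}\tau}$ holds for every $\tau$, then iterating this inequality around each Frobenius orbit of length $d$ gives $(p^d-1)k_\tau\le 0$, hence $k_\tau\le 0$ for all $\tau$; solving the cyclic linear system $k_\tau=c_\tau-pc_{\sigma\tau}$ on each orbit produces coefficients $c_\tau=-(p^d-1)^{-1}\sum_{i=0}^{d-1}p^i k_{\sigma^i\tau}\ge 0$, whence $\mathbf{k}=\sum_\tau c_\tau\mathbf{h}_\tau\in\Ccal_{\pha}$. Otherwise some $\tau$ satisfies $pk_\tau>k_{\sigma^{-1}\tau}$, and \ref{thm-DK-item1} gives $f=\Ha_\tau f'$ with $f'\ne 0$ of weight $\mathbf{k}-\mathbf{h}_\tau$. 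Iterating produces nonzero sections $f_n$ of weight $\mathbf{k}_n=\mathbf{k}-\sum_{i=1}^n \mathbf{h}_{\tau_i}$; if this procedure never terminates then, by finiteness of $\Sigma$, some $\tau_0$ is selected infinitely often, forcing $f$ to be divisible by $\Ha_{\tau_0}^n$ for every $n$ and hence to vanish to infinite order along the divisor $\{\Ha_{\tau_0}=0\}$, contradicting $f\ne 0$. Therefore the procedure terminates at a step where the first case applies, and $\mathbf{k}\in\Ccal_{\pha}$.

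The main obstacle is the sharp local computation underlying \ref{thm-DK-item1}: one needs to match powers of the Serre--Tate coordinate $t_\tau$ with the $\Ocal_{\mathbf{F}}$-graded weights $k_\tau$ of $\omega^{\mathbf{k}}$ precisely enough to obtain the exact threshold $pk_\tau>k_{\sigma^{-1}\tau}$ rather than a weaker inequality. The group-theoretic recasting on the flag space via Chevalley's formula, recalled in \S \ref{sec-partialHasse}, is precisely what makes this kind of divisibility argument available for arbitrary Hodge-type settings, which is the viewpoint pursued throughout the rest of the paper.
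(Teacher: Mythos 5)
The theorem you are asked to prove is stated in the paper purely as a citation to Diamond--Kassaei (\cite{Diamond-Kassaei}); the paper itself offers no proof, only the remark that~\eqref{thm-DK-item2} follows from~\eqref{thm-DK-item1} by factoring a form into a product of partial Hasse invariants and a form of minimal weight. Your reconstruction is sound and follows the actual route of the cited work: the substance lies in~\eqref{thm-DK-item1}, where the Serre--Tate local coordinates and the identification of $\Ha_\tau$ with the coordinate cutting out $S_{K,\tau}$ are precisely what Diamond--Kassaei use to extract the sharp threshold $pk_\tau>k_{\sigma^{-1}\tau}$. Your deduction of~\eqref{thm-DK-item2} by iteration is correct: the linear algebra on a Frobenius orbit of length $d$ correctly produces $c_\tau=-(p^d-1)^{-1}\sum_{i=0}^{d-1}p^ik_{\sigma^i\tau}\ge 0$ when all the inequalities $pk_\tau\le k_{\sigma^{-1}\tau}$ hold (this is exactly the inclusion $\Ccal_{\min}\subset\Ccal_{\pha}$), and the termination argument --- a nonzero section cannot be divisible by arbitrarily high powers of a fixed $\Ha_{\tau_0}$ --- is valid since $\div(\Ha_{\tau_0})$ is a proper divisor on each component of the (normal, noetherian) scheme $S_K$. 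One small caveat worth making explicit: iterating the division requires that the quotients $f_n$ are again sections of the relevant line bundle on $S_K$ (or its compactification), which in the Hilbert--Blumenthal case with $d\geq 2$ is automatic by Koecher's principle; for $d=1$ one argues on the compactified modular curve.
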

The authors define a minimal cone $\Ccal_{\rm min}\subset \Ccal_{\pha}$ as follows:
\begin{equation}
    \Ccal_{\rm min}=\left\{\mathbf{k}\in \ZZ^\Sigma \relmiddle| pk_\tau \leq k_{\sigma^{-1}\tau} \ \textrm{ for all }\tau\in \Sigma\right\}.
\end{equation}
Theorem~\ref{thmDK}\eqref{thm-DK-item1} shows that any Hilbert modular form $f$ of weight $\mathbf{k}$ can be written as a product $f=f_{\min} h$, where $f_{\min}$ has weight $\mathbf{k}_{\min}\in \Ccal_{\min}$ and $h$ is a product of partial Hasse invariants. In particular, \eqref{thm-DK-item2} is a direct consequence of \eqref{thm-DK-item1}. One motivation of this paper is to understand the natural setting in which one might expect a generalization of Theorem~\ref{thmDK}\eqref{thm-DK-item2} to other Shimura varieties. In \cite{Goldring-Koskivirta-divisibility}, Goldring and the second-named author show that \eqref{thm-DK-item1} also admits a similar generalization for several Hodge-type Shimura varieties.

\subsection{General setting}\label{sec-gen-set}
We attempt to give an abstract setting in which Theorem~\ref{thmDK} may generalize. First, by observing the example of Hilbert--Blumenthal varieties, we extract the essential properties of the objects we want to study. Specifically, we consider a stack $Y$ over $k=\overline{\FF}_p$ endowed with the following structure:
\begin{alist}
\item\label{sgs-a} There is a locally closed stratification $Y=\bigsqcup_{i=1}^N Y_i$ such that the Zariski closure of a stratum is a union of strata.
\item\label{sgs-b} There exist a free, finite-type $\ZZ$-module $\Lambda$ and a family of line bundles $(\omega(\lambda))_{\lambda\in \Lambda}$ on $Y$ such that $\omega(\lambda+\lambda')=\omega(\lambda)\otimes \omega(\lambda')$ for all $\lambda,\lambda'\in \Lambda$.
\item\label{sgs-c} For each codimension~$1$ stratum $Y_i\subset Y$, there are fixed $\lambda_i\in \Lambda$ and $\Ha_i\in H^0(X,\omega(\lambda_i))$ such that the support of $\div(\Ha_i)$ is $\overline{Y}_i$. By analogy, we call $\Ha_i$ a partial Hasse invariant for $Y_i$.
\end{alist}
Denote by $I_1\subset I$ the indices such that $Y_i$ has codimension~$1$. Let $C_{\pha}\subset \Lambda$ denote the cone generated by the elements $\{\lambda_i \mid i\in I_1\}$, and call it the partial Hasse invariant cone. Put
\begin{equation}\label{CY-cone}
    C_Y\colonequals \{\lambda\in \Lambda \mid H^0(Y,\omega(\lambda))\neq 0\}.
\end{equation}
By definition, one has $C_{\pha}\subset C_Y$. If $Y$ is integral, then $C_Y$ is a cone (\textit{i.e.}, an additive submonoid) of $\Lambda$. Indeed, if $\lambda,\lambda'\in \Lambda$ and $f,f'$ are nonzero sections of $\omega(\lambda)$ and $\omega(\lambda')$, respectively, then $ff'$ is a section of $\omega(\lambda+\lambda')$. Since $Y$ is integral, $ff'$ is nonzero. Write $\Ccal_{\pha}$ and $\Ccal_Y$ for the saturations of $C_{\pha}$ and $C_Y$, respectively, inside $\Lambda$.

\begin{definition}\label{def-Hasse-property}
Let $Y$ be a stack satisfying~\eqref{sgs-a},~\eqref{sgs-b} and~\eqref{sgs-c}. We say that $Y$ has the Hasse property if $\Ccal_{\pha} = \Ccal_Y $.
\end{definition}

For example, Theorem~\ref{thmDK}\eqref{thm-DK-item2} shows that the geometric special fiber of the Hilbert--Blumenthal Shimura variety at a place of good reduction satisfies the Hasse property. Let $(G,\mu)$ be a cocharacter datum, and let $\GZip^\mu$ be the attached stack of $G$-zips. Fix a frame $(B,T,z)$ as in Section~\ref{sec-frames}. 
Then, the stack of zip flags $\GF^\mu$ (see Section~\ref{subsec-zipflag}) satisfies all requirements~\eqref{sgs-a},~\eqref{sgs-b} and~\eqref{sgs-c} above. First, we have the flag stratification $\GF^{\mu}=\bigsqcup_{w\in W}\Fcal_w$ as in Section~\eqref{zipflag-Cw}. Setting $\Lambda \colonequals X^*(T)$, we have the family of line bundles $(\Vcal_{\flag}(\lambda))_{\lambda\in X^*(T)}$ satisfying (b) by \eqref{Vflag-additivity}. Finally, we have partial Hasse invariants (Section~\ref{sec-partialHasse}). 
To be precise, there is an ambiguity in the definition of $C_{\pha}$, because if $f$ is a partial Hasse invariant for $\Fcal_{w_0 s_\alpha}$ (see Definition~\ref{partial-Hasse-def}), then $\chi f^n$ for $\chi \in X^*(G)$ and $n\geq 1$ is also a partial Hasse invariant for $\Fcal_{w_0 s_\alpha}$. We will give an unambiguous definition of $C_{\pha}$ in Definition~\ref{definition-CHasse}. Moreover, the saturation $\Ccal_{\pha}$ is independent of all choices. In this paper, we give a full answer as to whether $\GF^{\mu}$ satisfies the Hasse property.

Similarly, let $Y$ be a scheme endowed with a smooth, surjective morphism $\zeta_{Y} \colon Y\to \GF^{\mu}$. Then $Y$ naturally inherits  by pullback all the structure from $\GF^{\mu}$, and hence satisfies all required properties~\eqref{sgs-a},~\eqref{sgs-b} and~\eqref{sgs-c}  above. In particular, if we start with a scheme $X$ and a smooth, surjective morphism $\zeta \colon X\to \GZip^\mu$, then we can consider the flag space $Y\colonequals\Flag(X)$ (similarly to the flag space of $S_K$ defined at the end of Section~\ref{subsec-Shimura}). It is defined as the fiber product
\begin{equation}\label{equ-Y}
    Y \colonequals X\times_{\GZip^\mu} \GF^{\mu}.
\end{equation}
The induced map $\zeta_{\flag} \colon Y\to \GF^{\mu}$ is again smooth and surjective. Hence, $Y$ inherits the structure as above and satisfies~\eqref{sgs-a},~\eqref{sgs-b} and~\eqref{sgs-c}. Denote by $\pi \colon Y\to X$ and $\pi \colon \GF^{\mu}\to \GZip^\mu$ the natural projections. In both cases, we have $\pi_{*}(\Vcal_{\flag}(\lambda))=\Vcal_I(\lambda)$ because $\zeta$ is smooth and $\pi$ is proper. Therefore, the cones $C_Y$ and $C_{\GF^{\mu}}$ can also be written as follows:
\begin{align}
    C_Y&=\left\{\lambda\in X^*(T) \relmiddle| H^0(X,\Vcal_I(\lambda))\neq 0\right\}, \label{equ-CY-Vlambda}  \\
    C_{\zipsf} \colonequals C_{\GF^{\mu}} &= \left\{\lambda\in X^*(T)\relmiddle| H^0(\GZip^\mu,\Vcal_I(\lambda))\neq 0\right\}. \label{equ-CZip}
\end{align}
We will use the notation $C_{\zipsf}$ (introduced in \cite{Koskivirta-automforms-GZip}) instead of $C_{\GF^{\mu}}$. When $Y$ is given as \eqref{equ-Y} above, we call $Y$ \emph{the flag space} of $(X,\zeta)$. Furthermore, we make the slight abuse of saying that $(X,\zeta)$ satisfies the Hasse property if $(Y,\zeta_{\flag})$ does. In particular, let $X=S_K$ be the geometric special fiber modulo $p$ of a Hodge-type Shimura variety with good reduction at $p$. By Zhang's result, there is a smooth, surjective morphism $\zeta \colon X\to \GZip^\mu$, and so we obtain $(Y,\zeta_{\flag})$ as above. Our goal is to investigate which Hodge-type Shimura varieties satisfy the Hasse property.

We now return to a general pair $(X,\zeta)$. Since $\zeta$ is surjective, pullback by $\zeta_{\flag}$ induces an inclusion
\begin{equation}
    H^0(\GZip^\mu,\Vcal_I(\lambda)) \subset H^0(X,\Vcal_I(\lambda)).
\end{equation}
Hence we have inclusions $C_{\zipsf}\subset C_Y$ and $\Ccal_{\zipsf}\subset \Ccal_Y$. Furthermore, Hasse invariants already exist on the stack $\GF^{\mu}$ by Section~\ref{sec-partialHasse}; hence the cone $C_{\pha}$ generated by their weights satisfies $C_{\pha}\subset C_{\zipsf}$. Therefore, we have in general
\begin{equation}
\Ccal_{\pha}\subset \Ccal_{\zipsf}\subset \Ccal_Y.    
\end{equation}
In particular, if the pair $(X,\zeta)$ satisfies the Hasse property, then all three cones above coincide. In other words, a necessary condition for $X$ to satisfy the Hasse property is that $\GZip^{\mu}$ itself satisfies this property, which is equivalent to the condition $\Ccal_{\zipsf}=\Ccal_{\pha}$. This is an obstruction for a potential generalization of Theorem~\ref{thmDK}\eqref{thm-DK-item2} to other Shimura varieties.

\begin{rmk}
When we start with a pair $(X,\zeta)$ and construct $(Y,\zeta_{\flag})$ by fiber product as in \eqref{equ-Y}, Formula \eqref{equ-CY-Vlambda} shows  immediately that 
\begin{equation}\label{equ-Ldom-inclusion}
\Ccal_Y\subset X^*_{+,I}(T).
\end{equation}
Indeed, this follows simply from the fact that if $\lambda$ is not $I$-dominant, then $\Vcal_I(\lambda)=0$. Thus, in the example of Shimura varieties, we have the inclusion \eqref{equ-Ldom-inclusion}.
\end{rmk}

\subsection{Previous results}
We review previous results from \cite{Goldring-Koskivirta-global-sections-compositio}. Let $(X,\zeta)$ be a pair consisting of a $k$-scheme $X$ and a smooth, surjective morphism of stacks $\zeta\colon X\to \GZip^\mu$, and let $(Y,\zeta_{\flag})$ be the flag space of $X$. We make the following assumption. 

\begin{assumption}\label{assume} \leavevmode
\begin{Alist}
\item\label{assume-A} For any $w\in W$ with $\ell(w)=1$, the closed stratum $\overline{Y}_w=\zeta_{\flag}^{-1}(\overline{\Fcal}_w)$ is pseudo-complete (\textit{i.e.}, any element of $H^0(\overline{Y}_w,\Ocal_{\overline{Y}_w})$ is locally constant on $\overline{Y}_w$ for the Zariski topology).
\item\label{assume-B} The restriction of $\zeta$ to any connected component $X^\circ \subset X$ is smooth and surjective.
\end{Alist}
\end{assumption}

For example, condition~\eqref{assume-A} is satisfied if $X$ is a proper $k$-scheme. In general, it can happen that the inclusion $\Ccal_{\pha}\subset \Ccal_{\zipsf}$ is strict. In this case, it is impossible for $Y$ to satisfy the Hasse property. However, Goldring and the second-named author conjectured the following in general. 

\begin{conjecture}\label{conj-Czip}
Under Assumption~\ref{assume}, we have $\Ccal_Y = \Ccal_{\zipsf}$.
\end{conjecture}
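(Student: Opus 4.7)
The inclusion $\Ccal_{\zipsf} \subset \Ccal_Y$ is immediate from pullback along the surjective smooth map $\zeta_{\flag}\colon Y \to \GF^\mu$, using \eqref{ident-H0-GF} and its obvious analogue on $Y$. For the reverse inclusion, fix $\lambda \in C_Y$ and a nonzero $f \in H^0(Y, \Vcal_{\flag}(\lambda))$. I would argue by induction on the vanishing order of $f$ along the codimension-one closed flag strata $\overline{Y}_{w_0 s_\alpha} \colonequals \zeta_{\flag}^{-1}(\overline{\Fcal}_{w_0 s_\alpha})$, $\alpha \in \Delta$. If $f$ vanishes along such a stratum, divide by $\zeta_{\flag}^*(\Ha_{\chi_\alpha})$, with $\Ha_{\chi_\alpha}$ the partial Hasse invariant of Definition \ref{partial-Hasse-def}: the quotient is a nonzero section of strictly smaller vanishing order and of weight $\lambda - \lambda_\alpha$ with $\lambda_\alpha \colonequals \wt(\Ha_{\chi_\alpha}) \in C_{\pha} \subset C_{\zipsf}$. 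Assumption \ref{assume}~(A) enters here to test and propagate vanishing: pseudo-completeness of the length-one closed strata $\overline{Y}_w$, combined with the closure relations \eqref{equ-closure-rel} and Assumption \ref{assume}~(B), should reduce the detection of vanishing along $\overline{Y}_{w_0 s_\alpha}$ to vanishing at deepest points.

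The crucial base case is when $f$ has no zero on any codimension-one closed flag stratum. Then $f$ is nowhere vanishing on $V \colonequals \zeta_{\flag}^{-1}(\Fcal_{w_0}) \subset Y$, the preimage of the open flag stratum. Under $\GF^\mu \simeq [E' \backslash G_k]$, the automorphism group at a geometric point of $\Fcal_{w_0}$ is a conjugate of the finite group $L_\varphi$ of Lemma \ref{lemLphi}, fitting in $1 \to L_\varphi^\circ \to L_\varphi \to L_0(\FF_q) \to 1$ with $L_\varphi^\circ$ finite unipotent of $p$-power order. The nowhere-vanishing $f|_V$ determines a character $\chi_\lambda\colon L_\varphi \to \GG_{\mathrm{m},k}$ by which $L_\varphi$ acts on the trivialized fiber; since $\Hom(L_\varphi^\circ, \GG_{\mathrm{m},k}) = 1$, this character factors through $L_0(\FF_q)$ and has order dividing $|L_0(\FF_q)|$. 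Raising $f$ to the power $N \colonequals |L_0(\FF_q)|$ kills $\chi_\lambda$, so $f^N$ should descend to a nowhere-vanishing section of $\Vcal_{\flag}(N\lambda)$ on the open substack $\Fcal_{w_0}$ of $\GF^\mu$.

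The main obstacle is extending this $\Fcal_{w_0}$-section across the whole of $\GF^\mu$, since it may a priori have poles along positive-codimension flag strata. The plan is to clear such poles by further multiplication with partial Hasse invariants and, when necessary, with the norm sections $\mathbf{f}_\lambda$ of Theorem~4 of the introduction together with sections provided by the Brylinski--Kostant description of Theorem \ref{main-Fq}, all of whose weights lie in $C_{\zipsf}$. The delicate point is to ensure that this pole-clearing procedure terminates while keeping the accumulated weight correction inside $C_{\zipsf}$; making this quantitative appears to require closure and intersection data for the flag stratification refining \eqref{equ-closure-rel}, together with a structural criterion describing which local sections on zip strata glue to global sections on $\GF^\mu$. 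Producing such a criterion is the principal difficulty, and is presumably why the statement is posed as a conjecture rather than a theorem.
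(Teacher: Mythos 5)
This is a conjecture in the paper, not a theorem, so there is no proof of the paper's own to compare against; I will instead assess the soundness of your strategy on its own terms. You correctly note that $\Ccal_{\zipsf}\subset\Ccal_Y$ is immediate, and you correctly identify that the content is the reverse inclusion. You also candidly flag the pole-clearing step as a gap. However, the more fundamental flaw occurs one step earlier, in the base case, and it is not merely a quantitative difficulty but a conceptual one.

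A nowhere-vanishing section $f|_V$ of $\Vcal_{\flag}(\lambda)$ on $V=\zeta_{\flag}^{-1}(\Fcal_{w_0})\subset Y$ is a trivialization of a line bundle on a \emph{scheme}; it does not carry an $L_\varphi$-character. The group $L_\varphi$ is the automorphism group of a geometric point of the \emph{stack} $\Fcal_{w_0}$, and it acts on the fiber of $\Vcal_{\flag}(\lambda)$ at that stack point; your invocation of $\Hom(L_\varphi^\circ,\GG_{\mathrm m,k})=1$ therefore presupposes that $f|_V$ already descends to a section on $\Fcal_{w_0}$, which is exactly the conclusion you are trying to establish. For the map $\zeta_{\flag}\colon Y\to\GF^\mu$, the fibers are positive-dimensional; a generic section of a pulled-back line bundle on $Y$ is not constant along these fibers, so neither $f$ nor $f^N$ will descend. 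Indeed, for Shimura varieties the space $H^0(S_K,\Vcal_I(\lambda))$ is typically vastly larger than $H^0(\GZip^\mu,\Vcal_I(\lambda))$ (e.g.\ it contains cusp forms that depend on the moduli of the abelian variety, not just on its $G$-zip), so no descent argument that produces a genuine section on $\GF^\mu$ out of $f$ can succeed. The conjecture asserts only that the \emph{saturated weight cones} agree; any successful argument must be indirect, constraining $\lambda$ by exploiting the stratification and the partial Hasse invariants (as in the divisibility arguments of \cite{Diamond-Kassaei} and \cite{Goldring-Koskivirta-divisibility}) without attempting to transport $f$ itself to $\GZip^\mu$. Your subsequent concern about pole-clearing is real, but subordinate: even if it could be resolved, the argument would still collapse at the descent step.

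A secondary issue: in the inductive step you pass from ``$f$ does not vanish along any codimension-one closed flag stratum'' to ``$f$ is nowhere-vanishing on $V$.'' This does not follow, since $f$ may vanish along divisors of $Y$ that are not pullbacks of flag strata (and generically it does). This gap is less serious than the descent problem because for the intended conclusion what would matter is not non-vanishing of $f|_V$ but rather whether $f|_V$ is pulled back from $\Fcal_{w_0}$—which, as above, it typically is not.
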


Let $S_K$ be the special fiber of a Hodge-type Shimura variety at a prime $p$ of good reduction. In this case, we write $C_K(\overline{\FF}_p)$ for the cone $C_Y$; \textit{i.e.}, 
\begin{equation}\label{cone-CKFp}
    C_K(\overline{\FF}_p) \colonequals \left\{\lambda\in X^*(T) \relmiddle| H^0(S_K,\Vcal_I(\lambda))\neq 0\right\}.
\end{equation}
By \cite[Corollary 1.5.3]{Koskivirta-automforms-GZip}, the saturation of $C_K(\overline{\FF}_p)$ is independent of $K$, so we simply denote it by $\Ccal(\overline{\FF}_p)$. Let $\zeta\colon S_K\to \GZip^\mu$ be the map \eqref{zeta-Shimura}. We do not know whether the pair $(X,\zeta)$ always satisfies condition~\eqref{assume-A}  of Assumption~\ref{assume}. However, by \cite[Theorem 6.2.1]{Goldring-Koskivirta-Strata-Hasse}, the map $\zeta\colon S_K\to \GZip^\mu$  admits an extension to a toroidal compactification
\begin{equation}
    \zeta^\Sigma\colon S_{K}^{\Sigma}\lra \GZip^\mu, 
\end{equation}
where $\Sigma$ is a sufficiently fine cone decomposition. By construction, the pullback $\Vcal_I^\Sigma(\lambda)\colonequals \zeta^{\Sigma,*}(\Vcal_I(\lambda))$ is the canonical extension of $\Vcal_I(\lambda)$ to $S_K^\Sigma$. Furthermore, by \cite[Theorem 1.2]{Andreatta-modp-period-maps}, the map $\zeta^\Sigma$ is smooth. Since $\zeta$ is surjective, $\zeta^\Sigma$ is also surjective. By \cite[Proposition 6.20]{Wedhorn-Ziegler-tautological}, any connected component $S^\circ\subset S_{K}^{\Sigma}$ intersects the unique zero-dimensional stratum. Since $\zeta^\Sigma \colon S^\circ \to \GZip^\mu$ is smooth, it has an open image; therefore,  it must be surjective. In particular, the pair $(S_{K}^{\Sigma},\zeta^\Sigma)$ satisfies conditions~\eqref{assume-A} and~\eqref{assume-B}. Furthermore, in most cases, Koecher's principle holds by \cite[Theorem 2.5.11]{Lan-Stroh-stratifications-compactifications}; \textit{i.e.}, we have an equality
\begin{equation}
    H^0\left(S^{\Sigma}_K, \Vcal^{\Sigma}_I(\lambda)\right) = H^0\left(S_K,\Vcal_I(\lambda)\right).
\end{equation}
In particular, the cone attached to the pair $(S_{K}^{\Sigma},\zeta^\Sigma)$ is the same as the cone attached to $(S_K,\zeta)$, namely $C_K(\overline{\FF}_p)$. Therefore, by the above discussion, we deduce that Conjecture~\ref{conj-Czip} applies to Shimura varieties and predicts the following. 

\begin{conjecture}\label{conj-shimura}
If $S_K$ is the special fiber of a Hodge-type Shimura variety at a prime $p$ of good reduction, we have $\Ccal(\overline{\FF}_p) = \Ccal_{\zipsf}$.
\end{conjecture}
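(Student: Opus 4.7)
The inclusion $\Ccal_{\zipsf} \subset \Ccal(\overline{\FF}_p)$ is immediate: pullback along Zhang's smooth surjective map $\zeta \colon S_K \to \GZip^\mu$ yields an injection $H^0(\GZip^\mu, \Vcal_I(\lambda)) \hookrightarrow H^0(S_K, \Vcal_I(\lambda))$, so $C_{\zipsf} \subset C_K(\overline{\FF}_p)$. The substantive content is the reverse inclusion $\Ccal(\overline{\FF}_p) \subset \Ccal_{\zipsf}$, asserting that up to saturation no new weights appear on the Shimura variety beyond those already visible on the stack of $G$-zips.

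The plan is to deduce this as a special case of Conjecture \ref{conj-Czip} applied to the pair $(S_K^\Sigma, \zeta^\Sigma)$, where $\zeta^\Sigma$ is the extension of Zhang's map to a smooth toroidal compactification. As the excerpt verifies, this pair satisfies Assumption \ref{assume}: $\zeta^\Sigma$ is smooth and surjective, every connected component meets the zero-dimensional toroidal stratum, and Koecher's principle identifies $H^0(S_K^\Sigma, \Vcal_I^\Sigma(\lambda))$ with $H^0(S_K, \Vcal_I(\lambda))$ so that the associated cone coincides with $C_K(\overline{\FF}_p)$. Thus it suffices to prove, for any pair $(X, \zeta)$ satisfying Assumption \ref{assume} with flag space $Y$, the purely geometric inclusion $\Ccal_Y \subset \Ccal_{\zipsf}$.

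For this I would work on the flag side via the stratification $(\Fcal_w)_{w \in W}$ of $\GF^\mu$ and its pullback to $Y$, together with the line bundles $\Vcal_{\flag}(\lambda)$ and the partial Hasse invariants $\Ha_{\chi_\alpha}$ of Definition \ref{partial-Hasse-def}. Starting from a nonzero $f \in H^0(Y, \Vcal_{\flag}(\lambda))$, one would measure its vanishing order along each codimension-one flag stratum $\overline{Y}_{w_0 s_\alpha}$ and divide out the appropriate power of the pullback $\zeta_{\flag}^*\Ha_{\chi_\alpha}$; the pseudo-completeness hypothesis in part (A) of Assumption \ref{assume} is precisely what makes this division produce a genuine section rather than only of a higher power. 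Up to multiplication by a product of Hasse invariant pullbacks we may then assume that $f$ is supported generically on the $\mu$-ordinary flag stratum and invoke Lemma \ref{lem-Umu-sections} to identify its value at a reference point with an $L_\varphi$-invariant in $V_I(\lambda)$. The final step is to spread this invariant to a genuine global section on $\GZip^\mu$, producing the required nonzero element of $C_{\zipsf}$ of weight some positive multiple of $\lambda$.

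The principal obstacle, and the reason this conjecture remains open in general, is exactly this last step. Even when $f$ is concentrated on the ordinary flag stratum of $Y$, it can depend nontrivially on the positive-dimensional fibers of $\zeta_{\flag}$, and no automatic mechanism forces its associated $L_\varphi$-invariant to extend across the non-ordinary locus of $\GZip^\mu$. Any general argument will have to combine the containment $\Ccal_{\GSsf} \subset \Ccal_{\zipsf}$ from Theorem 2 with the Weil-restriction reduction of Theorem 3 (to pass to a split group), and with a finer analysis of how the relative cohomology along $\zeta$ interacts with the Frobenius-linear structure carried by the universal $G$-zip; a precise version of such an analysis is presently available only in the Hilbert--Blumenthal, Picard-at-split-primes, and low-rank Siegel and unitary cases cited in the introduction, and systematizing it is the main missing ingredient.
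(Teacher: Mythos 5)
The statement you were asked to address is a \emph{conjecture}, not a theorem: the paper does not prove it, and neither do you, which you correctly acknowledge. Your first two paragraphs exactly reproduce the paper's own reasoning for stating the conjecture: the easy inclusion $\Ccal_{\zipsf}\subset\Ccal(\overline{\FF}_p)$ via pullback along Zhang's map, then the reduction to Conjecture~\ref{conj-Czip} by passing to the pair $(S_K^\Sigma,\zeta^\Sigma)$, verifying Assumption~\ref{assume} via smoothness of $\zeta^\Sigma$ (Andreatta), surjectivity on each component (Wedhorn--Ziegler), and Koecher's principle (Lan--Stroh) to identify the cone with $C_K(\overline{\FF}_p)$. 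That is precisely what the paper does, so your treatment of the reduction matches the paper.

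Your third paragraph is speculative content beyond anything the paper offers. One caution worth making explicit: the strategy you sketch — measuring vanishing orders along codimension-one flag strata and dividing by powers of the pullbacks $\zeta_{\flag}^*\Ha_{\chi_\alpha}$ to concentrate $f$ on the $\mu$-ordinary stratum — is precisely the Diamond--Kassaei/Goldring--Koskivirta argument that establishes $\Ccal_Y\subset\Ccal_{\pha}$. The paper proves (Theorem~\ref{main-thm-Hasse-type}) that $\Ccal_{\pha}=\Ccal_{\zipsf}$ \emph{only} for Hasse-type $(G,\mu)$, and gives explicit examples (e.g.\ $\Sp(2n)$, $n\geq 3$) where $\Ccal_{\pha}\subsetneq\Ccal_{\zipsf}$. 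So this division strategy cannot close the general case: it would at best prove the stronger (and false, in general) claim $\Ccal_Y\subset\Ccal_{\pha}$, unless one also incorporates the additional sections $\mathbf{f}_\lambda$ or analogues to account for $\Ccal_{\zipsf}\setminus\Ccal_{\pha}$. You do point at the right missing ingredient in your last sentence (interaction of relative cohomology along $\zeta$ with the Frobenius-linear structure), and you correctly identify Theorem~\ref{thmGSzip} and Theorem~\ref{thm-GFqr-cone} as the evidence the paper actually provides. In short: the parts of your argument that can be compared to the paper agree with it; the speculative parts are honestly flagged as incomplete and should not be mistaken for a proof.
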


In \cite[Theorem D]{Goldring-Koskivirta-global-sections-compositio}, the authors proved that certain Shimura varieties satisfy the Hasse property. Specifically, they showed the following. 

\begin{theorem}[\textit{cf.} {\cite[Theorem D]{Goldring-Koskivirta-global-sections-compositio}}]\label{thmcones}
Let $(X,\zeta)$ be a pair that satisfies Assumption~\ref{assume}, and let $(Y,\zeta_{\flag})$ be the flag space of $X$. Suppose that $(G,\mu)$ is one of the following three pairs:
\begin{assertionlist}
    \item\label{thmcones-1} $G$ is an $\FF_p$-form of\, $\GL_2^n$ for some $n\geq 1$, and $\mu$ is nontrivial on each factor;
    \item\label{thmcones-2} $G=\GL_{3,\FF_p}$ and $\mu\colon z\mapsto \diag(z,z,1)$; 
    \item\label{thmcones-3} $G=\GSp(4)_{\FF_p}$ and $\mu\colon z\mapsto \diag(z,z,1,1)$.
\end{assertionlist}
Then $(X,\zeta)$ satisfies the Hasse property. In other words, we have $\Ccal_Y = \Ccal_{\zipsf} = \Ccal_{\pha}$
\end{theorem}

Theorem~\ref{thmcones} also holds if we change the group $G$ to a group with the same adjoint group. By the above discussion, Theorem~\ref{thmcones} applies to Hilbert--Blumenthal Shimura varieties, Picard surfaces at a split prime, Siegel modular threefolds and shows that Conjecture~\ref{conj-Czip} holds in each case. Goldring and the second-named author proved Conjecture~\ref{conj-Czip} in \cite{Goldring-Koskivirta-divisibility} for certain Shimura varieties for which the inclusion $\Ccal_{\pha}\subset \Ccal_{\zipsf}$ is strict. Namely, they showed Conjecture~\ref{conj-Czip} for the Siegel modular variety $\Acal_3$ as well as unitary Shimura varieties of signature $(r,s)$ with $r+s\leq 4$ at split or inert primes, except when $r=s=2$ and $p$ is inert. With the exception of the case $r=s=2$ and $p$ split, the inclusion $\Ccal_{\pha}\subset \Ccal_{\zipsf}$ is strict in each of these cases.

\subsection{First properties of $\boldsymbol{C_{\zipsf}}$}
Let $(G,\mu)$ be a cocharacter datum over $\FF_q$ and $\Zcal_{\mu}=(G,P,L,Q,M)$ the attached zip datum (see Section~\ref{subsec-cochar}). Fix a frame $(B,T,z)$ with $z=\sigma(w_{0,I})w_0$ (see Section~\ref{subsec-cochar}). Let $C_{\zipsf}\subset X^*(T)$ be the zip cone, defined in Equation~\eqref{equ-CZip}. We start with some elementary properties of $C_{\zipsf}$. As we already noted, we have $C_{\zipsf}\subset X^*_{+,I}(T)$. Furthermore, the cone $C_{\zipsf}$ has maximal rank in $X^*(T)$, in the sense that $\Span_\QQ(C_{\zipsf})=X^*(T)\otimes_\ZZ \QQ$. This was shown in \cite[Lemma 3.4.2]{Goldring-Koskivirta-Strata-Hasse} (with the notation of \loccitn, $\mathcal{C}_{w_0}\subset C_{\zipsf}$ and $\mathcal{C}_{w_0}$ has maximal rank). Note that the cocharacter datum is assumed to be of Hodge type in \cite[Section~3.4]{Goldring-Koskivirta-Strata-Hasse}, but this assumption is unnecessary for \cite[Lemma 3.4.2]{Goldring-Koskivirta-Strata-Hasse}.

Next, we consider line bundles on $\GZip^\mu$. Recall that $\Vcal_I(\lambda)$ is a line bundle if and only if $\lambda\in X^*(L)$ (viewed as a subgroup of $X^*(T)$). Define the following set:
\begin{equation}\label{char-L-reg}
X^*_{-}(L)_{\reg} = \left\{\lambda\in X^*(L) \relmiddle| \langle \lambda,\alpha^\vee \rangle<0, \ \forall \alpha\in \Delta^P \right\}.
\end{equation}
These characters were termed $L$-ample in \cite[Definition N.5.1]{Goldring-Koskivirta-Strata-Hasse}. The notation used in \eqref{char-L-reg} is more enlightening, since these characters are in particular in $X_{-}^*(T)$ (the cone of anti-dominant characters). 
An immediate consequence of \cite[Theorem 5.1.4]{Koskivirta-Wedhorn-Hasse} is the inclusion $X^*_{-}(L)_{\reg}\subset \Ccal_{\zipsf}$. Set $X^*_{-}(L)\colonequals X^*_{-}(T) \cap X^*(L)$. The stronger inclusion $X^*_{-}(L)\subset \Ccal_{\zipsf} $ is claimed in \cite[Proposition 1.6.1]{Koskivirta-automforms-GZip} with an incomplete proof, so we give one below. 

\begin{proposition}\label{prop-charL}
We have $X^*_{-}(L) \subset \Ccal_{\zipsf}$.
\end{proposition}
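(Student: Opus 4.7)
The plan is to reduce the question to a concrete inequality on the weight $\lambda$ by using the description of global sections in terms of an invariance condition plus a filtration condition. Fix $\lambda\in X^*_-(L)$; since $\lambda\in X^*(L)$, the representation $V_I(N\lambda)$ is the one-dimensional $L$-module $k_{N\lambda}$ (viewed as a $P$-module trivial on $R_{\mathrm{u}}(P)$), and $\Vcal_I(N\lambda)$ is a line bundle on $\GZip^\mu$. My goal is to find $N\geq 1$ such that $H^0(\GZip^\mu,\Vcal_I(N\lambda))\ne 0$, which shows $N\lambda\in C_{\zipsf}$ and hence $\lambda\in \Ccal_{\zipsf}$.

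I will first produce a section on the $\mu$-ordinary stratum $\Ucal_\mu$, then extend it to $\GZip^\mu$. For the first step, by Lemma \ref{lem-Umu-sections} it suffices to show $(k_{N\lambda})^{L_\varphi}\ne 0$. By Lemma \ref{lemLphi}, $L_\varphi=L_\varphi^{\circ}\rtimes L_0(\FF_q)$ with $L_\varphi^{\circ}$ unipotent; any character of an algebraic group is trivial on a unipotent subgroup, so $N\lambda$ is automatically trivial on $L_\varphi^{\circ}$. Choosing $N$ divisible by the exponent of the finite abelian quotient of $L_0(\FF_q)$ (\eg $N=|L_0(\FF_q)|$ works), the restriction $N\lambda|_{L_0(\FF_q)}$ is also trivial, so $(k_{N\lambda})^{L_\varphi}=k_{N\lambda}$.

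For the second step I invoke the general form of Theorem \ref{main-Fq} for arbitrary $P$, \cite[Theorem 3.4.1]{Imai-Koskivirta-vector-bundles}: the image of the injection $H^0(\GZip^\mu,\Vcal(\rho))\hookrightarrow V^{L_\varphi}$ is cut out by a generalized Brylinski--Kostant filtration indexed by $\alpha\in\Delta^P$. For a one-dimensional $V=k_{N\lambda}$, only the weight $\nu=N\lambda$ appears, and the filtration condition reduces to $\langle N\lambda,\delta_\alpha\rangle\ge 0$ for every $\alpha\in\Delta^P$, where $\delta_\alpha=\wp_*^{-1}(\alpha^\vee)$. A direct computation gives $\delta_\alpha=-(q^{r_\alpha}-1)^{-1}\sum_{i=0}^{r_\alpha-1}q^i\sigma^i(\alpha^\vee)$, where $r_\alpha$ denotes the $\sigma$-period of $\alpha^\vee$, so the condition becomes
\[
\sum_{i=0}^{r_\alpha-1} q^i\,\langle \lambda,\sigma^i(\alpha^\vee)\rangle \;\le\; 0.
\]
Since $B$ is defined over $\FF_q$, $\sigma$ permutes $\Delta$: for each index $i$ either $\sigma^i(\alpha)\in I$, in which case the pairing vanishes because $\lambda$ kills all coroots in $\Phi_L$, or $\sigma^i(\alpha)\in\Delta^P$, in which case the pairing is $\le 0$ by the hypothesis $\lambda\in X^*_-(L)$. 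Thus every term is non-positive and the inequality holds.

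The main obstacle is Step 2: the filtration condition of Theorem \ref{main-Fq} in the excerpt is only stated when $P$ is defined over $\FF_q$, and for general $P$ one must invoke the generalized Brylinski--Kostant filtration from \cite[Theorem 3.4.1]{Imai-Koskivirta-vector-bundles} and verify that in the rank-one case it collapses to the simple inequality above. Once this reduction is granted, the remaining verifications are the routine root-theoretic calculation sketched and the choice of $N$ in Step 1. In particular, in the special case where $P$ is already defined over $\FF_q$ (so $L_\varphi=L(\FF_q)$, $L=L_0$), Corollary \ref{cor-Fq-Levi} together with the computation of $\delta_\alpha$ suffices, and the proof is completely self-contained.
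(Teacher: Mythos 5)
Your proof is correct and follows the same route as the paper's: both apply the general filtration criterion of \cite[Theorem 3.4.1]{Imai-Koskivirta-vector-bundles} to the one-dimensional representation, observe that the filtration condition collapses to $\langle\lambda,\delta_\alpha\rangle\geq 0$, compute $\delta_\alpha$ explicitly, and verify the inequality from anti-dominance. The only (cosmetic) difference is your cleaner choice $N=|L_0(\FF_q)|$, justified by noting that characters automatically vanish on the unipotent $L_\varphi^\circ$, whereas the paper scales by (a multiple of) the order of the full group scheme $L_\varphi$.
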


\begin{proof}
Let $\lambda \in X^*_{-}(T) \cap X^*(L)$. Applying \cite[Theorem 3.4.1]{Imai-Koskivirta-vector-bundles} to the $1$-dimensional $L$-representation $V_I(\lambda)$, we obtain 
\begin{equation}
H^0\left(\GZip^\mu,\Vcal_I(\lambda)\right)=V_I(\lambda)^{L_{\varphi}}\cap 
 \bigcap_{\alpha \in \Delta^P} 
 \fil_{\delta_{\alpha}}^{\Xi_{\alpha},\mathbf{a}_{\alpha},\mathbf{r}_{\alpha}} V_{\lambda}.
\end{equation}
Furthermore, $\fil_{\delta_{\alpha}}^{\Xi_{\alpha},\mathbf{a}_{\alpha},\mathbf{r}_{\alpha}} V_{\lambda}=V_\lambda=V_I(\lambda)$ if $\langle \lambda, \delta_\alpha \rangle \geq 0$; it is $0$ otherwise. Let $d_\alpha\geq 1$ be an integer such that~$\alpha$ is defined over $\FF_{q^{d_\alpha}}$. We find that $\delta_\alpha=-\frac{1}{q^{d_\alpha}-1}\sum_{i=0}^{d_\alpha-1} q^i \sigma^i(\alpha^\vee)$. Since $\lambda\in X^*_{-}(T)$, we have $\langle \lambda , \sigma^{i}(\alpha^\vee) \rangle \leq 0$ for all $i$, hence $\langle \lambda, \delta_\alpha \rangle \geq 0$. We deduce  $H^0(\GZip^\mu,\Vcal_I(\lambda))=V_I(\lambda)^{L_{\varphi}}$. Finally, if we change $\lambda$ to $N\lambda$, where~$N$ divides the order of the finite group scheme $L_\varphi$, we obtain $H^0(\GZip^\mu,\Vcal_I(\lambda))=V_I(\lambda)$. In particular, this space is nonzero, and this proves the result.
\end{proof}

\subsection{Norm of the highest-weight vector}\label{subsec-norm}

Recall that we always have $1\in U_\mu$, where $U_{\mu}\subset G_k$ is the unique open $E$-orbit. Recall the definition of the finite subgroup $L_\varphi\subset L$ given in \eqref{Lphi-equ}. Put $N_\varphi=|L_0(\FF_q)|q^m$, where $L_0$ is the Levi subgroup defined in \eqref{eqL0} and $m\geq 0$ is the smallest integer such that the finite unipotent group $L_\varphi^{\circ}$ is annihilated by $\varphi^m$. For $\lambda,\lambda'\in X^*(T)$ and $f\in V_I(\lambda)$, $f'\in V_I(\lambda')$, let $ff'\in V_I(\lambda+\lambda')$ be the image of $f\otimes f'$ by the map \eqref{Vlambda-natural-map}. For $\lambda\in X^*_{+,I}(T)$ and $f\in V_I(\lambda)$, define
\begin{equation} \label{norm-def-eq}
\Norm_{L_\varphi}(f)\colonequals \left(\prod_{s\in L_0(\FF_q)}s\cdot f \right)^{q^m}\in V_I(N_\varphi \lambda).
\end{equation}
It is clear that $\Norm_{L_\varphi}(f)$ is $L_\varphi$-invariant. In particular, it gives rise to an element in $H^0(\Ucal_\mu,\Vcal(N_\varphi \lambda))$ by Lemma~\ref{lem-Umu-sections}. In general, it is difficult to determine whether $\Norm_{L_\varphi}(f)$ extends to a global section. However, this is possible when $f$ is a highest-weight vector, as we now explain.

Let $f_\lambda\in V_I(\lambda)$ be a nonzero element in the highest-weight line of $V_I(\lambda)$. The following result generalizes \cite[Theorem 2]{Koskivirta-automforms-GZip} (where $P$ was assumed to be defined over $\FF_p$; here we do not make this assumption). For $\alpha\in \Delta^P$, denote by $r_\alpha$ the smallest integer $r\geq 1$ such that $\sigma^r(\alpha)=\alpha$.

\begin{proposition}\label{prop-Norm}
The section $\Norm_{L_\varphi}(f_\lambda)$ extends to a global section over $\GZip^\mu$ if and only if for all $\alpha \in \Delta^P$, the following holds:
\begin{equation}\label{formula-norm}
\sum_{w\in W_{L_0}(\FF_q)} \sum_{i=0}^{r_\alpha-1} q^{i+\ell(w)} \ \left\langle w\lambda, \sigma^i\left(\alpha^\vee\right) \right\rangle\leq 0.
\end{equation}
\end{proposition}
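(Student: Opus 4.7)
The plan is to apply the extension criterion of \cite[Theorem 3.4.1]{Imai-Koskivirta-vector-bundles} (the generalization of Theorem \ref{main-Fq} to arbitrary $P$) via the evaluation map at $1\in U_\mu$ of \eqref{injection-ev1}. Under this identification, an $L_\varphi$-invariant vector $v\in V_I(N_\varphi\lambda)^{L_\varphi}=H^0(\Ucal_\mu,\Vcal_I(N_\varphi\lambda))$ extends to a global section on $\GZip^\mu$ if and only if, for each $\alpha\in \Delta^P$, the weight components of $v$ lie in the relevant level of the generalized Brylinski--Kostant filtration $\fil^P_\ast V_I(N_\varphi\lambda)$. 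The $L_\varphi$-invariance of $\Norm_{L_\varphi}(f_\lambda)$ is built in: the inner product is tautologically invariant under the constant part $L_0(\FF_q)\subset L_\varphi$, and the outer $q^m$-th power kills the infinitesimal unipotent subgroup $L_\varphi^\circ$ by the defining property of $m$ (Lemma \ref{lemLphi}). Thus only the filtration conditions need to be analyzed.

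The next step is to work out the weight decomposition of the vector
\begin{equation*}
 F \colonequals \prod_{s\in L_0(\FF_q)} s\cdot f_\lambda \in V_I\!\Big(|L_0(\FF_q)|\lambda\Big).
\end{equation*}
Using the Bruhat decomposition $L_0(\FF_q)=\bigsqcup_{w\in W_{L_0}(\FF_q)}B_{L_0}(\FF_q)\,\dot{w}\,B_{L_0}(\FF_q)$ and the fact that $f_\lambda$ is $B_L$-semi-invariant with character restricting to $\lambda$ on $T$, each factor $s\cdot f_\lambda$ with $s=b_1\dot{w}b_2$ equals, up to a $T(\FF_q)$-scalar, the image of the weight vector $\dot{w}\cdot f_\lambda$ (of weight $w\lambda$) under $b_1\in B_{L_0}(\FF_q)$; the Bruhat cell of $w$ has cardinality $q^{\ell(w)}|B_{L_0}(\FF_q)|$. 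Multiplying over all $s$ and then raising to the $q^m$-th power, the extremal $T$-weight of $\Norm_{L_\varphi}(f_\lambda)$ is
\begin{equation*}
 \nu_0 = q^m\sum_{w\in W_{L_0}(\FF_q)} q^{\ell(w)}\, w\lambda,
\end{equation*}
and every other weight of $\Norm_{L_\varphi}(f_\lambda)$ differs from $\nu_0$ by a non-negative $\ZZ$-combination of roots in $\Phi^+_L$ coming from the unipotent parts of the Bruhat factorizations.

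To translate the filtration condition, for each $\alpha\in\Delta^P$ we use
\begin{equation*}
 \delta_\alpha = \wp_\ast^{-1}(\alpha^\vee) = -\frac{1}{q^{r_\alpha}-1}\sum_{i=0}^{r_\alpha-1} q^i\,\sigma^i(\alpha^\vee),
\end{equation*}
so that the (trivial-filtration) condition $\langle \nu_0,\delta_\alpha\rangle\ge 0$ becomes exactly inequality \eqref{formula-norm} after clearing the positive denominator $q^{r_\alpha}-1$ and multiplying by $q^m$. This gives the necessity of \eqref{formula-norm}: pairing with $\delta_\alpha$ is a linear functional, so if \eqref{formula-norm} fails then $\nu_0$ violates the filtration condition at $\alpha$, hence so does $\Norm_{L_\varphi}(f_\lambda)$ (as $\nu_0$ is the unique extremal weight component of highest height along the positive root cone of $L$).

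The main obstacle is the converse: we must show that if \eqref{formula-norm} holds at every $\alpha\in\Delta^P$, then the \emph{entire} vector $\Norm_{L_\varphi}(f_\lambda)$ lies in the generalized filtration, not only its extremal component $\nu_0$. The strategy is to argue that the non-extremal weights $\nu=\nu_0-\gamma$, with $\gamma\in\NN\cdot\Phi^+_L$, satisfy $\langle \gamma,\delta_\alpha\rangle\le 0$ for all $\alpha\in\Delta^P$, since roots in $\Phi^+_L$ pair non-positively with each $\sigma^i(\alpha^\vee)$ for $\alpha\notin I$. Combined with the fact that $L_\varphi$-invariance forces the non-extremal components to enter the \emph{higher} steps of the generalized Brylinski--Kostant filtration defined in \cite[\S 3.3]{Imai-Koskivirta-vector-bundles}, this upgrades the inequality at $\nu_0$ to the full filtration condition for $\Norm_{L_\varphi}(f_\lambda)$. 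Verifying this reduction for \emph{all} $\sigma$-orbits of roots (rather than only roots defined over $\FF_q$, which is the case treated in \cite[Theorem 2]{Koskivirta-automforms-GZip}) is the technical heart of the argument.
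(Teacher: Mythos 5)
The approach you sketch — apply the generalized Brylinski--Kostant filtration criterion of \cite[Theorem 3.4.1]{Imai-Koskivirta-vector-bundles} directly to the $L$-representation $V_I(N_\varphi\lambda)$ at the point $1\in U_\mu$ and then read off the condition from the $T$-weight decomposition of $\Norm_{L_\varphi}(f_\lambda)$ — is genuinely different from the paper's. The paper instead passes to the flag stack, views $\Norm_{L_\varphi}(f_\lambda)$ as a scalar-valued function $h\colon U_\mu\to\AA^1$, and computes the $t$-valuation of $h\circ\psi_\alpha$ explicitly via the Bruhat decomposition of $L_0(\FF_q)$. However, there is a real gap in your argument, precisely at what you call the ``technical heart''.

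The issue is that when $P$ is not defined over $\FF_q$, the generalized filtration condition for an $L$-representation is \emph{not} simply the weight inequality $\langle\nu,\delta_\alpha\rangle\geq 0$ on all $T$-weights $\nu$. It involves applying the operators $E^{(j)}_{\sigma^{-i}(\alpha)}$ for $1\leq i\leq m_\alpha-1$; these roots $\sigma^{-i}(\alpha)$ lie in $I$, so the corresponding root groups act nontrivially on $V_I(N_\varphi\lambda)$. The observation that collapses this to a weight condition — and which is the engine of the paper's proof — is that $u_{t,\alpha}$ lies in $R_{\mathrm{u}}(Q_0)$ with $Q_0=L_0B^+_L$, that $R_{\mathrm{u}}(Q_0)$ is normalized by $L_0(\FF_q)$, and that the highest weight vector $f_\lambda$ is $R_{\mathrm{u}}(B^+_L)$-invariant, hence $\Norm_{L_\varphi}(f_\lambda)$ is $R_{\mathrm{u}}(Q_0)$-invariant and the entire $u_{t,\alpha}$-factor drops out. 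Your phrase that ``$L_\varphi$-invariance forces the non-extremal components into higher filtration steps'' does not capture this; $L_\varphi$-invariance alone is already built into your $v\in V^{L_\varphi}$, and it says nothing about the $E^{(j)}$-kernels for roots in $I$. Without the $R_{\mathrm{u}}(Q_0)$-invariance (or some substitute such as Condition~\ref{cond-commute}, which the proposition explicitly does not assume), you have no way to reduce the filtration criterion to a pure weight test.

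There is also a sign inconsistency in your weight bookkeeping. You first say the other weights of $\Norm_{L_\varphi}(f_\lambda)$ are $\nu_0$ plus a non-negative combination of positive roots, but then write $\nu=\nu_0-\gamma$ with $\langle\gamma,\delta_\alpha\rangle\leq 0$. The correct statement is that $\nu=\nu_0+\gamma$ with $\gamma$ a non-negative $\ZZ$-combination of roots in $\Phi^+_{L_0}$ (these are the contributions from $R_{\mathrm{u}}(B^+_{L_0})(\FF_q)$ in the Bruhat factorizations, not from $\Phi^+_L$ in general), and that $\langle\gamma,\delta_\alpha\rangle\geq 0$ for $\alpha\in\Delta^P$, since $\langle\gamma,\sigma^i(\alpha^\vee)\rangle\leq 0$ and $\delta_\alpha$ carries a negative coefficient. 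This gives $\langle\nu,\delta_\alpha\rangle\geq\langle\nu_0,\delta_\alpha\rangle$, which is what you want; but as written the two statements in your sketch conflict. Once you supply the $R_{\mathrm{u}}(Q_0)$-invariance observation and fix the signs, your route should close — and it would be a nice alternative to the paper's argument on the flag stack — but as it stands the key step is missing.
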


Before giving the proof, we need to recall some facts from \cite[Section~3.1]{Imai-Koskivirta-vector-bundles}. First, we have
\begin{equation}\label{GminusU}
    G_k \setminus U_{\mu} = \bigcup_{\alpha \in \Delta^P} Z_\alpha, \quad Z_\alpha=\overline{E\cdot s_\alpha}, 
\end{equation}
where $E\cdot s_\alpha$ denotes the $E$-orbit of $s_{\alpha}$ and the bar denotes the Zariski closure. This follows easily from Theorem~\ref{thm-E-orb-param}. For any $\alpha\in \Delta^P$, define an open subset 
\begin{equation}
X_\alpha  \colonequals  G_k \setminus \bigcup_{\beta\in \Delta^P,\, \beta\neq \alpha}Z_\beta.
\end{equation}
Then $U_{\mu}\subset X_\alpha$, and $X_{\alpha}\setminus U_{\mu}$ is irreducible. Choose a realization
$(u_{\alpha})_{\alpha \in \Phi}$, and let $\phi_{\alpha}\colon \SL_2\to G$ be the map attached to $\alpha$ (see Section~\ref{subsec-notation}). Set $Y\colonequals E \times \AA^1$ and $Y_0\colonequals E\times \GG_{\mathrm{m}}$. For $\alpha\in \Delta^P$, define $\psi_\alpha \colon Y \to G$ by
\begin{equation}\label{phia}
\psi_\alpha \colon ((x,y),t)\longmapsto x \phi_{\alpha} \left(A(t)\right)y^{-1}, \quad \textrm{where} \ A(t)=\left(
\begin{matrix}
t & 1 \\ -1 & 0 \end{matrix}
\right)\in \SL_{2,k}.
\end{equation}
It satisfies $\psi_\alpha((x,y),t)\in X_\alpha$ for all $((x,y),t)\in Y$ and $\psi_\alpha((x,y),t)\in U_\mu$ if and only if $t\neq 0 $ (see \cite[Proposition 3.1.4]{Imai-Koskivirta-vector-bundles}).

We now prove Proposition~\ref{prop-Norm}. We use a similar argument to that in \cite[Theorem 3.5.3]{Koskivirta-automforms-GZip}. Set $\Ucal'_\mu\colonequals\pi^{-1}(\Ucal_\mu)$, where $\pi\colon \GF^\mu\to \GZip^\mu$ is the natural projection. One  clearly has $\Ucal'_\mu\simeq [E'\backslash U_\mu]$ via the isomorphism $\GF^\mu\simeq [E'\backslash G]$ explained in Section~\ref{subsec-zipflag}. We have an identification
\begin{equation}
    H^0\left(\Ucal_\mu,\Vcal_I\left(N_\varphi\lambda\right)\right)=H^0\left(\Ucal'_\mu,\Vcal_{\flag}\left(N_{\varphi}\lambda\right)\right)
\end{equation}
similarly to \eqref{ident-H0-GF}. In particular, we can view $\Norm_{L_\varphi}(f_{\lambda})$ as a function $h\colon U_\mu\to \AA^1$ satisfying the relation $h(axb^{-1})=\lambda(a)h(x)$ for all $(a,b)\in E'$ and $x\in U_\mu$ (using \eqref{globquot}). Specifically, the function $h$ is given by 
\begin{equation}\label{functionh-eq}
  h\left(x_1x_2^{-1}\right)=\Norm_{L_\varphi}(f_\lambda)\left(\theta_L^P(x_1)^{-1}\right)
\end{equation}
for all $(x_1,x_2)\in E$ using \eqref{eq:H0expid}. 
The function $h$ is well defined because $\Norm_{L_\varphi}(f_\lambda)$ is $L_\varphi$-invariant. Furthermore, $\Norm_{L_\varphi}(f_\lambda)$ extends to $\GZip^\mu$ if and only if $h$ extends to a function $G\to \AA^1$. By the strategy explained in \cite[Section~3.2]{Koskivirta-automforms-GZip} and in \cite[Section~3.1]{Imai-Koskivirta-vector-bundles}, the function $h$ extends to $G$ if and only if for each $\alpha\in \Delta^P$, the function $h\circ \psi_\alpha \colon Y_0\to \AA^1$ extends to a function $Y\to \AA^1$. It remains to compute the $t$-valuation of the function $h\circ \psi_\alpha$, viewed as an element of $R[t,\frac{1}{t}]$, where $R=k[E]$ is the ring of functions of $E$. Put 
\begin{equation}\label{malpha-equ}
 m_{\alpha} =\min \left\{ m \geq 1 \mid 
 \sigma^{-m}(\alpha) \notin I \right\}, \quad \alpha \in \Delta^P
\end{equation}
and $t_{\alpha}=t^{-1}\alpha(\varphi(\delta_{\alpha}(t)))^{-1} =t \alpha(\delta_{\alpha}(t))^{-1} \in t^{\QQ}$, where $t$ is an indeterminate and $\delta_{\alpha}=\wp_*^{-1}(\alpha^{\vee})$ as defined in Section~\ref{subsec-global-sections}. Set
\[
 u_{t,\alpha}=\prod_{i=1}^{m_{\alpha}-1} \phi_{\sigma^{-i}(\alpha)} 
 \left( \begin{pmatrix}
1&-t_{\alpha}^{\frac{1}{q^i}}\\0&1
\end{pmatrix} \right),  
\]
where the product is taken in increasing order of indices. By the proof of \cite[Proposition 3.1.4]{Imai-Koskivirta-vector-bundles}, for all $(x,y)\in E$ and $t\in \GG_{\mathrm{m}}$, we can write $\psi_\alpha((x,y),t)=x_1x_2^{-1}$ with $(x_1,x_2)\in E$ and
\begin{equation}
    x_1=x\phi_{\alpha} \left(\begin{pmatrix}
1&0\\-t^{-1}&1
\end{pmatrix} \right) \delta_{\alpha}(t) 
u_{t,\alpha}.
\end{equation}
By the definition of $m_\alpha$, all the roots $\sigma^{-i}(\alpha)$ (for $1\leq i\leq m_\alpha-1$) appearing in the formula of $u_{t,\alpha}$ lie in $I$. Using \eqref{functionh-eq}, we deduce 
\begin{align}
h\circ \psi_\alpha((x,y),t)&=\Norm_{L_\varphi}(f_\lambda)\left(u_{t,\alpha}^{-1}\delta_\alpha(t)^{-1}\theta_L^P(x)^{-1}\right) \\
&=\left(\prod_{s\in L_0(\FF_q)} f_\lambda\left(su_{t,\alpha}^{-1}\delta_\alpha(t)^{-1}\theta_L^P(x)^{-1}\right) \right)^{q^m}.
\end{align}
Consider the element $f_\lambda(su_{t,\alpha}^{-1}\delta_\alpha(t)^{-1}\theta_L^P(x)^{-1})$, which lies in $R[t^{\QQ}]$. We can still speak of the $t$-valuation of this element, which is a rational number. Equivalently, to simplify notation, we change $\theta_L^P(x)^{-1}$ to a generic element $g\in L$, and we compute the $t$-valuation of $F_s(t,g)\colonequals f_\lambda(su_{t,\alpha}^{-1}\delta_\alpha(t)^{-1}g)$, viewed as an element of $k[L][t^\QQ]$. Let $v_\alpha(s)$ be this valuation. 
We put $B^+_L=B^+ \cap L$. 
Define a parabolic subgroup of $L$ by $Q_0\colonequals L_0 B^+_L$. It is clear that $u_{t,\alpha}$ lies in $R_{\mathrm{u}}(Q_0)$; thus for all $s\in L_0(\FF_q)$, we have $su_{t,\alpha}^{-1}s^{-1}\in R_{\mathrm{u}}(Q_0)$. Since $f_\lambda$ is invariant by $R_{\mathrm{u}}(B^+_L)$, we obtain 
$F_s(t,g)=f_\lambda(s\delta_\alpha(t)^{-1}g)$. Now, the rest of the proof is completely similar to that of \cite[Theorem 3.5.3]{Koskivirta-automforms-GZip}. We recall it briefly.

Let $B_{L_0}\colonequals B\cap L_0$ and $B^+_{L_0}\colonequals B^+\cap L_0$. If we change $s$ to $bs$ with $b\in B^{+}_{L_0}(\FF_q)$, then $v_\alpha(bs)=v_\alpha(s)$. Indeed, this follows from $f_\lambda(bs\delta_\alpha(t)^{-1}g)=\lambda(b)^{-1} f_\lambda(s\delta_\alpha(t)^{-1}g)$ since $f_\lambda$ is a $B^+_L$-eigenfunction. Similarly, we claim that $v_\alpha(sb)=v_\alpha(s)$ for all $b\in B^+_{L_0}(\FF_q)$. By symmetry, it suffices to show $v_\alpha(sb)\geq v_\alpha(s)$. We can write
$F_{sb}(t,g)=F_s(t,\Gamma(t)g)$, where
\begin{equation}
    \Gamma(t)\colonequals
    \delta_\alpha(t) b \delta_\alpha(t)^{-1}.
\end{equation}
We view $\Gamma$ as a map $\Gamma\colon \spec(k[t^\QQ])\to L$. Since $\alpha\in \Delta^P$, the cocharacter $\alpha^\vee$ is anti-$L$-dominant. It follows that for all $j\in \ZZ$, $\sigma^j(\alpha^\vee)$ is an anti-$L_0$-dominant quasi-cocharacter. It is easy to see that $\delta_\alpha$ is explicitly given by the formula
\begin{equation}\label{eq-deltaalpha}
    \delta_\alpha=-\frac{1}{q^{r_\alpha}-1}\sum_{j=0}^{r_\alpha-1}q^j\sigma^j\left(\alpha^\vee\right). 
\end{equation}
In particular, $\delta_\alpha$ is $L_0$-dominant. We deduce that the function $\Gamma$ extends to a map $\spec(k[t^{\QQ_{\geq 0}}])\to L$. This follows simply from the fact that $\delta_\alpha(t)$ acts on the root space $U_\beta$ (for $\beta\in \Phi$) by $t^{\langle \beta,\delta_\alpha \rangle}$, using \eqref{eq:phiconj}. Write $F_s(t,g)=t^{v_\alpha(s)}F_{s,0}(t,g)$, where $F_{s,0}(t,g)$ is an element of $k[L][t^\QQ]$ whose $t$-valuation is $0$. Then $F_{sb}(t,g)=t^{v_\alpha(s)}F_{s,0}(t,\Gamma(t)g)$ and 
$F_{s,0}(t,\Gamma(t)g) \in k[L][t^{\QQ_{\geq 0}}]$. 
Hence $v_\alpha(sb)\geq v_\alpha(s)$ as claimed.

Now, consider the Bruhat decomposition of $L_0(\FF_q)$:
\begin{equation}
    L_0\left(\FF_q\right)=\bigsqcup_{w\in W_{L_0}(\FF_q)} B^+_{L_0}\left(\FF_q\right) w B^+_{L_0}\left(\FF_q\right)
\end{equation}
as in \cite[Lemma 3.4.4]{Koskivirta-automforms-GZip}. By \cite[Lemma 3.4.5]{Koskivirta-automforms-GZip}, one has
\begin{equation}
  \left|B^+_{L_0}\left(\FF_q\right) w B^+_{L_0}\left(\FF_q\right)\right| =  \left|T\left(\FF_q\right)\right|q^{\dim(R_{\mathrm{u}}(B_{L_0}))+\ell(w)}.
\end{equation} 
Thus, we can determine $v_\alpha$ completely from the values $v_\alpha(w)$ for $w\in W_{L_0}(\FF_q)$. Similarly to \cite[Proposition 3.5.2]{Koskivirta-automforms-GZip}, we have $v_\alpha(w)=\langle w\lambda, \delta_\alpha \rangle$. We deduce that the $t$-valuation of $h\circ \psi_\alpha((x,y),t)$ is
\begin{equation}
 q^m\sum_{s\in L_0(\FF_q)} v_\alpha(s) =   q^m \left|T\left(\FF_q\right)\right|q^{\dim(R_{\mathrm{u}}(B_{L_0}))} \sum_{w\in W_{L_0}(\FF_q)} q^{\ell(w)}\langle w\lambda, \delta_\alpha \rangle.
\end{equation}
The statement of Proposition~\ref{prop-Norm} then follows by replacing $\delta_\alpha$ by the expression in \eqref{eq-deltaalpha}.

\begin{definition}
We denote by $\Ccal_{\hwsf}\subset X_{+,I}^*(T)$ the subset of characters $\lambda$ satisfying the inequalities \eqref{formula-norm} and call $\Ccal_{\hwsf}$ the highest-weight cone.
\end{definition}

By construction, for all $\lambda\in \Ccal_{\hwsf}$, the section $\mathbf{f}_{\lambda}\colonequals \Norm_{L_\varphi}(f_\lambda)$ is a nonzero section of $\Vcal_I(N_\varphi\lambda)$ over $\GZip^\mu$. In particular, we deduce $N_\varphi\lambda\in C_{\zipsf}$ and hence $\lambda \in \Ccal_{\zipsf}$. We deduce that $\Ccal_{\hwsf}\subset \Ccal_{\zipsf}$. If $S_K$ is the good reduction special fiber of a Hodge-type Shimura variety and $\zeta\colon S_K\to \GZip^\mu$ is the map \eqref{zeta-Shimura}, we obtain a family of mod $p$ automorphic forms $\zeta^*(\mathbf{f}_{\lambda})_{\lambda\in \Ccal_{\hwsf}}$. We also have, by Section~\ref{sec-partialHasse}, the family $\zeta^*(\Ha_{\chi})_{\chi\in X^*_{+}(T)}$. The vanishing locus of $\Ha_{\chi}$ is a union of Ekedahl--Oort strata of codimension~$1$. On the other hand, the vanishing locus of $\mathbf{f}_{\lambda}$ is highly nontrivial. It is an interesting closed subvariety stable by Hecke operators.

\subsection{Partial Hasse invariant cone, Griffiths--Schmid cone}
As mentioned in Section~\ref{sec-gen-set}, 
we give an unambiguous definition of $C_{\pha}$.

\begin{definition}[\textit{cf.} {\cite[Definition 1.7.1]{Koskivirta-automforms-GZip}}] \label{definition-CHasse}
Define $C_{\pha}$ as the image of $X^*_+(T)$ by
\begin{equation}\label{equ-maph}
h_{\Zcal}\colon X^*(T)\lra X^*(T), \quad \lambda \longmapsto \lambda-q w_{0,I} (\sigma^{-1} \lambda).
\end{equation}
\end{definition}

We write $\Ccal_{\pha}$ for the saturation of $C_{\pha}$. One has $\Ccal_{\pha}\subset X_{+,I}^*(T)$ since $- w_{0,I} \sigma^{-1}(\lambda)\in X_{+,I}^*(T)$ for $\lambda\in X^*_+(T)$. 
If $G$ is split over $\FF_q$, we have an equivalence
\begin{equation}\label{equivSbt}
\lambda\in  \Ccal_{\pha}  \ \Longleftrightarrow \ q w_{0,I}\lambda+\lambda \in X_-^*(T).
\end{equation}

\begin{definition}\label{GSdef}
Let $\Ccal_{\GSsf}$ denote the set of characters $\lambda\in X^*(T)$ satisfying
\begin{align}
\langle \lambda, \alpha^\vee \rangle &\geq 0 \ \textrm{ for }\alpha\in I, \\
\langle \lambda, \alpha^\vee \rangle &\leq 0 \ \textrm{ for }\alpha\in \Phi^+ \setminus \Phi^{+}_{L}. 
\end{align}
\end{definition}
One easily sees that $\lambda\in \Ccal_{\GSsf}$ if and only if $-w_{0,I}\lambda$ is dominant. Clearly, $\Ccal_{\GSsf}$ is a saturated subcone of $X^*(T)$ and contains $X_{-}^*(L)$. We explain the significance of $\Ccal_{\GSsf}$. Consider a Hodge-type Shimura variety $\shgx_K$ over the reflex field $\mathbf{E}$, with good reduction at the prime $p$, as in Section~\ref{subsec-Shimura}. Similarly to \eqref{cone-CKFp}, we define a cone $C_K(\CC)$ by
\begin{equation}
C_K(\CC)=\{\lambda\in X^*(T) \mid H^0(\shgx_K\otimes_{\mathbf{E}} \CC,\Vcal_I(\lambda))\neq 0\}.
\end{equation}
Again, the saturation of $C_K(\CC)$ is independent of $K$, so we denote it by $\Ccal(\CC)$. Based on the results of \cite{Griffiths-Schmid-homogeneous-complex-manifolds}, it is expected that $\Ccal(\CC) =\Ccal_{\GSsf}$, but we could not find a reference for this conjecture. The inclusion $\Ccal(\CC) \subset \Ccal_{\GSsf}$ is proved for general Hodge-type Shimura varieties in \cite[Theorem 2.6.4]{Goldring-Koskivirta-GS-cone}.

By reduction modulo $p$, one can show that $\Ccal(\CC) \subset \Ccal(\overline{\FF}_p) $ (see \cite[Proposition 1.8.3]{Koskivirta-automforms-GZip}). Combining the expectation $\Ccal(\CC) =\Ccal_{\GSsf}$ with Conjecture~\ref{conj-shimura}, one should expect an inclusion $\Ccal_{\GSsf}\subset \Ccal_{\zipsf}$ (at least for groups attached to Shimura varieties). In Theorem~\ref{thmGSzip}, we confirm this expectation and prove $\Ccal_{\GSsf}\subset \Ccal_{\zipsf}$ in general (this was previously shown in \cite{Koskivirta-automforms-GZip} only in the case when $P$ is defined over $\FF_p$). This result gives evidence for Conjecture~\ref{conj-shimura}.

\subsection{Inclusion relations of cones}

Let us briefly summarize in a diagram the cones that appear in our construction. We explain below the various inclusion relations between these cones as well as the conjectures pertaining to these objects.

\begin{equation}\label{conediag}
\xymatrix@M=5pt{
&\Ccal_{\pha} \ar@{^{(}->}[rd] &  \\
X^*_{-}(L) \ar@{^{(}->}[r]  \ar@{^{(}->}[rd]  & \Ccal_{\hwsf} \ar@{^{(}->}[r] &  \Ccal_{\zipsf}  \\
& \Ccal_{\GSsf} \ar@{^{(}->}[ru] \ar@{^{(}->}[r] & \Ccal_{\lwsf}\rlap{,} \ar@{^{(}.>}[u]_{\textrm{Cond. }\ref{cond-commute}} }
\qquad \quad
\xymatrix@M=5pt{
&&\\
&  \Ccal_{\zipsf} \ar@{^{(}->}[r]^{?} & \Ccal(\overline{\FF}_p) \\
 \Ccal_{\GSsf} \ar@{^{(}->}[ru]  & \Ccal(\CC)\rlap{.} \ar@{_{(}->}[l]^{?} \ar@{^{(}->}[ru] }
\end{equation}
All arrows of these diagrams are inclusions, and all cones are contained in $X^*_{+,I}(T)$. All plain arrows are proved inclusions that hold unconditionally. The left diagram is entirely group-theoretic and holds for arbitrary pairs $(G,\mu)$. The lowest-weight cone $\Ccal_{\lwsf}$ is defined in Section~\ref{sec-low}. The inclusion $\Ccal_{\lwsf}\subset \Ccal_{\zipsf}$ is shown only under Condition~\ref{cond-commute} (hence the dotted arrow in the above diagram). 

The right diagram applies to Shimura varieties of Hodge (or abelian) type. The arrows labeled with a question mark are conjecturally equalities.

\begin{lemma}\label{XLreg-contained}
One has $X_{-}^*(L)\subset \Ccal_{\hwsf}$.
\end{lemma}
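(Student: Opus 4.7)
The plan is to verify the inequality \eqref{formula-norm} directly for every $\lambda \in X^*_{-}(L)$ and every $\alpha \in \Delta^P$. The key observation is that the formidable-looking double sum collapses dramatically when $\lambda$ factors through $L$, because $\lambda$ is automatically invariant under the Weyl group $W_L$.

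First, I would note the inclusion $W_{L_0} \subset W_L$, which holds because both $L_0$ and $L$ contain $T$ and $L_0 \subset L$. In particular, $W_{L_0}(\FF_q) \subset W_L$. Since $\lambda \in X^*(L)$ means $\lambda$ is a character of $L$, its restriction to $T$ is $W_L$-invariant; equivalently, $\langle \lambda, \alpha^\vee \rangle = 0$ for all $\alpha \in I$. In particular, $\lambda$ is $I$-dominant, so it makes sense to ask whether $\lambda \in \Ccal_{\hwsf}$. Moreover, $w\lambda = \lambda$ for every $w \in W_{L_0}(\FF_q)$, so the inner sum factors out and the left-hand side of \eqref{formula-norm} becomes
\begin{equation*}
\left(\sum_{w \in W_{L_0}(\FF_q)} q^{\ell(w)}\right) \sum_{i=0}^{r_\alpha - 1} q^i \, \langle \lambda, \sigma^i(\alpha^\vee) \rangle.
\end{equation*}

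Second, I would use that the Frobenius $\sigma$ acts on the based root datum $(G,B,T)$, so it permutes $\Delta$. Hence, for every $\alpha \in \Delta^P$ and every $i \geq 0$, the root $\sigma^i(\alpha)$ lies in $\Delta \subset \Phi^+$, and $\sigma^i(\alpha^\vee)$ is a positive coroot. Since $\lambda \in X^*_{-}(T)$ is antidominant, $\langle \lambda, \sigma^i(\alpha^\vee) \rangle \leq 0$ for each $i$. Each term in the displayed sum is therefore nonpositive (the factor $q^i$ and the prefactor $\sum_w q^{\ell(w)}$ are positive), so the whole expression is $\leq 0$. This verifies the inequality defining $\Ccal_{\hwsf}$ for every $\alpha \in \Delta^P$, and completes the proof.

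There is no real obstacle here: once one unwinds the definitions, the proof is a one-line calculation resting on two easy facts (invariance of characters of $L$ under $W_L$, and preservation of $\Delta$ under $\sigma$). The only subtle point worth stating explicitly is that $\lambda \in X^*(L)$ automatically guarantees the $I$-dominance condition implicit in $\Ccal_{\hwsf} \subset X^*_{+,I}(T)$, so that the lemma is well-posed for all $\lambda \in X^*_{-}(L)$.
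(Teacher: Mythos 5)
Your proof is correct and takes essentially the same approach as the paper: both exploit that any $\lambda\in X^*(L)$ is $W_L$-invariant (hence fixed by $W_{L_0}(\FF_q)$), and that $\sigma$ permutes $\Delta$ so that antidominance of $\lambda$ forces each pairing $\langle \lambda, \sigma^i(\alpha^\vee)\rangle\leq 0$. The paper's version is a three-line condensation of exactly this argument; your additional remark about well-posedness ($\lambda\in X^*(L)$ is automatically $I$-dominant) is a harmless clarification.
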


\begin{proof}
For $\lambda \in X_{-}^*(L)$, we have $w\lambda = \lambda$ for all $w\in W_{L}$. Hence $\langle w\lambda, \sigma^i\alpha^\vee \rangle \leq 0$ for all $i\in \ZZ$, $w\in W_{L_{0}}(\FF_q)$ and $\alpha\in \Delta^P$. Thus $\lambda \in \Ccal_{\hwsf}$.
\end{proof}

We postpone the proof of $\Ccal_{\GSsf}\subset \Ccal_{\zipsf}$ in the general case, which is quite involved. The following was proved in \cite[Corollary 3.5.6]{Koskivirta-automforms-GZip}.

\begin{lemma}\label{CGS-contained} 
Assume that $P$ is defined over $\FF_q$. Then one has $\Ccal_{\GSsf}\subset \Ccal_{\hwsf}$.
\end{lemma}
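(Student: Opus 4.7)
My plan is to show that when $P$ is defined over $\FF_q$, every individual summand in the inequality \eqref{formula-norm} defining $\Ccal_{\hwsf}$ is already non-positive for $\lambda \in \Ccal_{\GSsf}$. Since the coefficients $q^{i+\ell(w)}$ are all strictly positive, this will immediately give $\lambda \in \Ccal_{\hwsf}$. The $L$-dominance required for membership in $\Ccal_{\hwsf}$ is automatic from the definition of $\Ccal_{\GSsf}$ (namely the condition $\langle \lambda,\alpha^\vee\rangle \geq 0$ for $\alpha \in I$). So the whole proof reduces to showing
\[
\langle w\lambda,\sigma^i(\alpha^\vee)\rangle \leq 0, \quad \forall \alpha \in \Delta^P,\ \forall w\in W_{L_0}(\FF_q),\ \forall i\geq 0.
\]

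The key input is the following classical fact: the Weyl group $W_L$ preserves the set $\Phi^+ \setminus \Phi^+_L$. I would prove this quickly by examining a simple reflection $s_\beta$ with $\beta\in I$. If $\alpha\in \Phi^+\setminus \Phi^+_L$, then some simple root $\gamma\in \Delta^P$ appears in the support of $\alpha$ with positive coefficient. The reflection $s_\beta$ changes $\alpha$ by a multiple of $\beta\in I$, hence does not alter the coefficient of $\gamma$, so $s_\beta\alpha$ still has a positive coefficient on $\gamma$. This forces $s_\beta\alpha$ to be positive (coefficients of a root are of uniform sign), and it is not in $\Phi_L$ since $\gamma\notin I$.

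Next, because $P$ is defined over $\FF_q$, the Frobenius $\sigma$ permutes the sets $\Delta$, $I$ and hence also $\Delta^P$. In particular, for $\alpha \in \Delta^P$ and $0\leq i < r_\alpha$, one has $\sigma^i(\alpha)\in \Delta^P \subset \Phi^+\setminus\Phi^+_L$. Combining with the previous observation, and using that $W_{L_0}(\FF_q) \subset W_L$ (since $L_0=L$ when $P$ is defined over $\FF_q$ by Lemma \ref{lemLphi}\ref{lemLphi-item3}), we get that for every $w\in W_{L_0}(\FF_q)$ the root $w^{-1}\sigma^i(\alpha)$ lies in $\Phi^+\setminus \Phi^+_L$.

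To conclude, the definition of $\Ccal_{\GSsf}$ gives $\langle \lambda,\beta^\vee\rangle \leq 0$ for every $\beta \in \Phi^+\setminus\Phi^+_L$. Applying this to $\beta = w^{-1}\sigma^i(\alpha)$ yields
\[
\langle w\lambda,\sigma^i(\alpha^\vee)\rangle = \langle \lambda, w^{-1}\sigma^i(\alpha^\vee)\rangle \leq 0,
\]
and the whole double sum in \eqref{formula-norm} is a sum of non-positive terms, hence non-positive. There is no real obstacle in this proof; the only subtlety is the $W_L$-stability of $\Phi^+\setminus\Phi^+_L$, which is the standard lemma recalled above. This argument cleanly fails in the general case (where $L_0\subsetneq L$ and $\sigma$ may not preserve $\Delta^P$), which is consistent with the fact that the inclusion $\Ccal_{\GSsf}\subset \Ccal_{\hwsf}$ need not hold when $P$ is not defined over $\FF_q$—explaining why the authors must go through the more elaborate reduction to the split case in order to prove the weaker inclusion $\Ccal_{\GSsf}\subset \Ccal_{\zipsf}$.
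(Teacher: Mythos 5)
Your proof is correct, and it is essentially the natural argument for this statement. Note that the paper itself does not include a proof but refers to \cite[Corollary 3.5.6]{Koskivirta-automforms-GZip}; your write-up supplies a self-contained verification.

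A few points worth flagging for completeness, all of which you handle correctly: (1) when $P$ is defined over $\FF_q$ one has $L_0=L$ by Lemma \ref{lemLphi}\eqref{lemLphi-item3}, so $W_{L_0}(\FF_q)\subset W_L$; (2) $\sigma$ permutes $\Delta$ and $I$, hence $\Delta^P$, so $\sigma^i(\alpha)\in\Delta^P\subset\Phi^+\setminus\Phi^+_L$ for all $i$; (3) the $W_L$-stability of $\Phi^+\setminus\Phi^+_L$ (which you rederive from the fact that each simple reflection $s_\beta$, $\beta\in I$, only alters the $\beta$-coefficient and that root coefficients have uniform sign) gives $w^{-1}\sigma^i(\alpha)\in\Phi^+\setminus\Phi^+_L$ for $w\in W_{L_0}(\FF_q)$; (4) the $I$-dominance needed for $\lambda\in\Ccal_{\hwsf}$ comes from the first set of inequalities in Definition \ref{GSdef}. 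Since every term $q^{i+\ell(w)}\langle w\lambda,\sigma^i(\alpha^\vee)\rangle$ in \eqref{formula-norm} is then non-positive, the sum is non-positive, which is exactly what is required. Your closing remark about why this termwise argument breaks down when $P$ is not defined over $\FF_q$ (both because $L_0\subsetneq L$ and because $\sigma$ need not preserve $\Delta^P$) is also accurate and explains why the paper must take the Weil-restriction detour to prove the weaker containment $\Ccal_{\GSsf}\subset\Ccal_{\zipsf}$ in general.
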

This shows $\Ccal_{\GSsf}\subset \Ccal_{\zipsf}$ in the case when $P$ is defined over $\FF_q$. However, the inclusion $\Ccal_{\GSsf}\subset \Ccal_{\hwsf}$ is false in general. This happens for example in the case of Picard modular surfaces of signature $(2,1)$ at an inert prime, where the group $G$ is a unitary group of rank $3$ over $\FF_p$. In this example, all cones $\Ccal_{\pha}$, $\Ccal_{\GSsf}$, $\Ccal_{\hwsf}$ and $\Ccal_{\zipsf}$ are distinct, and there is no inclusion relation between the first three. These four cones are also distinct for $G=\Sp(6)$ (see \cite[Section~5.5]{Koskivirta-automforms-GZip}), and more generally for $G=\Sp(2n)$, $n\geq 3$. In particular, in those cases the inclusion $\Ccal_{\pha} \subset \Ccal_{\zipsf}$ is strict; hence $\GF^\mu$ does not satisfy the Hasse property. As a consequence, the Siegel-type Shimura variety $\Acal_n$ does not satisfy the Hasse property for $n\geq 3$.

\section{Hasse-type zip data}\label{sec4}

\subsection{Topology of $\boldsymbol{C_{\zipsf,\RR_{\geq 0}}}$}
Let $(G,\mu)$ be a cocharacter datum. We showed $X^*_{-}(L)\subset \Ccal_{\zipsf}$ in Proposition~\ref{prop-charL}. For $X_{-}^*(L)_{\reg}$ (see Equation~\eqref{char-L-reg}), we have a more precise result (see~\cite[Theorem 5.1.4]{Koskivirta-Wedhorn-Hasse}).

\begin{theorem}\label{theo-Hasse-inv}
For all $\lambda\in X^*_{-}(L)_{\reg}$, there is a section $h\in H^0(\GZip^\mu,\Vcal_I(N_\varphi\lambda))$ whose nonvanishing locus is exactly $\Ucal_\mu$.
\end{theorem}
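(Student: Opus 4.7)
The strategy is to produce $h$ first on the open $\mu$-ordinary locus $\Ucal_\mu$, then extend it across the boundary to all of $\GZip^\mu$, and finally read off its vanishing divisor.

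Since $\lambda\in X^*(L)$, the representation $V_I(N_\varphi\lambda)$ is one-dimensional and given by the character $N_\varphi\lambda\colon L\to \GG_{\mathrm{m}}$. By Lemma \ref{lem-Umu-sections}, sections of $\Vcal_I(N_\varphi\lambda)$ over $\Ucal_\mu$ correspond to $L_\varphi$-invariants of this representation. Using the structure $L_\varphi=L_\varphi^\circ\rtimes L_0(\FF_q)$ from Lemma \ref{lemLphi}, together with the facts that characters of finite unipotent group schemes are trivial and that $|L_0(\FF_q)|$ divides $N_\varphi$, the restriction of $N_\varphi\lambda$ to $L_\varphi$ is trivial. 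This gives a nonzero section $h$ on $\Ucal_\mu$, uniquely determined up to scalar.

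To extend $h$ to all of $\GZip^\mu$, I would pass to the stack of zip flags via the identification $H^0(\GZip^\mu,\Vcal_I(N_\varphi\lambda))=H^0(\GF^\mu,\Vcal_{\flag}(N_\varphi\lambda))$ and the isomorphism $\GF^\mu\simeq[E'\backslash G_k]$, so that $h$ becomes an equivariant function on $U_\mu\subset G_k$. Following the same $\psi_\alpha$-strategy used in Proposition \ref{prop-Norm}, extension across the boundary divisor $Z_\alpha$ for $\alpha\in\Delta^P$ is controlled by the $t$-valuation of $h\circ\psi_\alpha$. Because $\lambda\in X^*(L)$ is $W_L$-invariant, the double sum in \eqref{formula-norm} collapses, and this valuation becomes (up to a positive rational factor) simply $\langle N_\varphi\lambda,\delta_\alpha\rangle$ with $\delta_\alpha=\wp_*^{-1}(\alpha^\vee)$. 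The question therefore reduces to showing $\langle\lambda,\delta_\alpha\rangle\geq 0$ for every $\alpha\in\Delta^P$.

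The key point — and the step I expect to require the most care — is this positivity assertion, together with the fact that it is \emph{strict} (which is what will pin down the vanishing locus). Because $\sigma$ permutes $\Delta$, for each $i\in\{0,\dots,r_\alpha-1\}$ the root $\sigma^i(\alpha)$ lies in $\Delta$ and so belongs either to $I=I_P$ or to $\Delta^P$. When $\sigma^i(\alpha)\in I$, the pairing $\langle\lambda,\sigma^i(\alpha^\vee)\rangle$ vanishes because $\lambda\in X^*(L)$ annihilates every coroot of $L$; when $\sigma^i(\alpha)\in\Delta^P$, the regularity hypothesis $\lambda\in X^*_-(L)_{\reg}$ gives $\langle\lambda,\sigma^i(\alpha^\vee)\rangle<0$. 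Since $\alpha\in\Delta^P$, the $i=0$ term is strictly negative, so $\langle\lambda,\delta_\alpha\rangle>0$ strictly for every $\alpha\in\Delta^P$. This simultaneously proves that $h$ extends to $\GZip^\mu$ and shows that the extended section vanishes to strictly positive order along each codimension-one component $Z_\alpha$. Combined with the decomposition $G_k\setminus U_\mu=\bigcup_{\alpha\in\Delta^P}Z_\alpha$ from \eqref{GminusU}, this forces the divisor of $h$ to be supported exactly on $G_k\setminus U_\mu$, so its non-vanishing locus is precisely $\Ucal_\mu$.
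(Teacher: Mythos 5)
The paper itself does not prove Theorem~\ref{theo-Hasse-inv}: it simply records the statement with a citation to~\cite[Theorem~5.1.4]{Koskivirta-Wedhorn-Hasse}. Your proposal thus offers an internal proof where the paper offers none, so there is no argument in the paper to compare against; I will assess yours on its own terms, and the good news is that it is essentially sound and uses only ingredients already developed here. Your argument amounts to specializing Proposition~\ref{prop-Norm} to the rank-one case: when $\lambda\in X^*(L)$ the representation $V_I(N_\varphi\lambda)$ is a line, the highest weight vector is any nonzero element, and since $w\lambda=\lambda$ for $w\in W_{L_0}\subset W_L$ the double sum in~\eqref{formula-norm} factors as a positive constant times $\sum_{i=0}^{r_\alpha-1}q^i\langle\lambda,\sigma^i(\alpha^\vee)\rangle$, which is $-(q^{r_\alpha}-1)\langle\lambda,\delta_\alpha\rangle$; so the extension criterion becomes $\langle\lambda,\delta_\alpha\rangle\geq 0$, exactly as you say. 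For the strict positivity, you correctly observe that $\sigma$ permutes $\Delta$, so each $\sigma^i(\alpha)$ lies in $I$ (where $\lambda\in X^*(L)$ is orthogonal to the coroot) or in $\Delta^P$ (where the regularity hypothesis gives $\langle\lambda,\sigma^i(\alpha^\vee)\rangle<0$), and the $i=0$ term is always of the second kind. Thus $\langle\lambda,\delta_\alpha\rangle>0$ for every $\alpha\in\Delta^P$, which makes the $t$-valuation of $h\circ\psi_\alpha$ strictly positive and hence forces $h$ to vanish on $\psi_\alpha(E\times\{0\})=E\cdot s_\alpha$, whence on $Z_\alpha=\overline{E\cdot s_\alpha}$ by closure. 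Combined with~\eqref{GminusU} and the observation that the underlying equivariant function on $U_\mu$ takes values in $\GG_{\mathrm{m}}$ (its value at $1$ is a nonzero scalar, and the $E$-action multiplies it by $N_\varphi\lambda(\theta^P_L(a))\neq 0$), the non-vanishing locus is exactly $\Ucal_\mu$. The one place you should flesh out slightly is the passage from ``$h\circ\psi_\alpha$ has strictly positive $t$-valuation'' to ``$h$ vanishes identically on $Z_\alpha$'': this requires noting that $\psi_\alpha$ maps $E\times\{0\}$ \emph{onto} the $E$-orbit $E\cdot s_\alpha$ (since $\psi_\alpha((x,y),0)=xs_\alpha y^{-1}$), and not merely to a proper subset, so positive valuation at the generic point of $E\times\{0\}$ really does propagate to the whole divisor. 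With that remark supplied, your argument is a complete, self-contained replacement for the external citation.
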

Here $N_\varphi\geq 1$ is the integer defined in Section~\ref{subsec-norm}. Since $\lambda\in X^*(L)$, the vector bundle $\Vcal_I(\lambda)$ is a line bundle, and thus $\Vcal_I(N_\varphi\lambda)=\Vcal_I(\lambda)^{\otimes N_\varphi}$. A subset of an $\RR$-vector space stable under linear combination with coefficients in $\RR_{\geq 0}$ will be called an $\RR_{\geq 0}$-subcone. We endow $X^*_{+,I}(T)_{\RR_{\geq 0}}$ with the subspace topology of $X^*(T)_\RR$.

\begin{lemma}\label{lem-cone-topo}
Let $C\subset X^*_{+,I}(T)_{\RR_{\geq 0}}$ be an $\RR_{\geq 0}$-subcone, and let $\lambda\in C$. Then $C$ is a neighborhood of $\lambda$ in $X^*_{+,I}(T)_{\RR_{\geq 0}}$ if and only if for all $\lambda'\in X^*_{+,I}(T)_{\RR_{\geq 0}}$, there exists an $r\in \RR_{>0}$ such that $\lambda'+r\lambda \in C$.
\end{lemma}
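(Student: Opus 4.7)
Plan. Write $P = X^*_{+,I}(T)_{\RR_{\geq 0}}$, viewed as a polyhedral convex cone in $V = X^*(T)_{\RR}$; the assumption that $C$ is an $\RR_{\geq 0}$-subcone is exactly that $C$ is a convex subcone of $P$. The forward direction is routine: if $B(\lambda,\delta)\cap P \subset C$ and $\lambda' \in P$, then for $r > \|\lambda'\|/\delta$ the point $\lambda + \lambda'/r$ lies in $P$ (cone) at distance less than $\delta$ from $\lambda$, so it belongs to $C$; rescaling by $r$ yields $\lambda' + r\lambda \in C$.

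For the converse I plan to reduce to a statement about the tangent cone. Set $T_\lambda P = \{v \in V : \lambda + \epsilon v \in P \text{ for some } \epsilon > 0\}$ and, analogously, $T_\lambda C = \{v \in V : \lambda + \epsilon v \in C \text{ for some } \epsilon > 0\}$. A direct check shows $T_\lambda P = P + \RR\lambda$, and because $P$ is polyhedral there exists $\delta_0 > 0$ with $P \cap B(\lambda,\delta_0) = (\lambda + T_\lambda P) \cap B(\lambda,\delta_0)$; equivalently, $C$ is a $P$-neighborhood of $\lambda$ if and only if $C$ contains $\lambda + U$ for some neighborhood $U$ of $0$ in $T_\lambda P$. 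Since $C$ is a convex cone containing $\lambda$, the set $T_\lambda C$ is a convex cone containing $-\lambda$ (as $(1-\epsilon)\lambda \in C$ for $\epsilon \in (0,1]$). After the substitution $\epsilon = 1/r$, the hypothesis of the lemma reads exactly as $P \subset T_\lambda C$, and combined with $\RR\lambda \subset T_\lambda C$ this gives $T_\lambda P \subset T_\lambda C$.

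It remains to upgrade this directional inclusion into a genuine open neighborhood. Choose a finite family of generators $e_1,\dots,e_n$ of the polyhedral cone $T_\lambda P$ as a convex cone; each $e_i$ satisfies $\lambda + \epsilon_i e_i \in C$ for some $\epsilon_i > 0$, and convexity of the cone $C$ allows replacing every $\epsilon_i$ by the common value $\epsilon := \min_i \epsilon_i$. Then for any $t_i \geq 0$ with $\sum_i t_i \leq 1$, the identity
\[
 \lambda + \epsilon\sum_{i=1}^n t_i e_i = \Big(1-\sum_i t_i\Big)\lambda + \sum_i t_i(\lambda + \epsilon e_i)
\]
shows that $C$ contains the translated polytope $\lambda + \epsilon\cdot \mathrm{conv}(0, e_1,\dots, e_n)$. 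The conclusion then follows from the standard fact that a polytope containing the origin is a neighborhood of $0$ relative to its tangent cone at $0$; in our case that tangent cone is precisely $T_\lambda P$, producing the required $P$-neighborhood of $\lambda$ inside $C$.

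The main obstacle is the passage from the hypothesis, which supplies only ``approach directions'' from $P$, to a full topological neighborhood in $P$ -- in general $P$ is not pointed and $P-\lambda$ is not contained in $P$. This is resolved by the observation that $C$ automatically contains the ray $\RR_{\geq 0}\lambda$, so translations of $\lambda$ along its own direction are free, and this is exactly what upgrades the inclusion $P \subset T_\lambda C$ to $T_\lambda P \subset T_\lambda C$. Once this is in place, the polyhedrality of $T_\lambda P$ and the convexity of $C$ finish the argument through the elementary polytope fact above.
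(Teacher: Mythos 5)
Your proof is correct and follows essentially the same route as the paper: both reduce the converse direction to a finite-generation argument for polyhedral cones, using the convexity of $C$ to patch finitely many approach directions into an actual neighborhood. Your tangent-cone bookkeeping (observing $P \subset T_\lambda C$ and $-\lambda \in T_\lambda C$, then picking generators of $T_\lambda P$) is a tidier packaging of the paper's translate-to-origin and rescale reduction, but the underlying mechanism is identical.
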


\begin{proof}
First, assume that $C$ is a neighborhood of $\lambda$ in $X^*_{+,I}(T)_{\RR_{\geq 0}}$. There is an open subset $V$ of $X^*(T)_{\RR}$ such that $\lambda\in V\cap X^*_{+,I}(T)_{\RR_{\geq 0}} \subset C$. Fix $\lambda'\in X^*_{+,I}(T)_{\RR_{\geq 0}}$. For large $r\in \RR_{>0}$, we have $\lambda+\frac{\lambda'}{r}\in V$, and this element is also in $X^*_{+,I}(T)_{\RR_{\geq 0}}$. Thus $\lambda'+r\lambda \in C$.

We prove the converse. We claim that for all $\lambda'\in X^*_{+,I}(T)_{\RR_{\geq 0}}$, there exists an $r> 1$ such that $\lambda+\frac{\lambda'-\lambda}{r}\in C$. Indeed, let $r\in \RR_{>0}$ be such that $\lambda+\frac{\lambda'}{r}\in C$. Then for all $\gamma>0$, we have $\gamma\lambda + \frac{\gamma \lambda'}{r}=\lambda+\frac{\gamma(\lambda'-\lambda)}{r}+(\gamma-1+\frac{\gamma}{r})\lambda\in C$. For $\gamma=\frac{r}{r+1}$, we have $\gamma-1+\frac{\gamma}{r}=0$, hence $\lambda+\frac{\lambda'-\lambda}{r+1}\in C$. Hence, by taking $\lambda$ as the origin, we are reduced to the following: 
\begin{quote}
Let $X\subset \RR^n$ be an intersection of closed half-spaces 
containing $0$, and $Y \subset X$ a convex subset containing~$0$, satisfying: for all $x\in X$, $\exists r\in \RR_{>0}$, $\frac{x}{r}\in Y$. Then $Y$ is a neighborhood of $0$ in $X$. 
\end{quote}
Taking intersections with a neighborhood of $0$ in $\RR^n$ that is a convex polytope, we may assume that $X$ is a convex polytope. Since $X$ is the convex hull of finitely many points, there exists an $r>1$ such that $\frac{1}{r}X=\{\frac{x}{r} \mid x\in X\} \subset Y$. Hence, it suffices to show that $\frac{1}{r}X$ is a neighborhood of $0$ in $X$. There are linear forms $u_1, \dots , u_d$ on $\RR^n$ and $m_1, \dots , m_d\in \RR_{\geq 0}$ such that $x\in X$ if and only if $u_i(x)\leq  m_i$ for all $i=1, \dots, d$. Hence $u=(u_1, \dots , u_d )$ maps $X$ to $Z=\prod_{i=1}^d]-\infty, m_i]$. For $r>1$, $\frac{1}{r}Z$ is clearly a neighborhood of $0$ in $Z$; hence $\frac{1}{r}X=u^{-1}(\frac{1}{r}Z)$ is a neighborhood of $0$ in $X$.
\end{proof}

The following lemma was proved in a slightly restricted setting in \cite[Proposition 2.2.1]{Koskivirta-automforms-GZip}, so we restate it below. 

\begin{lemma}\label{lemma-zip-neighborhood}
The cone $C_{\zipsf, \RR_{\geq 0}}$ is a neighborhood of\, $X^*_{-}(L)_{\reg}$ in $X^*_{+,I}(T)_{\RR_{\geq 0}}$.
\end{lemma}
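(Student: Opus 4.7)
The strategy is to apply Lemma \ref{lem-cone-topo} with $C = C_{\zipsf,\RR_{\geq 0}}$. Fix $\lambda \in X^*_{-}(L)_{\reg}$; I must show that for every $\lambda' \in X^*_{+,I}(T)_{\RR_{\geq 0}}$ there exists $r > 0$ with $\lambda' + r\lambda \in C_{\zipsf,\RR_{\geq 0}}$. Since $X^*_{+,I}(T)_{\RR_{\geq 0}}$ is the $\RR_{\geq 0}$-cone on its integer points, and $\lambda \in X^*_{-}(L) \subset \Ccal_{\zipsf}$ by Proposition \ref{prop-charL} (so that $C_{\zipsf,\RR_{\geq 0}}$ is stable under adding positive multiples of $\lambda$), a standard reduction lets me assume $\lambda' \in X^*_{+,I}(T)$ is an integer character.

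The central idea is to produce a nonzero section of some $\Vcal_I(N\lambda')$ over the $\mu$-ordinary locus $\Ucal_\mu$ and then multiply by a large power of the Hasse-type section supplied by Theorem \ref{theo-Hasse-inv} in order to extend it globally. For the first ingredient, consider the norm $\mathbf{f}_{\lambda'} \colonequals \Norm_{L_\varphi}(f_{\lambda'}) \in V_I(N_\varphi \lambda')$ of the highest weight vector, as defined in \S\ref{subsec-norm}: by construction it is $L_\varphi$-invariant, so Lemma \ref{lem-Umu-sections} identifies it with a nonzero section of $\Vcal_I(N_\varphi \lambda')$ over $\Ucal_\mu$. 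For the second ingredient, Theorem \ref{theo-Hasse-inv} yields a section $h \in H^0(\GZip^\mu, \Vcal_I(N_\varphi \lambda))$ whose non-vanishing locus is exactly $\Ucal_\mu$.

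Pulling back along the smooth atlas $G \to \GZip^\mu$, the section $\mathbf{f}_{\lambda'}$ corresponds to a regular $E$-equivariant map $U_\mu \to V_I(N_\varphi\lambda')$, which extends to a rational map $G \dashrightarrow V_I(N_\varphi\lambda')$ whose pole divisor is supported on the boundary $G \setminus U_\mu = \bigcup_{\alpha \in \Delta^P} Z_\alpha$ (see \eqref{GminusU}); this boundary is purely of codimension one in $G$ by Theorem \ref{thm-E-orb-param}. The pullback of $h$ vanishes exactly on this boundary, hence with strictly positive multiplicity along every $Z_\alpha$. For $k$ sufficiently large, the divisor of $h^k$ dominates the pole divisor of $\mathbf{f}_{\lambda'}$ along each $Z_\alpha$, so $h^k \mathbf{f}_{\lambda'}$ is a regular (and manifestly nonzero) section of $\Vcal_I(N_\varphi(k\lambda + \lambda'))$ on $\GZip^\mu$. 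This gives $N_\varphi(k\lambda+\lambda') \in C_{\zipsf}$, hence $\lambda'+k\lambda \in C_{\zipsf,\RR_{\geq 0}}$, verifying the criterion of Lemma \ref{lem-cone-topo}.

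The main obstacle is the pole-killing step of the third paragraph; but it reduces cleanly to two structural facts, namely that each boundary component of $U_\mu$ in $G$ is a divisor (from Theorem \ref{thm-E-orb-param}) and that $h$ vanishes along every such divisor with strictly positive order (from the ``exactly $\Ucal_\mu$'' statement in Theorem \ref{theo-Hasse-inv}). Both are immediate, so the argument encounters no serious technical difficulty.
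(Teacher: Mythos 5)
Your proof is correct and follows essentially the same route as the paper's: reduce via Lemma \ref{lem-cone-topo}, produce a nonzero section of $\Vcal_I(N_\varphi\lambda')$ over $\Ucal_\mu$, and multiply by a sufficiently large power of the Hasse section from Theorem \ref{theo-Hasse-inv} to kill the poles along the codimension-one boundary. The only (harmless) difference is that you make the nonzero-ness of $H^0(\Ucal_\mu,\Vcal_I(N_\varphi\lambda'))$ explicit by exhibiting $\Norm_{L_\varphi}(f_{\lambda'})$, where the paper simply invokes Lemma \ref{lem-Umu-sections} and leaves this point implicit.
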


\begin{proof}
For $\lambda \in X^*_{-}(L)_{\reg}$, we show that $C_{\zipsf, \RR_{\geq 0}}$ is a neighborhood of $\lambda$ in $X^*_{+,I}(T)_{\RR_{\geq 0}}$. 
By Lemma~\ref{lem-cone-topo}, it suffices to show that for all $\lambda'\in X^*_{+,I}(T)_{\RR_{\geq 0}}$, there is an $r\in \RR_{>0}$ such that $\lambda'+r\lambda \in C_{\zipsf, \RR_{\geq 0}}$. 
We may assume $\lambda'\in X^*_{+,I}(T)$ by scaling. 
Let $h\in H^0(\GZip^\mu,\Vcal_I(N_\varphi\lambda))$ be the section provided by Theorem~\ref{theo-Hasse-inv}. By Lemma~\ref{lem-Umu-sections}, $H^0(\Ucal_\mu,\Vcal_I(N_\varphi \lambda'))$ is nonzero; let $h'$ be a nonzero element therein. This section may have poles on the complement of $\Ucal_\mu$. However, since $h$ vanishes on the complement of $\Ucal_\mu$, there exists a $d\geq 1$ such that $h^d h'$ has no poles. Hence $h^d h'\in H^0(\GZip^\mu, \Vcal_I(N_{\varphi}\lambda' + d N_{\varphi} \lambda))$, and thus $N_{\varphi}(\lambda' + d \lambda) \in C_{\zipsf}$, so $\lambda'+ d\lambda \in \Ccal_{\zipsf}$. The result follows.
\end{proof}

\begin{lemma}\label{lemma-GS-hw-nbh}
The cones $\Ccal_{\GSsf,\RR_{\geq 0}}$ and $\Ccal_{\hwsf, \RR_{\geq 0}}$ are neighborhoods of\, $X^*_{-}(L)_{\reg}$ in $X^*_{+,I}(T)_{\RR_{\geq 0}}$.
\end{lemma}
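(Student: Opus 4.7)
The plan is to verify, for each of the two cones and each fixed $\lambda\in X^*_{-}(L)_{\reg}$, the criterion of Lemma \ref{lem-cone-topo}: given any $\lambda'\in X^*_{+,I}(T)_{\RR_{\geq 0}}$, I must produce an $r\in\RR_{>0}$ such that $\lambda'+r\lambda$ lies in the cone. A preliminary observation common to both cases is that since $\lambda\in X^*(L)$ is fixed by $W_L$, one has $\langle\lambda,\beta^\vee\rangle=0$ for every $\beta\in I$; in particular $\lambda'+r\lambda$ remains $I$-dominant for every $r>0$, so it stays in $X^*_{+,I}(T)_{\RR_{\geq 0}}$.

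For $\Ccal_{\GSsf,\RR_{\geq 0}}$ the only remaining inequalities to check are $\langle\lambda'+r\lambda,\alpha^\vee\rangle\leq 0$ for $\alpha\in\Phi^+\setminus\Phi_L^+$. I would expand any positive coroot $\alpha^\vee=\sum_{\beta\in\Delta}m_\beta\beta^\vee$ with $m_\beta\in\ZZ_{\geq 0}$ and note that the positive coroots of $G$ lying in the $\ZZ_{\geq 0}$-span of $\{\beta^\vee\mid\beta\in I\}$ are precisely the positive coroots of $L$. Consequently, for $\alpha\in\Phi^+\setminus\Phi_L^+$ there exists $\beta\in\Delta^P$ with $m_\beta>0$, whence $\langle\lambda,\alpha^\vee\rangle=\sum_\beta m_\beta\langle\lambda,\beta^\vee\rangle<0$, using that $\langle\lambda,\beta^\vee\rangle<0$ for $\beta\in\Delta^P$ and $=0$ for $\beta\in I$. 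Since $\Phi^+\setminus\Phi_L^+$ is finite, one may pick a single $r$ large enough that $\langle\lambda'+r\lambda,\alpha^\vee\rangle\leq 0$ for all such $\alpha$ simultaneously, placing $\lambda'+r\lambda$ in $\Ccal_{\GSsf,\RR_{\geq 0}}$.

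For $\Ccal_{\hwsf,\RR_{\geq 0}}$ I would apply the same perturbation strategy to each of the linear inequalities \eqref{formula-norm} indexed by $\alpha\in\Delta^P$. For this it suffices to show that the left-hand side of \eqref{formula-norm}, evaluated at $\lambda$, is strictly negative. Since $W_{L_0}(\FF_q)\subset W_L$ fixes $\lambda$, the double sum collapses to
\begin{equation*}
\Bigl(\sum_{w\in W_{L_0}(\FF_q)}q^{\ell(w)}\Bigr)\sum_{i=0}^{r_\alpha-1}q^i\langle\lambda,\sigma^i(\alpha^\vee)\rangle.
\end{equation*}
Each term $\langle\lambda,\sigma^i(\alpha^\vee)\rangle$ is $\leq 0$ because $\sigma^i(\alpha)\in\Delta$ and $\lambda$ is anti-dominant, while the $i=0$ term is $\langle\lambda,\alpha^\vee\rangle<0$ by the definition of $X^*_{-}(L)_{\reg}$. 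Hence the full expression is strictly negative, and linearity together with the finiteness of $\Delta^P$ lets me choose a uniform $r$ for which all the inequalities \eqref{formula-norm} hold at $\lambda'+r\lambda$.

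There is no serious obstacle: both parts reduce to the standard device ``perturb an arbitrary boundary point by a large multiple of an interior point''. The only non-formal input is the root-theoretic fact that $\alpha\in\Phi^+\setminus\Phi_L^+$ forces the coroot expansion of $\alpha^\vee$ to involve some $\beta^\vee$ with $\beta\in\Delta^P$, which is what upgrades the non-strict GS inequality at $\lambda$ to a strict one and thereby makes the perturbation argument work.
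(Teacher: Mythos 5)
Your proof is correct and follows essentially the same path as the paper: for both cones the crux is that the defining non-strict inequalities hold \emph{strictly} at every $\lambda\in X^*_{-}(L)_{\reg}$ (for $\Ccal_{\GSsf}$ via the coroot expansion $\alpha^\vee=\sum m_\beta\beta^\vee$ forcing some $m_\beta>0$ with $\beta\in\Delta^P$, and for $\Ccal_{\hwsf}$ via the factorization of \eqref{formula-norm} using $W_L$-invariance and the $i=0$ term), which then yields a relatively open neighborhood. The paper simply exhibits the open subset cut out by the strict inequalities directly, whereas you route the same observation through the perturbation criterion of Lemma \ref{lem-cone-topo}; the two packagings are interchangeable.
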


\begin{proof}
The open subset of $X^*_{+,I}(T)_{\RR_{\geq 0}}$ defined by the equations $\langle \lambda, \alpha^\vee \rangle <0$ for all $\alpha\in \Phi^+ \setminus \Phi^{+}_{L}$ 
is contained in $\Ccal_{\GSsf,\RR_{\geq 0}}$ and contains $X^*_{-}(L)_{\reg}$, which proves the first part of the assertion. Replacing $\leq$ by $<$ in the inequalities \eqref{formula-norm}, we get an open subset of $X^*_{+,I}(T)_{\RR_{\geq 0}}$ containing $X^*_{-}(L)_{\reg}$ (same proof as that of Lemma~\ref{XLreg-contained}), which proves the second part. 
\end{proof}

We may ask whether $C_{\pha, \RR_{\geq 0}}$ is also a neighborhood of $X_{-}^*(L)_{\reg}$. The proof of the following result is similar to that of \cite[Lemma 2.3.1]{Koskivirta-automforms-GZip}, where the cocharacter datum $(G,\mu)$ was assumed to be of Hodge type, but this assumption is superfluous. We partly reproduce  the proof to explain the appropriate changes (we replace the character $\eta_\omega$ in \cite[Lemma 2.3.1]{Koskivirta-automforms-GZip} by the set $X^*_{-}(L)_{\reg}$). The following holds for an arbitrary cocharacter datum $(G,\mu)$. 

\begin{proposition}\label{prop-Hassetype-equivalent}
The following are equivalent:
\begin{equivlist}
\item\label{pHe-1} The cone $C_{\pha, \RR_{\geq 0}}$ is a neighborhood of\, $X^*_{-}(L)_{\reg}$ in $X_{+,I}^*(T)_{\RR_{\geq 0}}$.
\item\label{pHe-2} One has $\Ccal_{\GSsf} \subset \Ccal_{\pha}$.
\item
\label{item-root-data-hasse-type}
The subgroup $P$ is defined over $\FF_q$, and the Frobenius $\sigma$ acts on $I$ by $\sigma(\alpha)=-w_{0,I}\alpha$ for all $\alpha\in I$.
\end{equivlist}
\end{proposition}

\begin{proof}
Since $\Ccal_{\GSsf,\RR_{\geq 0}}$ is a neighborhood of $X^*_{-}(L)_{\reg}$ in $X^*_{+,I}(T)_{\RR_{\geq 0}}$, we have~\eqref{pHe-2} $\Rightarrow$~\eqref{pHe-1}. Assume that~\eqref{pHe-1} holds. In particular, $X^*_{-}(L)_{\reg}\subset \Ccal_{\pha}$; hence $h_\Zcal^{-1}(X^*_{-}(L)_{\reg})\subset X^*_{+}(T)_{\RR_{\geq 0}}$. Let $\lambda\in X^*_{-}(L)_{\reg}$, and write $\lambda = h_\Zcal(\chi)$ for $\chi \in X^*_{+}(T)_{\RR_{\geq 0}}$.  Hence for all $\alpha\in I$, we have $\langle h_\Zcal(\chi),\alpha^\vee\rangle=0$, which amounts to $\langle \chi, \alpha^\vee \rangle= q\langle \chi, \sigma(w_{0,I}\alpha^\vee) \rangle$. Since $\alpha \in I$, $w_{0,I} \alpha$ is a negative root, and so is $\sigma(w_{0,I}\alpha)$. We deduce that $\langle \chi, \alpha^\vee \rangle= \langle \chi, \sigma(w_{0,I}\alpha^\vee) \rangle = 0$ (in particular, $\chi\in X^*(L)$). Since $X^*_{-}(L)_{\reg}$ generates $X^*(L)$, this shows that $h_\Zcal^{-1}$ maps $X^*(L)_\RR$ to itself, and all elements in the image satisfy $\langle \chi, \sigma(w_{0,I}\alpha^\vee) \rangle = 0$ for all $\alpha\in I$. For dimension reasons, $h^{-1}_\Zcal(X^*(L)_\RR)=X^*(L)_\RR$; hence any character $\chi \in X^*(L)$ is orthogonal to $\sigma(\alpha^\vee)$ for all $\alpha\in I$. Hence we must have $\sigma(I)=I$; thus~$P$ is defined over $\FF_q$.  Next, for $\alpha\in I$, take $\lambda_\alpha\in X_{+,I}^*(T)$ such that $\langle \lambda_\alpha , \beta^{\vee} \rangle=0$ for all $\beta \in \Delta \setminus \{\alpha\}$ and $\langle \lambda_\alpha, \alpha^\vee \rangle >0$. Let $\lambda\in X_{-}^*(L)_{\reg}$.  There exist an $r \in \mathbb{R}_{>0}$ and a $\chi_\alpha\in X_{+}^*(T)_{\mathbb{R}_{\geq 0}}$ such that $h_\Zcal(\chi_\alpha)=r\lambda+\lambda_\alpha$.  As before, we deduce $\langle \chi_\alpha, \beta^\vee \rangle = \langle \chi_\alpha, \sigma(w_{0,I}\beta^\vee) \rangle = 0$ for all $\beta\in I\setminus \{\alpha\}$. The character $\chi_\alpha$ cannot be orthogonal to all $\beta^\vee$ for $\beta\in I$; hence $\langle \chi_\alpha, \alpha^\vee \rangle \neq 0$. Furthermore, since the map $I\to I$, $\beta\mapsto -\sigma(w_{0,I}\beta)$ is a bijection, we must have $-\sigma(w_{0,I}\alpha)=\alpha$. This shows~\eqref{pHe-1} $\Rightarrow$ \eqref{item-root-data-hasse-type}. Finally, the implication~\eqref{item-root-data-hasse-type} $\Rightarrow$~\eqref{pHe-2} is completely similar to the implication (3) $\Rightarrow$ (4) in the proof of \cite[Lemma 2.3.1]{Koskivirta-automforms-GZip} (after changing $p$ to $q$).
\end{proof}

\begin{definition}\label{def-Hasse-type-cochardatum}
We say that a cocharacter datum $(G,\mu)$ is of Hasse type if the equivalent conditions of Proposition~\ref{prop-Hassetype-equivalent} are satisfied.
\end{definition}

The main result of this section is that~\eqref{pHe-1},~\eqref{pHe-2},~\eqref{item-root-data-hasse-type} above are also equivalent to the equality $\Ccal_{\pha}=\Ccal_{\zipsf}$. For the time being, the following is an immediate consequence of Lemma~\ref{lemma-zip-neighborhood}. 

\begin{corollary}\label{cor-Hassetype}
Assume that $\Ccal_{\pha} = \Ccal_{\zipsf}$ holds. Then $(G,\mu)$ is of Hasse type.
\end{corollary}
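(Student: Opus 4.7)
The proof should be essentially a one-line chase through the machinery already assembled just above. The plan is to use the equivalence (i) $\Leftrightarrow$ (iii) in Proposition \ref{prop-Hassetype-equivalent} to reduce the statement to a topological claim about $C_{\pha,\RR_{\geq 0}}$, and then transfer the corresponding topological property from $C_{\zipsf,\RR_{\geq 0}}$ (which is known by Lemma \ref{lemma-zip-neighborhood}) via the hypothesis.

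More precisely, first I would observe that for any cone $C\subset X^*(T)$ one has $C_{\RR_{\geq 0}}=\Ccal_{\RR_{\geq 0}}$, since passing to the saturation only introduces $\QQ_{\geq 0}$-multiples of elements already in $C$, and these lie in $C_{\RR_{\geq 0}}$ anyway. Applying this to $C_{\pha}$ and $C_{\zipsf}$, the hypothesis $\Ccal_{\pha}=\Ccal_{\zipsf}$ yields the equality of $\RR_{\geq 0}$-cones
\begin{equation}
C_{\pha,\RR_{\geq 0}}=\Ccal_{\pha,\RR_{\geq 0}}=\Ccal_{\zipsf,\RR_{\geq 0}}=C_{\zipsf,\RR_{\geq 0}}.
\end{equation}

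Next, by Lemma \ref{lemma-zip-neighborhood}, the right-hand side is a neighborhood of $X^*_{-}(L)_{\reg}$ in $X^*_{+,I}(T)_{\RR_{\geq 0}}$. Hence so is $C_{\pha,\RR_{\geq 0}}$. This is precisely condition (i) of Proposition \ref{prop-Hassetype-equivalent}, so by the equivalence (i) $\Leftrightarrow$ (iii) proved there, $(G,\mu)$ is of Hasse-type in the sense of Definition \ref{def-Hasse-type-cochardatum}.

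There is essentially no obstacle here, since the nontrivial content has been packaged into Lemma \ref{lemma-zip-neighborhood} (producing Hasse-type sections in $H^0(\GZip^\mu,\Vcal_I(N_\varphi\lambda))$ and multiplying by them to push arbitrary weights into $C_{\zipsf}$) and into Proposition \ref{prop-Hassetype-equivalent} (the root-theoretic analysis of the map $h_\Zcal$). The only thing to be careful about is the identification $C_{\RR_{\geq 0}}=\Ccal_{\RR_{\geq 0}}$, which is formal.
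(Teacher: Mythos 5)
Your proof is correct and is exactly the argument the paper has in mind when it says the corollary is "an immediate consequence of Lemma \ref{lemma-zip-neighborhood}": one passes to $\RR_{\geq 0}$-cones (where saturation is invisible), invokes Lemma \ref{lemma-zip-neighborhood} to get the neighborhood property for $C_{\pha,\RR_{\geq 0}}$, and reads off condition (i) of Proposition \ref{prop-Hassetype-equivalent}. No difference in substance or route.
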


Recall that $\Ccal_{\pha} = \Ccal_{\zipsf}$ means by definition that $\GF^{\mu}$ satisfies the Hasse property (Definition~\ref{def-Hasse-property}). This shows that Theorem~\ref{thmDK}\eqref{thm-DK-item2} can only potentially generalize to Hodge-type Shimura varieties $S_K$ such that the associated zip datum $(G,\mu)$ is of Hasse type. Indeed, if the flag space of $S_K$ satisfies the Hasse property, then so does $\GF^{\mu}$, and hence $(G,\mu)$ must be of Hasse type by Corollary~\ref{cor-Hassetype}. In Theorem~\ref{thmcones}, all three cases~\eqref{thmcones-1},~\eqref{thmcones-2} and~\eqref{thmcones-3} are of Hasse type.

\subsection{Maximal flag stratum}\label{sec-max-flag-stratum}

We prove some technical results used in the proof of Theorem~\ref{main-thm-Hasse-type}. Let $(G,\mu)$ be an arbitrary cocharacter datum, and let $(B,T,z)$ be a frame with $z=\sigma(w_{0,I}) w_0$ (see Remark~\ref{rmkmuFp}). Recall that $H^0(\GZip^\mu,\Vcal_I(\lambda))$ is identified with $H^0(\GF^{\mu},\Vcal_{\flag}(\lambda))$ by \eqref{ident-H0-GF}. Via the isomorphism $\GF^{\mu} \simeq [E'\backslash G]$ (see Section~\ref{subsec-zipflag}) and Equation~\eqref{globquot}, an element of the space $H^0(\GF^{\mu},\Vcal_{\flag}(\lambda))$ can be viewed as a function $f\colon G\to \AA^1$ satisfying
\begin{equation}\label{equ-Gzipflag-lambda}
    f(agb^{-1})=\lambda(a) f(g), \quad \forall (a,b)\in E', \ \forall g\in G.
\end{equation}
Recall that $\GF^{\mu}$ admits a stratification $(\Fcal_w)_{w\in W}$ (see Section~\ref{subsec-zipflag}), where $\Fcal_w\colonequals [E'\backslash F_w]$ and $F_w=BwBz^{-1}$ is the $B\times {}^z B$-orbit of $wz^{-1}$. The unique open stratum is $\Ucal_{\max}=\Fcal_{w_0}$. Also write  $U_{\max}\colonequals F_{w_0}=B w_0 Bz^{-1}$ (the $B\times {}^z B$-orbit of $w_0 z^{-1}=\sigma(w_{0,I})^{-1}$). The codimension~$1$ $B\times {}^z B$-orbits are the $F_{s_\alpha w_0}$ for $\alpha \in \Delta$. Define $\Ucal'_\mu\colonequals \pi^{-1}(\Ucal_\mu)\simeq [E'\backslash U_\mu]$.

\begin{lemma} \label{lemma-Umax} \ 
\begin{assertionlist}
    \item \label{lemma-Umax-item1} The stabilizer of $\sigma(w_{0,I})^{-1}$ in $B\times {}^z B$ is $S\colonequals \{(t,\sigma(w_{0,I}) t\sigma(w_{0,I})^{-1}) \ | \ t\in T\}$.
    \item \label{lemma-Umax-item2} The map $B_M\to U_{\max}$, $b\mapsto \sigma(w_{0,I})b^{-1}$ induces an isomorphism $[B_M/T]\simeq \Ucal_{\max}$, where $T$ acts on $B_M$ on the right by the action $B_M\times T\to B_M$, $(b,t)\mapsto \varphi(t)^{-1} b \sigma(w_{0,I}) t \sigma(w_{0,I})^{-1}$.
    \item \label{lemma-Umax-item3} Assume that $P$ is defined over $\FF_q$. Then $U_{\max}\subset U_{\mu}$ and\, $\Ucal_{\max} \subset \Ucal'_{\mu}$.
\end{assertionlist}
\end{lemma}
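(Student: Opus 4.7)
\textbf{Proof plan for Lemma \ref{lemma-Umax}.}

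For part \ref{lemma-Umax-item1}, the plan is to compute the stabilizer directly. Starting from the identity $z = \sigma(w_{0,I})w_0$ and $w_0 Bw_0^{-1} = B^+$, one obtains ${}^z B = \sigma(w_{0,I}) B^+ \sigma(w_{0,I})^{-1}$. The condition $a\sigma(w_{0,I})^{-1} b^{-1} = \sigma(w_{0,I})^{-1}$ for $(a,b) \in B \times {}^z B$ rewrites as $b = \sigma(w_{0,I}) a \sigma(w_{0,I})^{-1}$, and $b \in {}^z B$ then forces $a \in B \cap B^+ = T$. Conversely, any such pair clearly fixes $\sigma(w_{0,I})^{-1}$.

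For part \ref{lemma-Umax-item2}, I would proceed in three steps. First, verify $\phi \colon b \mapsto \sigma(w_{0,I}) b^{-1}$ lands in $U_{\max}$: using $z^{-1}=w_0\sigma(w_{0,I})^{-1}$ we have $U_{\max} = Bw_0 Bz^{-1} = BB^+ \sigma(w_{0,I})^{-1}$; setting $n = \sigma(w_{0,I})$, we compute $\phi(b) \cdot \sigma(w_{0,I}) = nb^{-1}n = (n b^{-1}n^{-1})\cdot n^2$, where the first factor lies in $B_M^+ \subset B^+$ (since $n$ represents the longest element of $W_M$, which swaps $B_M$ and $B_M^+$) and $n^2 \in T \subset B^+$. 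Second, introduce the homomorphism $\iota \colon T \to E'$, $t \mapsto (t,\varphi(t))$ (well-defined since $\theta_L^P(t) = t$, $\theta_M^Q(\varphi(t)) = \varphi(t)$, and $\varphi(\theta_L^P(t)) = \theta_M^Q(\varphi(t))$). The induced $T$-action on $U_{\max}$ is $t\cdot g = tg\varphi(t)^{-1}$, and the identity $t\cdot \phi(b) = \phi(b \cdot t)$ yields $b\cdot t = \varphi(t)^{-1}\, b \, \sigma(w_{0,I})^{-1}\, t\, \sigma(w_{0,I})$; this agrees with the formula in the statement because $n^2 \in T$ is central in $T$, so $\sigma(w_{0,I})^{-1}t\sigma(w_{0,I}) = \sigma(w_{0,I}) t\sigma(w_{0,I})^{-1}$.

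Third, to upgrade $\phi$ to an isomorphism $[B_M/T] \simeq \Ucal_{\max}$, I would parametrize the $E'$-orbit of $\sigma(w_{0,I})^{-1}$ explicitly. Writing an arbitrary element of $E'$ as $(u_1 b_L, \varphi(b_L)u_2)$ with $u_1 \in R_{\mathrm{u}}(P)$, $b_L \in B_L$, $u_2 \in R_{\mathrm{u}}(Q)$, its action on $\sigma(w_{0,I})^{-1}$ gives $u_1 b_L \sigma(w_{0,I})^{-1} u_2^{-1} \varphi(b_L)^{-1}$. Using the factorization $U_{\max} = BB^+\sigma(w_{0,I})^{-1}$ together with the Bruhat-style parametrization $BB^+ \simeq (B \times B^+)/T$, one can absorb the contributions of $u_1$, $u_2$ (and the $R_{\mathrm{u}}(P)$-component of $b_L$) and reduce each orbit to a canonical representative of the form $\sigma(w_{0,I}) b^{-1}$ with $b \in B_M$, unique up to the $T$-action above. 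The dimension count $\dim[B_M/T] = \#\Phi_M^+ = \dim \Ucal_{\max}$ confirms the expected match; combined with part \ref{lemma-Umax-item1}, which identifies the stabilizers, this gives the isomorphism of stacks. I expect this bookkeeping to be the main technical obstacle.

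For part \ref{lemma-Umax-item3}, assume $P$ is defined over $\FF_q$. By \cite[Theorem 2.4.1]{Goldring-Koskivirta-Strata-Hasse} (or directly from the strata description in \S\ref{subsec-zipflag}), the map $\pi \colon \GF^\mu \to \GZip^\mu$ sends $\Fcal_w$ into $\pi^{-1}(\Xcal_{w'})$, where $w' \in {}^I W$ is the minimal-length representative of $W_I w$. For $w = w_0$, this representative is $w_{0,I} w_0$, which indexes the open zip stratum $\Ucal_\mu$. Hence $\Fcal_{w_0} \subset \pi^{-1}(\Ucal_\mu) = \Ucal'_\mu$, which translates to $U_{\max} \subset U_\mu$ and $\Ucal_{\max} \subset \Ucal'_\mu$.
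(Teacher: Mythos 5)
Your argument for part~(1) is essentially identical to the paper's; both rewrite the stabilizer condition via $z=\sigma(w_{0,I})w_0$ and $w_0 B w_0^{-1} = B^+$, and both land on $B\cap B^+=T$.

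For part~(2), your plan is a more direct route than the paper takes, and the step you flag as the ``main technical obstacle'' is exactly where your approach is incomplete. The paper sidesteps the orbit-by-orbit bookkeeping entirely: it first observes that $(x,y)\mapsto x\sigma(w_{0,I})y^{-1}$ identifies $U_{\max}$ with $(B\times {}^zB)/S$, so that $\Ucal_{\max}\simeq [E'\backslash (B\times{}^zB)/S]$, and then produces a clean isomorphism $E'\backslash(B\times{}^zB)\xrightarrow{\sim} B_M$, $E'\cdot(x,y)\mapsto \varphi(\theta^P_L(x))^{-1}\theta^Q_M(y)$, with visible inverse $b\mapsto E'\cdot(1,b)$. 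The $T$-action then transports automatically. Your composite $b\mapsto \sigma(w_{0,I})b^{-1}$ is exactly the composition of these two maps, so you are not wrong, but as written you have deferred the proof of the key surjectivity/injectivity step to ``bookkeeping.'' That step is not automatic, and the paper's formula is precisely what makes it transparent; you should state and verify that formula rather than leave it to a dimension count plus stabilizer matching, which does not by itself give an isomorphism of quotient stacks. Also, a small computational point: the intertwining identity you derive has $\varphi(t)^{-1}$ and $t$ where a correct computation gives $\varphi(t)$ and $t^{-1}$ (i.e.\ $\iota(t)\cdot\phi(b)=\phi(\varphi(t)\,b\,\sigma(w_{0,I})^{-1}t^{-1}\sigma(w_{0,I}))$). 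This is harmless since $t\mapsto t^{-1}$ is an automorphism of $T$, but it is an error.

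For part~(3), your argument has a genuine gap. You claim that $\Fcal_w\subset\pi^{-1}(\Xcal_{w'})$ with $w'$ the minimal-length representative of $W_I w$, citing a result of Goldring--Koskivirta. Two concerns: first, that description of the relation between the flag stratification and the pullback of the zip stratification is not what appears in the present paper (\S\ref{subsec-zipflag} contains no such statement), and whether the cited external theorem literally says this, or whether it requires further hypotheses, is not established by you. Second, and more seriously, your argument makes no use whatsoever of the hypothesis that $P$ is defined over $\FF_q$. This hypothesis is precisely what the paper's proof relies on: under it, $U_\mu$ equals the open $P\times Q$-orbit (by the Wedhorn reference), hence is $B\times{}^zB$-stable, hence contains the open $B\times{}^zB$-orbit $U_{\max}$. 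Without this hypothesis, $U_\mu$ is only $E$-stable and need not be $B\times{}^zB$-stable, so the inclusion $F_{w_0}\subset U_\mu$ is not clear; the claim that a flag stratum is contained in a single zip stratum is exactly what is at stake. If the theorem you cite truly implied $F_{w_0}\subset U_\mu$ unconditionally, the paper's hypothesis would be superfluous, which you should at least remark on and justify; as written, your part~(3) does not constitute a proof.
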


\begin{proof}
We prove~\eqref{lemma-Umax-item1}. Let $(x,y)\in B\times {}^zB$ be such that $x\sigma(w_{0,I})^{-1} y^{-1}=\sigma(w_{0,I})^{-1}$. Write $y=zy'z^{-1}$ with $y'\in B$. Since $z=\sigma(w_{0,I})w_0$, we obtain $x w_0 y'^{-1}w_0^{-1}\sigma(w_{0,I})^{-1}=\sigma(w_{0,I})^{-1}$, hence $x = w_0 y' w_0^{-1}$. It follows that $x\in B\cap w_{0}Bw_{0}^{-1}=T$. We can write $y=\sigma(w_{0,I}) x\sigma(w_{0,I})^{-1}$, which proves~\eqref{lemma-Umax-item1}.
To show~\eqref{lemma-Umax-item2}, note that the map $B\times {}^zB\to U_{\max}$, $(x,y)\mapsto x \sigma(w_{0,I}) y^{-1}$ induces an isomorphism $(B\times {}^zB)/S \to U_{\max}$, where $S$ is as in~\eqref{lemma-Umax-item1}. 
Hence $\Ucal_{\max}$ is isomorphic to $[E' \backslash B\times {}^z B / S]$. We have an isomorphism 
\begin{equation} \label{isomBM}
E'\backslash (B\times {}^z B) \lra B_M, \quad  E'\cdot (x,y)\longmapsto \varphi\left(\theta^P_L(x)\right)^{-1}\theta^Q_M(y),
\end{equation}
whose inverse is $B_M \to E'\backslash B\times {}^zB$, $b\mapsto E'\cdot (1,b)$. We identify $T$ and $S$ via the isomorphism $T\to S$, $t\mapsto (t,\sigma(w_{0,I}) t\sigma(w_{0,I})^{-1})$. The action of $S$ on $E'\backslash B\times {}^zB$ by multiplication on the right transforms via the isomorphism \eqref{isomBM} to the right action of $T$ defined by $B_M \times T \to B_M$, $(b,t)\mapsto \varphi(t)^{-1} b \sigma(w_{0,I}) t \sigma(w_{0,I})^{-1}$. This proves~\eqref{lemma-Umax-item2}. 
Finally, we show~\eqref{lemma-Umax-item3}. Assume that $P$ is defined over $\FF_q$. Then $U_{\mu}$ coincides with the unique open $P\times Q$-orbit by \cite[Corollary 2.15]{Wedhorn-bruhat}. 
Since $B\times {}^z B\subset P\times Q$, the set $U_{\mu}$ is a union of $B\times {}^z B$-orbits, hence contains $U_{\max}$. Since $\Ucal'_{\mu}=[E'\backslash U_{\mu}]$, we have $\Ucal_{\max} \subset \Ucal'_{\mu}$.
\end{proof}

For $\lambda\in X^*(T)$, let $S(\lambda)$ denote the space of functions $h\colon B_M\to \AA^1$ satisfying
\begin{equation}\label{equ-Slambda}
h\left(\varphi(t)^{-1}b\sigma(w_{0,I}) t\sigma(w_{0,I})^{-1}\right)=\lambda(t)^{-1} h(b), \quad \forall t\in T, \ \forall b\in B_M.
\end{equation}

\begin{corollary}
The isomorphism from item~\eqref{lemma-Umax-item2} of Lemma~\ref{lemma-Umax} induces an isomorphism
\begin{equation}
  \vartheta \colon  H^0\left(\Ucal_{\max},\Vcal_{\flag}(\lambda)\right)\lra S(\lambda).
\end{equation}
\end{corollary}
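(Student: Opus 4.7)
The plan is to construct $\vartheta$ as the pullback of sections along the isomorphism $[B_M/T] \simeq \Ucal_{\max}$ of Lemma \ref{lemma-Umax}(2) and to verify that the equivariance conditions match up. Concretely, given $f\in H^0(\Ucal_{\max},\Vcal_{\flag}(\lambda))$, viewed as a function $U_{\max}\to \AA^1$ satisfying \eqref{equ-Gzipflag-lambda}, I would set $\vartheta(f)$ to be the composition of $f$ with the map $B_M\to U_{\max}$ from Lemma \ref{lemma-Umax}(2), and show that this lands in $S(\lambda)$.

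To check this, first I would apply the quotient-stack description \eqref{globquot} to $[B_M/T]$ in its right-action form: global sections of the line bundle attached to a character $\chi\in X^*(T)$ correspond to functions $h\colon B_M\to \AA^1$ satisfying $h(b\cdot t)=\chi(t)^{-1}h(b)$, the inverse appearing because of the conversion from the right $T$-action to a left one by $t\mapsto t^{-1}$. Second, I would determine which character of $T$ arises from $\Vcal_{\flag}(\lambda)$ under the identification $T\xrightarrow{\sim}S$ of Lemma \ref{lemma-Umax}(1): since $\Vcal_{\flag}(\lambda)$ is built from the $E'$-representation obtained by composing the first projection $E'\to B\to T$ with $\lambda$, and since $S\hookrightarrow E'$ via $(t,\sigma(w_{0,I})t\sigma(w_{0,I})^{-1})\mapsto (t,\sigma(w_{0,I})t\sigma(w_{0,I})^{-1})$, the character of $S$ is $(t_1,t_2)\mapsto \lambda(t_1)$, which pulls back to $\lambda$ under $t\mapsto(t,\sigma(w_{0,I})t\sigma(w_{0,I})^{-1})$.

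Combining these two observations, the equivariance condition satisfied by $\vartheta(f)$ becomes $\vartheta(f)(b\cdot t)=\lambda(t)^{-1}\vartheta(f)(b)$ for the right $T$-action of Lemma \ref{lemma-Umax}(2), and this is exactly \eqref{equ-Slambda} after substituting $b\cdot t=\varphi(t)^{-1}b\sigma(w_{0,I})t\sigma(w_{0,I})^{-1}$. The inverse of $\vartheta$ is obtained from the same stack isomorphism, so $\vartheta$ is bijective.

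There is no serious obstacle here, as the statement is essentially the specialization of the general formalism of \S\ref{sec-genth-vb} to the quotient presentation of $\Ucal_{\max}$ given by Lemma \ref{lemma-Umax}(2). The one point that requires care is the tracking of sign conventions between left and right actions, and in particular verifying that the character of $S$ pulls back to $+\lambda$ rather than $-\lambda$ under $T\simeq S$, so that the exponent appearing in \eqref{equ-Slambda} matches what the general formalism produces.
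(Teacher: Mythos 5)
Your overall strategy matches the paper's: define $\vartheta(f)(b)=f(\sigma(w_{0,I})b^{-1})$, identify the character of $T$ that appears, and verify the equivariance conditions. The final conclusion --- that the character is $\lambda$, so that the image lands in $S(\lambda)$ --- is also correct. However, the step by which you determine the character contains a genuine error.

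You claim that $S$ embeds into $E'$ via $(t,\sigma(w_{0,I})t\sigma(w_{0,I})^{-1})\mapsto (t,\sigma(w_{0,I})t\sigma(w_{0,I})^{-1})$, and then restrict the $E'$-character $\lambda\circ\pr_1$ to $S$. But $S\not\subset E'$ in general: the pair $(t,\sigma(w_{0,I})t\sigma(w_{0,I})^{-1})$ lies in $E$ only if $\varphi(\theta^P_L(t))=\theta^Q_M(\sigma(w_{0,I})t\sigma(w_{0,I})^{-1})$, which for $t\in T$ reads $\varphi(t)=\sigma(w_{0,I})t\sigma(w_{0,I})^{-1}$. This is a nontrivial Lang-type condition; the set of $t$ satisfying it is a finite subgroup of $T$. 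Indeed, the stabilizer of the point $\sigma(w_{0,I})\in U_{\max}$ inside the stack $\GF^\mu=[E'\backslash G_k]$ is precisely $S\cap E'$, which is finite; this is consistent with the fact that the automorphism group of $[1]\in[B_M/T]$ is $\{t\in T:\varphi(t)=\sigma(w_{0,I})t\sigma(w_{0,I})^{-1}\}$, but it also shows that $S$ itself does not sit inside $E'$.

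The correct mechanism producing the factor $\lambda(t)^{-1}$ is a combination of the two actions in play on $B\times {}^z B$. Writing $F(x,y)\colonequals f(x\sigma(w_{0,I})y^{-1})$, the function $F$ is left $E'$-equivariant (with character $\lambda$) and right $S$-\emph{invariant} (since $S$ stabilizes $\sigma(w_{0,I})$). Under the isomorphism \eqref{isomBM}, the right $T$-action $b\mapsto b\cdot t$ on $B_M$ corresponds to right multiplication by $(t,\sigma(w_{0,I})t\sigma(w_{0,I})^{-1})\in S$ on $E'\backslash(B\times{}^zB)$; lifting to $B\times {}^zB$, one finds
\[
(1,\,b\cdot t)=(t^{-1},\varphi(t^{-1}))\cdot(1,b)\cdot(t,\sigma(w_{0,I})t\sigma(w_{0,I})^{-1}),
\]
and $(t^{-1},\varphi(t^{-1}))\in E'$ (this element \emph{does} satisfy the defining condition of $E$). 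Applying $E'$-equivariance and then $S$-invariance of $F$ yields $h(b\cdot t)=\lambda(t^{-1})h(b)$, which is \eqref{equ-Slambda}. So the factor $\lambda(t)^{-1}$ comes from the $E'$-element $(t^{-1},\varphi(t^{-1}))$ needed to restore $1$ in the first coordinate, not from an inclusion $S\subset E'$. The paper sidesteps this entirely by giving the explicit formulas for $\vartheta$ and $\vartheta^{-1}$ and observing that well-definedness of \eqref{equ-relationf} follows from the property \eqref{equ-Slambda} of $h$; that direct verification is exactly the computation sketched above.
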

We describe this isomorphism explicitly. Let $f\in H^0(\Ucal_{\max},\Vcal_{\flag}(\lambda))$, 
viewed as a function $f\colon U_{\max}\to \AA^1$ satisfying Equation~\eqref{equ-Gzipflag-lambda}. The corresponding element $\vartheta(f)\in S(\lambda)$ is the function $B_M \to \AA^1$, $b\mapsto f(\sigma(w_{0,I})b^{-1})$. Conversely, if $h\colon B_M\to \AA^1$ is an element of $S(\lambda)$, the function $f=\vartheta^{-1}(h)$ is given by
\begin{equation}\label{equ-relationf}
f\left(b_1 \sigma(w_{0,I}) b_2^{-1}\right) = \lambda(b_1) h\left(\varphi\left(\theta^P_L(b_1)\right)^{-1} \theta^Q_M(b_2)\right), \quad (b_1,b_2)\in B\times {}^zB.
\end{equation}
By the property of $h$, the function $f$ is well defined.

In particular, given a section of $\Vcal_{\flag}(\lambda)$ over $\GF^{\mu}$, we can restrict it to the open substack $\Ucal_{\max}$ and then apply $\vartheta$ to obtain an element of $S(\lambda)$. Now assume  that $P$ is defined over $\FF_q$. In particular, we have $\sigma(w_{0,I})=w_{0,I}$ and $z=w_{0,I}w_0$. We also have $\Ucal_{\max}\subset \Ucal_{\mu}'$ (\cf Lemma~\ref{lemma-Umax}\eqref{lemma-Umax-item3} and inclusions
\begin{equation}\label{equ-injection-spaces}
H^0\left(\GF^{\mu},\Vcal_{\flag}(\lambda)\right)\subset H^0\left(\Ucal_{\mu}',\Vcal_{\flag}(\lambda)\right) \subset H^0\left(\Ucal_{\max},\Vcal_{\flag}(\lambda)\right).
\end{equation}
Write $S_{\flag}(\lambda) \subset S_{\mu}(\lambda) \subset S(\lambda)$, respectively, for the images of these three spaces under $\vartheta$. Choose a realization
$(u_{\alpha})_{\alpha \in \Phi}$ (see Section~\ref{subsec-notation}). For $\alpha\in \Delta$, define a map $\Gamma_\alpha \colon B_L\times \AA^1\to G$ by
\[
\Gamma_\alpha \colon (b,t)\longmapsto b \phi_\alpha(A(t))w_{0,I}, \quad \textrm{where } 
A(t) \colonequals \left(\begin{matrix}
t&1\\-1&0
\end{matrix} \right)\in \SL_2
\]
and $\phi_{\alpha}\colon \SL_2\to G$ is the map attached to $\alpha$. For $\alpha\in \Delta$, define an open subset 
\begin{equation}
G_\alpha \colonequals G\setminus \bigcup_{\substack{\beta\in \Delta\\ \beta\neq \alpha}} \overline{F}_{s_{\beta} w_0} =U_{\max}\cup F_{s_\alpha w_{0}}.    
\end{equation}
Since $U_\mu$ coincides with the open $P\times Q$-orbit, one sees that $G_\alpha\subset U_{\mu}$ if and only if $\alpha \in I$. 
In this setting, one has an analogue of \cite[Proposition 3.1.4]{Imai-Koskivirta-vector-bundles}, as follows. 

\begin{proposition}\label{psiadapted}
The following properties hold: 
\begin{assertionlist}
\item \label{item-imagepsi} The image of\, $\Gamma_\alpha$ is contained in $G_\alpha$.
\item \label{item-psit} For all $b\in B_L$ and $t\in \AA^1$, one has $\Gamma_\alpha(b,t)\in U_{\max}$ if and only if $t\neq 0$.
\end{assertionlist}
\end{proposition}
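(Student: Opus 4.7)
The approach is to reduce both assertions to a single Bruhat-cell calculation via the smooth map $\psi\colon \GF^\mu \to \Sbt$. On the level of $G$-points this map is induced by right multiplication by $z = w_{0,I}w_0$, and by construction a point $g \in G$ lies in the flag stratum $F_w$ if and only if $gz$ lies in the Bruhat double coset $BwB$. It therefore suffices to identify, as a function of $t$, the Bruhat double coset containing
\[
\Gamma_\alpha(b,t)\,z = b\,\phi_\alpha(A(t))\,w_{0,I} \cdot w_{0,I}w_0 = b\,\phi_\alpha(A(t))\,w_0.
\]

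The key step is a Bruhat-style factorization of $A(t)$ inside $\SL_2$. For $t \neq 0$, one has
\[
A(t) = \begin{pmatrix} 1 & 0 \\ -1/t & 1\end{pmatrix} \begin{pmatrix} t & 0 \\ 0 & 1/t\end{pmatrix} \begin{pmatrix} 1 & 1/t \\ 0 & 1\end{pmatrix},
\]
so applying $\phi_\alpha$ and using the formulas defining $\phi_\alpha$ (\S\ref{subsec-notation}) gives $\phi_\alpha(A(t)) = u_{-\alpha}(-1/t)\,\alpha^\vee(t)\,u_\alpha(1/t)$. For $t = 0$, the matrix $A(0)$ is a representative of the nontrivial Weyl element of $\SL_2$, so $\phi_\alpha(A(0))$ is a representative in $N_G(T)$ of $s_\alpha$.

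For $t \neq 0$, the factors $b \in B_L \subset B$, $u_{-\alpha}(-1/t) \in U_{-\alpha}\subset B$ (because under the paper's convention $\Phi^+$ is positive for $B^+$, so $U_{-\alpha}\subset B$ for $\alpha \in \Delta$), and $\alpha^\vee(t) \in T \subset B$ combine to give $b\,u_{-\alpha}(-1/t)\,\alpha^\vee(t) \in B$. To push the remaining factor $u_\alpha(1/t)$ past $w_0$, observe that $w_0^{-1}\alpha$ is a negative root, so $w_0^{-1}u_\alpha(1/t)w_0 \in U_{w_0^{-1}\alpha} \subset B$, whence $u_\alpha(1/t)\,w_0 \in w_0 B$. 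Therefore $\Gamma_\alpha(b,t)\,z \in Bw_0 B$, and $\Gamma_\alpha(b,t) \in F_{w_0} = U_{\max}$. For $t = 0$, the same multiplication gives $\Gamma_\alpha(b,0)\,z = b\,\phi_\alpha(A(0))\,w_0 \in B(s_\alpha w_0)B$, so $\Gamma_\alpha(b,0) \in F_{s_\alpha w_0}$.

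Since $F_{w_0}$ and $F_{s_\alpha w_0}$ are disjoint strata, this dichotomy proves \eqref{item-psit}. For \eqref{item-imagepsi} it remains to check that $F_{s_\alpha w_0}$ is disjoint from $\overline{F}_{s_\beta w_0}$ for every $\beta \in \Delta\setminus\{\alpha\}$; this is immediate from the Bruhat closure relation, since $\ell(s_\alpha w_0) = \ell(s_\beta w_0) = \ell(w_0)-1$ and $s_\alpha w_0 \neq s_\beta w_0$ are therefore incomparable in Bruhat order. The only delicate point in the argument is keeping the sign and positivity conventions straight -- in particular, which root groups lie in $B$ versus $B^+$ -- so as to produce a factorization of $A(t)$ that, after applying $\phi_\alpha$, presents $\phi_\alpha(A(t))$ in the correct ordered form $B^- \cdot T \cdot B^+$ relative to the paper's conventions.
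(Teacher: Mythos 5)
Your proof is correct and follows essentially the same route as the paper's: both hinge on the factorization $A(t)=\begin{pmatrix}1&0\\-t^{-1}&1\end{pmatrix}\begin{pmatrix}t&0\\0&t^{-1}\end{pmatrix}\begin{pmatrix}1&t^{-1}\\0&1\end{pmatrix}$, which under $\phi_\alpha$ places the image in $BB^+$ for $t\neq 0$ and degenerates to a representative of $s_\alpha$ at $t=0$. The only cosmetic difference is that you route the argument through $\psi$ and the Bruhat double cosets $BwB$ (and explicitly justify the incomparability of $s_\alpha w_0$ and $s_\beta w_0$ for part (1)), whereas the paper works directly with the description $U_{\max}=BB^+w_{0,I}$ and takes the decomposition $G_\alpha=U_{\max}\cup F_{s_\alpha w_0}$ as already given; these are equivalent phrasings of the same computation.
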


\begin{proof}
We have $U_{\max}=Bw_0Bz^{-1}=BB^+ w_{0,I}$. As in \cite[Equation~(3.1.3)]{Imai-Koskivirta-vector-bundles}, one has a decomposition
\begin{equation} \label{At}
A(t)=\left(\begin{matrix}
1&0\\-t^{-1}&1
\end{matrix} \right) \left(\begin{matrix}
t&1\\0&t^{-1}
\end{matrix} \right)=\left(\begin{matrix}
1&0\\-t^{-1}&1
\end{matrix} \right) \left(\begin{matrix}
t&0\\0&t^{-1}
\end{matrix} \right)\left(\begin{matrix}
1&t^{-1}\\0&1
\end{matrix} \right).
\end{equation}
Thus for $t\neq 0$, we have $\phi_\alpha(A(t))\in BB^+$, hence $\Gamma(b,t)\in U_{\max}$. For $t=0$, we have $\phi_{\alpha} (A(0))=s_\alpha$ and $\Gamma_{\alpha}(b,t)\in B s_{\alpha}w_{0,I} \subset B s_{\alpha}w_{0,I} {}^z B =F_{s_\alpha w_0}$. This shows~\eqref{item-imagepsi} and~\eqref{item-psit}.
\end{proof}

Let $f\in H^0(\Ucal_{\max},\Vcal_{\flag}(\lambda))$, viewed as a function $f\colon U_{\max}\to \AA^1$ satisfying Equation~\eqref{equ-Gzipflag-lambda}. Let $h \colonequals \vartheta(f)$ be the corresponding element of $S(\lambda)$. Using Equation~\eqref{equ-relationf},  for $\alpha \in \Delta^P$ and $(b,t)\in B_L\times \GG_{\mathrm{m}}$, we have 
\begin{align}
f\circ \Gamma_\alpha(b,t)&= f\left(b \phi_{\alpha}\left(\begin{matrix}
1&0\\-t^{-1}&1\end{matrix} \right)w_{0,I} \left(w_{0,I} \phi_{\alpha}\left(\begin{matrix}
t&1\\0&t^{-1}\end{matrix} \right)w_{0,I} \right)\right)\\
&= \lambda(b) \ h\left(\varphi (b)^{-1} 
w_{0,I} \alpha^{\vee}(t)^{-1} w_{0,I}\right).
\end{align}

Similarly, for $\alpha\in I$ and $(b,t)\in B_L\times \GG_{\mathrm{m}}$, one can show the following (we leave out the computation, since we will only need the case $\alpha\in \Delta^P$ in Section~\ref{sec-main-result}):
\begin{equation}
 f\circ \Gamma_\alpha(b,t)=   \lambda(b) \ h\left(\phi_{\sigma(\alpha)}\left(\begin{matrix}
1&0\\t^{-q}&1\end{matrix} \right) \varphi(b)^{-1} 
 \phi_{-w_{0,I}\alpha}\left(\begin{matrix}
t&0\\-1&t^{-1}\end{matrix} \right)\right).
\end{equation}

For $\alpha\in \Delta$, define a function $F_{h,\alpha}\colon B_L\times \GG_{\mathrm{m}}\to \AA^1$ by
\begin{align}
F_{h,\alpha}(b,t) &\colonequals h\left(\phi_{\sigma(\alpha)}\left(\begin{matrix}
1&0\\t^{-q}&1\end{matrix} \right) b
 \phi_{-w_{0,I}\alpha}\left(\begin{matrix}
t&0\\-1&t^{-1}\end{matrix} \right)\right) &\textrm{ if }\alpha\in I, \\
F_{h,\alpha}(b,t) &\colonequals h\left(b w_{0,I} \alpha^{\vee}(t)^{-1} w_{0,I}\right) &\textrm{ if }\alpha\in \Delta^P.
\end{align}
The function $F_{h,\alpha}(b,t)$ lies in $k[B_L][t,\frac{1}{t}]$, where $k[B_L]$ denotes the ring of functions of $B_L$. Moreover, $F_{h,\alpha}(b,t)\in k[B_L][t]$ if and only if $f\circ \Gamma_{\alpha}(b,t)$ extends to a map $B_L\times \AA^1\to G$. 

\begin{proposition}\label{prop-Slambda}
Let $h\in S(\lambda)$.
\begin{assertionlist}
\item \label{item-Sflcd} We have $h\in S_{\flag}(\lambda)$ if and only if\, $F_{h,\alpha}\in k[B_L][t]$ for all $\alpha\in \Delta$.
\item \label{item-Smucd} We have $h\in S_\mu(\lambda)$ if and only if\, $F_{h,\alpha}\in k[B_L][t]$ for all $\alpha\in I$.
\end{assertionlist}
\end{proposition}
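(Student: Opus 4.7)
The strategy is to translate, via the isomorphism $\vartheta$, both parts of the proposition into assertions about extending a rational function. Writing $f = \vartheta^{-1}(h)$, membership $h \in S_{\flag}(\lambda)$ (resp.\ $h \in S_\mu(\lambda)$) is equivalent to asking that the $E'$-equivariant function $f$ on $U_{\max}$ extend regularly to $G$ (resp.\ to the open subscheme $U_\mu \subset G$). Since $G$ is smooth (hence normal) and the remaining pieces of each complement have codimension $\geq 2$, such an extension exists iff $f$ has no pole at the generic point of each codim-one irreducible boundary component. A short length computation using $\dim F_w = \ell(w)+\dim B$ identifies these components: the codim-one flag strata in $G \setminus U_{\max}$ are $F_{s_\alpha w_0}$ for $\alpha \in \Delta$, and among these the ones lying in $U_\mu$ are precisely those with $\alpha \in I$, since $\Fcal_w \subset \Ucal_\mu'$ forces $w \in W_I w_0$ and $s_\alpha w_0 \in W_I w_0 \iff \alpha \in I$. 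This matches the index sets appearing in (1) and (2).

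For each relevant $\alpha$, Proposition \ref{psiadapted} supplies a curve $t \mapsto \Gamma_\alpha(b,t)$ that sits in $U_{\max}$ for $t \in \GG_{\mathrm{m}}$ and meets $\overline{F}_{s_\alpha w_0}$ at $t=0$. The explicit formulas computed just before the proposition express $f \circ \Gamma_\alpha(b,t)$ as a nowhere-vanishing scalar multiple (depending on $b$) of $F_{h,\alpha}$ evaluated at a rational substitution in $b$. Since these substitutions are automorphisms of the variety $B_L$, regularity of $f \circ \Gamma_\alpha$ on the entire $B_L \times \AA^1$ is equivalent to $F_{h,\alpha} \in k[B_L][t]$. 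In the converse direction, once $f$ extends to $G$ (resp.\ to $U_\mu$), the composition $f \circ \Gamma_\alpha$ is automatically regular because the image of $\Gamma_\alpha$ lies in $G_\alpha \subset G$ (resp.\ in $U_\mu$, which happens exactly when $\alpha \in I$), yielding the desired polynomiality.

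The main step remaining is to propagate regularity of $f$ from the finite-dimensional family $\Gamma_\alpha(B_L \times \{0\}) \subset F_{s_\alpha w_0}$ to the whole codim-one stratum $\overline{F}_{s_\alpha w_0}$. This is the most delicate point, and it rests on the $E'$-equivariance \eqref{equ-Gzipflag-lambda} together with the observation that each flag stratum $F_w$ is a single $E'$-orbit (built into the stratification $\Fcal_w = [E'\backslash F_w]$): regularity of $f$ at any single point of $F_{s_\alpha w_0}$ transports by $E'$-translation to all points of the orbit and in particular to the generic point of $\overline{F}_{s_\alpha w_0}$. Applying this for every $\alpha$ in the relevant index set and invoking the normality argument above then completes the extension to $G$ in part (1) and to $U_\mu$ in part (2).
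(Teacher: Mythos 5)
Your proposal follows the same overall strategy as the paper's proof: translate via $\vartheta$ to the question of extending the rational function $f=\vartheta^{-1}(h)$ from $U_{\max}$ to $G$ (resp.\ $U_\mu$), reduce to the pole order along the codimension-one divisors $\overline{F}_{s_\alpha w_0}$ by normality, and probe these divisors with the test map $\Gamma_\alpha$. The paper simply cites two technical lemmas from \cite{Koskivirta-automforms-GZip} (about ``adapted'' morphisms) that package precisely this argument, so in that sense you are not taking a different route --- you are reconstructing the cited lemma.

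The reconstruction has a gap in the hard direction, however, and it is exactly at the point your last paragraph flags as ``the most delicate.'' You write that ``regularity of $f$ at any single point of $F_{s_\alpha w_0}$ transports by $E'$-translation to all points,'' but regularity of $f$ at a single point of the stratum is never established: what you have is regularity of the \emph{pullback} $f\circ\Gamma_\alpha$, and this does not imply regularity of $f$ along the image of $\Gamma_\alpha$, even if the curve meets the divisor transversally (think of $x^2/y$ on $\AA^2$ restricted to $y=x$: the pullback $x$ is regular, yet the rational function has a genuine pole along $y=0$). The correct bridge --- which is what the cited Lemma~3.2.2 supplies --- is valuation-theoretic: extend $\Gamma_\alpha$ to the $E'$-sweep
\[
\widetilde{\Gamma}_\alpha \colon E'\times B_L\times\AA^1\to G,\qquad ((a,b),c,t)\mapsto a\,\Gamma_\alpha(c,t)\,b^{-1},
\]
which \emph{is} dominant because its image contains the open orbit $U_{\max}$, and whose preimage of $D_\alpha:=\overline{F}_{s_\alpha w_0}$ is exactly $E'\times B_L\times\{0\}$ by Proposition~\ref{psiadapted} and $E'$-stability of $F_{s_\alpha w_0}$. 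The $E'$-eigenfunction property then gives $\widetilde{\Gamma}_\alpha^*f = \lambda(a)\cdot(f\circ\Gamma_\alpha)(c,t)$, so $v_{D_\alpha}(f)\geq 0$ follows from $v_{t=0}(f\circ\Gamma_\alpha)\geq 0$ via the standard inequality relating valuations under a dominant morphism. This is where the fact ``$E'\cdot\Gamma_\alpha(B_L\times\{0\})=F_{s_\alpha w_0}$'' (which both you and the paper invoke) is actually used. A smaller inaccuracy: the substitution $b\mapsto\varphi(b)^{-1}$ appearing in the explicit formula for $f\circ\Gamma_\alpha$ is a finite dominant morphism of $B_L$ but not an automorphism (Frobenius is not an isomorphism of $k$-schemes), though the equivalence with $F_{h,\alpha}\in k[B_L][t]$ is still valid because it is finite and dominant.
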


\begin{proof}
Let $f=\vartheta^{-1}(h)\in H^0(\Ucal_{\max}, \Vcal_{\flag}(\lambda))$. In the terminology of \cite[Definition 3.2.1]{Koskivirta-automforms-GZip}, the map $\Gamma_\alpha$ is adapted to $f$ by \cite[Lemma 3.2.4]{Koskivirta-automforms-GZip}, 
because $f$ is an eigenfunction for the action of $E'$ and we have $E'\cdot \Gamma_\alpha(B_L\times \{0\})= F_{s_\alpha w_0}$ using $B\times {}^zB = E' (B_L\times \{1\})$. By \cite[Lemma 3.2.2]{Koskivirta-automforms-GZip}, the map $f$ extends to $G$ if and only if $f\circ \Gamma_{\alpha}$ extends to $B_L\times \AA^1$ for all $\alpha\in \Delta$, which shows (\ref{item-Sflcd}). Assertion (\ref{item-Smucd}) is proved similarly.
\end{proof}

\subsection{Main result} \label{sec-main-result}

We state the main result of this section, which is the reciprocal of Corollary~\ref{cor-Hassetype}. 

\begin{theorem}\label{main-thm-Hasse-type}
Let $(G,\mu)$ be a cocharacter datum of Hasse type. Then $\GF^\mu$ satisfies the Hasse property. Combining that with Corollary~\ref{cor-Hassetype}, we have 
\begin{equation}
(G,\mu) \textrm{ is of Hasse type } \ \Longleftrightarrow \ \Ccal_{\zipsf} = \Ccal_{\pha}.
\end{equation}
\end{theorem}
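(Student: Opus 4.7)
The plan is to establish the nontrivial inclusion $\Ccal_{\zipsf}\subset \Ccal_{\pha}$ under the Hasse-type hypothesis; the reverse inclusion is automatic since the partial Hasse invariants yield global sections on $\GF^{\mu}$. Given $\lambda\in C_{\zipsf}$ and a nonzero $f\in H^{0}(\GF^{\mu},\Vcal_{\flag}(\lambda))$, the goal is to exhibit a positive multiple $N\lambda$ as a combination of weights $h_{\Zcal}(\chi_{\alpha})$ of partial Hasse invariants, by analyzing $f$ on the maximal flag stratum $\Ucal_{\max}$.

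The first step is the reduction to $\Ucal_{\max}$. Under Hasse-type, $P$ is defined over $\FF_{q}$, hence $L=M$ and $B_{L}=B_{M}$; moreover Lemma~\ref{lemma-Umax}(\ref{lemma-Umax-item3}) gives $\Ucal_{\max}\subset \Ucal'_{\mu}$, and Lemma~\ref{lemma-Umax}(\ref{lemma-Umax-item2}) identifies $\Ucal_{\max}$ with $[B_{L}/T]$ for the twisted $T$-action $(b,t)\mapsto \varphi(t)^{-1}b w_{0,I}tw_{0,I}$. Let $h:=\vartheta(f)\in S_{\flag}(\lambda)\subset S(\lambda)$. By Proposition~\ref{prop-Slambda}(\ref{item-Sflcd}), the hypothesis $h\in S_{\flag}(\lambda)$ translates to the polynomial conditions $F_{h,\alpha}\in k[B_{L}][t]$ for every $\alpha\in\Delta$.

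The second step is a weight-component decomposition. Writing $B_{L}=T\cdot U_{L}$ with $U_{L}=R_{\mathrm{u}}(B_{L})$, decompose $h$ into its finitely many $T\times T$-bi-eigencomponents $h=\sum_{(\nu_{1},\nu_{2})}h_{\nu_{1},\nu_{2}}$. The equivariance \eqref{equ-Slambda} forces on each nonzero component the relation $\lambda=\nu_{1}-q\sigma^{-1}\nu_{1}$ together with a linear constraint between $\nu_{1}$ and $\nu_{2}$ coming from the conjugation by $w_{0,I}$. The polynomial conditions from Proposition~\ref{prop-Slambda} now translate on each bi-weight into dominance-type inequalities for $\chi:=-w_{0,I}\nu_{2}$. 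For $\alpha\in\Delta^{P}$, the formula $F_{h,\alpha}(b,t)=h(bw_{0,I}\alpha^{\vee}(t)^{-1}w_{0,I})$ reduces directly to $\langle\chi,\alpha^{\vee}\rangle\geq 0$. For $\alpha\in I$, the more intricate expression $\phi_{\sigma(\alpha)}(\cdots)\cdot b\cdot \phi_{-w_{0,I}\alpha}(\cdots)$ simplifies under the Hasse-type identity $\sigma(\alpha)=-w_{0,I}\alpha$: the two root subgroups coincide and combine into a single factor whose valuation in $t$ is controlled by $\langle\chi,\alpha^{\vee}\rangle$. Altogether, $\chi$ lies in $X^{*}_{+}(T)$.

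The final step combines these data. On a nonzero bi-weight component, $\chi=-w_{0,I}\nu_{2}\in X^{*}_{+}(T)$ and the $W_{L}$-invariance implied by the constraint on $\nu_{2}$ give $\lambda=\chi-qw_{0,I}\sigma^{-1}\chi=h_{\Zcal}(\chi)\in C_{\pha}$. If $h$ has several bi-weight components, reduce to a single one by multiplying $f$ by an appropriate product of partial Hasse invariants on $\GF^{\mu}$, which modifies $\lambda$ by an element of $C_{\pha}$ without affecting membership in $\Ccal_{\pha}$. Passing to a suitable multiple, $N\lambda\in C_{\pha}$, so $\lambda\in\Ccal_{\pha}$.

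The main obstacle is the derivation of the dominance inequality for $\alpha\in I$ from the polynomial condition on $F_{h,\alpha}$. The formula from Proposition~\ref{prop-Slambda} entangles the two roots $\sigma(\alpha)$ and $-w_{0,I}\alpha$ inside the argument of $h$, and only their coincidence -- which is exactly the Hasse-type identity -- makes the condition separable into a clean single-root inequality on $\chi$. This is in complete alignment with Proposition~\ref{prop-Hassetype-equivalent}, which characterizes Hasse-type precisely by this symmetry, and explains why the analogous step fails without the Hasse-type hypothesis.
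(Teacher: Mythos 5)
Your proposal correctly identifies the high-level strategy shared with the paper: restrict to the open flag stratum $\Ucal_{\max}$, use the identification $\vartheta$ to view the section as a function $h$ on $B_L$ subject to the twisted $T$-equivariance \eqref{equ-Slambda}, decompose into weight components, and read off inequalities via $F_{h,\alpha}$ and Proposition~\ref{prop-Slambda}. However, there are several genuine gaps that the argument does not close.

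First, the bi-weight equivariance relation is wrong. Writing $h_{\nu_1,\nu_2}$ for a $T\times T$ bi-eigenfunction (left weight $\nu_1$, right weight $\nu_2$), the identity \eqref{equ-Slambda} forces $\lambda = q\sigma^{-1}\nu_1 - w_{0,I}\nu_2$, not $\lambda = \nu_1 - q\sigma^{-1}\nu_1$. Your stated relation does not involve $\nu_2$ or $w_{0,I}$ at all, so the quantities never fit together. Second, even with the corrected relation, the conclusion $\lambda = h_{\Zcal}(\chi)$ with $\chi := -w_{0,I}\nu_2$ does not follow. Expanding $h_{\Zcal}(\chi) = \chi - qw_{0,I}\sigma^{-1}\chi$ and matching against $\lambda = q\sigma^{-1}\nu_1 + \chi$ requires $\nu_1 = -\sigma w_{0,I}\sigma^{-1}\chi$, a nontrivial coupling between $\nu_1$ and $\nu_2$ that is not provided by the $T\times T$-bi-eigenfunction structure (the freedom encoded by the exponents $m_\alpha$ of the unipotent part is precisely what breaks this). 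This is why the paper does not attempt to exhibit $\lambda$ directly as $h_{\Zcal}(\chi)$ with $\chi$ dominant; instead it expresses membership in $\Ccal_{\pha}$ via the linear functionals $K_\alpha(\lambda) = \sum_{i=0}^{2n-1}q^i\langle(w_{0,I})^i\sigma^{-i}\lambda,\alpha^\vee\rangle$ from \eqref{equ-CHasse}, and verifies $K_\alpha(\lambda)\leq 0$.

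Third, the reduction ``multiply $f$ by partial Hasse invariants to land in a single bi-weight component'' does not work: multiplying by a section of $\Vcal_{\flag}(h_\Zcal(\chi))$ multiplies $h$ by the corresponding function on $B_L$, which itself has many bi-weight components, so the product need not become homogeneous. The paper sidesteps this by refining to the monomial decomposition $P_h = \sum c_i P_{m_i,\lambda_i}$ via the coordinate chart $T\times\prod_{\alpha\in\Phi_L^-}U_\alpha \cong B_L$ and then extracting the inequalities from a \emph{single} monomial $P_{m_1,\lambda_1}$ — no reduction to one component is needed, only Lemma~\ref{lemma-same-weight} ensuring all monomials carry the same weight $\lambda$ and Corollary~\ref{cor-monomials-DeltaP} isolating the valuation condition monomial-by-monomial. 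Finally, your analysis of $\alpha\in I$ is not needed: Lemma~\ref{lemma-HT-CHasse} shows that under Hasse-type, $K_\alpha(\lambda)\leq 0$ is \emph{automatic} for $\alpha\in I$ and any $\lambda\in X^*_{+,I}(T)$ — this is exactly where the symmetry $\sigma(\alpha)=-w_{0,I}\alpha$ is used, rather than inside the $F_{h,\alpha}$-analysis as you propose. You should redirect your energy toward (a) the monomial decomposition rather than the coarser bi-weight decomposition, (b) the explicit form of $\Ccal_{\pha}$ via $K_\alpha$, and (c) the observation that only $\alpha\in\Delta^P$ needs checking.
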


We prove Theorem~\ref{main-thm-Hasse-type} in the rest of this section. Fix a cocharacter datum $(G,\mu)$, with zip datum $\Zcal=\Zcal_\mu=(G,P,L,Q,M)$. For now, we only assume that $P$ is defined over $\FF_q$ (hence $L=M$). Also fix  a frame $(B,T,z)$ with $z=w_{0,I}w_0$.

\begin{proposition}[\textit{cf.} {\cite[Proposition XXII.5.5.1]{SGA3}}]
Let $G$ be a reductive group over $k$, and let $(B,T)$ be a Borel pair. Choose a total order on $\Phi^-$. The $k$-morphism
\begin{equation} \label{equ-map-gamma}
\gamma \colon T\times \prod_{\alpha\in \Phi^-}U_\alpha \lra G
\end{equation}
defined by taking the product with respect to the chosen order is a closed immersion with image $B$.
\end{proposition}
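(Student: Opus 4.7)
My plan is to reduce the statement to the analogous assertion for the unipotent radical $U \colonequals R_{\mathrm{u}}(B)$ and then invoke the Levi decomposition. More precisely, I would proceed as follows.

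\textbf{Step 1 (Levi decomposition).} Since $(B,T)$ is a Borel pair with $T \subset B$, we have the semidirect product decomposition $B = T \ltimes U$, and the multiplication map $T \times U \to B$ is an isomorphism of $k$-schemes. Consequently, to show $\gamma$ is a closed immersion with image $B$, it suffices to show that the product map
\begin{equation*}
\gamma_U \colon \prod_{\alpha \in \Phi^-} U_\alpha \to G,
\end{equation*}
taken in the chosen order on $\Phi^-$, is a closed immersion with image $U$.

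\textbf{Step 2 (Iterated extensions).} Enumerate $\Phi^- = \{\alpha_1,\dots,\alpha_N\}$ according to the chosen order, and for $1 \leq i \leq N+1$ let $U^{(i)} \subset G$ denote the subgroup generated by $U_{\alpha_i},\dots,U_{\alpha_N}$ (with $U^{(N+1)} = \{1\}$). I would prove by descending induction on $i$ that $U^{(i)}$ is a smooth, connected, unipotent closed subgroup of $G$ with $U^{(i+1)}$ normal in $U^{(i)}$, and that the multiplication morphism $U_{\alpha_i} \times U^{(i+1)} \to U^{(i)}$ is an isomorphism. The key ingredient here is the Chevalley commutator formula: for $\alpha,\beta \in \Phi$ linearly independent,
\begin{equation*}
[u_\alpha(x), u_\beta(y)] = \prod_{\substack{i,j > 0 \\ i\alpha+j\beta \in \Phi}} u_{i\alpha+j\beta}\bigl(c^{\alpha,\beta}_{i,j} \, x^i y^j \bigr)
\end{equation*}
with structure constants $c^{\alpha,\beta}_{i,j} \in k$ coming from the Chevalley system fixed in \S\ref{subsec-notation}. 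Applied to $\alpha = \alpha_{i-1}$ and $\beta \in \{\alpha_i,\dots,\alpha_N\}$, the commutator lies in $U^{(i)}$, which establishes normality and gives the inductive step.

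\textbf{Step 3 (Closed immersion).} Composing the isomorphisms $U_{\alpha_i} \times U^{(i+1)} \xrightarrow{\sim} U^{(i)}$ from Step~2 yields an isomorphism of $k$-schemes $\prod_{\alpha \in \Phi^-} U_\alpha \xrightarrow{\sim} U^{(1)}$. It remains to identify $U^{(1)}$ with $U$: since $U^{(1)}$ is a smooth connected unipotent subgroup of $B$ whose Lie algebra contains all the root spaces $\gfr_\alpha$ for $\alpha \in \Phi^-$, and since $\Lie(U) = \bigoplus_{\alpha \in \Phi^-} \gfr_\alpha$, a dimension count combined with smoothness forces $U^{(1)} = U$. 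Combining with Step~1 gives an isomorphism $\gamma \colon T \times \prod_{\alpha \in \Phi^-} U_\alpha \xrightarrow{\sim} B$, and since $B$ is closed in $G$, the composite $\gamma \colon T \times \prod_{\alpha \in \Phi^-} U_\alpha \to G$ is a closed immersion.

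\textbf{Main obstacle.} The delicate point is Step~2, specifically verifying that the product map is an isomorphism of \emph{schemes} (not just a bijection on points) for an \emph{arbitrary} total order on $\Phi^-$. For a convex enumeration (one that respects addition of roots in $\Phi^-$), the Chevalley commutator formula gives the result cleanly, since commutators of later factors remain in the tail subgroup $U^{(i+1)}$. For a general order, one must further argue that re-ordering the factors only modifies the parameters by polynomial expressions, so that the morphism remains an isomorphism; this is precisely the content invoked from \cite[XXII, Proposition 5.5.1]{SGA3}, which I would cite rather than reprove from scratch.
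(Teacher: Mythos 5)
The paper does not prove this proposition; it simply cites SGA3, so there is no paper proof to compare against. I will therefore evaluate your sketch on its own merits.

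Your outline is the standard one, and Steps~1 and~3 are fine. However, Step~2 \emph{as literally stated} is false for a general total order on $\Phi^-$, not merely delicate. Take $G = \SL_3$ with $\Phi^- = \{-\alpha, -\beta, -\alpha-\beta\}$ where $\alpha, \beta$ are the simple roots, and take the order $\alpha_1 = -\alpha-\beta$, $\alpha_2 = -\alpha$, $\alpha_3 = -\beta$. Then $U^{(2)}$ is generated by $U_{-\alpha}$ and $U_{-\beta}$, and since $[u_{-\alpha}(x), u_{-\beta}(y)]$ produces a nontrivial element of $U_{-\alpha-\beta}$, you get $U^{(2)} = U^{(1)} = R_{\mathrm{u}}(B)$, which is $3$-dimensional. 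So the multiplication map $U_{\alpha_1} \times U^{(2)} \to U^{(1)}$ is a morphism from a $4$-dimensional scheme to a $3$-dimensional one and cannot be an isomorphism; likewise $U^{(2)}$ is not normalized by $U_{\alpha_1}$ in the sense needed. The descending-induction filtration works only when the enumeration is ``convex'' (each tail $\{\alpha_i, \dots, \alpha_N\}$ closed under taking sums that land in $\Phi^-$); for an arbitrary order the groups $U^{(i)}$ are simply not the subgroups your induction needs them to be.

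You flag this in the ``Main obstacle'' paragraph, which is honest, but the proposed remedy — citing \cite[XXII, Proposition 5.5.1]{SGA3} for the arbitrary-order case — is circular, since that is precisely the statement under proof. The missing step cannot be outsourced: after establishing the result for one convex order, you must show that \emph{any} other order gives a closed immersion onto the same image. The standard way to do this is to pass from one enumeration to another by a sequence of adjacent transpositions $(\alpha_i, \alpha_{i+1}) \mapsto (\alpha_{i+1}, \alpha_i)$ and to use the commutator formula to rewrite $u_{\alpha_{i+1}}(y) u_{\alpha_i}(x)$ as $u_{\alpha_i}(x) u_{\alpha_{i+1}}(y) \cdot (\text{product of } u_{\gamma}(\ast))$ with $\gamma = k\alpha_i + l\alpha_{i+1}$, then recursively absorb the extra factors into the remaining terms of the product. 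This defines an explicit polynomial automorphism of the affine space $\prod_{\alpha \in \Phi^-} U_\alpha$ intertwining the two product maps, which is what actually transports the closed-immersion property from a convex order to an arbitrary one. Your proof is incomplete until this reordering argument is written out; as it stands, Step~2 proves the proposition only for convex orders.
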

We apply this proposition to $(L,B_L)$. Choose an order on $\Phi^{-}_{L}$, and consider the corresponding map $\gamma$ as in Equation~\eqref{equ-map-gamma}, with image $B_L$. For a function $h\colon B_L\to \AA^1$, put $P_h \colonequals h\circ \gamma$. Via the isomorphism $u_{\alpha}\colon \GG_{a} \to U_{\alpha}$, we can view $P_h$ as a polynomial $P_h \in k[T][(x_\alpha)_{\alpha\in \Phi^{-}_{L}}]$, where the $x_\alpha$ are indeterminates indexed by $\Phi^{-}_{L}$. For $m=(m_\alpha)_\alpha \in \NN^{\Phi^{-}_{L}}$ and $\lambda\in X^*(T)$, denote by $P_{m,\lambda}$ the monomial
\begin{equation}\label{equ-monomial}
P_{m,\lambda}=\lambda(t)\prod_{\alpha \in \Phi^{-}_{L}} x_{\alpha}^{m_\alpha} \ \in \  k[T]\left[(x_\alpha)_{\alpha\in \Phi^{-}_{L}}\right].   
\end{equation}
We can write any element $P$ of $k[T][(x_\alpha)_{\alpha\in \Phi^{-}_{L}}]$ as a sum of monomials
\begin{equation}\label{equ-decomp-monomials}
    P=\sum_{i=1}^N c_{i} P_{m_i,\lambda_i}, 
\end{equation}
where for all $1\leq i \leq N$, we have $m_i\in \NN^{\Phi^{-}_{L}}$, $\lambda_i\in X^*(T)$ and $c_{i}\in k$. Furthermore, we may assume that the $(m_i,\lambda_i)$ are pairwise distinct. Under this assumption, the expression \eqref{equ-decomp-monomials} is uniquely determined (up to permutation of the indices). 
For $P\in k[T][(x_\alpha)_{\alpha\in \Phi^{-}_{L}}]$, write $h_P\colon B_L\to \AA^1$ for the function $P\circ \gamma^{-1}$. For $m=(m_\alpha)_\alpha \in \NN^{\Phi^{-}_{L}}$ and $\lambda\in X^*(T)$, write $h_{m,\lambda}\colonequals h_{P_{m,\lambda}}$.

\begin{lemma}\label{lemma-formula-hmlambda}
Let $(m,\lambda)\in \NN^{\Phi^{-}_{L}}\times X^*(T)$. For all $a\in T$ and $b\in B_L$, we have
\begin{align}
h_{m,\lambda}(ab)&=\lambda(a)h_{m,\lambda}(b), \\
h_{m,\lambda}(ba)&=\left(\lambda(a) \prod_{\alpha\in \Phi^{-}_{L}}\alpha(a)^{-m_{\alpha}}\right) h_{m,\lambda}(b).
\end{align}
\end{lemma}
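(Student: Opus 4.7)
The plan is direct: both identities follow from a straightforward unwinding of the definitions together with the standard formula \eqref{eq:phiconj} for the $T$-conjugation action on the root groups.

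First I would recall that an element $b \in B_L$ is written uniquely as $b = \gamma(t, (u_\alpha(x_\alpha))_\alpha) = t \cdot \prod_{\alpha \in \Phi_L^-} u_\alpha(x_\alpha)$ (product in the fixed order), and that by construction $h_{m,\lambda}(b) = \lambda(t) \prod_\alpha x_\alpha^{m_\alpha}$.

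For the first identity, since $a \in T$ centralizes itself, left-multiplying by $a$ only changes the $T$-component: $ab = (at) \cdot \prod_\alpha u_\alpha(x_\alpha) = \gamma(at, (u_\alpha(x_\alpha))_\alpha)$. Evaluating $h_{m,\lambda}$ gives $\lambda(at) \prod_\alpha x_\alpha^{m_\alpha} = \lambda(a) h_{m,\lambda}(b)$.

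For the second identity, the key point is to move $a$ past the unipotent product. Using \eqref{eq:phiconj}, which gives $a^{-1} u_\alpha(x) a = u_\alpha(\alpha(a)^{-1} x)$ for every $\alpha$, and the fact that conjugation by $a \in T$ preserves each $U_\alpha$ individually (so the fixed ordering in $\gamma$ is respected), we obtain
\begin{equation}
ba = t\Big(\prod_\alpha u_\alpha(x_\alpha)\Big) a = (ta) \cdot \prod_\alpha u_\alpha(\alpha(a)^{-1} x_\alpha) = \gamma\!\left(ta, (u_\alpha(\alpha(a)^{-1} x_\alpha))_\alpha\right).
\end{equation}
Applying $h_{m,\lambda}$ yields
\begin{equation}
h_{m,\lambda}(ba) = \lambda(ta) \prod_\alpha \big(\alpha(a)^{-1} x_\alpha\big)^{m_\alpha} = \Big(\lambda(a) \prod_\alpha \alpha(a)^{-m_\alpha}\Big) h_{m,\lambda}(b),
\end{equation}
which is the desired formula.

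There is really no obstacle here: the statement is a bookkeeping lemma whose content is the $T$-weight behavior of the monomial coordinates on $B_L$ under left and right translation by $T$. The only subtlety is to note that because each $U_\alpha$ is $T$-stable, conjugation by $a$ acts coordinate-wise on the factorization $\gamma$ and does not disturb the chosen ordering, so no commutator terms between different root groups appear.
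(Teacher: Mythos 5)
Your proof is correct and coincides with the paper's own argument: both verify the first identity directly from the definition and obtain the second by conjugating each $u_\alpha(x_\alpha)$ past $a\in T$ via \eqref{eq:phiconj}, then reading off the resulting monomial. The remark about $T$-conjugation preserving the factorization order is a helpful clarification but the substance is the same.
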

\begin{proof}
The first formula is an immediate computation. For the second, let $b=\gamma(t,(u_\alpha(x_\alpha))_\alpha)$ with $t\in T$ and $(x_\alpha)_\alpha\in \GG_{\mathrm{a}}^{\Phi^{-}_{L}}$. Then 
\begin{align}
h_{m,\lambda}(ba)&=h_{m,\lambda}\left(ta\prod_{\alpha\in \Phi^{-}_{L}} a^{-1}u_\alpha(x_\alpha) a \right)= h_{m,\lambda}\left(ta\prod_{\alpha\in \Phi^{-}_{L}} u_\alpha(\alpha(a)^{-1} x_\alpha)\right) \\
&= \lambda(ta) \prod_{\alpha\in \Phi^{-}_{L}} (\alpha(a)^{-1} x_\alpha)^{m_\alpha} =\left(\lambda(a) \prod_{\alpha\in \Phi^{-}_{L}}\alpha(a)^{-m_{\alpha}}\right) h_{m,\lambda}(b),
\end{align}
where we used the formula $a^{-1} u_\alpha(x) a = u_\alpha(\alpha(a)^{-1}x)$ for all $x\in \AA^1$ and all $a\in T$.
\end{proof}

For $(m,\lambda)\in \NN^{\Phi^{-}_{L}}\times X^*(T)$ as above, define the weight $\omega(m,\lambda)$ as
\begin{equation}\label{omega-weight}
\omega(m,\lambda)\colonequals q \sigma^{-1}(\lambda)- w_{0,I}\lambda +\sum_{\beta \in \Phi^{-}_{L}} m_\alpha (w_{0,I}\beta) \in X^*(T).
\end{equation}
It follows immediately from Lemma~\ref{lemma-formula-hmlambda} that $h_{m,\lambda}\colon B_L\to \AA^1$ lies in $S(\omega(m,\lambda))$.

\begin{lemma}\label{lemma-same-weight}
Let $\lambda\in X^*(T)$ and $h\in k[B_L]$ be nonzero. Write $P_h=\sum_{i=1}^N c_i P_{m_i,\lambda_i}$ as in Equation~\eqref{equ-decomp-monomials}, with $(m_i,\lambda_i)$ pairwise disjoint and $c_i\neq 0$ for all $1\leq i \leq N$. Then we have
\begin{equation}
h\in S(\lambda) \Longleftrightarrow \omega(m_i,\lambda_i)=\lambda \textrm{ for all } i=1, \dots, N.
\end{equation}
\end{lemma}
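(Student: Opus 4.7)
The plan is to interpret $S(\lambda)$ as a weight space for a suitable $T$-action on $k[B_L]$, then show that the monomial decomposition $P_h = \sum c_i P_{m_i,\lambda_i}$ is precisely the decomposition into $T$-weight vectors, so membership in $S(\lambda)$ forces each nonzero summand to have weight $\lambda$.

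First, I would record the key equivariance property of each monomial function: the claim is that $h_{m_i,\lambda_i} \in S(\omega(m_i,\lambda_i))$ for every $(m_i,\lambda_i)$. Since $P$ is defined over $\FF_q$, the frame satisfies $\sigma(w_{0,I}) = w_{0,I}$ and $B_M = B_L$, so the defining relation of $S(\lambda)$ reads $h(\varphi(t)^{-1} b\, w_{0,I} t w_{0,I}^{-1}) = \lambda(t)^{-1} h(b)$. Applying Lemma~\ref{lemma-formula-hmlambda} (first formula) to the left-multiplication by $\varphi(t)^{-1}\in T$ contributes a factor $\lambda_i(\varphi(t))^{-1} = q\sigma^{-1}(\lambda_i)(t)^{-1}$, using the identity $\lambda_i\circ\varphi = q\sigma^{-1}(\lambda_i)$ (equation~\eqref{equ-Pupstar}). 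Applying the second formula of Lemma~\ref{lemma-formula-hmlambda} to the right-multiplication by $a = w_{0,I} t w_{0,I}^{-1}\in T$ contributes $\lambda_i(a)\prod_\beta \beta(a)^{-m_{i,\beta}} = (w_{0,I}\lambda_i)(t)\prod_\beta (w_{0,I}\beta)(t)^{-m_{i,\beta}}$. Combining, the total scalar on $h_{m_i,\lambda_i}(b)$ is $\omega(m_i,\lambda_i)(t)^{-1}$, exactly as predicted by the definition \eqref{omega-weight}. Hence $h_{m_i,\lambda_i} \in S(\omega(m_i,\lambda_i))$.

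Now given the decomposition $P_h = \sum_{i=1}^N c_i P_{m_i,\lambda_i}$ with pairwise distinct $(m_i,\lambda_i)$ and all $c_i \neq 0$, transporting to $B_L$ via $\gamma$ gives $h = \sum_i c_i h_{m_i,\lambda_i}$. Substituting into the defining relation of $S(\lambda)$ and using the previous paragraph yields, for all $t\in T$ and $b\in B_L$,
\begin{equation}
\sum_{i=1}^N c_i \bigl(\omega(m_i,\lambda_i)(t)^{-1} - \lambda(t)^{-1}\bigr)\, h_{m_i,\lambda_i}(b) = 0.
\end{equation}
The key observation is that the functions $h_{m_i,\lambda_i}$ for pairwise distinct $(m_i,\lambda_i)$ are linearly independent in $k[B_L]$; indeed $\gamma$ is a closed immersion with image $B_L$, so pullback identifies $k[B_L]$ with the polynomial ring $k[T][(x_\alpha)_{\alpha\in\Phi_L^-}]$, inside which the monomials $P_{m_i,\lambda_i}$ are manifestly linearly independent. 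Therefore for each $i$ we must have $c_i\bigl(\omega(m_i,\lambda_i) - \lambda\bigr)(t) = 0$ for all $t\in T$, and since $c_i\neq 0$, this forces $\omega(m_i,\lambda_i) = \lambda$.

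Conversely, if $\omega(m_i,\lambda_i)=\lambda$ for every $i$, then each summand $c_i h_{m_i,\lambda_i}$ lies in $S(\lambda)$ by the computation in the first paragraph, and $S(\lambda)$ is a $k$-vector subspace of $k[B_L]$, so $h \in S(\lambda)$. No real obstacle is expected here: the argument is essentially the identification of $S(\lambda)$ with an isotypic component for a torus action, and the main point is simply to check that the $T$-character acting by right-translation through $w_{0,I} t w_{0,I}^{-1}$ combines with the left-translation by $\varphi(t)^{-1}$ to give exactly the combinatorial weight $\omega(m,\lambda)$ of \eqref{omega-weight}.
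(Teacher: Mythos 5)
Your proof is correct and follows essentially the same route as the paper: the paper also records (right after Lemma~\ref{lemma-formula-hmlambda}) that $h_{m,\lambda}\in S(\omega(m,\lambda))$, and the forward direction is concluded by the same linear-independence argument, with the paper phrasing it as ``linear independence of characters'' while you appeal to linear independence of the distinct monomials $h_{m_i,\lambda_i}$ in $k[B_L]$ --- both are valid readings of the same step, since $\gamma$ is a closed immersion identifying $k[B_L]$ with the polynomial ring where the $P_{m_i,\lambda_i}$ are a basis.
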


\begin{proof}
  The implication ``$\Leftarrow$'' is obvious. Conversely, if $h\in S(\lambda)$, then for all $t\in T$, $b\in B$, we have
  $\lambda(t)h(b)=h(\varphi(t)bw_{0,I}t^{-1} w_{0,I})=\sum_{i=1}^N \omega(m_i,\lambda_i)(t) c_i  h_{m_i,\lambda_i}(b)$. 
 The result follows by the linear independence of characters.
\end{proof}

For $m\in \NN^{\Phi^{-}_{L}}$, $\lambda\in X^*(T)$ and $\alpha\in \Delta^P$, we write $F_{m,\lambda,\alpha}\colonequals F_{h_{m,\lambda},\alpha}$ (see Section~\ref{sec-max-flag-stratum}). For all $\alpha\in \Delta^P$ and all $(b,t)\in B_L\times \GG_m$, we find 
\begin{equation}\label{equ-Fmlambda}
F_{m,\lambda,\alpha}(b,t)=t^{-q\langle \lambda,\sigma\alpha^\vee \rangle + \langle \omega(m,\lambda),\alpha^\vee \rangle}h_{m,\lambda}(b).
\end{equation}
In particular, $F_{m,\lambda,\alpha}$ is in $k[B_L][t]$ if and only if $-q\langle \lambda, \sigma \alpha^\vee \rangle + \langle \omega(m,\lambda),\alpha^\vee \rangle \geq 0$. Using Equation~\eqref{omega-weight}, this inequality can also be written as 

\begin{equation}\label{equ-equivalent-ineq}
\left\langle w_{0,I}\lambda,\alpha^\vee \right\rangle \leq \sum_{\beta\in \Phi^{-}_{L}}m_\beta \left\langle w_{0,I}\beta,\alpha^\vee \right\rangle.
\end{equation}

\begin{corollary}\label{cor-monomials-DeltaP}
Let $\lambda\in X^*(T)$ and $h\in S(\lambda)$ be nonzero. Write $P_h=\sum_{i=1}^N c_{i} P_{m_i,\lambda_i}$ as in Equation~\eqref{equ-decomp-monomials}, with the $(m_i,\lambda_i)$ pairwise distinct and $c_i\neq 0$ for $1\leq i \leq N$. Let $\alpha\in \Delta^P$.
\begin{assertionlist}
\item \label{cor-monomials-DeltaP-item1} We have
\begin{align}
F_{h,\alpha}\in k[B_L][t] \ &\Longleftrightarrow \ \forall i=1, \dots ,N, \ F_{m_i,\lambda_i,\alpha}\in k[B_L][t].\\
&\Longleftrightarrow \ \forall i=1, \dots ,N, \ -q\langle \lambda_i,\sigma\alpha^\vee \rangle +\langle \lambda, \alpha^\vee \rangle\geq 0.
\end{align}
\item\label{cor-monomials-DeltaP-item2} Moreover, if\, $F_{h,\alpha}\in k[B_L][t]$, then $\langle w_{0,I}\lambda_i,\alpha^\vee \rangle \leq 0$ for all $1\leq i \leq N$.
\end{assertionlist}
\end{corollary}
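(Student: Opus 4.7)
The plan is to reduce everything to the explicit formula \eqref{equ-Fmlambda} and then use linear independence of distinct monomials in $k[B_L]$. Since $h\in S(\lambda)$, Lemma \ref{lemma-same-weight} gives $\omega(m_i,\lambda_i)=\lambda$ for every $i$. Substituting this into \eqref{equ-Fmlambda}, one finds
\begin{equation*}
F_{m_i,\lambda_i,\alpha}(b,t)=t^{e_i}\,h_{m_i,\lambda_i}(b),\qquad e_i \colonequals -q\langle \lambda_i,\sigma\alpha^\vee\rangle+\langle\lambda,\alpha^\vee\rangle,
\end{equation*}
so $F_{m_i,\lambda_i,\alpha}\in k[B_L][t]$ iff $e_i\geq 0$. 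By linearity, $F_{h,\alpha}(b,t)=\sum_{i=1}^N c_i\,t^{e_i}\,h_{m_i,\lambda_i}(b)$.

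Next I would group the indices $\{1,\dots,N\}$ according to the value of the exponent: let $E=\{e_i\}$ and, for $e\in E$, set $H_e(b) \colonequals \sum_{i:\,e_i=e} c_i\,h_{m_i,\lambda_i}(b)$, so that $F_{h,\alpha}(b,t)=\sum_{e\in E}t^{e}H_e(b)$. Since the pairs $(m_i,\lambda_i)$ are pairwise distinct, the monomials $P_{m_i,\lambda_i}$ defined in \eqref{equ-monomial} are linearly independent in $k[T][(x_\alpha)_{\alpha\in\Phi_L^-}]$; passing through the closed immersion $\gamma$ of \eqref{equ-map-gamma}, the functions $h_{m_i,\lambda_i}$ are linearly independent in $k[B_L]$. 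Combined with the hypothesis $c_i\neq 0$, this forces $H_e\neq 0$ for each $e\in E$. Consequently $F_{h,\alpha}\in k[B_L][t]$ iff $e\geq 0$ for every $e\in E$, iff $e_i\geq 0$ for every $i$. This simultaneously yields both equivalences in (\ref{cor-monomials-DeltaP-item1}).

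For (\ref{cor-monomials-DeltaP-item2}), I would apply the criterion \eqref{equ-equivalent-ineq} to each $(m_i,\lambda_i)$: assuming $F_{h,\alpha}\in k[B_L][t]$, part (\ref{cor-monomials-DeltaP-item1}) gives $F_{m_i,\lambda_i,\alpha}\in k[B_L][t]$, hence
\begin{equation*}
\langle w_{0,I}\lambda_i,\alpha^\vee\rangle \leq \sum_{\beta\in\Phi_L^-} m_{i,\beta}\,\langle w_{0,I}\beta,\alpha^\vee\rangle.
\end{equation*}
The key geometric input is that the right-hand side is nonpositive. Indeed, for $\beta\in\Phi_L^-$ we have $w_{0,I}\beta\in\Phi_L^+$, hence $w_{0,I}\beta$ is a nonnegative integral combination of the simple roots in $I$; since $\alpha\in\Delta^P=\Delta\setminus I$, each pairing $\langle\delta,\alpha^\vee\rangle$ for $\delta\in I$ is $\leq 0$, so $\langle w_{0,I}\beta,\alpha^\vee\rangle\leq 0$. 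Together with $m_{i,\beta}\geq 0$, this shows $\langle w_{0,I}\lambda_i,\alpha^\vee\rangle\leq 0$.

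The only step requiring care is the linear-independence argument used to separate the $t$-exponents (step two above); everything else is book-keeping with the formulas already established. Concretely, one must make sure that the grouping by exponent does not cause spurious cancellations, and that there is no coincidence forcing the $H_e$ to vanish. This is handled by the distinctness hypothesis on the $(m_i,\lambda_i)$ and by the fact that $\gamma$ is a closed immersion, which identifies the $h_{m,\lambda}$ with the linearly independent monomials $P_{m,\lambda}$.
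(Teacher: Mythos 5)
Your proof is correct and follows essentially the same route as the paper. The paper states the first equivalence in part (1) follows "from the assumption that $(m_i,\lambda_i)$ are pairwise distinct" without spelling out the linear-independence argument; you fill this in explicitly by grouping indices according to the $t$-exponent $e_i$ and invoking the closed immersion $\gamma$ of \eqref{equ-map-gamma} to transport linear independence of the monomials $P_{m_i,\lambda_i}$ to the $h_{m_i,\lambda_i}$ in $k[B_L]$, which is precisely the content the paper leaves implicit. The remaining steps (substituting $\omega(m_i,\lambda_i)=\lambda$ from Lemma \ref{lemma-same-weight} into \eqref{equ-Fmlambda}, and for part (2) combining \eqref{equ-equivalent-ineq} with $\langle w_{0,I}\beta,\alpha^\vee\rangle\leq 0$ for $\beta\in\Phi_L^-$ and $\alpha\in\Delta^P$) match the paper's argument verbatim.
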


\begin{proof}
By Equation~\eqref{equ-Fmlambda}, we have $F_{m_i,\lambda_i,\alpha}(b,t)=t^{d_i} h_{m_i,\lambda_i}(b)$ for some integer $d_i\in \ZZ$. 
Hence, the first equivalence of~\eqref{cor-monomials-DeltaP-item1} follows from the assumption that the $(m_i,\lambda_i)$ for $1\leq i \leq N$ are pairwise distinct. 
The second equivalence follows from the previous discussion, using $\omega(m_i,\lambda_i)=\lambda$ (see Lemma~\ref{lemma-same-weight}). Assertion \eqref{cor-monomials-DeltaP-item2} follows from the inequality \eqref{equ-equivalent-ineq} and the fact that $\langle w_{0,I}\beta , \alpha^\vee \rangle \leq 0$ for all $\beta\in \Phi^{-}_{L}$. Indeed, recall that $\langle \beta,\alpha^\vee \rangle \leq 0$ for any two distinct simple roots $\alpha,\beta\in \Delta$. Since $\beta\in \Phi^{-}_{L}$, we have $w_{0,I}\beta\in \Phi^{+}_{L}$; hence $w_{0,I}\beta$ is a sum of simple roots in $I$. Since $\alpha\in \Delta^P$, the result follows.
\end{proof}

We now study the partial Hasse invariant cone $C_{\pha}$ (see Definition~\ref{definition-CHasse}). Fix a positive integer $n$ such that $G$ is split over $\FF_{q^n}$. By inverting the map $h_\Zcal\colon \lambda \mapsto \lambda -qw_{0,I}(\sigma^{-1}\lambda)$, we can write $\Ccal_{\pha}$ as the set of $\lambda\in X^*(T)$ such that
\begin{equation}\label{equ-CHasse}
\sum_{i=0}^{2n-1} q^i \left\langle (w_{0,I})^i \sigma^{-i} \lambda,\alpha^\vee \right\rangle \leq 0, \quad \forall \alpha \in \Delta.
\end{equation}
For $\alpha \in \Delta$ and $\lambda\in X^*(T)$, define $K_{\alpha}(\lambda) \colonequals \sum_{i=0}^{2n-1} q^i \langle (w_{0,I})^i \sigma^{-i} \lambda,\alpha^\vee \rangle$.

\begin{lemma}\label{lemma-HT-CHasse}
Assume that $(G,\mu)$ is of Hasse type. For all $\lambda\in X_{+,I}^*(T)$ and $\alpha\in I$, we have $K_\alpha(\lambda)\leq 0$. In particular, we have 
\begin{equation}
  \Ccal_{\pha}=\left\{\lambda\in X_{+,I}^*(T) \relmiddle| \ \forall \alpha \in \Delta^P, \ K_\alpha(\lambda)\leq 0 \right\}.
\end{equation}
\end{lemma}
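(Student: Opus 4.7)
The plan is to evaluate $K_\alpha(\lambda)$ in closed form for $\alpha\in I$ by unpacking the Hasse-type hypothesis, and then to deduce the description of $\Ccal_{\pha}$ from this bound together with the containment $\Ccal_{\pha}\subset X_{+,I}^*(T)$ already observed after Definition \ref{definition-CHasse}.

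Since $(G,\mu)$ is of Hasse-type, $P$ is defined over $\FF_q$, so $\sigma$ preserves $I$ and $W_L$, hence fixes the longest element $w_{0,I}$. Using $W$-equivariance of the natural pairing and $\sigma$-equivariance, I would rewrite
\begin{equation*}
\langle (w_{0,I})^i \sigma^{-i}\lambda,\alpha^\vee\rangle
= \langle \lambda,\, w_{0,I}^{-i}\sigma^i(\alpha^\vee)\rangle.
\end{equation*}
The Hasse-type relation $\sigma(\alpha)=-w_{0,I}\alpha$ for $\alpha\in I$ gives $\sigma(\alpha^\vee)=-w_{0,I}\alpha^\vee$, and iterating (using that $\sigma$ commutes with $w_{0,I}$) yields $\sigma^i(\alpha^\vee)=(-w_{0,I})^i\alpha^\vee$. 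Hence $w_{0,I}^{-i}\sigma^i(\alpha^\vee)=(-1)^i\alpha^\vee$, and the $\sigma$-dependence collapses:
\begin{equation*}
K_\alpha(\lambda) \;=\; \langle \lambda,\alpha^\vee\rangle \sum_{i=0}^{2n-1}(-q)^i
\;=\; \langle \lambda,\alpha^\vee\rangle\cdot\frac{1-q^{2n}}{1+q}.
\end{equation*}
The geometric sum is strictly negative (as $q\geq 2$), while $\langle \lambda,\alpha^\vee\rangle\geq 0$ because $\lambda$ is $I$-dominant and $\alpha\in I$. This proves $K_\alpha(\lambda)\leq 0$.

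For the second assertion, recall from \eqref{equ-CHasse} that $\Ccal_{\pha}$ is cut out inside $X^*(T)$ by the conditions $K_\alpha(\lambda)\leq 0$ for all $\alpha\in\Delta$, and that $\Ccal_{\pha}\subset X_{+,I}^*(T)$. Splitting $\Delta=I\sqcup\Delta^P$, the $I$-inequalities are automatic for any $\lambda\in X_{+,I}^*(T)$ by the first part, so only the $\Delta^P$-inequalities remain restrictive; this gives the claimed equality.

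The argument is essentially a one-line character computation; there is no real obstacle, only the bookkeeping point that $\sigma$ must be checked to commute with $w_{0,I}$, which follows from $L$ (and not merely $I$) being $\sigma$-stable, i.e.\ from $P$ being defined over $\FF_q$ — exactly what part \ref{item-root-data-hasse-type} of Proposition \ref{prop-Hassetype-equivalent} supplies.
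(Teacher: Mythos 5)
Your proof is correct and takes essentially the same approach as the paper: both evaluate $K_\alpha(\lambda)$ by using the Hasse-type identity to collapse $\sigma^i(w_{0,I}^{\pm i}\alpha^\vee)$ to $(-1)^i\alpha^\vee$, obtaining $K_\alpha(\lambda)=\langle\lambda,\alpha^\vee\rangle\,\tfrac{1-q^{2n}}{1+q}\leq 0$ for $\alpha\in I$ and $\lambda\in X_{+,I}^*(T)$, and then deduce the description of $\Ccal_{\pha}$ by splitting $\Delta=I\sqcup\Delta^P$. You are slightly more explicit than the paper in noting that the commutativity $\sigma w_{0,I}=w_{0,I}\sigma$ (equivalent to $\sigma(w_{0,I})=w_{0,I}$, which holds because $P$ is defined over $\FF_q$) is what allows the manipulation.
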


\begin{proof}
For all $\alpha\in I$, we have
\begin{align}
\sum_{i=0}^{2n-1} q^i \left\langle (w_{0,I})^i \sigma^{-i} \lambda,\alpha^\vee \right\rangle & = \sum_{i=0}^{2n-1} q^i \left\langle  \lambda,\sigma^i((w_{0,I})^i \alpha^\vee) \right\rangle = \sum_{i=0}^{2n-1} (-1)^i q^i \left\langle  \lambda, \alpha^\vee \right\rangle
=-\left\langle  \lambda, \alpha^\vee \right\rangle \left(\frac{q^{2n}-1}{q+1} \right) \leq 0,
\end{align}
where we used that $(G,\mu)$ is of Hasse type in the second equality and the fact that $\lambda$ is $I$-dominant in the last inequality. 
This shows the result.
\end{proof}

For example, if $P$ is a maximal parabolic, we have $|\Delta^P|=1$, so $\Ccal_{\pha}$ is given inside $X_{+,I}^*(T)$ by a single inequality. This is in contrast to cases that are not of Hasse type. For example, if $G=\Sp(6)$ as explained in \cite[Section~5.5]{Koskivirta-automforms-GZip}, the cone $\Ccal_{\pha}$ is defined by $|\Delta|=3$ inequalities inside $X_{+,I}^*(T)$.

From now on, assume that $(G,\mu)$ is of Hasse type. We prove Theorem~\ref{main-thm-Hasse-type} by showing that if $H^0(\GZip^\mu,\Vcal_I(\lambda))\neq 0$, then $\lambda\in \Ccal_{\pha}$. First, recall that $H^0(\GZip^\mu,\Vcal_I(\lambda))$ is identified with the space $H^0(\GF^\mu,\Vcal_{\flag}(\lambda))$, and also with $S_{\flag}(\lambda)\subset S(\lambda)$. Let $h\in S_{\flag}(\lambda)$ be nonzero. By Proposition~\ref{prop-Slambda}\eqref{item-Smucd}, $F_{h,\alpha}\in k[B_L][t]$ for all $\alpha\in \Delta$. We will only need this information for $\alpha\in \Delta^P$. Again, write  $P_h=\sum_{i=1}^N c_{i} P_{m_i,\lambda_i}$ as in Equation~\eqref{equ-decomp-monomials}, with the $(m_i,\lambda_i)$ pairwise distinct and $c_i\neq 0$ for $1\leq i \leq N$. By Lemma~\ref{lemma-same-weight} and Formula \eqref{omega-weight}, we have in particular
\begin{equation}\label{equ-lambda1}
\lambda= q \sigma^{-1}(\lambda_1)- w_{0,I}\lambda_1 +\sum_{\beta\in \Phi^{-}_{L}} m_{1,\beta} (w_{0,I}\beta).
\end{equation}
We want to show $\lambda\in \Ccal_{\pha}$, which amounts to $K_\alpha(\lambda)\leq 0$ for all $\alpha\in \Delta^P$ by Lemma~\ref{lemma-HT-CHasse}. We first compute $K_\alpha(\beta)$ for any $\beta\in \Phi_L$. We find 
\begin{equation}
K_{\alpha}(\beta)=\sum_{i=0}^{2n-1} q^i \left\langle (w_{0,I})^i \sigma^{-i} \beta,\alpha^\vee \right\rangle = \sum_{i=0}^{2n-1} (-1)^i q^i \left\langle \beta,\alpha^\vee \right\rangle =-\left\langle\beta,\alpha^\vee \right\rangle \left(\frac{q^{2n}-1}{q+1} \right).
\end{equation}
On the other hand, we have
\begin{align}
K_\alpha\left(q \sigma^{-1}(\lambda_1)-w_{0,I}\lambda_1\right)&=\sum_{i=0}^{2n-1} q^i \left\langle (w_{0,I})^i \sigma^{-i} \left( q \sigma^{-1}(\lambda_1)-w_{0,I}\lambda_1 \right),\alpha^\vee \right\rangle \\
&=\sum_{i=0}^{2n-1} q^{i+1} \left\langle (w_{0,I})^i \sigma^{-(i+1)} (\lambda_1),\alpha^\vee \right\rangle - \sum_{i=0}^{2n-1} q^i \left\langle (w_{0,I})^{i+1} \sigma^{-i} (\lambda_1 ),\alpha^\vee \right\rangle\\
&=(q^{2n}-1) \left\langle w_{0,I} \lambda_1,\alpha^\vee \right\rangle,
\end{align}
where we used that $\sigma^{2n}\lambda_1 =\lambda_1$. Hence, for all $\alpha\in \Delta^P$, we find 
\begin{align}
K_\alpha(\lambda)&=K_\alpha\left(q \sigma^{-1}(\lambda_1)-w_{0,I}\lambda_1\right) + \sum_{\beta\in \Phi^{-}_{L}}m_{1,\beta} K_\alpha(w_{0,I} \beta) \\
&=\frac{q^{2n}-1}{q+1}\left((q+1)\left\langle w_{0,I}\lambda_1, \alpha^\vee \right\rangle-\sum_{\beta\in \Phi^{-}_{L}}m_{1,\beta}\left\langle w_{0,I}\beta, \alpha^\vee \right\rangle \right).
\end{align}
One has $K_\alpha(\lambda)\leq 0$, using the fact that $\langle w_{0,I}\lambda_1,\alpha^\vee \rangle \leq 0$ (see Corollary~\ref{cor-monomials-DeltaP}\eqref{cor-monomials-DeltaP-item2}) and Equation \eqref{equ-equivalent-ineq} applied to $F_{m_1,\lambda_1}$. This concludes the proof of Theorem~\ref{main-thm-Hasse-type}.

\section{\texorpdfstring{$\boldsymbol{R_{\textrm{u}}(P_0)}$}{R\textunderscore u (P\textunderscore 0)}-invariant subspace{}}\label{sec5}

Let $(G,\mu)$ be an arbitrary cocharacter datum, and let $\Zcal_\mu=(G,P,L,Q,M)$ be the attached zip datum. Fix a frame $(B,T,z)$ with $z=\sigma(w_{0,I})w_0$. Let $(V,\rho)$ be an $L$-representation. For $f\in V^{L_\varphi}$, we can view $f$ as a section of $\Vcal_I(\lambda)$ over $\Ucal_\mu$, by Lemma~\ref{lem-Umu-sections}. When $P$ is defined over $\FF_q$, this section extends to $\GZip^\mu$ if and only if $f\in V(\lambda)^{\Delta^P}_{\geq 0}$, by Corollary~\ref{cor-Fq-Levi}. For general $P$, the condition on $f$ is given by the Brylinski--Kostant filtration on $V_I(\lambda)$ (see \cite[Theorem 3.4.1]{Imai-Koskivirta-vector-bundles}). Unfortunately, this condition is too complex to understand explicitly. However, let $P_0$ be the parabolic $L_0B$ with $L_0$ as in \eqref{eqL0}, and assume further that $f\in V_I(\lambda)^{R_{\textrm{u}}(P_0)}$. In this case, we can give a more explicit condition for when $f$ extends. In particular, the lowest-weight vector of $V_I(\lambda)$ satisfies this condition. This makes it possible to define a ``lowest-weight cone'' $\Ccal_{\lwsf}$ (see Section~\ref{sec-low} below) similar to the highest-weight cone $\Ccal_{\hwsf}$. When $P$ is not defined over $\FF_q$, one sees on examples that $\Ccal_{\hwsf}$ is usually very small. On the other hand, the lowest-weight cone will be quite large.

\subsection{Statement}
As in the proof of Proposition~\ref{prop-Norm},  for $\alpha\in \Delta^P$, define 
\begin{equation}\label{malpha-equ2}
 m_{\alpha} =\min \{ m \geq 1 \mid 
 \sigma^{-m}(\alpha) \notin I \}
\end{equation}
and $t_{\alpha}=t^{-1}\alpha(\varphi(\delta_{\alpha}(t)))^{-1} =t \alpha(\delta_{\alpha}(t))^{-1} \in t^{\QQ}$, where $t$ is an indeterminate. Also set 
\begin{equation}\label{utalpha}
 u_{t,\alpha}=\prod_{i=1}^{m_{\alpha}-1} \phi_{\sigma^{-i}(\alpha)} 
 \left( \begin{pmatrix}
1&-t_{\alpha}^{\frac{1}{q^i}}\\0&1
\end{pmatrix} \right).    
\end{equation}
For $\alpha\in \Phi$, write $G_\alpha\subset G$ for the image of the map $\phi_\alpha\colon \SL_2\to G$. For simplicity, we consider the following condition.

\begin{condition}\label{cond-commute}
For all $1\leq i, j \leq m_\alpha-1$ with $i\neq j$, we have $\langle \sigma^{-i}(\alpha), \sigma^{-j}(\alpha^\vee) \rangle=0$, and the subgroups $G_{\sigma^{-i}(\alpha)}$ and $G_{\sigma^{-j}(\alpha)}$ commute with each other.
\end{condition}

\begin{rmk}
Condition~\ref{cond-commute} is satisfied in many cases. For example, if $G$ splits over $\FF_{q^2}$, then $m_\alpha\in \{1,2\}$ and the condition is trivially satisfied. In particular, all absolutely simple unitary groups satisfy it. The condition also holds for $G=\Res_{\FF_{q^n}/\FF_q}(G_{0,\FF_{q^n}})$, where $G_0$ is a split reductive over $\FF_q$.
\end{rmk}

Let $(V,\rho)$ be an $L$-representation, and let $V=\bigoplus_{\nu \in X^*(T)}V_\nu$ be its $T$-weight decomposition. For $\alpha\in \Delta$, set $\delta_{\alpha}=\wp_*^{-1}(\alpha^{\vee})$ (where $\wp_*$ was defined in Equation~\eqref{equ-Plowstar}). Put $P_1 \colonequals \sigma^{-(m_\alpha-1)}(P)$. We have $\Delta^{P_1}=\sigma^{-(m_\alpha-1)}(\Delta^P)$. Since $P_0\subset P_1$, we have $\Delta^{P_1}\subset \Delta^{P_0}$. Define $V^{\Delta^{P_1}}_{\geq 0}$ similarly to Equation~\eqref{equ-VDeltaP} by 
\begin{equation}\label{equ-VDeltaP0}
V_{\geq 0}^{\Delta^{P_1}} = \bigoplus_{\langle \nu,\delta_\beta \rangle \geq 0, \ 
 \forall \beta\in \Delta^{P_1}} V_\nu.
\end{equation}

\begin{proposition}\label{prop-RuP0}
Assume that Condition~\ref{cond-commute} holds. Then we have
\begin{equation}
V^{R_{\textrm{u}}(P_0)} \cap V^{L_\varphi} \cap V^{\Delta^{P_1}}_{\geq 0} \subset H^0(\GZip^\mu,\Vcal(\rho)).
\end{equation}
\end{proposition}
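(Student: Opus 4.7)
The plan is to adapt the strategy used in the proof of Proposition \ref{prop-Norm}, replacing the specific highest weight vector $f_\lambda$ by a general element $f$ of the triple intersection. An element $f \in V^{L_\varphi}$ defines, via Lemma \ref{lem-Umu-sections}, a section over $\Ucal_\mu$. To extend it to $\GZip^\mu$, by the reduction in \cite[\S3.1]{Imai-Koskivirta-vector-bundles} recalled in the proof of Proposition \ref{prop-Norm}, it suffices to check, for each $\alpha \in \Delta^P$, that the pullback $h \circ \psi_\alpha((x,y),t)$ lies in $k[E][t^{\QQ_{\geq 0}}]$ rather than only $k[E][t^{\QQ}]$. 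Decomposing $\psi_\alpha((x,y),t) = x_1 x_2^{-1}$ with $x_1 = x \phi_\alpha(\cdot)\delta_\alpha(t)u_{t,\alpha}$, and using that $\phi_\alpha(\begin{pmatrix}1&0\\-t^{-1}&1\end{pmatrix}) = u_{-\alpha}(-t^{-1}) \in R_{\mathrm{u}}(P)$ acts trivially on $V$, the question reduces to showing that
\begin{equation}
\rho(\delta_\alpha(t))\,\rho(u_{t,\alpha})\,f \ \in \ V \otimes_k k[t^{\QQ_{\geq 0}}].
\end{equation}

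The next step is to exploit Condition \ref{cond-commute}. Writing $\beta_i = \sigma^{-i}(\alpha)$ for $i = 1, \dots, m_\alpha - 1$, the condition says that the $G_{\beta_i}$ pairwise commute and that $\langle \beta_i, \beta_j^\vee\rangle = 0$ for $i\neq j$. Hence the factors $u_{\beta_i}(t_\alpha^{1/q^i})$ making up $u_{t,\alpha}^{-1}$ commute, and one obtains an expansion
\begin{equation}
\rho(u_{t,\alpha})\,f \ = \ \sum_{(j_i)\in \NN^{m_\alpha-1}} (-1)^{\sum j_i}\,t_\alpha^{\sum j_i/q^i}\,E^{(j_1)}_{\beta_1}\cdots E^{(j_{m_\alpha-1})}_{\beta_{m_\alpha-1}}\,f.
\end{equation}
Here a crucial observation is that each $\beta_i$ lies in $I \setminus \Delta_{L_0}$: indeed $\sigma^i(\beta_i) = \alpha \in \Delta^P$ would contradict $\sigma$-stability of $\Delta_{L_0}$. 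Consequently $-\beta_i$ belongs to the root set of $R_{\mathrm{u}}(P_0)\cap L$, so the $R_{\mathrm{u}}(P_0)$-invariance of $f$ constrains the structure of the terms $E^{(j_1)}_{\beta_1}\cdots E^{(j_{m_\alpha-1})}_{\beta_{m_\alpha-1}}f$; by the $\SL_2$-representation theory of each commuting factor (available because of Condition \ref{cond-commute}), the only weight components of $f$ that can contribute nontrivially sit in weight spaces $V_\nu$ controlled by the $\beta_i$-strings.

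It then remains to track the $t$-exponents. The coefficient $t_\alpha^{\sum j_i/q^i}$ equals $t^{(1-\langle\alpha,\delta_\alpha\rangle)\sum j_i/q^i}$; applying $\rho(\delta_\alpha(t))$ to a weight-$\nu$ vector multiplies it by $t^{\langle \nu,\delta_\alpha\rangle}$; and one computes that the image $E^{(j_1)}_{\beta_1}\cdots E^{(j_{m_\alpha-1})}_{\beta_{m_\alpha-1}}f$ has weight $\nu + \sum j_i \beta_i$. Summing the contributions and using $\langle \beta_i,\delta_\alpha\rangle$ computed from the formula \eqref{eq-deltaalpha}, the total $t$-exponent of each term becomes a non-negative $\QQ$-linear combination of quantities of the form $\langle \nu, \delta_\beta\rangle$ with $\beta\in \Delta^{P_1} = \sigma^{-(m_\alpha-1)}(\Delta^P)$, plus non-negative contributions from the $j_i$ coming from $\sum j_i/q^i \geq 0$. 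The hypothesis $f \in V^{\Delta^{P_1}}_{\geq 0}$ then forces the total exponent to be $\geq 0$, which is the required extension.

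The main obstacle is the middle step: pinning down which terms $E^{(j_1)}_{\beta_1}\cdots E^{(j_{m_\alpha-1})}_{\beta_{m_\alpha-1}}f$ actually contribute and verifying that, for those terms, $\rho(\delta_\alpha(t))$ introduces a weight whose pairing with the Frobenius-translated simple coroots in $\Delta^{P_1}$ is non-negative. This requires carefully using the commuting-$\SL_2$ structure from Condition \ref{cond-commute} together with the $U_{-\beta_i}$-invariance coming from $R_{\mathrm{u}}(P_0)$, and then a computation of $\langle\beta_i,\delta_\alpha\rangle$ via \eqref{eq-deltaalpha} to rewrite the exponent purely in terms of $\delta_\beta$ for $\beta\in \Delta^{P_1}$.
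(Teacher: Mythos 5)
Your overall route — pass to the Taylor expansion of $\rho(u_{t,\alpha})f$ along the commuting operators $E^{(j)}_{\beta_i}$ (legitimate under Condition \ref{cond-commute}), then track the $t$-exponent — is a valid alternative to the paper's argument and targets exactly the same reduction: one must show $\rho(\delta_\alpha(t))\rho(u_{t,\alpha})f \in V\otimes_k k[t^{\QQ_{\geq 0}}]$ for each $\alpha\in\Delta^P$. However, your final exponent accounting has a sign error that, as written, breaks the argument. When you combine $t_\alpha^{\sum_i j_i/q^i}=t^{(1-\langle\alpha,\delta_\alpha\rangle)\sum_i j_i/q^i}$ with the factor $t^{\langle\nu+\sum_i j_i\beta_i,\delta_\alpha\rangle}$ from $\rho(\delta_\alpha(t))$, the relation $\langle\beta_i,\delta_\alpha\rangle=q^{-i}(\langle\alpha,\delta_\alpha\rangle-2)$ (which is precisely where Condition \ref{cond-commute} enters) makes the total exponent collapse to $\langle\nu,\delta_\alpha\rangle-\sum_i j_i/q^i$. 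The $j_i$ therefore contribute \emph{negatively}, not "non-negative contributions from the $j_i$ coming from $\sum j_i/q^i\geq 0$" as you claim, and $\langle\nu,\delta_\alpha\rangle$ alone is not one of your target quantities $\langle\nu,\delta_\beta\rangle$ for $\beta\in\Delta^{P_1}$, since $\alpha\in\Delta^P$ while $\Delta^{P_1}=\sigma^{-(m_\alpha-1)}(\Delta^P)$.

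The fix is the quantitative form of the $\SL_2$-constraint you only allude to. Because $\beta_i=\sigma^{-i}(\alpha)\in\Delta^{P_0}$ for $1\leq i\leq m_\alpha-1$, one has $U_{-\beta_i}\subset R_{\mathrm{u}}(P_0)$, so each weight component $f_\nu$ of $f$ is a lowest-weight vector for the $G_{\beta_i}$-action; hence $\langle\nu,\beta_i^\vee\rangle\leq 0$ and $E^{(j_i)}_{\beta_i}f_\nu=0$ for $j_i>-\langle\nu,\beta_i^\vee\rangle$. Plugging in this upper bound for each $j_i$ and using the identity $\delta_\alpha+\sum_{i=1}^{m_\alpha-1}q^{-i}\sigma^{-i}(\alpha)^\vee=q^{-(m_\alpha-1)}\sigma^{-(m_\alpha-1)}(\delta_\alpha)$ yields that every term has $t$-exponent at least $q^{-(m_\alpha-1)}\langle\nu,\delta_{\sigma^{-(m_\alpha-1)}(\alpha)}\rangle$ — a single pairing with the one relevant coroot of $\Delta^{P_1}$, not a general positive combination — and this is $\geq 0$ by the hypothesis $f\in V^{\Delta^{P_1}}_{\geq 0}$. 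The paper reaches the same bound without any per-term $j_i$-analysis by factoring each $\phi_{\beta_i}$-matrix into a coweight piece $\beta_i^\vee(t^{1/q^i})$ (which is absorbed into $\delta_\alpha(t)$ and produces the displaced $\delta_{\sigma^{-(m_\alpha-1)}(\alpha)}$ exponent), a piece regular in $t^{1/q^i}$, and a unipotent piece in $R_{\mathrm{u}}(P_0)$ that kills on $f_\nu$; your Taylor-series version can be made correct, but you must invoke the $j_i\leq-\langle\nu,\beta_i^\vee\rangle$ bound explicitly and get the signs right.
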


\begin{proof}
Let $f\in V^{R_{\textrm{u}}(P_0)} \cap V^{L_\varphi} $, and let $\widetilde{f}\colon U_\mu\to V$ be the function corresponding to $f$ by Lemma~\ref{lem-Umu-sections}. It suffices to check that $\widetilde{f}$ extends to $G$. By the proof of \cite[Theorem 3.4.1]{Imai-Koskivirta-vector-bundles}, it is enough to show that for all $\alpha\in \Delta^P$, the function
\begin{equation}
    F_{\alpha}\colon t\longmapsto \rho \left( \phi_\alpha \left(\left(\begin{matrix}
    1 & 0 \\ -t^{-1} & 1
    \end{matrix} \right)\right)\delta_\alpha(t)u_{t,\alpha} \right)f
\end{equation}
lies in $k[t]\otimes V$. Since it lies in $k[t,t^{-1}]\otimes V$ by the proof of \cite[Theorem 3.4.1]{Imai-Koskivirta-vector-bundles}, it suffices to show that it also lies in $k[(t^r)_{r \in \mathbb{Q}_{\geq 0}}]\otimes V$. 
Since $\rho$ is trivial on $R_{\textrm{u}}(P)$ and $\alpha \in \Delta^P$, one has simply $F_\alpha(t)=\rho (\delta_\alpha(t)u_{t,\alpha})f$. Using Equation~\eqref{utalpha}, we can write
\begin{equation}
  F_\alpha(t)= \rho \left(\delta_\alpha(t) \prod_{i=1}^{m_{\alpha}-1} \phi_{\sigma^{-i}(\alpha)} 
 \left( \begin{pmatrix}
1&-t_{\alpha}^{\frac{1}{q^i}}\\0&1
\end{pmatrix} \right) \right)f  = \rho \left( \prod_{i=1}^{m_{\alpha}-1} \phi_{\sigma^{-i}(\alpha)} 
 \left( \begin{pmatrix}
1&\gamma_i\\0&1
\end{pmatrix} \right) \delta_\alpha(t) \right) f, 
\end{equation}
where $\gamma_i=-t^{\langle \sigma^{-i}(\alpha), \delta_\alpha\rangle } t_{\alpha}^{\frac{1}{q^i}}$. We have $q^{-1}\sigma^{-1}(\delta_\alpha)=\delta_\alpha+q^{-1}\sigma^{-1}\alpha^\vee$, and hence by induction we have $q^{-i}\sigma^{-i}(\delta_\alpha)=\delta_\alpha+(q^{-1}\sigma^{-1}\alpha^\vee + \dots +q^{-i}\sigma^{-i}\alpha^\vee)$. Let $1\leq i \leq m_\alpha-1$. By Condition~\ref{cond-commute}, we deduce $\langle \sigma^{-i}(\alpha),\delta_\alpha \rangle=q^{-i}(\langle \alpha, \delta_\alpha \rangle-2)$. Thus
\begin{equation}
\gamma_i=-t^{\langle \sigma^{-i}(\alpha), \delta_\alpha\rangle+q^{-i}(1-\langle \alpha,\delta_\alpha\rangle)}=-t^{-1/q^i}.
\end{equation}
Let $f=\sum_\nu f_\nu$ be the $T$-weight decomposition of $f$. By assumption, we have  
\begin{align*}
    F_\alpha(t)&=\sum_{\nu}\rho \left( \prod_{i=1}^{m_{\alpha}-1} \phi_{\sigma^{-i}(\alpha)} 
 \left( \begin{pmatrix}
1&-t^{-1/q^i}\\0&1
\end{pmatrix} \right) \delta_\alpha(t) \right) f_\nu \\ &=  \sum_{\nu} t^{\langle \nu, \delta_\alpha \rangle}\rho \left( \prod_{i=1}^{m_{\alpha}-1} \phi_{\sigma^{-i}(\alpha)} 
 \left( \begin{pmatrix}
1&-t^{-1/q^i}\\0&1
\end{pmatrix} \right) \right) f_\nu
\\ &=  \sum_{\nu} t^{\langle \nu, \delta_\alpha \rangle}\rho \left( \prod_{i=1}^{m_{\alpha}-1} \phi_{\sigma^{-i}(\alpha)} 
 \left( \begin{pmatrix}
t^{-1/q^i}&-1\\0&t^{1/q^i}
\end{pmatrix} \right) \sigma^{-i}(\alpha)^\vee(t^{1/q^i}) \right) f_\nu\\
 &=  \sum_{\nu} t^{\langle \nu, \delta_\alpha +\sum_{i=1}^{m_\alpha-1} q^{-i}\sigma^{-i}(\alpha)^{\vee} \rangle}\rho \left( \prod_{i=1}^{m_{\alpha}-1} \phi_{\sigma^{-i}(\alpha)} 
 \left( \begin{pmatrix}
t^{-1/q^i}&-1\\0&t^{1/q^i}
\end{pmatrix} \right)\right) f_\nu.
\end{align*}
As before, we have $\delta_\alpha+\sum_{i=1}^{m_\alpha-1} q^{-i}\sigma^{-i}(\alpha)^{\vee}=q^{-(m_\alpha-1)}\sigma^{-(m_\alpha-1)}(\delta_\alpha)$. Furthermore, we have
\begin{equation}
 \begin{pmatrix}
t^{-1/q^i}&-1\\0&t^{1/q^i}
\end{pmatrix} =  \begin{pmatrix}
0&-1\\1&t^{1/q^i}
\end{pmatrix}  \begin{pmatrix}
1&0\\-t^{-1/q^i}&1
\end{pmatrix}. 
\end{equation}
Since $P_0$ is defined over $\FF_q$, we have $\sigma^{-i}(\alpha)\notin I_{P_0}$ 
for all $i\in \ZZ$. Using the invariance of $f$ under $R_{\textrm{u}}(P_0)$, we deduce
\begin{equation}
    F_\alpha(t)= \sum_{\nu} t^{\langle \nu,\sigma^{-(m_\alpha-1)}(\delta_\alpha) \rangle / q^{m_\alpha-1} }\rho \left( \prod_{i=1}^{m_{\alpha}-1} \phi_{\sigma^{-i}(\alpha)} 
 \left( \begin{pmatrix}
0&-1\\1&t^{1/q^i}
\end{pmatrix} \right)\right) f_\nu.
\end{equation}
Since $f\in V^{\Delta^{P_1}}_{\geq 0} $, we have $\langle \nu,\sigma^{-(m_\alpha-1)}(\delta_\alpha) \rangle=\langle \nu,\delta_{\sigma^{-(m_\alpha-1)}(\alpha)} \rangle\geq 0$. Hence, the $t$-valuation of $F_\alpha(t)$ is $\geq 0$. The result follows.
\end{proof}

\subsection{Lowest-weight cone}\label{sec-low}

We examine the case $V=V_I(\lambda)$ for $\lambda\in X^*_{+,I}(T)$. The $L_0$-representation $V_I(\lambda)^{R_{\textrm{u}}(P_0)}$ is isomorphic to $V_{I_0}(w_{0,I_0}w_{0,I}\lambda)$ by \cite[Proposition 6.3.1]{Imai-Koskivirta-partial-Hasse}. Put $\lambda_0=w_{0,I_0}w_{0,I}\lambda$.

Let $f_{\low, \lambda} \in V_I(\lambda)$ be a nonzero element in the lowest-weight line of $V_I(\lambda)$. Consider the element $\Norm_{L_\varphi}(f_{\low, \lambda}) \in V_I(N_\varphi \lambda)$, defined in \eqref{norm-def-eq}, where $N_\varphi=|L_0(\FF_q)|q^m$. By construction, this element lies in $V_I(N_{\varphi}\lambda)^{L_\varphi}$. 
For $\alpha\in \Delta$, write $r_\alpha$ for the smallest integer $r\geq 1$ such that $\sigma^r(\alpha)=\alpha$.

\begin{theorem}\label{thm-norm-low}
Assume Condition~\ref{cond-commute}. Suppose that for all $\alpha \in \Delta^{P_0}$, one has
\begin{equation}\label{formula-norm-low}
\sum_{w\in W_{L_0}(\FF_q)} \sum_{i=0}^{r_\alpha-1} q^{i+\ell(w)} \ \langle w \lambda_0, \sigma^i(\alpha^\vee) \rangle\leq 0.
\end{equation}
Then $\Norm_{L_\varphi}(f_{\low,\lambda})$ extends to $\GZip^\mu$.
\end{theorem}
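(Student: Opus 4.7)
My approach is to apply Proposition~\ref{prop-RuP0} to the element $\mathbf{f} \colonequals \Norm_{L_\varphi}(f_{\low,\lambda}) \in V_I(N_\varphi\lambda)$. This reduces Theorem~\ref{thm-norm-low} to three hypotheses: $R_u(P_0)$-invariance, $L_\varphi$-invariance, and the weight condition $\mathbf{f}\in V_I(N_\varphi\lambda)^{\Delta^{P_1(\alpha)}}_{\geq 0}$ for each $\alpha\in\Delta^P$, where $P_1(\alpha)=\sigma^{-(m_\alpha-1)}(P)$.

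The $L_\varphi$-invariance is built into the definition~\eqref{norm-def-eq} of the norm. For the $R_u(P_0)$-invariance, I use that $f_{\low,\lambda}$ spans the unique $B_L$-stable line of $V_I(\lambda)$ (of weight $w_{0,I}\lambda$), hence is $R_u(B_L)$-invariant. Since $P_0\cap L = L_0 B_L$ is a parabolic of $L$ containing the Borel $B_L$, one has $R_u(P_0)\subset R_u(B_L)$, giving the required invariance of $f_{\low,\lambda}$. By normality of $R_u(P_0)$ in $P_0\supset L_0(\FF_q)$, the invariance passes to each $s\cdot f_{\low,\lambda}$, and hence to the product and its $q^m$-th power.

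The substantive step is the weight condition. For $\alpha\in\Delta^P$, set $\alpha'\colonequals\sigma^{-(m_\alpha-1)}(\alpha)$. From the defining property~\eqref{malpha-equ2} of $m_\alpha$, one has $\sigma^{-1}(\alpha')=\sigma^{-m_\alpha}(\alpha)\notin I$, whence $\alpha'\notin I_0=\bigcap_{n\geq 0}\sigma^n(I)$, so $\alpha'\in\Delta^{P_0}$ and the hypothesis~\eqref{formula-norm-low} of the theorem applies at $\alpha'$. Using the identity $\delta_{\alpha'}=\sigma^{-(m_\alpha-1)}(\delta_\alpha)=-\frac{1}{q^{r_\alpha}-1}\sum_{i=0}^{r_\alpha-1}q^i\sigma^i(\alpha'^\vee)$ (which follows by applying $\sigma^{-(m_\alpha-1)}$ to the defining equation $\wp_*(\delta_\alpha)=\alpha^\vee$), the required bound $\langle\nu,\delta_{\alpha'}\rangle\geq 0$ becomes
\[
\sum_{i=0}^{r_\alpha-1} q^i\,\langle\nu,\sigma^i(\alpha'^\vee)\rangle \leq 0,
\]
and this must hold for every $T$-weight $\nu$ of $\mathbf{f}$.

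To analyze the $T$-weights of $\mathbf{f}$, I exploit the identification $V_I(\lambda)^{R_u(P_0)}\simeq V_{I_0}(\lambda_0)$ of $L_0$-representations (with $\lambda_0=w_{0,I_0}w_{0,I}\lambda$), under which $f_{\low,\lambda}$ corresponds to the \emph{lowest} weight vector of $V_{I_0}(\lambda_0)$, so that $\mathbf{f}\in V_{I_0}(N_\varphi\lambda_0)$. Decomposing $L_0(\FF_q)$ via the Bruhat decomposition of Proposition~\ref{prop-Norm} and expanding each $s\cdot f_{\low,\lambda}$ into $T$-weight components, the leading contribution cell-by-cell reproduces the left-hand side of~\eqref{formula-norm-low}---after using $w_{0,I}\lambda=w_{0,I_0}\lambda_0$ and the reindexing $w\mapsto w_{0,I_0}w$ (with $\ell(w_{0,I_0}w)=\ell(w_{0,I_0})-\ell(w)$ for $w\in W_{L_0}$) to reconcile the $q^{\ell(w)}$ factors. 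The non-leading contributions arise from expansions of unipotent elements acting on weight vectors, and give a non-positive correction because $\alpha'\in\Delta^{P_0}$ makes $\sigma^i(\alpha'^\vee)$ pair non-positively with the positive combinations of roots in $\Phi^+_{L_0}$ that appear in the expansion.

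The hard part will be the careful weight bookkeeping in the Bruhat cell-by-cell analysis: selecting the Bruhat decomposition so that cell sizes combine into the factor $q^{\ell(w)}$ of~\eqref{formula-norm-low}, and verifying that the error terms coming from non-leading weights contribute with the correct sign. This step parallels the proof of Proposition~\ref{prop-Norm} but is adapted to the lowest weight vector, which replaces the role of the highest weight eigenvector.
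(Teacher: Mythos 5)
Your proof correctly identifies the reduction to Proposition~\ref{prop-RuP0} and handles the $R_u(P_0)$- and $L_\varphi$-invariance checks essentially as the paper does (the observation that $f_{\low,\lambda}$ spans the $B_L$-stable line, hence is $R_u(B_L)\supset R_u(P_0\cap L)$-invariant, and that the invariance propagates to the norm by normality, is correct). However, your treatment of the weight condition $\mathbf{f}\in V^{\Delta^{P_1(\alpha)}}_{\geq 0}$ has a genuine gap. The paper's proof does \emph{not} verify this condition by a direct Bruhat-cell weight computation; it observes that since $w_{0,I_0}\in W_{L_0}(\FF_q)$, a group-level reindexing $s\mapsto s\dot{w}_{0,I_0}$ gives the identity
\[
\Norm_{L_\varphi}(f_{\low,\lambda})=\Norm_{L_\varphi}(f_{\high,\lambda_0})=\Norm_{L_0(\FF_q)}(f_{\high,\lambda_0})^{q^m},
\]
and then applies Proposition~\ref{prop-Norm} to the \emph{highest} weight vector of $V_{I_0}(\lambda_0)$ on the auxiliary stack $\GZip^{\mu_0}$ (where $P_0$ is defined over $\FF_q$), combined with Corollary~\ref{cor-Fq-Levi}, to get $\mathbf{f}\in V^{\Delta^{P_0}}_{\geq 0}\subset V^{\Delta^{P_1(\alpha)}}_{\geq 0}$. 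You never state this identity; without it, your proposed direct computation runs into problems.

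Concretely, the obstacles are threefold. First, the Bruhat double-coset valuation argument in the proof of Proposition~\ref{prop-Norm} crucially uses that $f_\lambda$ is a $B_L^+$-eigenfunction together with $L_0$-dominance of $\delta_\alpha$ (so that $\Gamma(t)=\delta_\alpha(t)b\delta_\alpha(t)^{-1}$ with $b\in B^+_{L_0}$ extends as $t\to 0$); replacing the highest weight vector by the lowest, a $B_L$-eigenfunction, flips the relevant Borel and the $\Gamma(t)$-limit argument gives the \emph{wrong} sign, so the valuation is not constant on $B^+_{L_0}(\FF_q)$-double cosets. One can repair this with a mixed decomposition $B_{L_0}(\FF_q)wB^+_{L_0}(\FF_q)$, but this is a nontrivial modification that you do not spell out. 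Second, your reindexing $w\mapsto w_{0,I_0}w$ with $\ell(w_{0,I_0}w)=\ell(w_{0,I_0})-\ell(w)$ \emph{flips the sign} of the exponent in the $q$-power factor, turning $q^{\ell(w)}$ into $q^{\ell(w_{0,I_0})}q^{-\ell(w)}$; as written this does not reproduce the $q^{i+\ell(w)}$ of \eqref{formula-norm-low} and would require an additional reindexing by inversion $w\mapsto w^{-1}$ together with the mixed Bruhat decomposition to come out right. Third, and most importantly, the condition $\mathbf{f}\in V^{\Delta^{P_1(\alpha)}}_{\geq 0}$ requires \emph{every} $T$-weight of the product $\mathbf{f}$ to satisfy $\langle\nu,\delta_{\sigma^{-(m_\alpha-1)}(\alpha)}\rangle\geq 0$; your ``leading contribution / non-positive correction'' language suggests checking a single extremal term, but the actual lowest weight of $V_{I_0}(N_\varphi\lambda_0)$ may not satisfy the pointwise constraint even when the hypothesis \eqref{formula-norm-low} (a weighted sum over $W_{L_0}(\FF_q)$) holds, and the argument then hinges on a delicate analysis of which weights actually occur in the product $\prod_s s\cdot f_{\low,\lambda}$. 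The paper sidesteps all of this: the equivalence of Corollary~\ref{cor-Fq-Levi} between ``extends to $\GZip^{\mu_0}$'' and ``lies in $V^{L_0(\FF_q)}\cap V^{\Delta^{P_0}}_{\geq 0}$'' converts the scalar $t$-valuation statement of Proposition~\ref{prop-Norm} into the full weight condition in one stroke.
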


\begin{rmk}
Formulas \eqref{formula-norm-low} and \eqref{formula-norm} (in the case of $f_{\high, \lambda}$) differ in two aspects: $\lambda$ changes to $\lambda_0=w_{0,I_0}w_{0,I}\lambda$, and ``for all $\alpha\in \Delta^P$'' changes to ``for all $\alpha \in \Delta^{P_0}$.''
\end{rmk}

\begin{proof}
The lowest-weight vector $f_{\low, \lambda}$ is contained in the $L_0$-subrepresentation $V_I(\lambda)^{R_{\textrm{u}}(P_0)} \cong V_{I_0}(\lambda_0)$, 
which has highest weight $\lambda_0$, lowest-weight vector $f_{\low, \lambda}$ and highest-weight vector $f_{\high,\lambda_0}\colonequals w_{0, I_0}(f_{\low, \lambda})$. Since $w_{0,I_0}\in W_{L_0}(\FF_q)$, we have
\begin{equation}\label{norm-eq}
\Norm_{L_\varphi}\left(f_{\low,\lambda}\right)=\Norm_{L_\varphi}\left(f_{\high,\lambda_0}\right)=\Norm_{L_0(\FF_q)}\left(f_{\high,\lambda_0}\right)^{q^m}.
\end{equation}
Consider the zip datum $\Zcal_0=(G,P_0,L_0,Q_0,L_0)$, where $Q_0$ is the opposite parabolic to $P_0$ with Levi subgroup $L_0$. By Remark~\ref{rmk-opposite-cochar}, we have $\Zcal_0=\Zcal_{\mu_0}$ for some cocharacter $\mu_0\colon \GG_{\textrm{m},k}\to G_k$. Since $P_0$ is defined over $\FF_q$,  by Corollary~\ref{cor-Fq-Levi}, we have 
\begin{equation}
H^0\left(\GZip^{\mu_0},\Vcal_{I_0}(\lambda_0)\right)=V_{I_0}(\lambda_0)^{L_0(\FF_q)}\cap V_{I_0}(\lambda_0)^{\Delta^{P_0}}_{\geq 0}.   
\end{equation}
Applying Proposition~\ref{prop-Norm} to $\GZip^{\mu_0}$ and the $L_0$-representation $V_{I_0}(\lambda_0)$, we deduce 
\[\Norm_{L_0(\FF_q)}\left(f_{\high,\lambda_0}\right)\in V_{I_0}(N_0\lambda_0)^{\Delta^{P_0}}_{\geq 0}, 
\] 
where $N_0=|L_0(\FF_q)|$. Combining this with \eqref{norm-eq}, and using that $\Delta^{P_1}\subset \Delta^{P_0}$, we find
\begin{equation}
    \Norm_{L_\varphi}\left(f_{\low,\lambda}\right) \in  V_I\left(N_\varphi\lambda\right)_{\geq 0}^{\Delta^{P_0}} \subset V_I(N_\varphi\lambda)_{\geq 0}^{\Delta^{P_1}}.
\end{equation}
The result follows from Proposition~\ref{prop-RuP0} applied to $V_I(N_\varphi \lambda)$.
\end{proof}

\begin{definition}
Define $\Ccal_{\lwsf}$ as the set of $\lambda \in X^*_{+,I}(T)$ satisfying the inequalities \eqref{formula-norm-low}.
\end{definition}

We call $\Ccal_{\lwsf}$ the lowest-weight cone. Under Condition~\ref{cond-commute}, one has $\Ccal_{\lwsf}\subset \Ccal_{\zipsf}$ by Theorem~\ref{thm-norm-low}. We do not know if this inclusion holds in general. When $P$ is defined over $\FF_q$, one has $P_0=P$ and hence $\Ccal_{\lwsf}=\Ccal_{\hwsf}$.

\begin{lemma}\label{cor-CGS-Cond}
One has $\Ccal_{\GSsf}\subset  \Ccal_{\lwsf}$. 
\end{lemma}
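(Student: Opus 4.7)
The plan is to verify the defining inequality of $\Ccal_{\lwsf}$ termwise, exploiting dominance of $-w_{0,I}\lambda$. Since every element of $\Ccal_{\GSsf}$ is by definition $I$-dominant, one has $\Ccal_{\GSsf}\subset X_{+,I}^*(T)$ for free, so only the inequalities \eqref{formula-norm-low} remain. First I would set $\mu\colonequals -w_{0,I}\lambda$, which is dominant by the characterization of $\Ccal_{\GSsf}$ in Definition \ref{GSdef}, and rewrite $\lambda_{0}=w_{0,I_{0}}w_{0,I}\lambda=-w_{0,I_{0}}\mu$. For $w\in W_{L_{0}}(\FF_{q})\subset W_{L_{0}}$, put $w'\colonequals w\,w_{0,I_{0}}\in W_{L_{0}}$, so that $w\lambda_{0}=-w'\mu$. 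Since every coefficient $q^{i+\ell(w)}$ is positive, it will suffice to prove the stronger pointwise inequality $\langle w\lambda_{0},\sigma^{i}(\alpha^\vee)\rangle\leq 0$, i.e.\ $\langle w'\mu,\sigma^{i}(\alpha^\vee)\rangle\geq 0$, for every $\alpha\in\Delta^{P_{0}}$, every $0\leq i\leq r_{\alpha}-1$, and every $w$.

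Next I would use that $P_{0}$ is defined over $\FF_{q}$, so $\sigma$ permutes $I_{0}=I_{P_{0}}$ and hence $\Delta^{P_{0}}$; in particular $\sigma^{i}(\alpha)\in\Delta^{P_{0}}\subset\Delta$ is a simple root not in $I_{0}$, and $\sigma^{i}(\alpha^\vee)$ is the corresponding simple coroot. Since $\mu$ is dominant (a fortiori $I_{0}$-dominant) and $w'\in W_{L_{0}}$, the standard fact from the weight theory of the $L_{0}$-representation of highest weight $\mu$ gives a decomposition
\begin{equation*}
\mu-w'\mu=\sum_{\beta\in I_{0}}c_{\beta}\beta,\qquad c_{\beta}\in\ZZ_{\geq 0}.
\end{equation*}
Pairing with $\sigma^{i}(\alpha^\vee)$ yields
\begin{equation*}
\langle w'\mu,\sigma^{i}(\alpha^\vee)\rangle=\langle \mu,\sigma^{i}(\alpha^\vee)\rangle-\sum_{\beta\in I_{0}}c_{\beta}\langle \beta,\sigma^{i}(\alpha^\vee)\rangle.
\end{equation*}
The first term is $\geq 0$ because $\mu$ is dominant and $\sigma^{i}(\alpha)$ is simple, while each $\langle \beta,\sigma^{i}(\alpha^\vee)\rangle\leq 0$ since $\beta\in I_{0}$ and $\sigma^{i}(\alpha)\in\Delta^{P_{0}}$ are distinct simple roots. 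Hence the whole expression is $\geq 0$, which translates back to $\langle w\lambda_{0},\sigma^{i}(\alpha^\vee)\rangle\leq 0$, and summing over $w$ and $i$ with the positive weights $q^{i+\ell(w)}$ gives $\lambda\in\Ccal_{\lwsf}$.

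There is no serious obstacle in this argument; the only thing that requires care is the interplay of the two longest elements $w_{0,I}$ and $w_{0,I_{0}}$ together with the action of $\sigma$, and the crucial structural input is that $P_{0}/\FF_{q}$ forces $\sigma^{i}(\alpha^\vee)$ to remain a simple coroot outside $I_{0}$, so that the standard nonpositivity $\langle\beta,\sigma^{i}(\alpha^\vee)\rangle\leq 0$ for distinct simple roots is available.
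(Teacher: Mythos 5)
Your proof is correct. Both you and the paper reduce the statement to the termwise inequality $\langle w\lambda_0,\sigma^i(\alpha^\vee)\rangle\leq 0$ and both exploit the dominance of $-w_{0,I}\lambda$, but you use a different Weyl-group input in the middle. The paper moves the coroot: it writes $\langle w\lambda_0,\sigma^i(\alpha^\vee)\rangle=\langle w_{0,I}\lambda, u\,\sigma^i(\alpha^\vee)\rangle$ with $u=w_{0,I_0}w^{-1}\in W_{L_0}$, and then invokes the structural fact that $W_{L_0}$ permutes $\Phi^+\setminus\Phi^+_{L_0}$, so $u\,\sigma^i(\alpha)$ stays a positive root and pairs $\leq 0$ against the anti-dominant $w_{0,I}\lambda$. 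You instead move the weight: you set $\mu=-w_{0,I}\lambda$, write $w\lambda_0=-w'\mu$ with $w'\in W_{L_0}$, and use the (equally standard) fact that $\mu-w'\mu$ is a nonnegative combination of roots in $I_0$ when $\mu$ is dominant, then pair each piece against $\sigma^i(\alpha^\vee)$. The two facts are close cousins but not the same lemma; the paper's route is a line shorter, while yours is a touch more elementary in that it only uses the weight-lattice statement about dominant vectors and the Cartan-matrix sign $\langle\beta,\gamma^\vee\rangle\leq 0$ for distinct simple roots, without appealing to the invariance of $\Phi^+\setminus\Phi^+_{L_0}$ under $W_{L_0}$. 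Either way the crucial observation is the one you single out: $P_0/\FF_q$ forces $\sigma^i(\alpha)\in\Delta^{P_0}$ to remain a simple root outside $I_0$.
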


\begin{proof}
For $\lambda \in \Ccal_{\GSsf}$, the character $w_{0,I}\lambda$ is anti-dominant. For all $w\in W_{L_0}(\FF_q)$, we have $\langle w \lambda_0, \sigma^i(\alpha^\vee) \rangle = \langle  w_{0,I}\lambda, w_{0,I_0} w^{-1}\sigma^i(\alpha^\vee) \rangle$. Since $w_{0,I_0} w^{-1}\in W_{L_0}$ and $\alpha \in \Delta^{P_0}$, the root $w_{0,I_0} w^{-1}\sigma^i(\alpha)$ is positive. Hence $\langle w \lambda_0, \sigma^i(\alpha^\vee) \rangle \leq 0$ for all $w\in W_{L_0}(\FF_q)$, and the result follows.
\end{proof}

In particular, if Condition~\ref{cond-commute} holds, we deduce $\Ccal_{\GSsf}\subset \Ccal_{\zipsf}$ from Lemma~\ref{cor-CGS-Cond}. We will prove this inclusion in the general case in the next section.

\section{Weil restriction}\label{sec-Weil}

When Condition~\ref{cond-commute} does not hold, we cannot use Proposition~\ref{prop-RuP0} to show $\Ccal_{\GSsf}\subset \Ccal_{\zipsf}$. We show here that a version of Proposition~\ref{prop-RuP0} holds in general (see Theorem~\ref{thm-Weilr} below). To eliminate the need for Condition~\ref{cond-commute}, we first study the case of a Weil restriction. More generally, we will prove a useful result that makes it possible to reduce certain questions pertaining to the cone $\Ccal_{\zipsf}$ to the case of a split group.

\subsection{Zip strata of a Weil restriction}

We recall some results from \cite[Section~4]{Koskivirta-Wedhorn-Hasse}. Note that \loccit uses the convention $B\subset Q$, whereas we assume $B\subset P$. We make the appropriate changes in this section. Let $r\geq 1$, and let $G_1$ be a connected, reductive group over $\FF_{q^r}$. Put $G=\Res_{\FF_{q^r}/\FF_q} G_1$. Over $k$, we can decompose
\begin{equation}
    G_k=G_{1,k}\times G_{2,k} \times \dots \times G_{r,k}, 
\end{equation}
where $G_i=\sigma^{i-1}(G_1)$. The Frobenius homomorphim $\varphi \colon G\to G$ maps the tuple $(x_1, \dots , x_r)\in G_k$ to the tuple $(\varphi(x_r),\varphi(x_1), \dots , \varphi(x_{r-1}))$. We choose a cocharacter $\mu\colon \GG_{\mathrm{m},k}\to G_k$ written as $(\mu_1, \dots , \mu_r)$ with $\mu_i\colon \GG_{\mathrm{m},k}\to G_{i,k}$. Consider the attached zip datum $(G,P,L,Q,M)$. Assume that there is a Borel pair $(B,T)$ defined over $\FF_q$ and $B\subset P$. For all $\square=P,L,Q,M,B,T$, one can decompose $\square=\prod_{i=1}^r \square_i$. Note that $\sigma(B_i) = B_{i+1}$ and $\sigma(T_i) = T_{i+1}$ and $\sigma(L_i)=M_{i+1}$, where indices are taken modulo $r$. Moreover, $\sigma(P_i)$ and $Q_{i+1}$ are opposite in $G_{i+1,k}$. Write $\Delta_i$ for the set of simple roots of $G_i$. The Weyl group $W:=W(G_k,T)$ also decomposes  as $W = W_{1} \times \cdots \times W_{r}$, where $W_i:=W(G_{i,k},T_i)$. Let $w_{0,i}$ be the longest element in $W_i$. The Frobenius induces an automorphism of $W$ again denoted by $\sigma$, and we have $\sigma(W_i) = W_{i+1}$. Similarly, we have ${}^I W={}^{I_1} W_1\times\cdots\times {}^{I_r} W_r$ and $W^J=W_1^{J_1}\times\cdots\times W_r^{J_r}$, 
where $I_{i}, J_i\subset \Delta_{i}$ are the types of the parabolic subgroups $P_{i}$ and $Q_i$, respectively.

We obtain a frame $(B,T,z)$ by setting $z\colonequals \sigma(w_{0,I}) w_0=w_0 w_{0,J}$ (see Lemma~\ref{lem-framemu}). Thus $z=(z_1,\dots ,z_r)$ with $z_i=w_{0,i} w_{0,J_i}$ for all $i=1,\dots ,r$. By the dual parametrization \eqref{dualorbparam} 
and the dimension formula for $E$-orbits in Theorem~\ref{thm-E-orb-param}, 
the $E$-orbits of codimension~$1$ in $G$ are
\begin{equation} \label{codim1-Weil}
C_{i,\alpha}:=E\cdot\left(1,\dots,1,w_{0,i} s_{\alpha} w_{0,i},1,\dots,1\right), \quad 1\leq i \leq r, \ \alpha \in \Delta_i \setminus J_i.
\end{equation}
For each $1\leq j \leq r$, define parabolic subgroups in $G_{j,k}$ by
\[
P'_{j}=\bigcap_{i=0}^{r-1}\sigma^{-i}(P_{i+j}) \quad\mbox{and}\quad Q'_{j}= \bigcap_{i=0}^{r-1}\sigma^{i}(Q_{j-i}), 
\]
where the indices are taken modulo $r$. The unique Levi subgroups of $P'_j$ and $Q'_j$ containing $T_j$ are, respectively, 
\[
L'_{j}=\bigcap_{i=0}^{r-1}\sigma^{-i}(L_{i+j}) \quad\mbox{and}\quad M'_{j}= \bigcap_{i=0}^{r-1}\sigma^{i}(M_{j-i}). 
\]
By \cite[Lemma 4.2.1]{Koskivirta-Wedhorn-Hasse}, the tuple $\Zcal_j \colonequals (G_j,P'_j,L'_j,Q'_j,M'_j,\varphi^r)$ is a zip datum over $\FF_{q^r}$. Clearly, $B_j\subset P'_j$ and $B_j^+ \subset Q'_j$, since $B$ is defined over $\FF_{q}$. It follows that $\sigma^r(P'_j)$ and $Q'_j$ are opposite parabolics of $G_{j,k}$. By Remark~\ref{rmk-opposite-cochar}, $\Zcal_j$ is of cocharacter type. We denote the zip group of $\Zcal_j$ by $E_j\subset P'_j \times Q'_j$ (in \cite{Koskivirta-Wedhorn-Hasse}, this group is denoted by $E'_j$, but we want to avoid confusion with the group $E'$ defined in Section~\ref{subsec-zipflag}).

Write $\iota_j\colon G_{j,k}\to G_k$ for the natural embedding $x\mapsto (1, \ldots , x, \ldots ,1)$. Denote by $\Xcal$ the set of $E$-orbits in $G_k$, and by $\Xcal_j\subset \Xcal$ the set of $E$-orbits that intersect $G_{j,k}$ (viewed as a subset of $G_k$ via $\iota_j$). We have the following result (\textit{cf.} \cite[Theorem 4.3.1]{Koskivirta-Wedhorn-Hasse}).

\begin{theorem}
\label{bijorbits}The map $C\mapsto C\cap G_{j,k}$ defines a
bijection between $\Xcal_{j}$ and the set of\, $E_{j}$-orbits in $G_{j,k}$. Furthermore,  one has $\codim_{G_k}(C) = \codim_{G_{j,k}}(C\cap G_{j,k})$ for all $C\in \Xcal_j$.
\end{theorem}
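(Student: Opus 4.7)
The plan is to reduce by cyclic relabeling to the case $j=1$, which I henceforth assume. Writing $\iota\colon G_1\hookrightarrow G$ for the embedding $g_1\mapsto(g_1,1,\ldots,1)$, I would begin with the elementary observation that for any $(x,y)=((x_1,\ldots,x_r),(y_1,\ldots,y_r))\in E$ one has $(x,y)\cdot\iota(g_1)=(x_1g_1y_1^{-1},x_2y_2^{-1},\ldots,x_ry_r^{-1})$; hence $(x,y)\cdot\iota(g_1)\in\iota(G_1)$ if and only if $x_i=y_i$ for all $i\geq 2$, a condition independent of $g_1$. Let $H\subset E$ denote the closed subgroup cut out by these equations; it is precisely the setwise stabilizer of $\iota(G_1)$, and its action on $\iota(G_1)\simeq G_1$ is controlled by the first-component projection $\pi\colon H\to P_1\times Q_1$, $(x,y)\mapsto(x_1,y_1)$.

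The heart of the proof is the identification $\pi(H)=E_1$ inside $P_1\times Q_1$, viewing $E_1\subset P'_1\times Q'_1$. To show $\pi(H)\subset E_1$, I would iterate the defining constraints of $H$: combining $x_i=y_i\in P_i\cap Q_i$ for $i\geq 2$ with the cyclic Frobenius conditions $\theta^{Q_i}_{M_i}(y_i)=\varphi(\theta^{P_{i-1}}_{L_{i-1}}(x_{i-1}))$, one shows by induction that the successive Frobenius twists of $x_1$ land in the prescribed parabolics, forcing $x_1\in P'_1=\bigcap_{i=0}^{r-1}\sigma^{-i}(P_{i+1})$ and symmetrically $y_1\in Q'_1$, while composing the $r$ individual compatibilities cyclically produces the defining $\varphi^r$-compatibility of $E_1$. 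For surjectivity $\pi(H)\supset E_1$, I would build a preimage of $(a,b)\in E_1$ by setting $x_1:=a$, $y_1:=b$, and for $i\geq 2$ recursively choosing $x_i=y_i\in P_i\cap Q_i$ whose $M_i$-projection equals $\varphi(\theta^{P_{i-1}}_{L_{i-1}}(x_{i-1}))$; the conditions $a\in P'_1$ and $b\in Q'_1$ guarantee that such choices exist, and the $E_1$-relation on $(a,b)$ closes the cycle at $i=r$.

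With the surjection $\pi\colon H\twoheadrightarrow E_1$ established, the bijection follows immediately. Given $C\in\Xcal_1$ and $g,g'\in C\cap G_1$, transitivity of $E$ on $C$ supplies $e\in E$ with $e\cdot\iota(g)=\iota(g')$; the initial observation forces $e\in H$, so $\pi(e)\cdot g=g'$ with $\pi(e)\in E_1$, showing that $C\cap G_1$ lies in a single $E_1$-orbit. Injectivity is clear, and surjectivity onto the set of $E_1$-orbits is witnessed by $C:=E\cdot\iota(C_1)$ for each $E_1$-orbit $C_1$. For the codimension equality, orbit-stabilizer together with $\Stab_E(\iota(g_1))=\pi^{-1}(\Stab_{E_1}(g_1))$ reduces the problem to the dimension identity $\dim E-\dim H=\dim G-\dim G_1$, which follows by counting that imposing $x_i=y_i\in P_i\cap Q_i$ for $i\geq 2$ cuts precisely $\dim G_i$ off each factor via the density of $P_iQ_i$ in $G_i$.

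The main obstacle is the identification $\pi(H)=E_1$ itself: coordinating the $r$ interlocking one-step Frobenius compatibilities with the diagonal conditions $x_i=y_i$, in order to descend them to (resp. lift them from) the single $\varphi^r$-compatibility of $E_1$. The required bookkeeping is formal but intricate, and structurally reflects the descent of Frobenius through the tower implicit in the Weil restriction $G=\Res_{\FF_{q^r}/\FF_q}G_1$.
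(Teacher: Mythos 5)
Your overall strategy is the right one: isolate the setwise stabilizer $H\subset E$ of $\iota(G_1)$, project it to the first factor, and compare the result with $E_1$. (The paper defers the proof to \cite[Theorem~4.3.1]{Koskivirta-Wedhorn-Hasse}, but the explicit homomorphism $\gamma_j$ that the paper does exhibit is precisely the input for the direction $E_1\subset\pi(H)$, so your approach is aligned with the cited argument.) The initial observation that $(x,y)\cdot\iota(g_1)\in\iota(G_1)$ iff $x_i=y_i$ for $i\geq 2$ is correct, and the identification of $H$ with that set is correct.

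However, the heart of the argument — the identification $\pi(H)=E_1$ — is left at the level of a sketch, and both inclusions have genuine content that your outline does not establish. For $\pi(H)\subset E_1$ you must show that the $E$-relations, combined with $u_i=v_i\in P_i\cap Q_i$, force $u_1\in P'_1$ (and symmetrically $v_1\in Q'_1$) and the $\varphi^r$-compatibility. What the $i$th link of the chain directly gives you is $\varphi(\theta^{P_{i-1}}_{L_{i-1}}(u_{i-1}))\in\theta^{Q_i}_{M_i}(P_i\cap Q_i)=M_i\cap P_i$, hence $\theta^{P_1}_{L_1}(u_1)\in L_1\cap\sigma^{-1}(P_2)$ from the first link; but pushing this to the deeper intersections $L_1\cap\sigma^{-i}(P_{i+1})$, $i\geq 2$, requires transporting a condition on $\theta^{P_i}_{L_i}(w_i)$ back through $\theta^{Q_i}_{M_i}(w_i)$, which governs a \emph{different} quotient of $w_i$. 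The two projections only agree after passing to the common Levi $L_i\cap M_i$, and one must then lift the resulting constraint through the radical $M_i\cap R_u(P_i)$; that this radical lies in the relevant conjugate parabolic is true but is exactly the nontrivial root-theoretic fact one has to check. Your phrase ``one shows by induction that the successive Frobenius twists of $x_1$ land in the prescribed parabolics'' papers over this lifting step. Similarly, for $\pi(H)\supset E_1$, ``recursively choosing $w_i$ with prescribed $M_i$-projection'' is not yet a construction: the $M_i$-projection does not determine $\theta^{P_i}_{L_i}(w_i)$, which is what the next link needs, so the cycle closes only for a specific choice; and the assertion ``the $E_1$-relation closes the cycle at $i=r$'' requires a computation (identifying the $r$-fold composite $\theta^{P_r}_{L_r}\circ\varphi\circ\cdots\circ\varphi\circ\theta^{P_1}_{L_1}$ with $\varphi^{r-1}\circ\theta^{P'_1}_{L'_1}$ on $P'_1$) that is not as formal as stated. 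Finally, the codimension count via ``imposing $u_i=v_i$ cuts $\dim G_i$ off each factor'' uses dominance of the map $E\to\prod_{i\geq 2}G_i$, $(u,v)\mapsto(u_iv_i^{-1})_i$, so that the cuts are transverse; this is plausible but not supplied. None of these points looks wrong, but they are exactly where the work of the theorem lives, and a careful proof must address them.
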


Note that $\Xcal_j$ always contains the open $E$-orbit, since this orbit contains $1\in G_k$. Furthermore, by Equation \eqref{codim1-Weil}, any $E$-orbit of codimension 1 lies in at least one of the $\Xcal_j$. There is a natural group homomorphism $\gamma_j\colon E_j \to E$, defined as follows. For $(x,y)\in E_j$, write $\overline{x}:= \theta_{L'_j}^{P'_j}(x)$ and set
\begin{equation}\label{injEprime}
\begin{aligned}
u_j(x,y) &:= (\varphi^{r-j+1}(\overline{x}),\dots ,\varphi^{r-1}(\overline{x}), x,\varphi(\overline{x}),\dots,\varphi^{r-j}(\overline{x}))\in P, \\
v_j(x,y) &:=(\varphi^{r-j+1}(\overline{x}),\dots ,\varphi^{r-1}(\overline{x}), y,\varphi(\overline{x}),\dots,\varphi^{r-j}(\overline{x}))\in Q,\\
\gamma_j(x,y)&:=(u_j(x,y),v_j(x,y)) \in E.
\end{aligned}
\end{equation}
The pair $(\iota_j, \gamma_j)$ induces a morphism of stacks
\begin{equation}
\theta_j \colon [E_j \backslash G_{j,k}] \lra [E \backslash G_k].
\end{equation}
By the previous discussion, the image of $\theta_j$ contains a nonempty open subset, 
and each codimension 1 stratum in $\GZip^\mu$ is contained in the image of at least one $\theta_j$. Note that $u_j(x,y)$ only depends on $x$. By abuse of notation, we denote again by $\gamma_j$ the map
\begin{equation}
   \gamma_j \colon P'_j \lra P, \quad x\longmapsto \left(\varphi^{r-j+1}(\overline{x}),\dots ,\varphi^{r-1}(\overline{x}), x,\varphi(\overline{x}),\dots,\varphi^{r-j}(\overline{x})\right).
\end{equation}
We have a commutative diagram
\begin{equation}
\xymatrix@1@M=7pt{
E_j \ar[d]_{\pr_1} \ar[r]^{\gamma_j} & E \ar[d]^{\pr_1} \\
P'_j \ar[r]_{\gamma_j} & P\rlap{.}
}
\end{equation}
For $x\in L'_j$, we have $\gamma_j(x)\in L$. Hence, we also have a map $\gamma_j\colon L'_j\to L$.

\subsection{Space of global sections}
For each $1\leq i \leq r$, let $(V_i,\rho_i)$ be an $L_i$-representation, and let $(V,\rho)$ be the $L$-representation $\textstyle \sigmaop{}_{i=1}^r \rho_i$. For example, if $\lambda=(\lambda_1, \dots , \lambda_r)$ is in $X^*(T)=X^*(T_1)\times \dots \times X^*(T_r)$, then we have $V_I(\lambda)=\textstyle \sigmaop{}_{i=1}^r V_{I_i}(\lambda_i)$. View $\rho_i$ as a map $P_i\to \GL(V_i)$ trivial on $R_{\textrm{u}}(P_i)$. Using the maps $\gamma_j\colon P'_j \to P$, we have
\begin{equation}
    \theta_j^* \left( \Vcal(\rho) \right) = \bigotimes_{i=1}^r \Vcal\left(\rho_{j+i}^{[i]}\right), 
\end{equation}
where $\rho_{j+i}^{[i]}$ denotes the $P'_j$-representation $P'_j \xrightarrow{\varphi^i} P_{j+i} \xrightarrow{\rho_{j+i}} \GL(V_{j+i})$ (indices modulo $r$). By the definition of~$P'_j$, this composition is well defined. Note that $\rho_{j+i}^{[i]}$ may not be trivial on the unipotent radical of $P'_j$. Let $L_\varphi$ be the stabilizer of $1\in G$ in $E$, as defined in Section~\ref{subsec-global-sections}, and fix $f\in V^{L_\varphi}$. By Lemma~\ref{lem-Umu-sections}, we may view $f$ as a section of $\Vcal(\rho)$ over the open substack $\Ucal_\mu \subset \GZip^\mu$.  Similarly, since $\theta_j$ maps $\Ucal_{\mu_j}$ into $\Ucal_\mu$ (see Theorem~\ref{bijorbits}), we have $\theta_j^*(f) \in H^0(\Ucal_{\mu_j}, \theta_j^*(\Vcal(\rho)))$.

\begin{lemma}\label{thetaj-extends}
The section $f$ extends to $\GZip^\mu$ if and only if\, $\theta_j^*(f)$ extends to $\GjZip^{\Zcal_j}$ for all $1\leq j \leq r$.
\end{lemma}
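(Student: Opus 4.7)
The plan is as follows. The forward implication is immediate: given a global extension $\widetilde{f} \in H^0(\GZip^\mu,\Vcal(\rho))$ of $f$, the pullback $\theta_j^{*}(\widetilde{f})$ is a global section of $\theta_j^{*}(\Vcal(\rho))$ on $\GjZip^{\Zcal_j}$ whose restriction to $\Ucal_{\mu_j}$ is $\theta_j^{*}(f)$.

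For the converse, I would view $f$ as an $E$-equivariant function $\widetilde{f}\colon U_\mu \to V$ via Lemma \ref{lem-Umu-sections}, so that $\theta_j^{*}(f)$ corresponds to $\widetilde{f}\circ \iota_j \colon \iota_j^{-1}(U_\mu) \to V$, which is $E_j$-equivariant through $\gamma_j$. Since $G$ is smooth (hence normal) and $\widetilde{f}$ is already regular on the open subset $U_\mu$, the algebraic Hartogs principle reduces the claim to showing that $\widetilde{f}$ is regular at every codimension-one generic point of $G \setminus U_\mu$. By equation \eqref{codim1-Weil}, these codimension-one boundary components are exactly the Zariski closures $\overline{C_{i,\alpha}}$ for $1\leq i\leq r$ and $\alpha \in \Delta_i\setminus J_i$; denote by $\xi_{i,\alpha}$ the generic point of $\overline{C_{i,\alpha}}$.

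Fix $(i,\alpha)$. Theorem \ref{bijorbits} gives $C_{i,\alpha}\cap \iota_i(G_i)=\iota_i(D_{i,\alpha})$ for a unique codimension-one $E_i$-orbit $D_{i,\alpha}$ in $G_i$; let $\eta_{i,\alpha}$ denote its generic point in $G_i$. Since $1 \in \iota_i(G_i)\cap U_\mu$, the subvariety $\iota_i(G_i)$ is not contained in $\overline{C_{i,\alpha}}$, so $\iota_i^{-1}(\overline{C_{i,\alpha}})$ is a well-defined effective Cartier divisor on $G_i$ having $\overline{D_{i,\alpha}}$ as an irreducible component. I would then argue that if $\widetilde{f}$ had a pole of positive order along $\overline{C_{i,\alpha}}$, then by $E$-equivariance this pole is generic along the orbit $C_{i,\alpha}$; restricting to a generic point of $\iota_i(D_{i,\alpha})$ would therefore force $\widetilde{f}\circ \iota_i$ to acquire a pole along $\overline{D_{i,\alpha}}$, contradicting the hypothesis that $\theta_i^{*}(f)$ extends to all of $G_i$. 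Hence $\widetilde{f}$ is regular at $\xi_{i,\alpha}$ for every $(i,\alpha)$, and by Hartogs it extends to a regular function $G \to V$.

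The hard part is justifying the last detection-of-poles step rigorously, namely that the sign of the valuation of $\widetilde{f}$ at $\xi_{i,\alpha}$ coincides with the sign of the valuation of $\widetilde{f}\circ \iota_i$ at $\eta_{i,\alpha}$. This amounts to checking that $\iota_i$ meets $\overline{C_{i,\alpha}}$ transversely at a generic point of $\iota_i(D_{i,\alpha})$: the codimension equality $\codim_{G}(C_{i,\alpha})=\codim_{G_i}(D_{i,\alpha})=1$ supplied by Theorem \ref{bijorbits} furnishes the right dimension count, and combined with the $E$-equivariance (which allows one to slide along the orbit to a point where the transverse intersection is well-behaved) it should ensure that $\iota_i^{*}(\overline{C_{i,\alpha}})$ agrees with a positive multiple of $\overline{D_{i,\alpha}}$ as a Cartier divisor near $\eta_{i,\alpha}$, which is all that is needed for the sign comparison of valuations.
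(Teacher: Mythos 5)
Your forward implication, the reduction via normality of $G$ to checking regularity at the generic points $\xi_{i,\alpha}$ of the codimension-one orbit closures, and the identification of those orbits via \eqref{codim1-Weil} and Theorem \ref{bijorbits} are all sound, and this is indeed the skeleton of the paper's argument. The gap you flag in your last paragraph, however, is not something a transversality statement can close, because the difficulty is more fundamental than intersection multiplicity.

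The problem is that the valuations you want to compare do not live on related local rings. For $r > 1$ we have $\dim \overline{D}_{i,\alpha} = \dim G_i - 1 < \dim G - 1 = \dim \overline{C}_{i,\alpha}$, so $\iota_i(\overline{D}_{i,\alpha})$ is a \emph{proper} closed subset of $\overline{C}_{i,\alpha}$ and $\iota_i(\eta_{i,\alpha}) \neq \xi_{i,\alpha}$. Thus $\iota_i$ does not give a map $\Ocal_{G,\xi_{i,\alpha}} \to \Ocal_{G_i,\eta_{i,\alpha}}$ at all; it only relates $\Ocal_{G,\iota_i(\eta_{i,\alpha})}$ to $\Ocal_{G_i,\eta_{i,\alpha}}$, and $\iota_i(\eta_{i,\alpha})$ is a specialization of $\xi_{i,\alpha}$, not equal to it. Even setting that aside, poles can genuinely vanish upon restriction to a subvariety that meets the divisor non-generically: for instance $g = x/y$ on $\AA^2$ has a pole along $\{y=0\}$, yet its pullback along $\iota\colon t \mapsto (t^2,t)$ is $\iota^*g = t$, which is regular. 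So ``$\iota_i^*(f)$ regular at $\eta_{i,\alpha}$'' does not by itself constrain $v_{\xi_{i,\alpha}}(\widetilde{f})$, and no dimension count or equivariant sliding of the intersection point fixes this.

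What actually rescues the argument — and this is what the paper's Lemma \ref{lem-extends} encodes — is to bring the group action into the morphism itself, not just into the equivariance of $f$. One considers $\phi\colon E \times X \to Y$, $(h,x) \mapsto h\cdot\iota_i(x)$, where $Y = U_\mu \cup C_{i,\alpha}$ and $X = \iota_i^{-1}(Y)$. By hypothesis $\phi(E\times X) \cap C_{i,\alpha} = E\cdot\iota_i(D_{i,\alpha})$ is all of $C_{i,\alpha}$, so $\phi$ does reach the generic point $\xi_{i,\alpha}$, and the localization argument for detecting poles along $\overline{C}_{i,\alpha}$ applies to $\phi^*(\widetilde{f})$ (dominance of $\phi$ plus $\phi$ hitting $\xi_{i,\alpha}$ is enough; transversality is not needed). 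On the other hand, the $E$-equivariance of $\widetilde{f}$ gives the identity $\phi^*(\widetilde{f})(h,x) = \rho(h)\bigl(\iota_i^*(\widetilde{f})(x)\bigr)$, which shows that $\phi^*(\widetilde{f})$ extends to $E\times X$ if and only if $\iota_i^*(\widetilde{f}) = \theta_i^*(f)$ extends to $X$. Combining these two facts, and finally invoking normality of $G$ to pass from regularity at each $\xi_{i,\alpha}$ to regularity on all of $G$, gives the converse. Your write-up has the right normality endgame but is missing this middle step; the phrase ``by $E$-equivariance this pole is generic along the orbit'' gestures at the right idea, but the precise mechanism is the auxiliary morphism $\phi$ from a product with the group.
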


\begin{proof}
The ``only if''  implication is clear. Conversely, assume that $\theta_j^*(f) \in H^0(\GZip^{\Zcal_j}, \theta_j^*(\Vcal(\rho)))$ for all $1\leq j \leq r$. 
Viewing $f$ as a section over $\Ucal_\mu$, consider the unique regular map $\widetilde{f}\colon U_\mu \to V$ satisfying $\widetilde{f}(1)=f$ and $\widetilde{f}(axb^{-1})=\rho(a)\widetilde{f}(x)$ for all $x\in U_{\mu}$ and all $(a,b)\in E$. It suffices to show that the map $\widetilde{f}$ extends to a regular map $\widetilde{f}\colon G\to V$ (by density, this regular map will automatically satisfy the $E$-equivariance condition).

Consider a codimension~$1$ $E$-orbit $C_{i,\alpha}$ for some $1\leq i \leq r$ and $\alpha\in \Delta_i\setminus J_i$ (where $C_{i,\alpha}$ was defined in Equation \eqref{codim1-Weil}). Set $Y\colonequals U_\mu \cup C_{i,\alpha}$. It is the complement in $G$ of the union of the Zariski closures of all other codimension~$1$ $E$-orbits. In particular, $Y$ is open in $G$. Define $X\colonequals \iota_i^{-1}(Y)$, and consider the map $\iota_i\colon X\to Y$. This map satisfies conditions~\eqref{le-1} and~\eqref{le-2} of Lemma~\ref{lem-extends} below (for the group $H=E$). By assumption, the function $\iota_i^*(\widetilde{f})=\widetilde{f} \circ \iota_i \colon U_{\mu_i} \to V$ extends to a function $G_i\to V$ (in particular, to a map $X\to V$). Therefore, we can apply Lemma~\ref{lem-extends} to deduce that $\widetilde{f}$ extends to a regular map $Y\to V$. To show that $\widetilde{f}$ extends to $G$, let $\widetilde{f}_0\colon U_\mu\to \AA^1$ be a coordinate function of $f$ in some basis of $V$. By the above discussion, $\widetilde{f}_0$ cannot have a pole along any codimension~$1$ $E$-orbit of $G$, hence extends to $G$ by normality. Hence $\widetilde{f}$ itself extends to $G$, and the result follows.
\end{proof}

\begin{lemma}\label{lem-extends}
Let $Y,X$ be irreducible normal $k$-varieties, and assume that $Y$ is endowed with an action of an algebraic group $H$. Suppose that $Y$ has an open subset $U_Y\subset Y$ stable by $H$. Set $Z_Y \colonequals Y\setminus U_Y$. Let $(V,\rho)$ be an $H$-representation, and let $f\colon U_Y\to V$ be an $H$-equivariant regular map on $U_Y$. Let $\iota\colon X\to Y$ be a regular map satisfying the following:
\begin{assertionlist}
\item\label{le-1} $\iota(X)\cap U_Y \neq \emptyset$,
\item\label{le-2} $H\cdot(\iota(X)\cap Z_Y)$ is Zariski dense in $Z_Y$.
\end{assertionlist}
Define $U_X\colonequals \iota^{-1}(U_Y)$. Then the morphism $f$ extends to an $H$-equivariant regular map $Y\to V$ if and only if $\iota^*(f)\colon U_X\to V$ extends to a regular map $X\to V$.
\end{lemma}

\begin{proof}
The ``only if'' direction is obvious. Conversely, assume that $\iota^*(f)\colon U_X\to V$ extends to a regular map $X\to V$. Consider the map
\begin{equation*}
    \phi \colon H\times X\lra Y, \quad (h,x)\longmapsto h\cdot \iota(x).
\end{equation*}
We have $\phi^{-1}(U_Y)=H\times U_X$. Then $f$ extends to a regular map $Y\to V$ if and only if $\phi^*(f)\colon H\times U_X\to V$ extends to  a regular map $H\times X\to V$. Indeed, choose a basis of $V$. Let $f_i \colon U_Y \to \mathbb{A}_k^1$ for $1 \leq i \leq \dim V$ be coordinate maps of $f$ with respect to that basis. Since the image of $\phi$ is dense in $Z_Y$ by assumption, $f_i$ cannot have a pole along $Z_Y$, hence extends to $Y$ by normality. Thus, it suffices to show that if $\iota^*(f)$ extends, then so does $\phi^*(f)$. But since $f$ is $H$-equivariant,  for all $h\in H$, $x\in U_X$, we have 
\begin{equation*}
    \phi^*(f)(h,x)=f(h\cdot \iota(x))=h\cdot \left( \iota^*(f)(x)\right).
\end{equation*}
Hence if $\iota^*(f)$ extends to $X$, we can define a function $H\times X\to V$ using the above formula, and it must coincide with $\phi^*(f)$ on the open subset $H\times U_X$. The result follows.
\end{proof}

Now, assume that for all $1\leq j \leq r$, $P_j$ is defined over $\FF_{q^r}$ (for example, this is the case if $T_1$ is split over $\FF_{q^r}$). It is clear that $P'_j$ is then also defined over $\FF_{q^r}$. We apply Corollary~\ref{cor-Fq-Levi} to the $\FF_{q^r}$-zip datum $\Zcal_j$. We deduce that for any $L'_j$-representation $(W,\rho_W)$, we have
\begin{equation}\label{form-Pprime}
    H^0(\GjZip^{\Zcal_j}, \Vcal(\rho_W)) = W^{L'_j(\FF_{q^r})}\cap W^{\Delta^{P'_j}}_{\geq 0}.
\end{equation}
However, since $\gamma_j^*(\rho)=\rho \circ \gamma_j \in \Rep(P'_j)$ may be nontrivial on $R_{\textrm{u}}(P'_j)$, we cannot apply this formula directly to $\gamma_j^*(\rho)$. Denote by $V^{\#}\subset V$ the subspace of $f\in V$ that are invariant under $\gamma_j(R_{\textrm{u}}(P'_j))$ for all $1\leq j \leq r$. We deduce the following from Equation~\eqref{form-Pprime} and Lemma~\ref{thetaj-extends}. 

\begin{corollary}
Let $f\in V^{L_\varphi}\cap V^{\#}$. Then $f$ extends to $\GZip^\mu$ if and only if $f\in \left( V|_{L'_j} \right)^{\Delta^{P'_j}}_{\geq 0}$ for all $1\leq j \leq r$, where $V|_{L'_j}$ denotes the $L'_j$-representation $\gamma_j^*(\rho) \colon L'_j \xrightarrow{\gamma_j} L \xrightarrow{\rho} \GL(V)$.
\end{corollary}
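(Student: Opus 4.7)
The plan is to translate the question of extending $f$ from $\Ucal_\mu$ to $\GZip^\mu$ into the $r$ questions of extending $\theta_j^*(f)$ from $\Ucal_{\mu_j}$ to $\GjZip^{\Zcal_j}$, and then to invoke the explicit description \eqref{form-Pprime} which is available because each $P'_j$ is defined over $\FF_{q^r}$. The first reduction is immediate from Lemma \ref{thetaj-extends}: $f$ extends to $\GZip^\mu$ if and only if $\theta_j^*(f)$ extends to $\GjZip^{\Zcal_j}$ for every $1\leq j \leq r$. So fix $j$; it suffices to characterize extension of $\theta_j^*(f)$.

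First I would verify that the pullback of $\Vcal(\rho)$ under $\theta_j$ is the vector bundle on $\GjZip^{\Zcal_j}$ attached to the $E_j$-representation obtained from $\gamma_j^*(\rho)\colon P'_j\xrightarrow{\gamma_j} P\xrightarrow{\rho}\GL(V)$ via $\pr_1\colon E_j\to P'_j$; this is clear from the definition of $\theta_j$. The section $\theta_j^*(f)$ is then $f$ itself, viewed as an element of $V$ through the identification $H^0(\Ucal_{\mu_j},\theta_j^*\Vcal(\rho))=V^{(L'_j)_{\varphi^r}}$ of Lemma \ref{lem-Umu-sections} applied to $\Zcal_j$. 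To see that $f$ indeed lies in $V^{(L'_j)_{\varphi^r}}$, I would check directly from \eqref{injEprime} that $\gamma_j$ maps $(L'_j)_{\varphi^r}$ into $L_\varphi$, so that $V^{L_\varphi}\subset V^{(L'_j)_{\varphi^r}}$; moreover, since $P'_j$ is defined over $\FF_{q^r}$, Lemma \ref{lemLphi}\ref{lemLphi-item3} gives $(L'_j)_{\varphi^r}=L'_j(\FF_{q^r})$.

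The main obstacle is that $\gamma_j^*(\rho)$ need not be trivial on $R_{\mathrm{u}}(P'_j)$, so \eqref{form-Pprime} is not directly applicable. The key observation that removes this obstacle is that the subspace
\begin{equation}
 W_j \colonequals V^{\gamma_j(R_{\mathrm{u}}(P'_j))} \subset V
\end{equation}
is $L'_j$-stable (because $L'_j$ normalizes $R_{\mathrm{u}}(P'_j)$ and $\gamma_j$ is a group homomorphism), and the restriction of $\gamma_j^*(\rho)$ to $W_j$ is, by construction, trivial on $R_{\mathrm{u}}(P'_j)$. Hence $W_j$ carries a genuine $L'_j$-representation structure, and the vector bundle $\theta_j^*\Vcal(\rho)$ contains the sub-vector-bundle attached to $W_j$. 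By hypothesis $f\in V^{\#}\subset W_j$, so the section $\theta_j^*(f)$ lives in this sub-bundle.

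Finally I would apply \eqref{form-Pprime} to the $L'_j$-representation $W_j$ to obtain that $\theta_j^*(f)$ extends to $\GjZip^{\Zcal_j}$ if and only if $f\in W_j^{L'_j(\FF_{q^r})}\cap (W_j)^{\Delta^{P'_j}}_{\geq 0}$. Since $W_j^{L'_j(\FF_{q^r})}= W_j\cap V^{L'_j(\FF_{q^r})}$ and the Brylinski--Kostant-type condition $(W_j)^{\Delta^{P'_j}}_{\geq 0}= W_j\cap \big(V|_{L'_j}\big)^{\Delta^{P'_j}}_{\geq 0}$ is simply the same weight condition on $T_j$-weight spaces, and both containments $f\in W_j$ and $f\in V^{L'_j(\FF_{q^r})}$ are automatic from the assumption $f\in V^{L_\varphi}\cap V^{\#}$ together with the previous paragraph, the remaining condition is precisely $f\in \big(V|_{L'_j}\big)^{\Delta^{P'_j}}_{\geq 0}$. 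Combining over all $j$ via Lemma \ref{thetaj-extends} yields the corollary.
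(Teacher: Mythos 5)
Your proof is correct and follows essentially the same route as the paper: reduce to the factors via Lemma \ref{thetaj-extends}, observe that the obstruction to applying \eqref{form-Pprime} (namely that $\gamma_j^*(\rho)$ need not kill $R_{\mathrm{u}}(P'_j)$) disappears when one restricts to the $P'_j$-stable subspace $W_j=V^{\gamma_j(R_{\mathrm{u}}(P'_j))}$ containing $f$, and conclude. You have simply spelled out in full the deduction that the paper compresses into "We deduce from \eqref{form-Pprime} and Lemma \ref{thetaj-extends}."
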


Write $V=\bigoplus_{\chi\in X^*(T)} V_\chi$ for the $T$-weight space decomposition of $V$, and write $\chi=(\chi_1, \dots , \chi_r)$, where $\chi_i \in X^*(T_i)$. Similarly, let $f=\sum_{\chi} f_\chi$ be the decomposition of $f$. We determine the $T_j$-weight decomposition of $V|_{L'_j}$. For $\chi \in X^*(T)$, define 
\[S_j(\chi)\colonequals \sum_{i=0}^{r-1} q^i \sigma^{-i}(\chi_{j+i}) \in X^*(T_j)\]
(indices taken modulo $r$). Then, the $T_j$-weight decomposition of $V|_{L'_j}$ is given by
\begin{equation}
    V|_{L'_j} = \bigoplus_{\eta \in X^*(T_j)} V_\eta, \quad \textrm{where }  V_\eta = \bigoplus_{\substack{ \chi \in X^*(T) \\ S_j(\chi)=\eta }} V_\chi.
\end{equation}
Define $V^{\cap}_{\geq 0}\subset V$ as the intersection of all $\left( V|_{L'_j} \right)^{\Delta^{P'_j}}_{\geq 0}$ for $1\leq j \leq r$ inside $V$. Put
\begin{equation}
 \wp^{(r)}_{j,*} \colon X_*(T_j)_\RR \lra X_*(T_j)_\RR, \quad \delta \longmapsto \delta-q^r\sigma^{r}(\delta)
\end{equation}
as in Equation~\eqref{equ-Plowstar} (but changing $\varphi$ to $\varphi^r$). For $\alpha \in \Delta_j$, define $\delta^{(r)}_{j,\alpha}\colonequals(\wp^{(r)}_{j,*})^{-1}(\alpha^\vee)\in X_*(T_j)_{\RR}$. By definition, $( V|_{L'_j})^{\Delta^{P'_j}}_{\geq 0}$ is the direct sum of the $V_{\eta}$ for $\eta\in X^*(T_j)$ satisfying $\langle \eta, \delta^{(r)}_{j,\alpha} \rangle\geq 0$ for all $\alpha \in \Delta^{P'_j}$. Hence $V^{\cap}_{\geq 0}\subset V$ is the direct sum of the weight spaces $V_\chi$ satisfying $\langle S_j(\chi), \delta^{(r)}_{j,\alpha} \rangle \geq 0$ for all $\alpha \in \Delta^{P'_j}$ and all $1\leq j \leq r$. We have shown that $f$ extends to $\GZip^\mu$ if and only if $f\in V^{\cap}_{\geq 0}$. In other words, we have  shown the following. 

\begin{proposition} \label{glob-sec-Weil}
Let $\Gamma(\rho)$ be the set of all $\chi \in X^*(T)$ such that $\langle S_j(\chi), \delta^{(r)}_{j,\alpha} \rangle \geq 0 $ for all $1\leq j\leq r$ and all $\alpha\in \Delta^{P'_j}$. For $f\in V^{L_\varphi}\cap V^{\#}$, $f$ extends to $\GZip^\mu$ if and only if $f\in V^{\cap}_{\geq 0}=\bigoplus_{\chi\in \Gamma(\rho)} V_\chi$.
\end{proposition}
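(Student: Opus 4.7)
The plan is to simply organize the arguments from the preceding paragraphs into a clean proof, with the main step being the computation of the $T_j$-weight of the restricted representation $V|_{L'_j}$ under the map $\gamma_j$.

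First I would invoke Lemma~\ref{thetaj-extends} to reduce the question to the following: $f$ extends to $\GZip^\mu$ if and only if, for each $1\leq j\leq r$, the pullback $\theta_j^*(f)$ extends from $\Ucal_{\mu_j}$ to all of $\GjZip^{\Zcal_j}$. Since by assumption each $P_j$, and hence each $P'_j$, is defined over $\FF_{q^r}$, formula \eqref{form-Pprime} (coming from Corollary~\ref{cor-Fq-Levi} applied to the $\FF_{q^r}$-zip datum $\Zcal_j$) characterizes the $L'_j$-representations $(W,\rho_W)$ for which a given element of $W$ extends: the condition is $W^{L'_j(\FF_{q^r})}\cap W^{\Delta^{P'_j}}_{\geq 0}$. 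The hypothesis $f\in V^{\#}$ ensures that $\gamma_j^*(\rho)\colon P'_j\to \GL(V)$ is trivial on $R_{\mathrm{u}}(P'_j)$, so it factors through $L'_j$ and coincides with the representation $V|_{L'_j}$ introduced above; this is precisely the role of $V^{\#}$.

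Next I would verify that the invariance condition $f\in V^{L_\varphi}$ automatically implies $f\in (V|_{L'_j})^{L'_j(\FF_{q^r})}$ for every $j$. This is a direct check using \eqref{injEprime}: for $x\in L'_j(\FF_{q^r})$, one has $\varphi^r(x)=x$, so $\gamma_j(x,\varphi(x))$ lies in the image of the diagonal-like embedding defining $L_\varphi$ (the element $\gamma_j(x)\in L$ is $L_\varphi$-conjugate to $1$ in the required sense). Once this is in place, the only remaining condition to analyse is $f\in (V|_{L'_j})^{\Delta^{P'_j}}_{\geq 0}$.

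The central computation is the $T_j$-weight decomposition of $V|_{L'_j}$. For $t\in T_j$, the explicit formula for $\gamma_j$ restricted to $T_j$ gives
\begin{equation}
    \gamma_j(t)=(\varphi^{r-j+1}(t),\dots,\varphi^{r-1}(t),t,\varphi(t),\dots,\varphi^{r-j}(t))\in T,
\end{equation}
so for a weight vector $v\in V_\chi$ with $\chi=(\chi_1,\dots,\chi_r)$, one computes
\begin{equation}
    \chi(\gamma_j(t))=\prod_{i=0}^{r-1} (\chi_{j+i}\circ\varphi^{i})(t)=\Big(\sum_{i=0}^{r-1}q^i\sigma^{-i}(\chi_{j+i})\Big)(t)=S_j(\chi)(t),
\end{equation}
using the identity $\chi'\circ\varphi=q\,\sigma^{-1}(\chi')$ on characters (with indices taken cyclically). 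Hence $V_\chi$ sits inside the $T_j$-weight space of $V|_{L'_j}$ of weight $S_j(\chi)$, as claimed before the proposition.

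Combining these ingredients, $\theta_j^*(f)$ extends if and only if every $T$-weight component $f_\chi$ of $f$ lies in a $T_j$-weight space of $V|_{L'_j}$ satisfying $\langle S_j(\chi),\delta^{(r)}_{j,\alpha}\rangle\geq 0$ for all $\alpha\in\Delta^{P'_j}$. Intersecting these conditions over all $1\leq j\leq r$ yields exactly $f\in\bigoplus_{\chi\in\Gamma(\rho)}V_\chi=V^{\cap}_{\geq 0}$, finishing the proof. The main obstacle is purely notational: tracking the indices in the formula for $\gamma_j$ and in the character identity $\chi'\circ\varphi=q\sigma^{-1}(\chi')$ to correctly produce the sum defining $S_j(\chi)$; everything else is a direct assembly of Lemma~\ref{thetaj-extends}, formula \eqref{form-Pprime}, and the definition of $V^{\#}$.
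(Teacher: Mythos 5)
Your proposal follows the paper's route precisely: Proposition~\ref{glob-sec-Weil} is essentially a summary of the three paragraphs of \S6.2 that precede it, and your plan (reduce via Lemma~\ref{thetaj-extends}, apply formula~\eqref{form-Pprime} to each $\Zcal_j$, compute the $T_j$-weight of $V|_{L'_j}$ under $\gamma_j$, and intersect over $j$) is exactly how the paper assembles it. The weight computation $\chi(\gamma_j(t)) = S_j(\chi)(t)$ via $\chi'\circ\varphi = q\,\sigma^{-1}(\chi')$ is correct and matches the paper's (unspelled-out) claim.

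One mis-statement is worth correcting, since the paper explicitly warns against it. You write that ``$f\in V^{\#}$ ensures that $\gamma_j^*(\rho)\colon P'_j\to\GL(V)$ is trivial on $R_{\mathrm{u}}(P'_j)$, so it factors through $L'_j$.'' This is false as stated: the $P'_j$-representation $\gamma_j^*(\rho)$ on the \emph{whole} space $V$ is generally non-trivial on $R_{\mathrm{u}}(P'_j)$ — the paper says so in the sentence introducing $V^{\#}$ — and no hypothesis on $f$ can change that. What the hypothesis $f\in V^{\#}$ buys is that $f$ lies in the $L'_j$-subrepresentation $V^{\gamma_j(R_{\mathrm{u}}(P'_j))}\subset V$, which \emph{is} trivial on $R_{\mathrm{u}}(P'_j)$ and to which formula~\eqref{form-Pprime} therefore applies; one then observes that extending $f$ as a section of $\Vcal(\gamma_j^*\rho)$ is equivalent to extending it as a section of the corresponding subbundle. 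Also, in your verification that $V^{L_\varphi}$-invariance implies $L'_j(\FF_{q^r})$-invariance, the element you name, $\gamma_j(x,\varphi(x))$, is not the right one: for $x\in L'_j(\FF_{q^r})$ the diagonal pair in $E_j$ is $(x,x)$, and from~\eqref{injEprime} one checks directly that $u_j(x,x)=v_j(x,x)$ (using $\overline{x}=x$ and $\varphi^r(x)=x$), so $\gamma_j(x,x)\in L_\varphi$. Both issues are local slips in a correct argument rather than gaps in the logic.
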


Now, assume that $T_1$ is split over $\FF_{q^r}$. 
Then for all $1\leq j \leq r$, $T_j$ is split over $\FF_{q^r}$; hence $\delta^{(r)}_{j,\alpha} = -\frac{1}{q^r-1} \alpha^\vee$ for all $\alpha \in \Delta_j$. Therefore, in this case, $\Gamma(\rho)$ is the set of $\chi\in X^*(T)$ satisfying $\langle S_j(\chi), \alpha^\vee \rangle \leq 0$ for all $\alpha \in \Delta^{P'_j}$ and all $1\leq j \leq r$.

\subsection{Consequence for $\boldsymbol{H^0(\GZip^\mu, \Vcal(\rho))}$} \label{subseq-Weyl-conseq}

We derive consequences from the above considerations. Let $G$ be a connected, reductive group over $\FF_q$, $\mu\colon \GG_{\mathrm{m},k}\to G_k$ a cocharacter and $\Zcal=(G,P,L,Q,M)$ the associated zip datum over $\FF_q$. Choose a frame $(B,T,z)$ as in Section~\ref{sec-frames}. For $r\geq 1$, consider the diagonal embedding
\begin{equation}
    \Delta \colon G \lra \widetilde{G} \colonequals \Res_{\FF_{q^r}/\FF_q}\left(G_{\FF_{q^r}}\right).
\end{equation}
The cocharacter $\widetilde{\mu} \colonequals \Delta\circ \mu$ induces a zip datum $\widetilde{\Zcal}=(\widetilde{G},\widetilde{P},\widetilde{L},\widetilde{Q},\widetilde{M},\widetilde{\varphi})$, where for each $\square=G,P,L,Q,M$ we have $\widetilde{\square}_k=\square_k \times \dots \times \square_k$. Write $\widetilde{E}$ for the zip group of $\widetilde{\Zcal}$. We obtain a morphism of stacks
\begin{equation}
\Delta \colon \GZip^\mu \lra \GtilZip^{\widetilde{\mu}}. 
\end{equation}
For all $1\leq i \leq r$, let $(V_i, \rho_i)$ be an $L$-representation, and write $\widetilde{\rho} \colonequals \textstyle \sigmaop{}_{i=1}^r \rho_i$, viewed as an $\widetilde{L}$-representation. We have
\begin{equation}
 \Delta^*(\Vcal(\widetilde{\rho}))= \bigotimes_{i=1}^r \Vcal(\rho_i).
\end{equation}
Since $\Delta\colon G \to \widetilde{G}$ is a group homomorphism, it satisfies $\Delta(1)=1$; hence the induced map $\Delta$ from $\GZip^\mu$ to $\GtilZip^{\widetilde{\mu}}$ is dominant ($1$ lies in the open zip stratum). Therefore, pullback via $\Delta$ induces an injection on the spaces of global sections:
\begin{equation}
    \Delta^* \colon H^0\left(\GtilZip^{\widetilde{\mu}},\Vcal(\widetilde{\rho})\right) \lra H^0\left(\GZip^\mu, \bigotimes_{i=1}^r \Vcal(\rho_i)\right).
\end{equation}
In particular, let $(V,\rho)$ be an $L$-representation, and let $\rho_0\colon L\to \{1\}$ be the trivial character of $L$. Put $\rho_1 = \rho$ and $\rho_i=\rho_0$ for all $2\leq i \leq r$. We obtain an injection
\begin{equation}\label{Deltastar}
\Delta^* \colon  H^0\left(\GtilZip^{\widetilde{\mu}}, \Vcal(\pr_1^*\rho)\right) \lra H^0(\GZip^\mu, \Vcal(\rho)), 
\end{equation}
where $\pr_1\colon \widetilde{L}\to L$ is the first projection and $\pr_1^*\rho$ is the $\widetilde{L}$-representation $\rho \circ \pr_1$. Fix an $r\geq 1$ such that~$P$ is defined over $\FF_{q^r}$. We apply Proposition~\ref{glob-sec-Weil} to $\pr_1^*\rho$. In this case, for each $1\leq j \leq r$, the parabolic subgroup $P'_j$ is equal to $P_0=\bigcap_{i\in \ZZ} \sigma^i(P)$, the largest parabolic subgroup defined over $\FF_q$ contained in $P$. Let $L_0\subset P_0$ be the Levi subgroup containing $T$, as in Equation~\eqref{eqL0}. The space $V^{\#}$ is clearly $V^{R_{\textrm{u}}(P_0)}$. Any weight of the $\widetilde{T}$-representation $\pr_1^*\rho$ is of the form $\widetilde{\chi} = (\chi,0, \dots , 0)$, where $\chi$ is a $T$-weight of $V$. Hence, for each $1\leq j \leq r$, we have $S_j(\widetilde{\chi})=q^{r-j+1}\sigma^{-(r-j+1)}\chi$. Thus, $V_{\geq 0}^{\cap}$ is the direct sum of $T$-weight spaces $V_\chi$ satisfying $\langle \sigma^{-(r-j+1)}\chi, \delta^{(r)}_{\alpha} \rangle \leq 0$ for all $\alpha \in \Delta^{P_0}$ and all $1\leq j \leq r$ (here $\delta^{(r)}_{j,\alpha}$ is independent of $j$, so we denote it simply by $\delta^{(r)}_{\alpha}$). But since $P_0$ is defined over $\FF_q$, this condition is also equivalent to $\langle \chi, \delta^{(r)}_\alpha \rangle \leq 0$ for all $\alpha \in \Delta^{P_0}$. Note that $V^{\cap}_{\geq 0}$ is very close to the space $V^{\Delta^{P_0}}_{\geq 0}$, the only difference being that $\delta_\alpha$ is replaced by $\delta^{(r)}_\alpha$ in the definition. In other words, we could say that $V^{\cap}_{\geq 0}= V^{\Delta^{P_0\otimes \FF_{q^r}}}_{\geq 0}$, where we changed $P_0$ to $P_{0}\otimes \FF_{q^r}$. To simplify notation, for any $L$-representation $(V,\rho)$,  define
\begin{equation}\label{equ-VDeltarP0}
V^{\Delta^{P_0},(r)}_{\geq 0}\colonequals  \bigoplus_{\substack{\langle \nu,\delta^{(r)}_{\alpha} \rangle \geq 0, \ 
 \forall \alpha\in \Delta^{P_0}}} V_\nu.
\end{equation}
We showed that $V^{\cap}_{\geq 0}=V^{\Delta^{P_0},(r)}_{\geq 0}$. Denote by $L_{\varphi}^{(r)}$ the image of $\Stab_{\widetilde{E}}(1)$ via the composition of the projection $\widetilde{E}\to \widetilde{P}$ and the first projection $\pr_1\colon \widetilde{P}\to P$. By Lemma~\ref{lemLphi}, we have $L_{\varphi}^{(r)}\subset L$. 
From Proposition~\ref{glob-sec-Weil}, we deduce 
\begin{equation}\label{equ-Gtil-pr1}
    V^{L_{\varphi}^{(r)}} \cap V^{\Delta^{P_0},(r)}_{\geq 0} \cap V^{R_{\textrm{u}}(P_0)} \subset H^0\left(\GtilZip^{\widetilde{\mu}},\Vcal(\pr_1^*(\rho)\right).
\end{equation}
The largest Levi subgroup of $\widetilde{G}$ defined over $\FF_q$ contained in $\widetilde{L}$ is $\widetilde{L}_0 \colonequals \Res_{\FF_{q^r}/\FF_q} L_0$. Since $\widetilde{L}_0(\FF_q) = L_0(\FF_{q^r})$, we have $L_{\varphi}^{(r)} = L_{\varphi}^{(r),\circ}\rtimes L_0(\FF_{q^r})$ by Lemma~\ref{lemLphi}. 
Furthermore, $\Delta$ induces an injection $\Delta\colon L_\varphi \to L_{\varphi}^{(r)}$. Combining Equations~\eqref{equ-Gtil-pr1} and \eqref{Deltastar}, we deduce the following. 

\begin{theorem} \label{thm-Weilr}
Let $r\geq 1$ be such that $P$ is defined over $\FF_{q^r}$. One has
\begin{equation} \label{thm-Weilr-eq}
    V^{L_{\varphi}^{(r)}} \cap V^{\Delta^{P_0},(r)}_{\geq 0}\cap V^{R_{\textrm{u}}(P_0)} \subset H^0(\GZip^\mu, \Vcal(\rho)).
\end{equation}
\end{theorem}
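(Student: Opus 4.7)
The plan is to deduce Theorem \ref{thm-Weilr} from Proposition \ref{glob-sec-Weil} applied to the Weil restriction $\widetilde{G}=\Res_{\FF_{q^r}/\FF_q}(G_{\FF_{q^r}})$, combined with the pullback injection \eqref{Deltastar} coming from the diagonal $\Delta\colon G\to \widetilde{G}$. Concretely, I would apply Proposition \ref{glob-sec-Weil} to $(\widetilde{G},\widetilde{\mu})$ with the auxiliary $\widetilde{L}$-representation $\pr_1^*\rho$ (taking $\rho_1=\rho$ and $\rho_i=\rho_0$ trivial for $i\geq 2$). Since $\Delta(1)=1$ lies in the open $\widetilde{E}$-orbit, $\Delta$ is dominant and hence $\Delta^*$ is injective on global sections, with $\Delta^*\Vcal(\pr_1^*\rho)=\Vcal(\rho)$.

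The next step is to identify the three conditions appearing in Proposition \ref{glob-sec-Weil}, applied to $\pr_1^*\rho$, with the three conditions in the statement. Since $P$ is defined over $\FF_{q^r}$, the parabolic $P'_j=\bigcap_{i=0}^{r-1}\sigma^{-i}(P_{i+j})$ inside $\widetilde{G}$ is independent of $j$ and equals $P_0$, the largest subgroup of $P$ defined over $\FF_q$. The invariance subspace $V^{\#}\subset V$ (those $f\in V$ fixed by $\gamma_j(R_{\textrm{u}}(P'_j))$ for every $j$) becomes $V^{R_{\textrm{u}}(P_0)}$, since $\gamma_j$ embeds $R_{\textrm{u}}(P_0)$ diagonally into a tuple whose projection onto the component $L$ acts as $R_{\textrm{u}}(P_0)$ itself. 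The image of $\Stab_{\widetilde{E}}(1)$ under $\pr_1\colon \widetilde{P}\to P$ is by definition $L^{(r)}_\varphi$, which lies in $L$ by Lemma \ref{lemLphi}\eqref{lemLphi-item1}.

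The main computational step is checking that the weight condition collapses. A $\widetilde{T}$-weight $\widetilde{\chi}$ of $\pr_1^*\rho$ has the form $\widetilde{\chi}=(\chi,0,\dots,0)$ for $\chi$ a $T$-weight of $V$, so a direct computation gives $S_j(\widetilde{\chi})=q^{r-j+1}\sigma^{-(r-j+1)}(\chi)$. Because $P_0$ is defined over $\FF_q$, $\sigma$ permutes $\Delta^{P_0}$ and the quasi-cocharacters $\delta^{(r)}_{\alpha}=(\wp^{(r)}_{*})^{-1}(\alpha^\vee)$ are equivariant for this action. Consequently, the family of inequalities $\langle S_j(\widetilde{\chi}),\delta^{(r)}_\alpha\rangle\geq 0$, ranging over all $1\leq j\leq r$ and all $\alpha\in \Delta^{P_0}$, is equivalent to the single set of conditions $\langle \chi,\delta^{(r)}_\alpha\rangle\geq 0$ for all $\alpha\in \Delta^{P_0}$. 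In other words, $V^{\cap}_{\geq 0}=V^{\Delta^{P_0},(r)}_{\geq 0}$.

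Putting everything together, Proposition \ref{glob-sec-Weil} yields the inclusion
\begin{equation}
V^{L^{(r)}_\varphi}\cap V^{\Delta^{P_0},(r)}_{\geq 0}\cap V^{R_{\textrm{u}}(P_0)}\ \subset\ H^0(\GtilZip^{\widetilde{\mu}},\Vcal(\pr_1^*\rho)),
\end{equation}
and composing with the injection \eqref{Deltastar} gives the desired statement. The only genuinely delicate point in the plan is the cyclic-permutation argument in the third step; the remaining ingredients are a careful unpacking of definitions (the parabolic $P'_j$, the maps $\gamma_j$, and the stabilizer $L^{(r)}_\varphi$).
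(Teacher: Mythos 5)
Your proposal is correct and follows essentially the same route as the paper's proof in \S\ref{subseq-Weyl-conseq}: pull back along the diagonal $\Delta\colon G\to\widetilde{G}$, apply Proposition~\ref{glob-sec-Weil} to $\pr_1^*\rho$, identify $P'_j=P_0$ and $V^\#=V^{R_{\textrm{u}}(P_0)}$, and collapse the family of weight conditions using the $\sigma$-equivariance of $\alpha\mapsto\delta^{(r)}_\alpha$ on $\Delta^{P_0}$. The only point worth tightening in your write-up is the justification that $V^\#=V^{R_{\textrm{u}}(P_0)}$: the reason is not really a diagonal embedding but that for $j\geq 2$ the first component of $\gamma_j(x)$ is a Frobenius power of $\theta^{P_0}_{L_0}(x)$, which kills $R_{\textrm{u}}(P_0)$, while for $j=1$ the first component is $x$ itself, so the only nontrivial constraint comes from $\gamma_1$.
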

This theorem is slightly weaker than Proposition~\ref{prop-RuP0}, but it holds in general, independently of Condition~\ref{cond-commute}. Put $V_{\Weil}^{(r)}\colonequals V^{L_{\varphi}^{(r)}} \cap V^{\Delta^{P_0},(r)}_{\geq 0}\cap V^{R_{\textrm{u}}(P_0)}$.

\subsection{Applications to $\boldsymbol{C_{\zipsf}}$}

Consider the $L$-representation $V=V_I(\lambda)$ for $\lambda\in X_{+,I}^*(T)$. Let $r\geq 1$ be such that $P$ is defined over $\FF_{q^r}$. Consider the sub-$L_0$-representation $V_{I_0}(\lambda_0)\subset V_I(\lambda)$ with $\lambda_0\colonequals w_{0,I_0}w_{0,I}\lambda$. Then, we have $V^{R_{\textrm{u}}(P_0)}=V_{I_0}(\lambda_0)$. Let $Q_0$ be the opposite parabolic to $P_0$ with Levi subgroup $L_0$. Let $\mu_0\colon \GG_{\textrm{m},k}\to G_k$ be any dominant cocharacter with centralizer $L_0$ (hence $\mu_0$ defines the parabolics $P_0$, $Q_0$). If we base change $G$ to $\FF_{q^r}$, then by Corollary~\ref{cor-Fq-Levi}, we have 
\begin{align}
    H^0(\GFqrZip^{\mu_0},\Vcal_{I_0}(\lambda_0)) &= V_{I_0}(\lambda_0)^{L_0(\FF_{q^r})}\cap V_{I_0}(\lambda_0)^{\Delta^{P_0},(r)}_{\geq 0} \\ &=V^{L_0(\FF_{q^r})} \cap V^{\Delta^{P_0},(r)}_{\geq 0}\cap V^{R_{\textrm{u}}(P_0)}. \label{H0Fqr}
\end{align}
Hence, the space $V_{\Weil}^{(r)}$ given in Equation~\eqref{thm-Weilr-eq} is very close to the space in Equation~\eqref{H0Fqr}. The only difference is that we take invariants under $L_{\varphi}^{(r)} = L_{\varphi}^{(r),\circ}\rtimes L_0(\FF_{q^r})$ instead of $L_0(\FF_{q^r})$.

Fix $m\geq 1$ such that the finite unipotent group $L_{\varphi}^{(r),\circ}$ is annihilated by $\varphi^m$. 
If $f\in H^0(\GFqrZip^{\mu_0},\Vcal_{I_0}(\lambda_0))$, then $f^{q^m}$ is stable by $L_{\varphi}^{(r)}$, and hence lies in $V_I(q^m\lambda)_{\Weil}^{(r)}$. 
We deduce the following: Assume that $\lambda\in X_{+,I}^*(T)$ satisfies $\lambda_0\in C_{\zipsf}(G_{\FF_{q^r}},\mu_0)$, where $C_{\zipsf}(G_{\FF_{q^r}},\mu_0)$ is the zip cone of the zip datum $(G_{\FF_{q^r}},\mu_0)$. Then $\lambda\in \Ccal_{\zipsf}$. We have shown the following.

\begin{theorem}\label{thm-GFqr-cone}
Assume that $P$ is defined over $\FF_{q^r}$. Then
\begin{equation}
  X_{+,I}^*(T) \cap \left(w_{0,I} w_{0,I_0} \Ccal_{\zipsf}(G_{\FF_{q^r}},\mu_0) \right) \subset \Ccal_{\zipsf}.
\end{equation}
\end{theorem}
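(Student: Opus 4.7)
The plan is to leverage Theorem \ref{thm-Weilr} together with the explicit description of global sections over $\FF_{q^r}$ coming from Corollary \ref{cor-Fq-Levi}, exploiting that $P_0$ is already defined over $\FF_q$ and hence over $\FF_{q^r}$. Passing to saturations and scaling, it suffices to show that whenever $\lambda \in X^*_{+,I}(T)$ satisfies $\lambda_0 \colonequals w_{0,I_0} w_{0,I} \lambda \in C_{\zipsf}(G_{\FF_{q^r}},\mu_0)$, one has $\lambda \in \Ccal_{\zipsf}$.

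First I would choose a nonzero $f \in H^0(\GFqrZip^{\mu_0}, \Vcal_{I_0}(\lambda_0))$. Applying Corollary \ref{cor-Fq-Levi} to the cocharacter datum $(G_{\FF_{q^r}},\mu_0)$ (where the relevant Frobenius is $\varphi^r$, so $\delta_\alpha$ is replaced by $\delta^{(r)}_\alpha$) identifies this space with $V_{I_0}(\lambda_0)^{L_0(\FF_{q^r})} \cap V_{I_0}(\lambda_0)^{\Delta^{P_0},(r)}_{\geq 0}$. Via the $L_0$-equivariant identification $V_I(\lambda)^{R_{\textrm{u}}(P_0)} \cong V_{I_0}(\lambda_0)$ coming from \cite[Proposition 6.3.1]{Imai-Koskivirta-partial-Hasse}, I view $f$ as an element of $V_I(\lambda)$ that lies simultaneously in $V_I(\lambda)^{R_{\textrm{u}}(P_0)}$, in $V_I(\lambda)^{L_0(\FF_{q^r})}$, and in $V_I(\lambda)^{\Delta^{P_0},(r)}_{\geq 0}$.

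Second, to invoke Theorem \ref{thm-Weilr} I need invariance under the full group $L_\varphi^{(r)} = L_\varphi^{(r),\circ} \rtimes L_0(\FF_{q^r})$, so I must upgrade the $L_0(\FF_{q^r})$-invariance to absorb the infinitesimal unipotent part. Fix $m \geq 1$ such that $\varphi^m$ annihilates $L_\varphi^{(r),\circ}$, and replace $f$ by its image $f^{q^m} \in V_I(q^m\lambda)$ under iterated application of the multiplication map \eqref{Vlambda-natural-map}. The $R_{\textrm{u}}(P_0)$- and $L_0(\FF_{q^r})$-invariance pass through the $q^m$-th power because that multiplication is $L$-equivariant; the filtration condition passes because $f \in V_\nu$ gives $f^{q^m} \in V_{q^m\nu}$, and the inequalities defining $V^{\Delta^{P_0},(r)}_{\geq 0}$ are homogeneous in $\nu$. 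Crucially, for $u \in L_\varphi^{(r),\circ}$, the action on $f^{q^m}$ factors through the Frobenius twist and hence is given by $\varphi^m(u) = 1$, yielding invariance under the infinitesimal part, and thus under all of $L_\varphi^{(r)}$.

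Finally, Theorem \ref{thm-Weilr} applied to $V = V_I(q^m\lambda)$ shows that $f^{q^m}$ extends to a global section of $\Vcal_I(q^m\lambda)$ over $\GZip^\mu$, so $q^m\lambda \in C_{\zipsf}$ and hence $\lambda \in \Ccal_{\zipsf}$. The main obstacle — and the delicate point of the argument — is the Frobenius-twist step in the third paragraph showing that $L_0(\FF_{q^r})$-invariance automatically promotes to full $L_\varphi^{(r)}$-invariance after raising to the $q^m$-th power; once this compatibility between the multiplication construction on the $V_I(\lambda)$ and the infinitesimal action of $L_\varphi^{(r),\circ}$ is verified, the remaining hypotheses of Theorem \ref{thm-Weilr} are formal consequences of the weight-space bookkeeping.
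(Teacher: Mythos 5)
Your proposal is correct and follows essentially the same route as the paper: you reduce (by scaling) to showing $\lambda_0\in C_{\zipsf}(G_{\FF_{q^r}},\mu_0)\Rightarrow\lambda\in\Ccal_{\zipsf}$, identify $H^0(\GFqrZip^{\mu_0},\Vcal_{I_0}(\lambda_0))$ with $V^{L_0(\FF_{q^r})}\cap V^{\Delta^{P_0},(r)}_{\geq 0}\cap V^{R_{\mathrm{u}}(P_0)}$ via Corollary~\ref{cor-Fq-Levi} and the identification $V_I(\lambda)^{R_{\mathrm{u}}(P_0)}\cong V_{I_0}(\lambda_0)$, then upgrade $L_0(\FF_{q^r})$-invariance to $L_\varphi^{(r)}$-invariance by raising to the $q^m$-th power (killing the infinitesimal part annihilated by $\varphi^m$), and conclude by Theorem~\ref{thm-Weilr}. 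Your "delicate point" about the Frobenius twist is exactly what the paper invokes when it asserts that $f^{q^m}$ is stable by $L_\varphi^{(r)}$, and your justification of it (the $q^m$-power map factors through relative Frobenius, which sends $\rho(u)$ to $\rho(\varphi^m(u))=\rho(1)$) is sound.
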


\begin{rmk}\label{rmk-reduce-split}
    We can apply all results and constructions about the zip cone to $(G_{\FF_{q^r}},\mu_0)$.  For example, consider the highest-weight cone of $(G_{\FF_{q^r}},\mu_0)$. We deduce from Theorem~\ref{thm-GFqr-cone} and Proposition~\ref{prop-Norm} that if $\lambda\in X_{+,I}^*(T)$ satisfies
\begin{equation}\label{formula-norm-Weil}
\sum_{w\in W_{L_0}(\FF_{q})} q^{r\ell(w)} \ \langle w\lambda_0, \alpha^\vee \rangle\leq 0, \quad \forall \alpha \in \Delta^{P^0},
\end{equation}
then $\lambda\in \Ccal_{\zipsf}$. This is slightly weaker than Theorem~\ref{thm-norm-low}, but it holds without any assumption on $(G,\mu)$. 
\end{rmk}

We can finally prove the statement in general.

\begin{theorem}\label{thmGSzip}
One has $\Ccal_{\GSsf}\subset \Ccal_{\zipsf}$.
\end{theorem}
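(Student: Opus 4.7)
The plan is to deduce the inclusion directly from the two ingredients already available in the paper: Theorem~\ref{thm-GFqr-cone}, which transports information from $(G_{\FF_{q^r}},\mu_0)$ back to $(G,\mu)$, and Lemma~\ref{CGS-contained}, which establishes the Griffiths--Schmid containment when the parabolic is defined over the base field. The entire proof should amount to checking that the passage from $\lambda$ to $\lambda_0 \colonequals w_{0,I_0}w_{0,I}\lambda$ turns a Griffiths--Schmid character for $(G,\mu)$ into a Griffiths--Schmid character for $(G_{\FF_{q^r}},\mu_0)$.

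Concretely, I would proceed as follows. First, fix $r\geq 1$ such that $P$ is defined over $\FF_{q^r}$ and let $\mu_0$ be the dominant cocharacter with centralizer $L_0$ considered in \S\ref{subseq-Weyl-conseq}, so that the parabolic $P_0$ attached to $(G_{\FF_{q^r}},\mu_0)$ is defined over $\FF_q$ (and in particular over $\FF_{q^r}$). Let $\lambda \in \Ccal_{\GSsf}$; equivalently, $-w_{0,I}\lambda$ is dominant, and in particular $\lambda$ is $I$-dominant, so $\lambda \in X^*_{+,I}(T)$. Set $\lambda_0 \colonequals w_{0,I_0}w_{0,I}\lambda$. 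Since $w_{0,I_0}^2=1$, one has
\begin{equation}
-w_{0,I_0}\lambda_0 = -w_{0,I_0}^{\,2}w_{0,I}\lambda = -w_{0,I}\lambda,
\end{equation}
which is dominant by assumption. This means exactly that $\lambda_0$ lies in the Griffiths--Schmid cone $\Ccal_{\GSsf}(G_{\FF_{q^r}},\mu_0)$ associated to the pair $(G_{\FF_{q^r}},\mu_0)$.

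Next, since $P_0$ is defined over $\FF_q$, hence over $\FF_{q^r}$, Lemma~\ref{CGS-contained} applied to $(G_{\FF_{q^r}},\mu_0)$ yields
\begin{equation}
\Ccal_{\GSsf}(G_{\FF_{q^r}},\mu_0) \subset \Ccal_{\zipsf}(G_{\FF_{q^r}},\mu_0),
\end{equation}
and so $\lambda_0 \in \Ccal_{\zipsf}(G_{\FF_{q^r}},\mu_0)$. Now invoke Theorem~\ref{thm-GFqr-cone}: since $w_{0,I}w_{0,I_0}\lambda_0 = \lambda$ and $\lambda \in X^*_{+,I}(T)$, the character $\lambda$ lies in the intersection $X^*_{+,I}(T)\cap w_{0,I}w_{0,I_0}\Ccal_{\zipsf}(G_{\FF_{q^r}},\mu_0)$, and therefore in $\Ccal_{\zipsf}$.

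There is no substantial obstacle in this argument: all the hard work has been done in the construction of the Weil-restriction machinery of \S\ref{sec-Weil} and culminates in Theorem~\ref{thm-GFqr-cone}. The only verification needed is the elementary Weyl-group identity $-w_{0,I_0}\lambda_0 = -w_{0,I}\lambda$, which makes the definition of $\lambda_0$ exactly compatible with the two Griffiths--Schmid cones.
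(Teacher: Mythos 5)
Your proof is correct and follows essentially the same route as the paper: both arguments combine Lemma~\ref{CGS-contained} for the split pair $(G_{\FF_{q^r}},\mu_0)$ with Theorem~\ref{thm-GFqr-cone}, and the identity $-w_{0,I_0}\lambda_0 = -w_{0,I}\lambda$ you verify is exactly what the paper summarizes as $w_{0,I}w_{0,I_0}\Ccal_{\GSsf,I_0}=\Ccal_{\GSsf,I}$. Your version simply spells out the Weyl-group bookkeeping rather than stating it as an equality of cones.
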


\begin{proof}
Write $\Ccal_{\GSsf,I}=\Ccal_{\GSsf}$ and $\Ccal_{\GSsf,I_0}$ for the Griffiths--Schmid cones of $I$ and $I_0$, respectively. By Lemma~\ref{CGS-contained}, we have $\Ccal_{\GSsf,I_0}\subset \Ccal_{\zipsf}(G_{\FF_{q^r}},\mu_0)$. Since $w_{0,I} w_{0,I_0}\Ccal_{\GSsf,I_0}=\Ccal_{\GSsf,I}$, the result follows from Theorem~\ref{thm-GFqr-cone}.
\end{proof}

\section{Examples}\label{sec7}

\subsection{The case $\boldsymbol{G=U(2,1)}$ with $\boldsymbol{p}$ inert}
We consider the example of Picard modular surfaces. More precisely, let $\mathbf{E}/\QQ$ be a quadratic totally imaginary extension and $(\mathbf{V},\psi)$ a Hermitian space over $\mathbf{E}$ of dimension $3$ such that $\psi_\RR$ has signature $(2,1)$. There is a Shimura variety of dimension $2$ of PEL type attached to $\mathbf{G}=\GU(\mathbf{V},\psi)$. It parametrizes abelian varieties of dimension $3$ with a polarization, an action of $\Ocal_{\mathbf{E}}$ and a level structure. Let $p$ be a prime of good reduction, and let $X$ be the special fiber of the Kisin--Vasiu (canonical) integral model of the Shimura variety. By Equation~\eqref{zeta-Shimura}, we have a smooth, surjective morphism $\zeta \colon X\to \GZip^\mu$, where $G$ is the special fiber of a reductive $\ZZ_p$-model of $\mathbf{G}_{\QQ_p}$. In this section, we study the cones attached to $\GZip^\mu$ when $p$ is inert in $\mathbf{E}$. To simplify, we consider the case of a unitary group $G=\U(V,\psi)$ (the case of $G=\GU(V,\psi)$ is very similar).

Let $(V,\psi)$ be a $3$-dimensional vector space over $\FF_{q^2}$ endowed with a nondegenerate Hermitian form $\psi \colon V\times V\to \FF_{q^2}$ (in the context of Shimura varieties, take $q=p$). Write $\Gal(\FF_{q^2}/\FF_q)=\{\id,\sigma\}$. Choose a basis $\Bcal=(v_1,v_2,v_3)$ of $V$ where $\psi$ is given by the matrix 
\[
 J= \begin{pmatrix}
&&1\\&1&\\1&&\end{pmatrix}. 
\]
We define a reductive group $G$ by
\begin{equation}
G(R) = \left\{f\in \GL_{\FF_{q^2}}\left(V\otimes_{\FF_q} R\right) \relmiddle|  \psi_R(f(x),f(y))=\psi_R(x,y), \ \forall x,y\in V\otimes_{\FF_q} R \right\}
\end{equation}
for any $\FF_q$-algebra $R$.  There is an isomorphism $G_{\FF_{q^2}}\simeq \GL(V)\simeq \GL_{3,\FF_{q^2}}$. It is induced by the $\FF_{q^2}$-algebra isomorphism $\FF_{q^2}\otimes_{\FF_q} R\to R\times R$, $a\otimes x\mapsto (ax,\sigma(a)x)$ (where $\Gal(\FF_{q^2}/\FF_q)=\{\id, \sigma\}$). The corresponding action of $\sigma$ on $\GL_3(k)$ is given by $\sigma\cdot A = J \sigma({}^t \!A)^{-1}J$. Let $T$ denote the diagonal torus and $B$ the lower-triangular Borel subgroup of $G_k$ (note that $B$ and $T$ are defined over $\FF_q$). Identify $X^*(T)=\ZZ^3$ such that $(a_1,a_2,a_3)\in \ZZ^3$ corresponds to the character $\diag(x_1,x_2 ,x_3)\mapsto \prod_{i=1}^3 x_i^{a_i}$. The simple roots are $\Delta=\{e_1-e_{2}, e_2-e_3\}$, where $(e_1, e_2 ,e_3)$ is the canonical basis of $\ZZ^3$. Define a cocharacter $\mu  \colon  \GG_{\mathrm{m},k}\to G_{k}$ by $x\mapsto \diag(x,x,1)$ via the identification $G_{k}\simeq \GL_{3,k}$. Let $\Zcal_{\mu}=(G,P,L,Q,M)$ be the associated zip datum. We have $\Delta^P=\{e_2-e_3\}$. Note that the determinant $\det\colon \GL_{3,k}\to \GG_{\textrm{m},k}$ is an invertible section of the line bundle $\Vcal_I(p+1,p+1,p+1)$ on $\GZip^\mu$. Set $D\colonequals \ZZ(1,1,1)=X^*(G)$. 
We have $D \subset \Ccal_{\zipsf}$. Identify
\begin{equation}\label{ident}
\ZZ^3/D\simeq \ZZ^2, \quad (a_1,a_2,a_3)\longmapsto (a_1-a_3,a_2-a_3).
\end{equation}
Hence, subcones of $\ZZ^3$ containing $D$ correspond bijectively to subcones of $\ZZ^2$ via the bijection~\eqref{ident}. For a subcone $C$ of $\ZZ^3$ containing $D$ and a subcone $C'\subset \ZZ^2$, we write $C\leftrightarrow C'$ if they correspond via the bijection \eqref{ident}.

\begin{proposition}
Via this identification, we have
\begin{align*}
    X^*_{+,I}(T) & \leftrightarrow  \{ (a_1,a_2)\in \ZZ^2 \mid \  a_1\geq a_2\}, \\
    X_{-}^*(L) & \leftrightarrow  \NN(-1,-1), \\ 
    \Ccal_{\GSsf} & \leftrightarrow  \{ (a_1,a_2)\in X^*_{+,I}(T) \mid \  0\geq a_1 \}, \\
     \Ccal_{\zipsf} & \leftrightarrow  \{ (a_1,a_2)\in X^*_{+,I}(T) \mid \  (q-1)a_1+a_2\leq 0 \},    \\  
    \Ccal_{\pha} & \leftrightarrow  \{ (a_1,a_2)\in X^*_{+,I}(T) \mid \  qa_1-(q-1)a_2\geq 0 \textrm{ and } (q-1)a_1+a_2\leq 0\}, \\
    \Ccal_{\hwsf} & \leftrightarrow  \{ (a_1,a_2)\in X^*_{+,I}(T) \mid \  qa_1-(q-1)a_2\leq 0 \}, \\
    \Ccal_{\lwsf} & =  \Ccal_{\zipsf}.
\end{align*}
\end{proposition}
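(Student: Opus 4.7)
\smallskip

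The plan is to check each line of the table by instantiating the general machinery of the preceding sections with the explicit data of $(G,\mu)$. First I would set up the Frobenius action. Since $G_{\FF_{q^2}} \simeq \GL_3$ with $\sigma$-action $A \mapsto J\,\sigma({}^tA)^{-1} J$, the induced action on $X^*(T) = \ZZ^3$ is $(a_1,a_2,a_3) \mapsto (-a_3,-a_2,-a_1)$. In particular $\sigma$ swaps the two simple roots $\alpha_1 = e_1-e_2$ and $\alpha_2 = e_2 - e_3$, so $I = \{\alpha_1\}$, $\Delta^P = \{\alpha_2\}$, $r_{\alpha_1} = r_{\alpha_2} = 2$, the largest $\FF_q$-rational Levi inside $L$ is $L_0 = T$, and $I_0 = \emptyset$, $W_{L_0}(\FF_q) = \{1\}$, $\lambda_0 = s_{\alpha_1}\lambda$.

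With this in hand, the first three rows are direct: $X^*_{+,I}(T)$ is the $I$-dominance condition $a_1 \geq a_2$; $X^*_-(L)$ comes from $a_1 = a_2$ and $a_2 \leq a_3$; and for $\Ccal_{\GSsf}$ the conditions $\langle \lambda, \alpha^\vee\rangle \leq 0$ for $\alpha \in \{e_1-e_3, e_2-e_3\}$ force $a_1 \leq a_3$ while making $a_2 \leq a_3$ redundant. For $\Ccal_{\pha}$ I apply Definition \ref{definition-CHasse}: a short calculation gives $h_\Zcal(a_1,a_2,a_3) = (a_1+qa_2,\ a_2+qa_3,\ a_3+qa_1)$, whose image of the extremal rays $(1,0,0)$ and $(1,1,0)$ of $X^*_+(T)$ modulo $D$ is $(1-q,-q)$ and $(1,1-q)$; the saturated $\NN$-cone on these two vectors is exactly the one cut out by $qa_1 - (q-1)a_2 \geq 0$ and $(q-1)a_1 + a_2 \leq 0$ (and $I$-dominance is automatic on this cone).

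For $\Ccal_{\hwsf}$ I substitute into Proposition \ref{prop-Norm}. Since $W_{L_0}(\FF_q) = \{1\}$, the single inequality (for $\alpha = \alpha_2$) reduces to $(a_2-a_3) + q(a_1-a_2) \leq 0$, i.e.\ $qa_1 - (q-1)a_2 \leq 0$ modulo $D$. For $\Ccal_{\lwsf}$ I apply Theorem \ref{thm-norm-low}; Condition \ref{cond-commute} is vacuous because $G$ splits over $\FF_{q^2}$ (so $m_\alpha \leq 2$ and the range $1 \leq i < j \leq m_\alpha - 1$ is empty). The two inequalities at $\alpha_1$ and $\alpha_2 \in \Delta^{P_0} = \Delta$ collapse to $(q-1)a_1 + a_2 \leq 0$ and $qa_2 - (q-1)a_1 \leq 0$. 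A two-case check (according to the sign of $a_1$), using $a_1 \geq a_2$, shows the second is implied by the first, so $\Ccal_{\lwsf}$ is defined by $(q-1)a_1 + a_2 \leq 0$ alone.

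Finally, the equality $\Ccal_{\lwsf} = \Ccal_{\zipsf}$ is what I view as the main obstacle. One inclusion $\Ccal_{\lwsf} \subset \Ccal_{\zipsf}$ is immediate from Theorem \ref{thm-norm-low} since Condition \ref{cond-commute} holds. The hard direction is the upper bound $\Ccal_{\zipsf} \subset \{(q-1)a_1 + a_2 \leq 0\}$: given any $L$-dominant $\lambda$ with $(q-1)a_1 + a_2 > 0$, I must show that every element of $V_I(\lambda)^{L_\varphi}$ fails the condition of extending from $\Ucal_\mu$ across the unique codimension-one stratum $\Xcal_{s_{\alpha_2}}$ of $\GZip^\mu$. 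I plan to obtain this by applying the refined general global-sections formula (the generalization of Theorem \ref{main-Fq} without $\FF_q$-rationality of $P$, i.e.\ \cite[Theorem 3.4.1]{Imai-Koskivirta-vector-bundles}) to the explicit three-dimensional $L$-representation $V_I(\lambda)$, and computing the Brylinski--Kostant filtration $\fil^{P}_{\wp^{*-1}(\nu)} V_\nu$ along $\alpha_2 \in \Delta^P$. Because $|\Delta^P| = 1$ and $L_\varphi$ is explicitly described by Lemma \ref{lemLphi}, the single resulting numerical inequality collapses exactly to $(q-1)a_1 + a_2 \leq 0$, completing the identification $\Ccal_{\zipsf} = \Ccal_{\lwsf}$.
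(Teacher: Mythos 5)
Your approach matches the paper's in spirit: the paper proves the proposition by declaring "The cone $\Ccal_{\zipsf}$ was determined in \cite[Corollary 6.3.3]{Imai-Koskivirta-vector-bundles}. The rest is a straight-forward computation," and you supply the straightforward computation in detail. Your setup ($\sigma$ swapping $\alpha_1$ and $\alpha_2$, $L_0=T$, $W_{L_0}(\FF_q)=\{1\}$, $r_\alpha=2$) is correct, and I verified your formulas for $X^*_{+,I}(T)$, $X^*_-(L)$, $\Ccal_{\GSsf}$, $\Ccal_\pha$ (via $h_\Zcal(a_1,a_2,a_3)=(a_1+qa_2,\,a_2+qa_3,\,a_3+qa_1)$ and the two extremal rays), $\Ccal_{\hwsf}$, and your reduction of the two $\Ccal_{\lwsf}$-inequalities to one on $X^*_{+,I}(T)$.

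Two caveats on the $\Ccal_{\zipsf}$ step, which is the one place you yourself flag as the main obstacle. First, a small slip: $V_I(\lambda)$ is not three-dimensional; it is $\Ind_{B_L}^L\lambda$ for $L\simeq \GL_2\times \GG_m$, hence has dimension $a_1-a_2+1$, which grows with $\lambda$. Second, and more substantively, you assert that the generalized Brylinski--Kostant condition from \cite[Theorem 3.4.1]{Imai-Koskivirta-vector-bundles} "collapses exactly to $(q-1)a_1+a_2\le 0$" because $|\Delta^P|=1$. That is too quick: the condition there is not a single linear inequality on $\lambda$ but a filtration condition on the weight spaces of $V_I(\lambda)$, requiring an $L_\varphi$-invariant vector to lie in the right filtration step. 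Showing that \emph{no} such vector exists when $(q-1)a_1+a_2>0$ (and that one does exist otherwise, for some multiple of $\lambda$) is precisely the content of the cited Corollary 6.3.3. Since the paper delegates this, your plan is acceptable in outline, but as written it is an assertion, not a computation; if you want a self-contained proof you would need to actually analyze which weights of $V_I(\lambda)$ can contribute and compute the operators $E^{(j)}_{-\alpha_2}$ on the $(a_1-a_2+1)$-dimensional representation of $\GL_2$ explicitly.

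Everything else is correct and coincides with what the paper calls the "straight-forward computation."
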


\begin{proof}
The cone $\Ccal_{\zipsf}$ was determined in \cite[Corollary 6.3.3]{Imai-Koskivirta-vector-bundles}. 
The rest is a straight-forward computation.
\end{proof}

This example is not of Hasse type since $P$ is not defined over $\FF_q$. As predicted by Proposition~\ref{prop-Hassetype-equivalent}, $C_{\pha, \RR_{\geq 0}}$ is not a neighborhood of $X^*_{-}(L)_{\reg}$ in $X_{+,I}^*(T)_{\RR_{\geq 0}}$. Condition~\ref{cond-commute} is satisfied, and we  indeed have $\Ccal_{\GSsf}\subset \Ccal_{\lwsf}$ (see Lemma~\ref{cor-CGS-Cond}). 
However, $\Ccal_{\GSsf}\subset \Ccal_{\hwsf}$ does not hold. For this group, Conjecture~\ref{conj-shimura} holds by \cite[Theorem 4.3.3]{Goldring-Koskivirta-divisibility}; \textit{i.e.}, we have $\Ccal(\overline{\FF}_p)=\Ccal_{\zipsf}$.

\subsection{The orthogonal group $\boldsymbol{\SO(2n+1)}$} \label{subsec-orthogonal}

We consider the case of odd orthogonal groups. This example arises in the theory of Shimura varieties of Hodge type attached to general spin groups $\GSpin(2n-1,2)$ ($n\geq 1$). This furnishes an interesting infinite family of examples of zip data of Hasse type (see Definition~\ref{def-Hasse-type-cochardatum}). To simplify, we only consider the case of odd special orthogonal groups $\SO(2n+1)$, which is completely similar. Assume $p>2$. Let $J$ be the symmetric square matrix of size $2n+1$ defined by
\begin{equation}
    J \colonequals \left(
    \begin{matrix}
    &&1 \\
    &\iddots& \\
    1&&
    \end{matrix}
    \right).
\end{equation}
Let $n\geq 1$, and let $G$ be the reductive, connected, algebraic group over $\FF_q$ defined by 
\begin{equation}
    G(R) \colonequals \{A\in \SL_{2n+1}(R) \ | \ {}^tA J A = J \}
\end{equation}
for all $\FF_q$-algebra $R$. Let $T$ be the maximal diagonal torus, which is given by matrices of the form $t=\diag(t_1, \dots ,t_n,1,t_n^{-1}, \dots ,t_1^{-1})$. Identify $X^*(T)\simeq \ZZ^n$ such that $(a_1, \dots ,a_n)\in \ZZ^n$ corresponds to $t\mapsto t_1^{a_1} \cdots t_n^{a_n}$. Let $e_1, \dots ,e_n$ be the canonical basis of $\ZZ^n$. Fix the Borel subgroup of lower-triangular matrices in $G$. The positive roots $\Phi^+$ and the simple roots $\Delta$ are, respectively, 
\begin{align}
    \Phi^+& \colonequals \{ e_i \pm e_j, \ 1\leq i< j \leq n\}\cup \{e_i, \ 1\leq i \leq n\},  \\
    \Delta & \colonequals \{e_1-e_2, \dots ,e_{n-1}-e_n,e_n\}.
\end{align}
The Weyl group identifies as the group of permutations $\sigma$ of $\{1, \dots ,2n+1\}$ satisfying $\sigma(i)+\sigma(2n+2-i)=2n+2$. In particular, we have $\sigma(n+1)=n+1$. Moreover, $\sigma$ is entirely determined by $\sigma(1), \dots ,\sigma(n)$. For $\sigma\in W$ such that $\sigma(i)=a_i$ for $i=1, \dots ,n$, write
$\sigma=[a_1 \  \dots  \ a_n]$. Hence, the identity element is $[1 \ 2 \ \dots  \ n]$, and the longest element is $w_0=[2n+1 \  2n\  \dots  \ n+2]$. The action of $w_0$ on $X^*(T)$ is given by $w_0\lambda =-\lambda$. Consider the cocharacter
\[\mu\colon z\longmapsto \diag(z,1, \dots ,1,z^{-1}).\]
Let $\Zcal_\mu\colonequals (G,P,L,Q,M)$ be the zip datum attached to $\mu$ (since $\mu$ is defined over $\FF_q$, we have $M=L$). For $n\geq 2$, one has 
\begin{equation}
    I=\Delta \setminus \{e_1-e_2\}, \quad \Delta^P=\{e_1-e_2\}
\end{equation}
(for $n=1$, one has $I=\emptyset$, $\Delta^P=\Delta=\{e_1\}$). The Levi $L$ is isomorphic to $\SO(2n-1)\times \GG_{\mathrm{m}}$. In particular, $w_{0,I}$ acts on $I$ by $w_{0,I}\alpha=-\alpha$. Since $T$ is $\FF_q$-split, one has $\sigma(\alpha)=\alpha=-w_{0,I}\alpha$ for all $\alpha\in I$. This shows that $(G,\mu)$ is of Hasse type. Put $z \colonequals w_{0,I}w_0 =[2n+1 \ 2 \  \dots  \ n]$. Then $(B,T,z)$ is a frame for $\Zcal_\mu$ (see Lemma~\ref{lem-framemu}). We determine the cones appearing in Diagram \eqref{conediag}.

\begin{proposition}\label{prop-SO}
For $n\geq 2$, we have
\allowdisplaybreaks
\begin{align*}
    X^*_{+,I}(T) & = \{ (a_1,\dots ,a_n)\in \ZZ^n \mid \  a_2\geq  \dots  \geq a_n \geq 0\}, \\
    X^*(L)_{-} &= \ZZ_{\leq 0}(1, 0,\dots ,0),  \\
    \Ccal_{\GSsf} &= \{(a_1,\dots ,a_n)\in \ZZ^n \in X^*_{+,I}(T) \mid \  a_1+a_2\leq 0 \},  \\
    \Ccal_{\pha} &= \{ (a_1,\dots ,a_n)\in X^*_{+,I}(T) \mid \   (q+1)a_1+(q-1)a_2\leq 0\}, \\
     \Ccal_{\zipsf} &= \Ccal_{\pha}, \\[-0.8em]
     \Ccal_{\hwsf} &= \Ccal_{\lwsf} =
     \left\{ (a_1,\dots ,a_n)\in X^*_{+,I}(T) \mid \  (q^{2n-2}-1) a_1 \leq (q-1)
     \sum\limits_{i=2}^n (q^{i-2}-q^{2n-1-i})  a_i\right\}.
\end{align*}
\end{proposition}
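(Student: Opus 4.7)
The plan is to treat each cone in turn, with most computations reducing to explicit manipulations with the root data of $\SO(2n+1)$. The three cones $X^*_{+,I}(T)$, $X^*(L)_-$, and $\Ccal_{\GSsf}$ should fall out directly from the definitions: the coroots are $(e_i-e_{i+1})^\vee = e_i^*-e_{i+1}^*$ and $e_n^\vee = 2e_n^*$, so $I$-dominance translates to $a_2\geq a_3\geq\cdots\geq a_n\geq 0$; for $X^*(L)_-$ the $W_L$-invariance forces $a_2=\cdots=a_n=0$ and anti-dominance then gives $a_1\leq 0$; for $\Ccal_{\GSsf}$ the positive roots outside $\Phi_L^+$ are $e_1\pm e_j$ (for $j\geq 2$) and $e_1$, and the most restrictive condition under $I$-dominance is $a_1+a_2\leq 0$.

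For $\Ccal_{\pha}$, the plan is to apply Definition \ref{definition-CHasse}. Since $G$ is $\FF_q$-split, $h_\Zcal(\lambda) = \lambda - q w_{0,I}\lambda$, and the longest element $w_{0,I}$ fixes $e_1$ while acting as $-1$ on $\{e_2,\ldots,e_n\}$. Hence $h_\Zcal(a_1,\ldots,a_n)=((1-q)a_1,(1+q)a_2,\ldots,(1+q)a_n)$, and inverting on $X^*_+(T)_{\QQ_{\geq 0}}$ one reads off the defining inequality $(q+1)a_1+(q-1)a_2\leq 0$ (the remaining conditions $\chi_i\geq\chi_{i+1}$ coincide with $I$-dominance). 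The equality $\Ccal_{\zipsf}=\Ccal_{\pha}$ is then immediate from Theorem \ref{main-thm-Hasse-type}, since $(G,\mu)$ is of Hasse-type: $L$ is defined over $\FF_q$, $\sigma$ acts trivially on $I$ as $T$ is split, and $-w_{0,I}$ also acts trivially on $I$ because $w_{0,I}$ restricts to $-1$ on the span of the roots in $I$.

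For $\Ccal_{\hwsf}=\Ccal_{\lwsf}$, the equality follows from $P_0=P$ since $P$ is $\FF_q$-rational. Proposition \ref{prop-Norm} (with $r_\alpha=1$, $L_0=L$, $\Delta^P=\{e_1-e_2\}$) gives the defining inequality
\[
\sum_{w\in W_L} q^{\ell(w)}\langle w\lambda,(e_1-e_2)^\vee\rangle \leq 0.
\]
Since every $w\in W_L$ fixes $e_1$, the coefficient of $a_1$ is the Poincar\'e polynomial $P_{B_{n-1}}(q)=\prod_{i=1}^{n-1}(q^{2i}-1)/(q-1)$. For $k\geq 2$ the coefficient of $a_k$ equals $-\sum_{w:\,w^{-1}(e_2)=e_k}q^{\ell(w)}+\sum_{w:\,w^{-1}(e_2)=-e_k}q^{\ell(w)}$. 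The plan is to decompose $W_L$ into cosets of the stabilizer of $e_2$, which is $W(B_{n-2})$ acting on $\{e_3,\ldots,e_n\}$: each fiber contributes $q^{\ell(u_v)}P_{B_{n-2}}(q)$, where $u_v$ is the minimal-length representative sending $e_2\mapsto v$, with $v$ ranging over the orbit $\{\pm e_j:2\leq j\leq n\}$.

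The main combinatorial step is to determine these minimal lengths: $\ell(u_{e_k})=k-2$ (from the reduced expression $s_{e_{k-1}-e_k}\cdots s_{e_2-e_3}$) and $\ell(u_{-e_k})=2n-k-1$ (from the path $e_2\to e_3\to\cdots\to e_n\to -e_n\to\cdots\to -e_k$). I would verify these are indeed the minimal lengths by checking $\sum_{v\in W_L\cdot e_2}q^{\ell(u_v)}=\sum_{k=2}^n(q^{k-2}+q^{2n-k-1})=(q^{2n-2}-1)/(q-1)=P_{B_{n-1}}(q)/P_{B_{n-2}}(q)$, as required by the quotient decomposition. Substituting and dividing the inequality by $P_{B_{n-2}}(q)>0$ then yields $(q^{2n-2}-1)/(q-1)\cdot a_1\leq\sum_{k\geq 2}(q^{k-2}-q^{2n-1-k})a_k$, and clearing denominators gives the claimed formula. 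The only nontrivial obstacle is the length computation, which is essentially the identification of the minimal-length coset representatives for $W(B_{n-1})/W(B_{n-2})$.
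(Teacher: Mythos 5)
Your proposal is correct and follows essentially the same route as the paper: reduce to the formula of Proposition~\ref{prop-Norm} (with $r_\alpha=1$, $L_0=L$, trivial $\sigma$), isolate $\alpha=e_1-e_2$ as the only element of $\Delta^P$, and split the sum over $W_L\cong W(B_{n-1})$ according to the orbit of $e_2$.

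The one place where you diverge in technique is how the coset representatives are handled. The paper invokes a result from \cite{Koskivirta-automforms-GZip} that rewrites the sum directly over the minimal-length set ${}^{I_\alpha}W_I$ and simply asserts the lengths ($i-2$ and $i-3$ in its labeling); you instead factor out $P_{W(B_{n-2})}(q)$ by hand and justify the lengths $k-2$, $2n-k-1$ via the Poincar\'e-polynomial identity $\sum_v q^{\ell(u_v)} = P_{B_{n-1}}(q)/P_{B_{n-2}}(q)$. This verification is rigorous as you set it up, since $\ell(u_v)\geq m_v$ termwise and $\sum_v(q^{\ell(u_v)}-q^{m_v})=0$ with nonnegative summands forces equality; it is a modest bonus over the paper, which states the lengths without proof. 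One minor imprecision: since the relevant quantity is $\langle w\lambda,\alpha^\vee\rangle = a_1 - \langle\lambda, w^{-1}(e_2)\rangle$, the fibers are those of $w\mapsto w^{-1}(e_2)$ (right cosets $W_{L_\alpha}w$); your phrase ``minimal-length representative sending $e_2\mapsto v$'' describes the inverse. Because $\ell(w)=\ell(w^{-1})$ this does not affect any computation, but it is worth stating cleanly. Everything else---the coroot identifications, the Hasse-type check via $-w_{0,I}=1$ on $B_{n-1}$, the inversion of $h_{\Zcal}$ with its sign flip coming from $1-q<0$, and the final clearing of denominators---matches the paper.
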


\begin{proof}
The equality $\Ccal_{\zipsf} = \Ccal_{\pha}$ follows from Theorem~\ref{main-thm-Hasse-type}. Since $P$ is defined over $\FF_q$, we have $\Ccal_{\hwsf}=\Ccal_{\lwsf}$. The only nontrivial computation is $\Ccal_{\hwsf}$. Since $T$ is split over $\FF_q$, we can use \cite[Section~3.6]{Koskivirta-automforms-GZip} (changing~$p$ to $q$). Put $\alpha=e_1-e_2$. Denote by $L_\alpha \subset L$ the centralizer in $L$ of $\alpha^\vee$ and by $I_\alpha\subset I$ its type. Then $\Ccal_{\hwsf}$ is the set of $\lambda\in X_{+,I}^*(T)$ satisfying
\begin{equation}\label{ineqfinal}
\sum_{w\in {}^{I_\alpha} W_{I}} q^{\ell(w)}  \langle w \lambda, \alpha^\vee \rangle\leq 0.
\end{equation}
We only carry out the case $n\geq 3$. The set ${}^{I_{\alpha}}W_I$ has cardinality $2(n-1)$. Any permutation $w\in {}^{I_{\alpha}}W_I$ is entirely determined by $w^{-1}(2)$, and it can be any integer satisfying $2\leq w^{-1}(2)\leq 2n$ different from $n+1$. Writing $w^{-1}(2)=i$, there are two cases to consider: $2\leq i\leq n$ and $n+2\leq i\leq 2n$. In the first case, the length of $w$ is $i-2$ and one has  $\langle  \lambda, w^{-1}\alpha^\vee \rangle = a_1 - a_i$ (where $\lambda=(a_1,\dots ,a_n)$). In the second case, the length of $w$ is $i-3$ and  $\langle \lambda, w^{-1}\alpha^\vee \rangle = a_1 + a_{2n+2-i}$. Hence we find that the sum in Equation~\eqref{ineqfinal} is equal to
\begin{equation}
\sum_{i=2}^n q^{i-2} (a_1- a_i) + \sum_{i=n+2}^{2n} q^{i-3} (a_1+ a_{2n+2-i}) = \frac{q^{2n-2}-1}{q-1} a_1 - \sum_{i=2}^n (q^{i-2}-q^{2n-1-i})  a_i.
\end{equation}
The result follows.
\end{proof}

As predicted by Theorem~\ref{main-thm-Hasse-type}, one sees that $\Ccal_{\pha}$ contains all cones of Proposition~\ref{prop-SO} (except of course $X_{+,I}^*(T)$). For example, assume that $\lambda \in \Ccal_{\hwsf}$. We find $\frac{q^{2n-2}-1}{q-1}a_1 \leq \sum_{i=2}^n (q^{i-2}-q^{2n-1-i})  a_i  \leq  (1-q^{2n-3}) a_2$, and hence $\frac{q^{2n-2}-1}{q^{2n-3}-1}a_1+(q-1)a_2\leq 0$. In particular, this implies $a_1 \leq 0$. 
Since $q+1\geq \frac{q^{2n-2}-1}{q^{2n-3}-1}$, we have $(q+1)a_1+(q-1)a_2\leq 0$. This shows $\Ccal_{\hwsf}\subset \Ccal_{\pha}$ (for $n=2$, one actually has $\Ccal_{\hwsf}=\Ccal_{\pha}$). In Figure~\ref{fig1}, we give a representation of the cones for $n=3$. We represent the intersections with the affine hyperplane $a_1=-(q-1)$. In other words, the weight $(-(q-1),x,y)$ appears as the point $(x,y)$.

\begin{figure}[ht]
\begin{center}
\footnotesize{
\begin{tikzpicture}[line cap=round,line join=round,x=.7cm,y=.7cm]
\clip(-2,-1) rectangle (12,10);
\draw [line width=1pt] (0,0)-- (9,9);
\draw [line width=1pt] (9,9)-- (9,0);
\draw [line width=1pt] (9,0)-- (0,0);
\draw [line width=1pt] (5,5)-- (5,0);
\draw [line width=1pt] (6.5,6.5)-- (8,0);
\draw [line width=1pt,dotted,domain=0:20.347000000000005] plot(\x,{(-0--9*\x)/9});
\draw [line width=1pt,dotted,domain=0:20.347000000000005] plot(\x,{(-0-0*\x)/9});
\draw (7.3,-0.1) node[anchor=north west] {$(b,0)$};
\draw (5.2,7.5) node[anchor=north west] {$(a,a)$};
\draw (2.2,6) node[anchor=north west] {$(q-1,q-1)$};
\draw (6.0448,9.9186) node[anchor=north west] {$(q+1,q+1)$};
\draw (8.8,-0.1) node[anchor=north west] {$(q+1,0)$};
\draw (4,-0.1) node[anchor=north west] {$(q-1,0)$};
\draw (-1.312,-0.1) node[anchor=north west] {\large $X^*(L)_-$};
\draw[color=black] (13,4.859198753441583) node {\large $X_{+,I}^{*}(T)$};
\draw[color=black] (3.5,1.6) node {\Large $\Ccal_{\GSsf}$};
\draw[color=black] (6.4,2.2) node {\Large $\Ccal_{\hwsf}$};
\draw[color=black] (7.8,6) node {\Large $\Ccal_{\pha}$};

\begin{scriptsize}
\draw [fill=black] (0,0) circle (2pt);
\draw [fill=black] (9,9) circle (2.5pt);
\draw [fill=black] (9,0) circle (2.5pt);
\draw [fill=black] (5,0) circle (2.5pt);
\draw [fill=black] (5,5) circle (2.5pt);
\draw [fill=black] (6.5,6.5) circle (2.5pt);
\draw [fill=black] (8,0) circle (2.5pt);
\end{scriptsize}
\end{tikzpicture}
}
\end{center}\caption{Representation of the cones for $\lambda \in \Ccal_{\hwsf}$, $n=3$,  where $a \colonequals \frac{q^4-1}{q^3+q^2-q-1}$ and $b \colonequals  \frac{q^4-1}{q^3-1}$ (hence we have $q-1<a<b<q+1$)}
\label{fig1}
\end{figure}

\renewcommand\thesection{\Alph{section}}
\setcounter{section}{0}

\section*{Appendix. Classification of Hasse-type zip data, \Large{\bf by Wushi Goldring}\footnote{Department of Mathematics, Stockholm University, Stockholm, SE-10691, Sweden\\
\textit{e-mail:} wushijig@gmail.com}}

\addcontentsline{toc}{section}{Appendix.  Classification of Hasse-type zip data, by Wushi Goldring}
\refstepcounter{section}

    This appendix  classifies Hasse-type pairs $(G,\mu)$, as defined in Definition~\ref{def-Hasse-type-cochardatum}; see Theorem~\ref{th-sigma-triv}. The componentwise-maximal ones are singled out in Corollary~\ref{cor-maximal}, while those arising from Shimura varieties (resp.\ Shimura varieties of Hodge and abelian type) are classified in Theorem~\ref{th-Hodge}. Proofs are given in Section~\ref{sec-app-proofs}.

\subsection*{Acknowledgments}
I am grateful to Arno Kret and Ludvig Olsson for enlightening discussions. I thank Jean-Stefan Koskivirta and the referee for their helpful comments.

\subsection{Hasse-type Dynkin triples}
Let $\Dfr$ be a Dynkin diagram, $\sigma \in \Aut(\Dfr)$ a diagram automorphism and $\Ifr \subset \Dfr$ a $\sigma$-stable sub-diagram. 
This appendix classifies such  Dynkin triples $(\Dfr, \Ifr, \sigma)$ satisfying the following condition.

\begin{condition}
\label{cond-hasse-type-root-data}
 The actions of $\sigma$ and the opposition involution $-w_{0,\Ifr}$ of $\Ifr$ on $\Ifr$ coincide.
\end{condition}

The case $\Ifr=\Dfr$ is allowed. If $\Ifr=\Dfr$ and $\sigma=1$, then Condition~\ref{cond-hasse-type-root-data} holds precisely when the opposition involution of $\Dfr$ is trivial: $-w_0:=-w_{0,\Dfr}=1$. The classification of such $\Dfr$ is recalled in Lemma~\ref{lem-opp-inv-triv}.

\subsection{Translation}
\label{sec-translation} In the setting of Proposition~\ref{prop-Hassetype-equivalent}, let $\Dfr$ denote the Dynkin diagram of the simple roots $\Delta$ associated to $(G,B,T)$, and let $\Ifr$ denote the Dynkin sub-diagram of the type $I \subset \Delta$ of the parabolic $P \supset B$. Then the triples $(\Dfr, \Ifr, \sigma)$ satisfying Condition~\ref{cond-hasse-type-root-data} are precisely those arising from Hasse-type zip data, as characterized by the root-data-theoretic condition~\eqref{item-root-data-hasse-type} of Proposition~\ref{prop-Hassetype-equivalent}.

\subsection{Classification}
After highlighting isolated vertices of $\Ifr$, the classification is stated in Theorem~\ref{th-sigma-triv}.

\begin{definition}
A vertex $v \in \Ifr$ is \emph{isolated} 
if its connected component in $\Ifr$ is $\{v\}$. Let $\Ifr^{\geq 2} \subset \Ifr$ be the sub-diagram consisting of all connected components with at least two vertices.
\end{definition}

That is, $\Ifr^{\geq 2}$ is the (possibly empty) sub-diagram with all isolated vertices removed.  

\begin{rmk}
\label{rmk-isolated}
An isolated vertex $v \in \Ifr$ is fixed by $w_{0,\Ifr}$. Hence $(\Dfr, \Ifr, \sigma)$ satisfies Condition~\ref{cond-hasse-type-root-data} if and only if $(\Dfr, \Ifr^{\geq 2}, \sigma)$ does and $\sigma$ fixes all isolated vertices of $\Ifr$. 
Thus the key triples $(\Dfr, \Ifr, \sigma)$ are those where $\Ifr=\Ifr^{\geq 2}$ contains no isolated vertices. 
\end{rmk}

\begin{theorem}
\label{th-sigma-triv}
A triple $(\Dfr, \Ifr, \sigma)$ satisfies Condition~\ref{cond-hasse-type-root-data} if and only if it satisfies the three conditions: 
\begin{alist}
\item 
\label{item-isolated}
All isolated vertices of\, $\Ifr$ are fixed by $\sigma$; 
\item If\, 
\label{item-sigma-stable}
$\Ifr \cap \Dfr_i \neq \emptyset$ for some connected component $\Dfr_i$ of\, $\Dfr$, then $\Dfr_i$ is $\sigma$-stable; and 
\item 
\label{item-table}
If\, $\Ifr^{\geq 2} \cap \Dfr_i \neq \emptyset$, then 
$(\Dfr_i, \Dfr_i \cap \Ifr^{\geq 2},\sigma)$ appears in Table~\ref{table-main}.
\end{alist}
\end{theorem}

\begin{table}[ht]
\caption{ Triples $(\Dfr, \Ifr^{\geq 2}, \sigma)$ satisfying Condition~\ref{cond-hasse-type-root-data} with $\Dfr$ connected}
\label{table-main}
\small
\addtolength{\tabcolsep}{-2pt}
\renewcommand{\arraystretch}{1.2}
\begin{center}
\begin{tabular}{|c|c|c|c|}

\hline & $\type(\Dfr)$ &     $\sigma \in \Aut(\Dfr)$ & $\Ifr^{\geq 2}$

\\ \hline
1 & $A_n$, $n \geq2 $ &  $-w_0$ &
\begin{tabular}{l}
 unique $\sigma$-stable $A_m$, some     \\
  $2 \leq m \leq n$, $m \equiv n \pmod 2$    
\end{tabular}

\\ \hline
2 & $B_n$, $n \geq 2$ & trivial & unique $B_m$, some $2 \leq m \leq n$   
\\ \hline
3 &$C_n$, $n \geq 2$ & trivial & unique $C_m$, some $2 \leq m \leq n$ 

\\  \hline
4 & $D_n$, $n \geq 4$  & trivial & unique $D_{2m}$, some $2 \leq m \leq n/2$ 
 \\ \hline
5 &  $D_n$, $n \geq 4$  & order $2$ & unique $D_{2m+1}$, some $2 \leq m \leq (n-1)/2$ 
 \\ \hline
6 & $G_2$ & trivial &
unique $G_2$   
\\ \hline
7 & $D_4$ & order $2$ &
 extremal $\sigma$-fixed point removed $\cong D_3$          

\\ \hline
8 & $F_4$ &  trivial &
 unique $B_2 \cong C_2$, $B_3$, $C_3$ or $F_4$.          

\\ \hline
9 & $E_6$ &  trivial &
 unique $D_4$          

\\ \hline
10 & $E_6$ &  $-w_0$ &
 unique $-w_0$-stable $A_3$, $A_5$ or $E_6$.         
\\ \hline
11 & $E_7$ &  trivial &
 unique  $D_4$, $D_6$ or $E_7$.         
\\ \hline
12 & $E_8$ &  trivial &
 unique  $D_4$, $D_6$, $E_7$ or $E_8$.         
\\ \hline

   \end{tabular}
\end{center}
\end{table}

\subsection{Special cases I: Componentwise-maximal triples}
\label{sec-special-max}

\begin{definition}
\label{def-maximal}
A pair $(\Dfr,\Ifr)$ is \emph{maximal} if $\card(\Ifr)=\card(\Dfr)-1$. 
It is   \emph{componentwise-maximal} if $\Ifr \subsetneqq \Dfr$ and, for every connected component $\Dfr_i$ of $\Dfr$, either  $\Dfr_i \cap \Ifr=\Dfr_i$ or $(\Dfr_i, \Dfr_i \cap \Ifr)$ is maximal. 
\end{definition}
A triple $(\Dfr, \Ifr, \sigma)$ is (componentwise-)maximal  if the underlying pair $(\Dfr, \Ifr)$ is.
\begin{rmk} If $(\Dfr,\Ifr, \sigma)$ arises from $(G,\mu,B,T)$ as in Section~\ref{sec-translation}, then $(\Dfr,\Ifr)$ is componentwise-maximal if and only if, for every nontrivial, minimal, normal, connected $k$-subgroup $G_i$ of $G$, the Levi subgroup $L_i=\cent(\mu) \cap G_i$ of $G_i$ is either all of $G_i$ or a proper, maximal Levi of $G_i$. 

\end{rmk}

A $\sigma$-orbit of a triple $(\Dfr, \Ifr, \sigma)$ is a triple $(\Dfr', \Ifr', \sigma')$ such that $\Dfr'$ is a $\sigma$-orbit of connected components of $\Dfr$, $\Ifr':=\Dfr' \cap \Ifr$ and $\sigma'=\sigma_{|\Dfr'} \in \Aut(\Dfr')$.
\begin{corollary}
\label{cor-maximal}
A componentwise-maximal triple $(\Dfr, \Ifr, \sigma)$ satisfies Condition~\ref{cond-hasse-type-root-data} if and only if every $\sigma$-orbit  $(\Dfr', \Ifr', \sigma')$ with $\Ifr'=\Dfr'$ has $\Dfr'$ connected, $\sigma'=1$ and is listed in Lemma~\ref{lem-opp-inv-triv}, while every  $\sigma$-orbit with $\Ifr' \subsetneqq \Dfr'$ appears in Table~\ref{table-max}.
\end{corollary}

\subsection{Special cases II: Hodge-, abelian- and Shimura-type triples}
\label{sec-special-hodge}
Let $\gx$ be a Shimura datum.
For every prime $p$ such that $\mathbf{G}_{\mathbb{Q}_p}$ is unramified, the process recalled in~Section~\ref{subsec-Shimura} produces\footnote{This does not require $\gx$ to be of Hodge or abelian type.} a connected, reductive $\FF_p$-group $G$ from $\mathbf{G}$ and a cocharacter $\mu \in X_*(G)$ from~$\mathbf{X}$.  
Then Section~\ref{sec-translation} associates a triple $(\Dfr, \Ifr, \sigma)$ to $(G,\mu)$. 
The datum $\gx$ is of Hodge type if there exists a symplectic embedding $\gx \hookrightarrow (\GSp(2g), \mathbf{X}_g)$ into a Siegel-type datum for some $g \geq 1$, where $\mathbf{X}_g$ is the Siegel double half-space. 

\begin{definition}
A triple $(\Dfr, \Ifr, \sigma)$ is of Shimura (resp.\ Hodge) type if it arises from a Shimura (resp.\ Hodge-type) datum $\gx$ and a prime $p$ by the process described above. 
\end{definition}

\begin{rmk}
Let $\mathbf{X}^{\ad}$ be the projection of $\mathbf{X}$ onto the adjoint group $\mathbf{G}^{\ad}$.
If $\gx$ is of abelian type, then by definition there exists a Hodge-type datum $(\mathbf{G}_1, \mathbf{X}_1)$ such that $\mathbf{G}^{\ad}=\mathbf{G}_1^{\ad}$ and $\mathbf{X}^{\ad}=\mathbf{X}_1^{\ad}$.
If $\mathbf{G}, \mathbf{G}_1$ are both unramified at $p$, the triples $(\Dfr, \Ifr, \sigma)$ associated to the two Shimura data at $p$ are naturally identified. Therefore, there is no point to define ``abelian-type triples'' $(\Dfr, \Ifr, \sigma)$, as they are just the Hodge-type ones.    
\end{rmk}

\begin{table}[ht]
\caption{Componentwise-maximal $\sigma$-orbits  $(\Dfr', \Ifr', \sigma')$ satisfying Condition~\ref{cond-hasse-type-root-data}}
\label{table-max}

\addtolength{\tabcolsep}{-4pt}
\renewcommand{\arraystretch}{1.2}
\begin{center}
\begin{tabular}{|c|c|c|c|c|}

\hline & $\type(\Dfr')$ &     $\sigma' \in \Aut(\Dfr')$ &
\begin{tabular}{cl}
   $\emptyset$  & if $\type(\Ifr')=\emptyset$ \\
   $( \type(\Ifr'), \alpha)$  & if $\Ifr'=\Dfr' \setminus \{\alpha\}$
\end{tabular}
& \begin{tabular}{c}
    Hodge   \\
     type? 
\end{tabular} 
\\ \hline
1 & $A_1^m$, $m \geq1 $ &  $m$-cycle & $\emptyset$      
& Yes

\\ \hline
2 & $A_2$ & trivial & $(A_1, \alpha_1)$, $(A_1, \alpha_2)$
& Yes

\\ \hline 
3 &
$A_3$ & trivial &  $(A_{1}^2, \alpha_2)$ 
& Yes

\\ \hline 
4 &
$B_n$, $n \geq 2$ & trivial &  $(B_{n-1}, \alpha_1)$ 
& Yes

\\ \hline
5 & $D_{2m+1}$, $m \geq 1$  & trivial & $(D_{2m}, \alpha_1)$ 
& Yes

\\ \hline 
6 &
$D_4$ & \begin{tabular}{c}
   any of the $3$ \\  involutions $\neq 1$  
\end{tabular}  &
\begin{tabular}{c}
     $(D_3, \alpha)$ \\
  $\alpha=$extremal $\sigma'$-fixed point     
\end{tabular}
& Yes            

\\ \hline
7 &
$D_{2m}$, $m \geq 3$  & unique involution $\neq 1$ &  $(D_{2m-1}, \alpha_1)$ 
& Yes

\\ \hline 
8 &
$C_n$, $n \geq 2$ & trivial &  $(C_{n-1}, \alpha_1)$
& No

\\ \hline 
9 &
$C_n$, $n \geq 3$ & trivial & 
$(A_1 \times C_{n-2}, \alpha_2)$
& No

\\ \hline 
10 &
 $D_{2m}$, $m \geq 2$  & trivial &  $(A_1 \times D_{2m-2}, \alpha_2)$ 
& No
\\ \hline 
11 &
$B_n$, $n \geq 3$ & trivial & $(A_1 \times B_{n-2}, \alpha_2)$ 
& No

 \\ \hline 
12 &
 $D_{2m+1}$, $m \geq 2$  & $-w_0$ & $(A_1 \times D_{2m-1}, \alpha_2)$ 
& No

 \\ \hline 
13 &
$G_2$ & trivial & $(A_1, \alpha_1)$, $(A_1, \alpha_2)$
   
& No

\\ \hline 
14 &
$F_4$ &  trivial &
$(B_3, \alpha_4)$, $(C_3, \alpha_1)$
  
& No

\\ \hline 
15 &
$E_6$ &  $-w_0$ &
 $(A_5, \alpha_2)$
& No

\\ \hline 
16 &
$E_7$ &  trivial &
  $(D_6, \alpha_1)$       & No
 
\\ \hline 
17 &
$E_8$ &  trivial &
$(E_7,\alpha_8)$       & No
 
\\ \hline
   \end{tabular}
\end{center}
\end{table}

Combining Deligne's classification of Shimura data, see \cite[Section~1.2.5]{Deligne-Shimura-varieties}, and their symplectic embeddings, see [\opcitn, Table~1.3.9, Sections~2.3.4, 2.3.5, 2.3.7 and 2.3.9, Propositions~2.3.6 and~2.3.10, and Table~2.3.8]  with Corollary~\ref{cor-maximal} gives the following. 

\begin{theorem}
\label{th-Hodge}
A triple $(\Dfr, \Ifr, \sigma)$ of either Shimura or Hodge type satisfies Condition~\ref{cond-hasse-type-root-data} if and only if $(\Dfr, \Ifr, \sigma)$ is componentwise-maximal and every $\sigma$-orbit $(\Dfr', \Ifr', \sigma')$ satisfies:
\begin{alist}
\item
\label{item-special-shimura}
If\, $\Ifr' \subsetneqq \Dfr'$, then $(\Dfr', \Ifr', \sigma')$ is one of Table~\ref{table-max}, entries 1--7.
 
\item 
\label{item-cpt-factors}
 If\, $\Ifr'=\Dfr'$, then $\sigma'=-w_{0,\Dfr'}$ and there exists another $\sigma$-orbit $(\Dfr'', \Ifr'', \sigma'')$ of\, $(\Dfr, \Ifr, \sigma)$ with $\Ifr'' \subsetneqq \Dfr''$ and $\type(\Dfr'')=\type(\Dfr')$.    
\end{alist}
\end{theorem}

In particular, the Shimura triples satisfying Condition~\ref{cond-hasse-type-root-data} are precisely the Hodge-type ones.

\begin{rmk}
\label{rmk-Siegel-threefold}
In \cite{Goldring-Koskivirta-global-sections-compositio}, it was shown that the cone conjecture (Conjecture~\ref{conj2} of the introduction, Conjecture~2.1.6 in \opcitn) holds when $\type(G)=C_2$ and the projection $\mu^{\ad}$ of $\mu$ onto $G^{\ad}$ is a multiple of a minuscule cocharacter. This includes the Siegel varieties associated to $\GSp(4)$. Under the coincidental isomorphism $B_2 \cong C_2$, this is Table~\ref{table-max}, entry 4, $n=2$, consistent with the Hodge-type classification (Theorem~\ref{th-Hodge}).
 \end{rmk}

\subsection{Proofs}
\label{sec-app-proofs}
The proofs of the general classification, Theorem~\ref{th-sigma-triv}, and the componentwise-maximal case, Corollary~\ref{cor-maximal}, are exercises in the \textit{Planches} of Bourbaki \cite[Chapitre~6, Planches~I-IX]{bourbaki-lie-4-6}. The proof of the Hodge-type classification is an exercise in Deligne's classification  of Shimura (resp.\ Hodge-type) data (see \cite[\textit{loc.~cit.} and Section~1.2.5]{Deligne-Shimura-varieties}). Consulting the \textit{Planches}, one finds the following. 

\begin{lemma}
\label{lem-opp-inv-triv}
A connected Dynkin diagram $\Dfr$ has trivial opposition involution $-w_{0,\Dfr}=1$ in $\Aut(\Dfr)$ if and only if\, $\type(\Dfr)=A_1$, $B_n$,  $C_n$, 
$D_{2n}$ $(n \geq 2)$, $G_2$, $F_4$, $E_7$ or $E_8$.
\end{lemma}

\begin{lemma}
\label{lem-components}  
If\, $(\Dfr, \Ifr, \sigma)$ satisfies Condition~\ref{cond-hasse-type-root-data}, then every connected component of\, $\Ifr$ is $\sigma$-stable.
\end{lemma}

\begin{proof}
    The parabolic subgroup $W_{\Ifr}$ of $W$ stabilizes connected components of $\Ifr$.
\end{proof}

\begin{corollary}
\label{cor-components}    
If a connected component $\Dfr_i$ of\, $\Dfr$ is not $\sigma$-stable, then $\Ifr \cap \Dfr_i =\emptyset$.
\end{corollary}

\begin{proof}[Proof of Theorem~\ref{th-sigma-triv}]
By Remark~\ref{rmk-isolated} on isolated vertices, it is equivalent to show that $(\Dfr, \Ifr^{\geq 2}, \sigma)$ satisfies Condition~\ref{cond-hasse-type-root-data} if and only if it satisfies Condition~\ref{cond-hasse-type-root-data}\eqref{item-sigma-stable},~\eqref{item-table}. 
A triple $(\Dfr, \Ifr, \sigma)$ satisfies Condition~\ref{cond-hasse-type-root-data} if and only if every $\sigma$-orbit does. By Corollary~\ref{cor-components}, it suffices to check that the $(\Dfr, \Ifr, \sigma)$ satisfying Condition~\ref{cond-hasse-type-root-data} with $\Dfr$ connected and no isolated vertices $\Ifr=\Ifr^{\geq 2}$ are those listed in Table~\ref{table-main}.

Assume $\sigma=1$. Then $-w_{0,\Ifr}=1$. By Lemma~\ref{lem-opp-inv-triv}, $\Ifr$ should contain none of the following:
\begin{alist}
    \item
    \label{item-no-type-A}
    a connected component of type $A_m$ with $m\geq 2$,
    \item
    \label{item-no-type-D}
    a connected component of type $D_{2k+1}$ with $k \geq 2$,
    \item
    \label{item-no-type-E6}
    a sub-diagram of type $E_6$.
\end{alist}
Since $\Dfr$ is connected, a disconnected sub-diagram without isolated vertices contains a type $A_m$ component with $m \geq 2$. 
By restriction~\eqref{item-no-type-A},  $\Ifr$ is connected. 
Thus restrictions~\eqref{item-no-type-A}--\eqref{item-no-type-D} establish Theorem~\ref{th-sigma-triv} when $\type(\Dfr) \neq E$.  
Type $E$ is handled the same way, except that, in addition, the unique sub-diagram of type $E_6$ is disqualified by Lemma~\ref{lem-opp-inv-triv}. 
The sub-diagrams of type $D_5$ in $E_6$ are excluded by~\eqref{item-no-type-D}. This proves Theorem~\ref{th-sigma-triv} when $\sigma=1$.

Assume $\sigma \neq 1$. Since $\Dfr$ is connected, $\type(\Dfr)=A_n$ ($n \geq 2$), $D_n$ ($n\geq 3$) or $E_6$ by Lemma~\ref{lem-opp-inv-triv}. 

Consider type $E_6$. Since $\sigma \neq 1$, $\sigma=-w_0$ is the opposition involution.  There are precisely six $-w_0$-stable sub-diagrams without isolated points, of types $A_2, A_3, A_2 \times A_2$, $D_4$, $A_5$ and $E_6$. For the unique $\Ifr$ with $\type(\Ifr)=D_4$, $\sigma$ acts nontrivially, while $-w_{0,\Ifr}=1$.  For the $\sigma$-stable $\Ifr$ of type $A_2$, $\sigma$ acts trivially, while $-w_{0,\Ifr} \neq 1$. So Condition~\ref{cond-hasse-type-root-data} fails for both.  By Lemma~\ref{lem-components}, it also fails for $A_2 \times A_2$.  The remaining three sub-diagrams $A_3, A_5$ and $E_6$ do satisfy Condition~\ref{cond-hasse-type-root-data}. This proves Theorem~\ref{th-sigma-triv} in type $E_6$.

In type $A$ with $\sigma \neq 1$, again $\sigma=-w_0$. So Theorem~\ref{th-sigma-triv} holds by Lemma~\ref{lem-components}.
In type $D$, $\sigma$ acts trivially on $\sigma$-stable, type $A$ sub-diagrams $\Afr \subset \Dfr$ with more than one point, while these have $-w_{0,\Afr} \neq 1$. On the other hand, $\sigma \neq 1$ will act nontrivially on a type $D$ sub-diagram, so the latter must have odd rank, meaning type $D_{2m+1}$  rather than $D_{2m}$. 
\end{proof}

\begin{proof}[Proof of Corollary~\ref{cor-maximal}]
A triple $(\Dfr, \Ifr, \sigma)$ is componentwise-maximal if and only if some $\sigma$-orbit is componentwise-maximal and every  $\sigma$-orbit $(\Dfr', \Ifr', \sigma')$ is either componentwise-maximal or has $\Ifr'=\Dfr'$. By Corollary~\ref{cor-components}, a $\sigma$-orbit $(\Dfr', \Ifr', \sigma')$ with $\Dfr'$ disconnected satisfies $\Ifr'=\emptyset$. Hence the only componentwise-maximal, disconnected $\sigma$-orbit satisfying Condition~\ref{cond-hasse-type-root-data} is Table~\ref{table-max}, entry 1.

By Theorem~\ref{th-sigma-triv}, it remains to check that the maximal $\sigma$-orbits $(\Dfr', \Ifr', \sigma')$ satisfying Condition~\ref{cond-hasse-type-root-data} with $\Dfr'$ connected of rank strictly greater than~$1$ are Table~\ref{table-max}, entries 2--16.   
By maximality, an isolated point of $\Ifr'$  is an extremity of $\Dfr'$. So $\Ifr'$ has at most three isolated points. Consider the  four cases:

\textit{Three isolated points in $\Ifr'$}. Then  $\type(\Dfr')=D_4$, $\type(\Ifr')=A_1^3$ and $\sigma'=1$. This is Table~\ref{table-max}, entry 9, $m=2$. 

\textit{Two isolated points in $\Ifr'$}. Then they are separated by a single vertex. Since they are both extremities, either $\rank(\Dfr')=3$ and $\type(\Ifr')=A_1^2$, or $\type(\Dfr')=D_n$, $n \geq 5$ and $\type (\Ifr')=A_1^2 \times A_{n-3}$. 
If $\rank (\Dfr')=3$, then Condition~\ref{cond-hasse-type-root-data} holds. This is Table~\ref{table-max}, entries 3 and 9--10 with $n=3$. 
The case $D_3$ is covered by the coincidental isomorphism $D_3 \cong A_3$.
For $n \geq 5$, Condition~\ref{cond-hasse-type-root-data} fails due to the $A_{n-3}$ factor.

\textit{A unique isolated point in $\Ifr'$}.
If $\rank \Dfr'=2$, then Condition~\ref{cond-hasse-type-root-data} holds unless $\type(\Dfr')=A_2$ and $\sigma \neq 1$. The cases $A_2,B_2, C_2, G_2$ are recorded in Table~\ref{table-max}, $n=2$, entries 2, 4, 8, 13.
As in Remark~\ref{rmk-Siegel-threefold}, $\type(\Dfr')=B_2$, $\Ifr'=\Dfr' \setminus \{\alpha_2\}$ (resp.\ $\type(\Dfr')=C_2$, $\Ifr'=\Dfr' \setminus \{\alpha_2\}$) occurs under entry 8: type $C_n$ (resp.\ entry 4: type $B_n$) via the coincidental isomorphism $C_2 \cong B_2$. 

When $\rank \Dfr' \geq 3$ and $\Ifr'$ admits a unique isolated point, $\Ifr'^{\geq 2} \neq \emptyset$. 
By the main classification, Theorem~\ref{th-sigma-triv}, $(\Dfr', \Ifr', \sigma')$ satisfies Condition~\ref{cond-hasse-type-root-data} if and only if it is one of Table~\ref{table-max}, entries 9--12. 

\textit{No isolated points in $\Ifr'$}. In this case, $\Ifr'=\Ifr'^{\geq 2}$. By Theorem~\ref{th-sigma-triv}, $(\Dfr', \Ifr', \sigma')$ satisfies Condition~\ref{cond-hasse-type-root-data} if and only if it is one of Table~\ref{table-max}, entries 4 ($n \geq 3$), 5 ($m \geq 2$), 6--8,  14--17.
\end{proof}
It remains to prove the Hodge-type classification, Theorem~\ref{th-Hodge}.
Recall, see \cite[Section~1.2.5]{Deligne-Shimura-varieties}, that a simple root $\alpha \in \Dfr$ is \emph{special} if $\alpha$ has multiplicity~$1$ in the decomposition of the highest root of the connected component $\Dfr_i$ of $\Dfr$ containing $\alpha$. Equivalently, $\alpha$ is special if and only if the corresponding fundamental coweight is minuscule.

\begin{lemma}
\label{lem-Hodge-simple}
Assume $(\Dfr', \Ifr', \sigma')$ is a $\sigma$-orbit of a Shimura-type triple $(\Dfr, \Ifr, \sigma)$. If\, $\Dfr'$ is connected and $\Ifr \subsetneqq \Dfr$, then $(\Dfr', \Ifr', \sigma')$ appears in Table~\ref{table-max}, entries 1--7 $($entry 1 occurring only with $m=1)$.
\end{lemma}

\begin{proof}
As explained in \cite[Section~1.2.5]{Deligne-Shimura-varieties}, Deligne's Griffiths transversality axiom for Shimura data (\opcitn, Equation~(2.1.1.1)) implies $\Dfr'\setminus \Ifr'= \{\alpha\}$ and  $\alpha$ is special. Table~\ref{table-max}, entries 8--17 are excluded since $\alpha$ is not special there.
\end{proof}

\begin{lemma}
\label{lem-Hodge-cpt-factors}
If a $\sigma$-orbit $(\Dfr', \Ifr', \sigma')$ of a Shimura-type triple $(\Dfr, \Ifr, \sigma)$ satisfies Condition~\ref{cond-hasse-type-root-data}, then item~\eqref{item-cpt-factors} of Theorem~\ref{th-Hodge} holds. 
\end{lemma}

\begin{proof}
Since $(\Dfr, \Ifr, \sigma)$ is of Hodge type, it arises from an $\fp$-group $G$ and $\mu \in X_*(G)$ as in Section~\ref{sec-translation}, while $(G,\mu)$ arises from a Hodge-type Shimura datum $\gx$ as in Section~\ref{sec-special-hodge}.  Assume $\Ifr'=\Dfr'$. By the main classification, Theorem~\ref{th-sigma-triv}, $\Dfr'$ is connected. By Condition~\ref{cond-hasse-type-root-data}, $\sigma'=-w_{0, \Dfr'}$. 

The root data of $\mathbf{G}_{\overline{\QQ}}$ and $G_k$ are naturally identified under specialization. 
Hence there exists a $\QQ$-simple factor $\mathbf{G}_i$ of $\mathbf{G}^{\ad}$ such that the Dynkin diagram of $\mathbf{G}_i$ admits a component  isomorphic to $\Dfr'$.  
By Deligne's  ``no compact factors over $\QQ$'' axiom \cite[Equation~(2.1.1.3)]{Deligne-Shimura-varieties},  there exists an $\RR$-simple factor $\mathbf{H}$ of $\mathbf{G}_{i, \RR}^{\ad}$ such that $\mathbf{H}(\RR)$ is not compact.  
By Deligne's polarization axiom [\opcitn, Equation~(2.1.1.2)], the $\RR$-simple factors of $\mathbf{G}_{\RR}^{\ad}$ are absolutely simple. 
Let $\mu_1 \in X_*(\mathbf{G})$ be a representative of the conjugacy class of cocharaters associated to $\XX$. 
The noncompactness of $\mathbf{H}(\RR)$ is equivalent to the nontriviality of the projection of $\mu_1$ onto $H_{\CC}$. 
In turn, the latter nontriviality corresponds to a $k$-simple factor $H$ of $G_k^{\ad}$ such that the projection of $\mu$ onto $H$ is nontrivial. 
Let $(\Dfr'', \Ifr'', \sigma'')$ be the $\sigma$-orbit of $\Dfr, \Ifr, \sigma)$ such that the Dynkin diagram of $H$ is a component~$\Dfr''$. By construction, we have $\Ifr'' \subsetneqq \Dfr''$ and $\type(\Dfr')=\type(\Dfr'')$ because they are both components of the Dynkin diagram of the $\QQ$-simple group $\mathbf{G}_i$.  
So $(\Dfr'', \Ifr'', \sigma'')$ satisfies Theorem~\ref{th-Hodge}\eqref{item-cpt-factors}.
\end{proof}

\begin{proof}[Proof of Theorem~\ref{th-Hodge}]
  By Lemmas~\ref{lem-Hodge-simple} and~\ref{lem-Hodge-cpt-factors}, every Hodge-type triple satisfies Theorem~\ref{th-Hodge}\eqref{item-special-shimura},~\eqref{item-cpt-factors}. 
  We explain why the converse follows from Deligne's classification \cite[Sections~2.3.4, 2.3.5, 2.3.7 and 2.3.9, Propositions~2.3.6 and~2.3.10, and Table~2.3.8]{Deligne-Shimura-varieties}. Assume $(\Dfr, \Ifr, \sigma)$ satisfies Theorem~\ref{th-Hodge}\eqref{item-special-shimura},~\eqref{item-cpt-factors}. As explained in the proof of Lemma~\ref{lem-Hodge-cpt-factors}, components $\Dfr_i$ with $\Ifr \cap \Dfr_i=\Dfr_i$ correspond to compact factors of $\mathbf{G}_{\RR}^{\ad}$.

If $(\mathbf{G}_1, \mathbf{X}_1)$, $(\mathbf{G}_2, \mathbf{X}_2)$ are Hodge-type Shimura data, then there exists a Hodge-type datum $\gx$ whose adjoint datum decomposes as $\mathbf{G}^{\ad}=\mathbf{G}_1^{\ad} \times \mathbf{G}_2^{\ad}$ and $\mathbf{X}^{\ad}=\mathbf{X}^{\ad}_1 \times \mathbf{X}_2^{\ad}$. 
In particular, the Dynkin triples of $\gx$ are disjoint unions of those of $(\mathbf{G}_1, \mathbf{X}_1)$, $(\mathbf{G}_2, \mathbf{X}_2)$.

Using this product construction, we may assume without loss of generality that all components $\Dfr_i$ of $\Dfr$ have the same type. Under this assumption, we exhibit a group $\mathbf{G}$ such that $\mathbf{G}^{\ad}$ is $\QQ$-simple and there exists a Hodge-type datum $\gx$ giving rise to $(\Dfr, \Ifr, \sigma)$. The $\mathbf{G}(\RR)$-conjugacy class $\mathbf{X}$ is determined by $\mu \in X_*(G)$. 

Let $d$ (resp.\ $d_{nc}, d_c$) be the number of components  of $\Dfr$ (resp.\ those $\Dfr_i$ with $\Dfr_i \cap \Ifr \subsetneqq \Dfr_i$, those with $\Dfr_i \cap \Ifr=\Dfr_i$). Let $F$ be a degree $d$ totally real extension of $\QQ$.
For each entry 1--7 of Table~\ref{table-max},  we specify 
\begin{alist}
\item a quasi-split $F$-group $\mathbf{G}_0^*$ associated to a totally real or totally imaginary quadratic $F$-algebra $K$; 

\item groups $\mathbf{G}_{0,v}$ over $F_v$ for all real places $v$ of $F$, such that $\mathbf{G}_{0,v}$ is compact for precisely $d_c$ real places;  
\item a prime $p$ unramified in $F$ with prescribed splitting behavior;
\item
\label{item-local-quasi-split}
for all primes $v$ of $F$ above $p$, the $F_v$-group $\mathbf{G}_{0,v}=\mathbf{G}_{0,v}^*$.
\end{alist}
One has $K \cong F \times F$ if and only if $\mathbf{G}_0^*$ is split.

In each of the cases below, a result of Kottwitz \cite[Proposition~2.6]{Kottwitz-elliptic-singular} implies that there exists an inner $F$-form $\mathbf{G}_0$ of $\mathbf{G}_0^*$ with the prescribed behavior at the archimedean places and those above $p$. 
In fact, \loccit shows a much stronger result; in particular, the group can be prescribed at all but finitely many places (often at all but one place). This is worked out in detail for certain orthogonal groups by Kret--Shin \cite[Section~8]{Kret-Shin-GSO}, and works similarly in the cases below. In all the cases below, the weight cocharacter $w\colon \gm \to \mathbf{G}_{\RR}$, see \cite[Section~1.1.11]{Deligne-Shimura-varieties}, is defined over $\QQ$, and
$\mathbf{G}=w(\gmq)\cdot \res_{F/\QQ}\mathbf{G_0}$ is a similitude group of the restriction of scalars $\res_{F/\QQ}\mathbf{G_0}$.

In some special cases, there is a more classical description of a Hodge-type $\gx$ giving rise to $(\Dfr, \Ifr, \sigma)$; see Remark~\ref{rmk-classical-cases}.

For entries $j=1,2,3$ of Table~\ref{table-max}, let $K/F$ be totally imaginary. Let $\mathbf{G}_0^*$ be the quasi-split, unitary $F$-group associated to $K/F$ of rank $j+1$ (an outer form of $\GL(j+1)$). For $j=2,3$, choose $p$ that splits completely in~$K$. For $j=1$, choose $p$ whose residual degrees $f_i$ relative $F$ (with $\sum f_i=d$) match the sizes of the cycles of $\sigma$ acting on $\Dfr$.
Let  $\mathbf{G}_0$ be an inner  $F$-form  such that the $d_c$ compact factors (resp.\ $d_{nc}$ noncompact factors)  satisfy $\mathbf{G}_{0,v} \cong U(j+1)=U(j+1,0)$  (resp.\  $\mathbf{G}_{0,v} \cong \U(1,1), \U(2,1), \U(2,2)$). 
By~\eqref{item-local-quasi-split}, $\mathbf{G}_{0,v} \cong \GL(n)_{F_v}$ is split (resp.\ a restriction of scalars $\res_{\FF_{p^{f_i}}/\FF_p})$ for all $v$ above $p$ when $j=2,3$ (resp.\ $j=1$).    
   
 In all three cases, $\mathbf{G}_{\RR}$ is a unitary similitude group $\textnormal{G}(\U(a_1,b_1) \times \cdots \times \U(a_d, b_d))$ with $(a_i,b_i)$ as above, where the single ``$G$'' outside the parentheses signifies that all factors have the same similitude.

For entry 4 (type $B_n$), $\mathbf{G}_0^*$ is a (necessarily) split spin group. Choose  $p$ which  splits completely in~$F$. Construct an inner form $\mathbf{G}_0$ such that $\mathbf{G}_{0,v} \cong \Spin(2n+1)_{\RR}$ at the $d_c$ compact real places, $\mathbf{G}_{0,v} \cong \Spin(2n-1,2)$ has signature $(2n-1,2)$ at the  $d_{nc}$ noncompact real places  and~\eqref{item-local-quasi-split} holds. 

Entry 5 is a hybrid of entries 2--3 and 4: Let $K$ and $p$ be as for entries 2--3. Let $\mathbf{G}_0$ be the (nonsplit) quasi-split $F$-form of $\Spin(4m+2)$ associated to $K/F$.  
Let $\mathbf{G}_0$ be an inner form such that $\mathbf{G}_{0,v} \cong \Spin(4m+2)$ is compact for $d_c$ real places (resp.\ $\mathbf{G}_{0,v} \cong \Spin(4m,2)$ for $d_{nc}$ real places) and~\eqref{item-local-quasi-split} holds. Since $p$ splits completely in $K$, $\mathbf{G}_{0,v}$ is $F_v$-split for $v$ above $p$.   

Entries 6--7 are of a different flavor because $-w_0=1$ and $\sigma' \neq 1$ there. Up to isomorphism, entry 6 is the same as entry 7 but with $m=2$. So consider entry $7$ extended to include $m=2$. Since $-w_0=1$ and $\sigma' \neq 1$, take $K/F$ totally real and nonsplit (see also Remark~\ref{rmk-inner-form-cpt}). Let $p$ be a prime which splits completely in $F$ and is totally inert along $F'/F$. Let $\mathbf{G}_0^*$ be the (nonsplit) quasi-split $F$-form of $\Spin(4m)$ associated to $K/F$. Let $\mathbf{G}_0$ be an $F$-inner form of $\mathbf{G}_0^*$ such that $\mathbf{G}_{0,v} \cong \Spin(4m)$ is compact (resp.\ $\mathbf{G}_{0,v} \cong \Spin(4m-2,2)$) for $d_c$ (resp.\ $d_{nc}$) real places and~\eqref{item-local-quasi-split} holds above $p$. Since the primes $v$ of $F$ above $p$ are inert in $K$, the Galois group $\gal(K/F)$ acts nontrivially on the Dynkin diagram of each $\mathbf{G}_{0,v}$ (of type $D_{2m}$). Hence $\sigma$ acts nontrivially on all components $\Dfr_i$ with $\Dfr_i \cap \Ifr \subsetneqq \Dfr_i$.       
\end{proof}

Let $K/F$ be as in the proof of Theorem~\ref{th-Hodge}. 

\begin{rmk}
 \label{rmk-classical-cases}  
 Assume $(\Dfr, \Ifr,\sigma)$ is a Hodge-type triple with $\Ifr \subsetneqq \Dfr$ and $\sigma$ acting transitively on the components of $\Dfr$. Then in Table~\ref{table-max}, entries 1--5 also arises from Hodge-type Shimura varieties that admit a more classical description. 
 The assumption implies that there are no compact factors and that $\Dfr$ is connected and $F=\QQ$ in entries 2--7.  As mentioned before, entries 6--7 are more complicated, even under the simplifying assumption, due to the role of the totally real quadratic extension $K/\QQ$.
 
 Entry 1 arises from Hilbert modular varieties associated to $F$. A special case of the construction in the proof of Theorem~\ref{th-Hodge} for entries $j=2,3$ is a unitary similitude group $\mathbf{G}=\GU(2,j-1)$ associated to a Hermitian form of signature $(2,j)$ for $K/\QQ$ an imaginary quadratic field. For $j=2$, the resulting Shimura varieties are often called Picard modular surfaces. Similarly, for entries 4--5, $\mathbf{G}$ may be taken to be the spin similitude group associated to a nondegenerate, symmetric bilinear form over $\QQ$, whose signature over $\RR$ is $(2n-1,2)$ (resp.\ $(4m,2)$).

 \end{rmk}
\begin{rmk}
    \label{rmk-inner-form-cpt}
For a Shimura datum $\gx$, the polarization axiom, see \cite[Equation~(2.1.1.2)]{Deligne-Shimura-varieties}, implies that $\mathbf{G}_{\RR}^{\ad}$ is an inner form of its compact form. This implies that $\gal(\CC/\RR)$ acts on $\Dfr$ by the opposition involution $-w_{0,\Dfr}$ [\opcitn, Section~2.3.4(b)]. This dictates that $K/F$ is totally imaginary in the construction for entries 2--3 and 5 (resp.\ totally real for entries 4, 6--7, with $K=F \times F$ split for entry 4). 

Entries 6--7 stand out in that $\gal(\CC/\RR)$ acts trivially on $\Dfr$, but $\sigma$ does not.
\end{rmk}

\phantomsection
\newcommand{\etalchar}[1]{$^{#1}$}
\providecommand{\bysame}{\leavevmode\hbox to3em{\hrulefill}\thinspace}

\end{document}